\newtheorem{theorem}{Theorem}
\newtheorem{lemma}[theorem]{Lemma}
\newtheorem*{lemma*}{Lemma}
\newtheorem{proposition}[theorem]{Proposition}
\newtheorem{definition}[theorem]{Definition}
\newtheorem{remark}[theorem]{Remark}
\newcommand{\Z}{\mathbb{Z}}
\newcommand{\R}{\mathbb{R}}
\newcommand{\C}{\mathbb{C}}
\newcommand{\E}{\mathbb{E}}
\newcommand{\Tree}{\mathcal{T}}
\newcommand{\Jcal}{\mathcal{J}}
\renewcommand{\Re}{\mathrm{Re} \,}
\def\cT{\mathcal{T}}
\def\ph{\varphi}
\DeclareMathOperator{\var}{Var}
\numberwithin{equation}{section}
\numberwithin{theorem}{section}
\title[Random Hermitian matrices and GMC]{Random Hermitian Matrices and Gaussian Multiplicative Chaos}
\def\@tocline#1#2#3#4#5#6#7{\relax
  \ifnum #1>\c@tocdepth 
  \else
    \par \addpenalty\@secpenalty\addvspace{#2}%
    \begingroup \hyphenpenalty\@M
    \@ifempty{#4}{%
      \@tempdima\csname r@tocindent\number#1\endcsname\relax
    }{%
      \@tempdima#4\relax
    }%
    \parindent\z@ \leftskip#3\relax \advance\leftskip\@tempdima\relax
    \rightskip\@pnumwidth plus4em \parfillskip-\@pnumwidth
    #5\leavevmode\hskip-\@tempdima
      \ifcase #1
       \or\or \hskip 1em \or \hskip 2em \else \hskip 3em \fi%
      #6\nobreak\relax
    \dotfill\hbox to\@pnumwidth{\@tocpagenum{#7}}\par
    \nobreak
    \endgroup
  \fi}
\author{Nathana\"el Berestycki}
\address{{Statistical Laboratory, DPMMS, University of Cambridge. Wilberforce Rd. Cambridge CB3 0WB, United Kingdom.}}
\email{{N.Berestycki@statslab.cam.ac.uk}}
\author{Christian Webb}
\address{Department of mathematics and systems analysis, Aalto University, P.O.
Box 11000, 00076 Aalto, Finland}
\email{christian.webb@aalto.fi}
\author{Mo Dick Wong}
\address{{Statistical Laboratory, DPMMS, University of Cambridge. Wilberforce Rd. Cambridge CB3 0WB, United Kingdom.}}
\email{{mdw46@cam.ac.uk}}
\begin{document}

\begin{abstract}
We prove that when suitably normalized, small enough powers of the absolute value of the characteristic polynomial of random Hermitian matrices, drawn from one-cut regular unitary invariant ensembles, converge in law to Gaussian multiplicative chaos measures. We prove this in the so-called $L^2$-phase of multiplicative chaos. Our main tools are asymptotics of Hankel determinants with Fisher-Hartwig singularities. Using Riemann-Hilbert methods, we prove a rather general Fisher-Hartwig formula for one-cut regular unitary invariant ensembles.
\end{abstract}

\maketitle

{\hypersetup{linkcolor=black}
\tableofcontents}

\section{Introduction}

\subsection{Main result}

Log-correlated Gaussian fields, namely Gaussian random generalized functions whose covariance kernels have a logarithmic singularity on the diagonal, are known to show up in various models of modern probability and mathematical physics -- e.g. in combinatorial models describing random partitions of integers \cite{io}, random matrix theory \cite{fks,hko,rivi}, lattice models of statistical mechanics \cite{kenyon}, the construction of conformally invariant random planar curves such as stochastic Loewner evolution \cite{ajks,sheffield}, and growth models \cite{bf} just to name a few examples. A recent and fundamental development in the theory of these log-correlated fields has been that while these fields are rough objects -- distributions instead of functions -- their geometric properties can be understood to some degree. For example, one can describe the behavior of the extremal values and level sets of the fields in a suitable sense -- see e.g. \cite[Section 4 and Section 6.4]{rv1}.

A fundamental tool in describing these geometric properties of the fields is a class of random measures, which can be formally written as an exponential of the field. As these fields are distributions instead of functions, exponentiation is not an operation one can naively perform, but through a suitable limiting and normalization procedure, these random measures can be rigorously constructed and they are known as Gaussian multiplicative chaos measures. These objects were introduced by Kahane in the 1980s \cite{kahane}. For a recent review, we refer the reader to \cite{rv1} and for a concise proof of existence and uniqueness of these measures we refer to \cite{berestycki}.

A typical example of how log-correlated fields show up can be found in random matrix theory. For a large class of models of random matrix theory, the following is true: when the size of the matrix tends to infinity, the logarithm of the characteristic polynomial behaves like a log-correlated field. This is essentially equivalent to a suitable central limit theorem for the global linear statistics of the random matrix -- see \cite{fks,hko,rivi} for results concerning the GUE, Haar distributed random unitary matrices, and the complex Ginibre ensemble.

One would thus expect that the characteristic polynomial and powers of it should behave asymptotically like a multiplicative chaos measure. A related question was explored thoroughly though non-rigorously in \cite{fk,fs}. The issue here is that the construction of the multiplicative chaos measure goes through a very specific approximation of the Gaussian field and typically uses things like independence and Gaussianity very strongly. In the random matrix theory situation these are present only asymptotically. Thus the precise extent of the connection between the theory of log-correlated processes and random matrix theory is far from fully understood. For rigorous results concerning multiplicative chaos and the study of extrema of approximately Gaussian log-correlated fields in random matrix theory we refer to \cite{abb,cmn,LamOstSimm,LamPaq,pz,webb}.

In this article we establish a universality result showing that for a class of random Hermitian matrices, small enough powers of the absolute value of the characteristic polynomial can be described in terms of a Gaussian multiplicative chaos measure. More precisely, we prove the following result (for definitions of the relevant quantities, see Section \ref{sec:model}).

\begin{theorem}\label{th:main}
Let $H_N$ be a random $N\times N$ Hermitian matrix drawn from a one-cut regular, unitary invariant ensemble whose equilibrium measure is normalized to have support $[-1,1]$. Then for $\beta\in[0,\sqrt{2})$, the random measure

\begin{equation*}
\frac{|\det (H_N-x)|^\beta}{\E |\det(H_N-x)|^\beta}dx
\end{equation*}

\noindent on $(-1,1)$, converges in distribution with respect to the topology of weak convergence of measures on $(-1,1)$ to a Gaussian multiplicative chaos measure which can be formally written as $e^{\beta X(x)-\frac{\beta^2}{2}\E X(x)^2}dx$, where $X$ is a centered Gaussian field with covariance kernel

\begin{equation*}
\E X(x)X(y)=-\frac{1}{2}\log|2(x-y)|.
\end{equation*}
\end{theorem}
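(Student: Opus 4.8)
The plan is to reduce Theorem~\ref{th:main} to precise asymptotics of the joint moments $\E\big[\prod_{j=1}^{m}|\det(H_N-x_j)|^{\beta}\big]$. By the standard identity expressing the partition function of a unitary invariant ensemble with a perturbed weight as a Hankel determinant, these moments are ratios of Hankel determinants whose weight carries $m$ Fisher--Hartwig singularities, all of pure ``absolute value'' type with the common exponent $\beta$, located at the $x_j\in(-1,1)$. The first -- and by far the hardest -- step is therefore to establish, via Riemann--Hilbert analysis, a Fisher--Hartwig formula for one-cut regular unitary invariant ensembles that is valid \emph{uniformly} as the $x_j$ range over a compact $K\subset(-1,1)$, \emph{including} the regime in which two or more singularities approach one another at distances of order $N^{-1}$. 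Such a formula has the shape: explicit $N$-dependent prefactor, times a product of single-singularity factors (one per $x_j$, each carrying a Barnes-$G$ constant and a position-dependent factor built from the equilibrium measure), times a product of pairwise interaction factors proportional to $|x_j-x_k|^{-\beta^2/2}$, times $1+o(1)$.

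From this formula I would read off the first two moments of $\mu_N$. Because of the normalisation, Fubini gives $\E\,\mu_N(f)=\int f$ for every $f\in C_c\big((-1,1)\big)$, matching $\E\,\mu_\beta(f)=\int f$ for the limiting chaos. For the second moment, set
\begin{equation*}
R_N(x,y):=\frac{\E\big[\,|\det(H_N-x)|^{\beta}\,|\det(H_N-y)|^{\beta}\,\big]}{\E|\det(H_N-x)|^{\beta}\,\E|\det(H_N-y)|^{\beta}} .
\end{equation*}
Cancelling the single-singularity factors in the Fisher--Hartwig asymptotics, $R_N(x,y)$ converges, for every $x\neq y$ in $(-1,1)$, to $|2(x-y)|^{-\beta^2/2}$ -- the constant $2$ being exactly the universal constant in the Fisher--Hartwig interaction term -- and the confluent asymptotics yield the uniform bound $R_N(x,y)\le C_K\,|2(x-y)|^{-\beta^2/2}$ for all distinct $x,y\in K$ and all $N$. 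Since $\beta<\sqrt2$ is precisely the condition $\beta^2/2<1$, this dominating function is integrable on $K\times K$, so dominated convergence gives
\begin{align*}
\E\,\mu_N(f)^2=\iint f(x)f(y)\,R_N(x,y)\,\d x\,\d y
&\longrightarrow \iint f(x)f(y)\,|2(x-y)|^{-\beta^2/2}\,\d x\,\d y\\
&=\E\,\mu_\beta(f)^2 ,
\end{align*}
the last equality being the $L^2$-moment of the chaos attached to the kernel $-\tfrac12\log|2(\cdot-\cdot)|$. In particular $(\mu_N(f))_N$ is bounded in $L^2$, hence uniformly integrable and tight; running this over a countable family of $f$'s dense in $C_c\big((-1,1)\big)$ and using a diagonal argument shows $(\mu_N)$ is tight for the vague topology on $(-1,1)$, so it remains only to identify every subsequential limit as $\mu_\beta$.

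To pin down the limit I would use the three-step comparison that is available precisely in the $L^2$-phase. For $\varepsilon>0$ let $X_\varepsilon$ be the scale-$\varepsilon$ regularisation of $X$ (e.g. its harmonic extension evaluated at imaginary height $\varepsilon$) and $\mu_{\beta,\varepsilon}$ the corresponding chaos; on the matrix side let $\mu_{N,\varepsilon}$ be the measure obtained by replacing $\log|\det(H_N-x)|$ by its mollification at scale $\varepsilon$ (for instance $\log|\det(H_N-(x+i\varepsilon))|$), renormalised by its mean, so that the exponent is a \emph{smooth} linear eigenvalue statistic. Then: (i) for fixed $\varepsilon$, the central limit theorem for smooth linear statistics of one-cut regular ensembles -- classical, and also contained in the Fisher--Hartwig asymptotics with a smooth perturbation -- gives, in its functional form, convergence of these smooth fields to $X_\varepsilon$, hence, together with the $L^2$ bounds, $\mu_{N,\varepsilon}(f)\Rightarrow\mu_{\beta,\varepsilon}(f)$ as $N\to\infty$; (ii) $\mu_{\beta,\varepsilon}(f)\to\mu_\beta(f)$ in $L^2$ as $\varepsilon\to0$, which is the $L^2$-construction of Gaussian multiplicative chaos; and (iii) the uniform comparison
\begin{equation*}
\lim_{\varepsilon\to0}\ \limsup_{N\to\infty}\ \E\,\big|\mu_N(f)-\mu_{N,\varepsilon}(f)\big|^{2}=0 ,
\end{equation*}
obtained by expanding the square into three double integrals of ratios of the same Fisher--Hartwig type -- with singularities at $x$, at $y$, or at the mollified location of $y$ -- each controlled by the uniform bound above and each converging, as $\varepsilon\to0$, to $\iint f(x)f(y)|2(x-y)|^{-\beta^2/2}\,\d x\,\d y$. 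Bounding, on the common probability space, the bounded-Lipschitz distance by the $L^2$-distance in steps (ii) and (iii) and combining with (i) via the triangle inequality gives $\mu_N(f)\Rightarrow\mu_\beta(f)$; the same argument for finite linear combinations $\sum_i a_i f_i$ upgrades this to joint convergence, i.e. to the asserted weak convergence of random measures.

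The real obstacle is the uniform Fisher--Hartwig formula of the first paragraph. Pointwise asymptotics with fixed, well-separated singularities already deliver the pointwise convergence of $R_N$; what is genuinely delicate is the bound $R_N(x,y)\le C_K|2(x-y)|^{-\beta^2/2}$ valid \emph{all the way to the diagonal}, which is needed both to pass the second-moment limit under the integral and to run step (iii). This forces a Riemann--Hilbert analysis in the transition regime $|x-y|\lesssim N^{-1}$, where the two exponent-$\beta$ singularities effectively merge into a single exponent-$2\beta$ singularity, together with a matching of this regime to the separated one and uniformity of all error terms as the singularities move through $K$. By contrast the probabilistic half of the argument -- the CLT, the $L^2$-theory of chaos, and the tightness and triangle-inequality bookkeeping -- is routine; the entire role of the hypothesis $\beta<\sqrt2$ is to place the problem in the $L^2$-phase, where $|2(x-y)|^{-\beta^2/2}$ is integrable and the second-moment method closes.
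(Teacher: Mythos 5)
Your proposal is correct and follows essentially the same architecture as the paper: a uniform Fisher--Hartwig formula (Proposition \ref{prop:fh}) combined with the Claeys--Fahs merging-singularities asymptotics to control the second moment up to the diagonal, an approximation of $X_N$ by a smooth linear statistic whose fluctuations are governed by the CLT for one-cut regular ensembles, and an $L^2$ triangle-inequality comparison that closes precisely because $\beta^2/2<1$. The only substantive difference is the choice of regularization --- you mollify at scale $\varepsilon$ (e.g.\ by shifting into the upper half-plane), whereas the paper truncates the Fourier--Chebyshev expansion of $X_N$ at level $M$ so that the smooth part is a fixed polynomial and uniformity of the Riemann--Hilbert errors in its coefficients is easy to state --- and near the diagonal the paper assembles your dominating bound from three regions, using Cauchy--Schwarz for $|x-y|\lesssim N^{-\alpha}$ rather than a single pointwise estimate; both variants are equivalent in substance.
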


\noindent We note that in particular, this result holds for the Gaussian Unitary Ensemble (GUE) of random matrices, with a suitable normalization. The proof here is a generalization of that in \cite{webb} by the second author and relies on understanding the large $N$ asymptotics of quantities which can be written in the form $\E [e^{\mathrm{Tr}\  \mathcal{T}(H_N)}\prod_{j=1}^k |\det(H_N-x_j)|^{\beta_j}]$ for a suitable function $\mathcal{T}:\R\to \R$, $x_j\in (-1,1)$ and $\beta_j\geq 0$.

It is easy to see, and we will recall the relevant derivations below, that such expectations can be written in terms of Hankel determinants with Fisher--Hartwig symbols, and while such quantities (and corresponding Toeplitz determinants) have been studied in great detail \cite{cf,dik1,dik2,krasovsky}, it seems that in the generality we require for Theorem \ref{th:main}, many of the results are lacking. Thus we give a proof of such results using Riemann--Hilbert techniques;  see Proposition \ref{prop:fh} for the precise result. This settles some conjectures due to Forrester and Frankel -- see Remark \ref{rem:ff} and \cite[Conjecture 5 and Conjecture 8]{ff} for further information about their conjectures.

\subsection{Motivations and related results}

One of the main motivations for this work is establishing multiplicative chaos measures as something appearing universally when studying the global spectral behavior of random matrices. This is a new type of universality result in random matrix theory and also suggests that it should be possible to establish some of the geometric properties of log-correlated fields in the setting of random matrix theory as well. Perhaps on a more fundamental level, a further motivation for the work here is a general picture of when does the exponential of an approximation to a log-correlated field converge to a multiplicative chaos measure. Naturally we don't answer this question here, but the fact that our approach works so generally, suggests that part of this argument is something that transfers beyond random matrix theory to general models where one expects multiplicative chaos measures to play a role.

On a more speculative level, we also mention as motivation the connection to two-dimensional quantum gravity. It is well known that random matrix theory is related to a discretization of two-dimensional quantum gravity, namely the analysis of random planar maps -- see e.g. \cite{eml} for a mathematically rigorous discussion of this connection. On the other hand, multiplicative chaos measures play a significant role in the study of Liouville quantum gravity \cite{dkrv,ds} which is in some instances known to be the scaling limit of a suitable model of random planar maps \cite{legall,miermont,ms2,ms3,ms4}. The appearance of multiplicative chaos measures from random matrix theory seems like a curious coincidence from this point of view, and one that deserves further study.

One interpretation of Theorem \ref{th:main} is that it gives a way of probing the (random fractal) set of points $x$ where the recentered log characteristic polynomial $\log |\det (H_N-x)|-\E \log |\det (H_N-x)|$ is exceptionally large. In analogy with standard multiplicative chaos results (see e.g. \cite[Theorem 4.1]{rv1} or the approach of \cite{berestycki}), one would expect that Theorem \ref{th:main} implies that asymptotically,  $\frac{|\det(H_N-x)|^\beta}{\E|\det(H_N-x)|^\beta} dx$ lives on the set of points $x$ where 

\begin{align}\label{eq:thick}
\lim_{N\to\infty}\frac{\log |\det (H_N-x)|-\E \log |\det (H_N-x)|}{\var (\log | \det (H_N - x) |) }=\beta.
\end{align}

\noindent We emphasize that this really means that the (approximately Gaussian) random variable $\log |\det (H_N-x)|-\E \log |\det (H_N-x)|$ would be of the order of its variance instead of its standard deviation -- as the variance is exploding, this is what motivates the claim of the log-characteristic polynomial taking exceptionally large values. Moreover, as it is known that the measure $\mu_\beta$ vanishes for $\beta\geq 2$, this connection suggests that for $\beta>2$, there are no points where \eqref{eq:thick} is satisfied and that $\beta=2$ corresponds to the scale of where the maximum of the field lives (note that it is rigorously known through other methods that the maximum is indeed on the scale of two times the variance of the field -- see \cite{LamPaq} and see also \cite{abb,pz,cmn} for analogous results in the case of ensembles of random unitary matrices). This suggests that suitable variants of Theorem \ref{th:main} should provide a tool for studying extremal values of the characteristic polynomial, or even that more generally, existence of multiplicative chaos measures can be used to study the extremal behavior of log-correlated field. This is significant because maxima of logarithmically correlated fields (such as the log characteristic polynomial) are believed to display universality, and have as such been extensively studied in recent years (see e.g. \cite{fhk} and references below). In fact, the construction of Gaussian multiplicative chaos measures supported on points where the value of the field is a given fraction of the maximal value, may be viewed as part of the programme of establishing universality for such processes. While our results do not extend to the full range of values of $\beta$ where one expects the result to be valid (roughly, we examine only the $L^2$ regime in  Gaussian multiplicative chaos terminology), we believe that an appropriate modification of the methods of this paper eventually will yield the result in its full generality (for instance by combining it with a suitable modification of the approach in \cite{berestycki}).

Regarding this programme, we mention the papers of Arguin, Belius and Bourgade  \cite{abb} which verify the leading order of the maximum of the CUE log characteristic polynomial, as well as Paquette and Zeitouni \cite{pz} which refined this to obtain the second order, doubly logarithmic (``Bramson") correction. This is consistent with a prediction of Fyodorov, Hiary and Keating \cite{fhk}. In turn this was subsequently refined and generalized to the so-called circular $\beta$-ensemble by \cite{cnn} where tightness of the centered maximum was proved. For a large class of random Hermitian matrices, the leading order behavior was established recently by Lambert and Paquette \cite{LamPaq}, while in the case of the Riemann zeta function, the first order term was obtained (assuming the Riemann hypothesis) by Najnudel  \cite{naj} as well as (unconditionally) by Arguin et al. \cite{abbrs}. In the case of the discrete Gaussian free field in two dimensions, the convergence in law of the recentered maximum was obtained recently in an important paper of Bramson, Ding and Zeitouni \cite{bdz}. As for Gaussian multiplicative chaos measures (in the $L^2$-phase), the construction in the case of CUE random matrices was achieved by Webb \cite{webb}. Very recently, a related construction of a Gaussian multiplicative chaos measure was obtained by Lambert, Ostrovsky and Simm \cite{LamOstSimm} in the full $L^1$ regime of CUE random matrices, but for a slightly regularized version of the logarithm of the characteristic polynomial which is closer to a Gaussian field.

\subsection{Organisation of the paper} The outline of the article is the following: in Section \ref{sec:model}, we describe our model and objects of interest, our main results, and an outline of the proof. After this, in Section \ref{sec:hankel}, we recall how the relevant moments can be expressed as Hankel determinants as well as how these determinants are related to orthogonal polynomials on the real line and Riemann-Hilbert problems. In this section we also recall from \cite{dik2} a differential identity for the relevant determinants. Then in Section \ref{sec:rh} we go over the analysis of the relevant Riemann-Hilbert problem. This is very similar to the corresponding analysis in \cite{krasovsky,dik2}, but for completeness and due to slight differences in the proofs, we choose to present details of this in appendices. After this, in Section \ref{sec:di} we use the solution of the Riemann-Hilbert problem to integrate the differential identity to find the asymptotics of the relevant moments. Finally in Section \ref{sec:mainproof}, we put things together and prove our main results. 

We have chosen to defer a number of technical proofs to the end of the paper in the form of multiple appendices. These contain proofs of results which might be considered in some sense routine calculations by experts in random matrix and integrable models, but which would require significant effort to readers not familiar with these techniques. Since we hope that the paper will be of interest to different communities, we have chosen to keep them in the paper at the cost of increasing its length.

\vspace{0.3cm}

{\bf Acknowledgements:}  First of all, we wish to thank the three anonymous reviewers of this article for their careful reading, helpful comments, and pointing out errors in a previous version of this article.  We also wish to thank Igor Krasovsky for pointing out to us how to extend our main result from the GUE to general one-cut regular ensembles,  Benjamin Fahs for helpful discussions about \cite{cf}, and Christophe Charlier for pointing out some errors in a previous version of this article. Further, we wish to thank the Heilbronn Institute for Mathematical Research for support during the workshop Extrema of Logarithmically Correlated Processes, Characteristic Polynomials, and the Riemann Zeta Function, during which part of this work was carried out. N. Berestycki's work is supported by EPSRC grants EP/L018896/1 and EP/I03372X/1. M. D. Wong is a PhD student at the Cambridge Centre for Analysis, supported by EPSRC grant EP/L016516/1. Some of this work was carried out while the first and third authors visited the University of Helsinki, funded in part by EPSRC grant EP/L018896/1. They also wish to thank the University of Helsinki for its hospitality during this visit. C. Webb wishes to thank the Isaac Newton Institute for Mathematical Sciences for its hospitality during the Random Geometry program, during which this project was initiated. C. Webb was supported by the Eemil Aaltonen Foundation grant Stochastic dynamics on large random graphs and Academy of Finland  grants 288318 and 308123. 

\section{Preliminaries and outline of the proof}\label{sec:model}

In this section, we describe the main objects we shall discuss in this article, state our main results, and give an outline of the proof of them.

\subsection{One-cut regular ensembles of random Hermitian matrices} The basic objects we are interested in are $N\times N$ random Hermitian matrices $H_N$ whose distribution can be written as

\begin{equation}\label{eq:matlaw}
\mathbf{P}(dH_N)= \frac{1}{\widetilde{Z}_N(V)}e^{-N\mathrm{Tr}V(H_N)}dH_N,
\end{equation}

\noindent where $dH_N=\prod_{j=1}^N dH_{jj}\prod_{1\leq i<j\leq N}d(\mathrm{Re}H_{ij})d(\mathrm{Im}H_{ij})$ denotes the Lebesgue measure on the space of $N\times N$ Hermitian matrices, $\mathrm{Tr}V(H_N)$ denotes $\sum_{j=1}^N V(\lambda_j)$, where $(\lambda_j)$ are the eigenvalues of $H_N$ (we drop the dependence on $N$ from our notation), the potential $V:\R\to \R$ is a smooth function with nice enough growth at infinity so that this makes sense, and $\widetilde{Z}_N(V)$ is a normalizing constant. Perhaps the simplest model of such form is the Gaussian Unitary Ensemble for which $V(x)=2x^2$. This corresponds to the diagonal entries of $H_N$ being i.i.d. centered normal random variables with variance $1/(4N)$, and the entries above the diagonal being i.i.d. random variables whose real and imaginary parts are centered normal random variables with variance $1/(8N)$ and are independent of each other and of the diagonal entries. The entries below the diagonal are determined by the condition that the matrix is Hermitian.

The distribution \eqref{eq:matlaw} induces a probability distribution for the eigenvalues of $H_N$. In analogy with the GUE (see e.g. \cite{agz}) one finds that the distribution of the eigenvalues (on $\R^N$) is given by
\begin{equation}\label{eq:evlaw}
\mathbb{P}(d\lambda_1,...,d\lambda_N)=\frac{1}{Z_N(V)}\prod_{i<j}|\lambda_i-\lambda_j|^2 \prod_{j=1}^N e^{-NV(\lambda_j)}d\lambda_j,
\end{equation}

\noindent where $Z_N(V)$ is a normalizing constant called the partition function. Our main goal will be to describe the large $N$ behavior of the characteristic polynomial of $H_N$, and more generally a power of this characteristic polynomial. To do this, we will have to impose further constraints on the function $V$. A general family of functions $V$ for which our argument works is the class of one-cut regular potentials. We will review the relevant concepts here, but for more details, see \cite{kml}.

First of all, we assume that $V$ is real analytic on $\R$ and $\lim_{x\to \pm \infty}V(x)/\log|x|=\infty$. Further conditions on $V$ are rather indirect as they are statements about the associated equilibrium measure $\mu_V$ which is defined as the unique minimizer of the functional
\begin{equation*}
\mathcal{I}_V(\mu)=\int\int \log \frac{1}{|x-y|}\mu(dx)\mu(dy)+\int V(x)\mu(dx)
\end{equation*}
on the space of Borel probability measures on $\R$. For further information about $\mu_V$, see e.g. \cite{dkml,st}. The measure $\mu_V$ can also be characterized in terms of Euler--Lagrange equations:
\begin{equation}\label{eq:el1}
2\int\log |x-y|\mu_V(dy)=V(x)+\ell_V, \quad x\in \mathrm{supp}(\mu_V)\\
\end{equation}
\begin{equation}\label{eq:el2}
2\int\log |x-y|\mu_V(dy)\leq V(x)+\ell_V, \quad x\notin \mathrm{supp}(\mu_V)
\end{equation}

\noindent for some constant $\ell_V$ depending on $V$.

Our first constraint on $V$ is that the support of $\mu_V$ is a single interval, and we normalize it to be $[-1,1]$. In this case, on $[-1,1]$, $\mu_V$ can be written as

\begin{equation}\label{eq:equilib}
\mu_V(dx)=d(x)\sqrt{1-x^2}dx,
\end{equation}

\noindent where $d$ is real analytic in some neighborhood of $[-1,1]$ -- see \cite{dkml}. For one-cut regularity, we further assume that $d$ is positive on $[-1,1]$ and that the inequality \eqref{eq:el2} is strict. We collect this all into a single definition.

\begin{definition}[One-cut regular potentials]\label{def:1cp}
We say that the potential $V:\R\to \R$ is one-cut regular {\rm{(}}with normalized support of the equilibrium measure{\rm{)}} if it satisfies the following conditions:

\begin{itemize}[leftmargin=0.5cm]
\item[1.] $V$ is real analytic.
\item[2.] $\lim_{x\to \pm \infty}V(x)/\log |x|=\infty$.
\item[3.] The support of the equilibrium measure $\mu_V$ is $[-1,1]$.
\item[4.] The inequality \eqref{eq:el2} is strict.
\item[5.] The real analytic function $d$ from \eqref{eq:equilib} is positive on $[-1,1]$.
\end{itemize}
\end{definition}

The condition that the support is $[-1,1]$ instead of say $[a,b]$ is not a real constraint since the general case can be mapped to this with a simple transformation. Moreover, note that the support of the equilibrium measure is where the eigenvalues accumulate asymptotically, as the size of the matrix tends to infinity. So in this limit, we expect that nearly all of the eigenvalues of $H_N$ are in $[-1,1]$.

We also point out that this is a non-empty class of functions $V,$ since for the GUE ($V(x)=2x^2$), it is known that all of the conditions of Definition \ref{def:1cp} are satisfied -- in particular $d(x)=2/\pi$ in this case.

\subsection{The characteristic polynomial and powers of its absolute value} As mentioned, our main goal is to describe the large $N$ behavior of the characteristic polynomial of $H_N$. There are several possibilities for what one might want to say. One could consider the characteristic polynomial at a single point, say inside the support of the equilibrium measure, in which case one might expect in analogy with random unitary matrices \cite{ks} that the logarithm of the characteristic polynomial should, as a linear statistic of eigenvalues, be asymptotically a Gaussian random variable with exploding variance. One could consider the behavior of the characteristic polynomial in a microscopic neighborhood of a fixed point, where one might expect it to be asymptotically a random analytic function as it is for the CUE -- see \cite{cnn}, or one could consider the logarithm of the absolute value of the characteristic polynomial on a macroscopic scale inside or outside the support of the equilibrium measure. For the GUE, on the macroscopic scale and in the support of the equilibrium measure, it is known \cite{fks} that the recentered logarithm of the absolute value of the characteristic polynomial behaves like a random generalized function which is formally a Gaussian process with a logarithmic singularity in its covariance.

Our goal is to ``exponentiate" this last statement. (Note that since the limiting process describing the logarithm of a the characteristic polynomial is only a generalized function, and not an actual function defined pointwise, taking its exponential is \emph{a priori} highly nontrivial). More precisely, we make the following definitions.

\begin{definition}\label{def:cpfield}
For $N\in \Z_+$, let $H_N$ be distributed according to \eqref{eq:matlaw}. For $x\in \C$, define
\begin{equation}\label{eq:cpoly}
P_N(x)=\det(H_N-x\mathbf{1}_{N\times N})=\prod_{j=1}^N (\lambda_j-x).
\end{equation}
Moreover, let
\begin{equation}\label{eq:rmtfield}
X_N(x)=\log |P_N(x)|=\sum_{j=1}^N \log \left|\lambda_j- x\right|,
\end{equation}
and for $\beta>0$, define the following measure on $(-1,1)$:
\begin{equation}\label{eq:rmtmeas}
\mu_{N,\beta}(dx)=\frac{e^{\beta X_N(x)}}{\E e^{\beta X_N(x)}}dx=\frac{|P_N(x)|^\beta}{\E |P_N(x)|^\beta}dx.
\end{equation}
\end{definition}

While exponentiating a generalized function in general is impossible, it turns out that in our setting, the correct description of such a procedure is in terms of random measures known as Gaussian multiplicative chaos measures. We now describe some of the basics of the relevant theory.

\subsection{Gaussian Multiplicative Chaos}

Gaussian multiplicative chaos is a theory going back to Kahane \cite{kahane} with the aim of defining what the exponential of a Gaussian random (possibly generalized) function should mean when the covariance kernel of the Gaussian process has a suitable structure, as well as describing some geometric properties of these Gaussian processes.

Kahane proved, that if the covariance kernel has a logarithmic singularity, but otherwise has a particularly nice form, then with a suitable limiting and normalizing procedure, the exponential of the corresponding generalized function can be indeed understood as a random multifractal measure, known as a Gaussian multiplicative chaos measure. For a recent review of the theory, see \cite{rv1} and for a concise proof for existence and uniqueness, see \cite{berestycki}.

Recently, these measures have found applications in constructing random SLE-like planar curves through conformal welding \cite{ajks,sheffield}, quantum Loewner evolution \cite{ms}, the random geometry of two-dimensional quantum gravity \cite{dkrv,ds} -- see also the lecture notes \cite{LQGnotes, rv2}, and even in models of mathematical finance \cite{bkm}. Complex variants of these objects are also connected to the statistical behavior of the Riemann zeta function on the critical line \cite{sw}. Perhaps their greatest importance is the role they are believed to play in describing the scaling limits of random planar maps embedded conformally -- see \cite{ms2,ms3,ms4} and \cite{LQGnotes}. In all of these cases, the covariance kernel of the Gaussian field has a logarithmic singularity on the diagonal.

In this section we will give a brief construction of the measures which are relevant to us. The random distribution we will be interested in is the whole-plane Gaussian free field restricted to the interval $(-1,1)$ with a suitable choice of additive constant. Formally we will want to consider a Gaussian field $X$ defined on $(-1,1)$ such that it has a covariance kernel $\E X(x)X(y)=-\frac{1}{2}\log [2|x-y|]$. It can be shown that it is possible to construct such an object as a random variable taking values in a suitable Sobolev space of  generalized functions, see \cite{fks}. However, we will only need to work with approximations to this distribution which are well defined functions, so we will not need this fact. To motivate our definitions, we first recall  a basic fact about expanding $\log |x-y|$ for $x,y\in(-1,1)$ in terms of Chebyshev polynomials -- see e.g. \cite[Appendix C]{ps}, \cite[Exercise 1.4.4]{for}, or \cite[Lemma 3.1]{gp} for a proof.

\begin{lemma}\label{le:haagerup}
Let $x,y\in(-1,1)$ and $x\neq y$. Then
\begin{equation}\label{eq:logexp}
\log |x-y|=-\log 2-\sum_{n=1}^\infty \frac{2}{n}T_n(x)T_n(y),
\end{equation}

\noindent where $T_n$ is a Chebyshev polynomial of the first kind, i.e. it is the unique polynomial of degree $n$ satisfying $T_n(\cos \theta)=\cos n\theta$ for all $\theta\in[0,2\pi]$.
\end{lemma}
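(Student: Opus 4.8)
The plan is to reduce the claimed identity to a classical one-dimensional Fourier expansion. Writing $x=\cos\theta$ and $y=\cos\phi$ with $\theta,\phi\in(0,\pi)$ and $\theta\neq\phi$ (which is exactly the range corresponding to $x,y\in(-1,1)$ with $x\neq y$), the defining relation $T_n(\cos\psi)=\cos(n\psi)$ turns the right-hand side of \eqref{eq:logexp} into
\[
-\log 2-\sum_{n=1}^\infty\frac{2}{n}\cos(n\theta)\cos(n\phi),
\]
and the product-to-sum identity $2\cos(n\theta)\cos(n\phi)=\cos\!\big(n(\theta-\phi)\big)+\cos\!\big(n(\theta+\phi)\big)$ writes the general term as $\tfrac1n\cos(n\alpha_-)+\tfrac1n\cos(n\alpha_+)$ with $\alpha_-=\theta-\phi\in(-\pi,\pi)\setminus\{0\}$ and $\alpha_+=\theta+\phi\in(0,2\pi)$. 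Neither $\alpha_\pm$ lies in $2\pi\Z$, so both series $\sum_{n\geq1}\tfrac1n\cos(n\alpha_\pm)$ converge (conditionally) by Dirichlet's test; hence the series in \eqref{eq:logexp} converges and splits termwise into these two pieces.

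Next I would establish the classical evaluation
\[
\sum_{n=1}^\infty\frac{\cos(n\alpha)}{n}=-\log\!\big(2\,|\sin(\alpha/2)|\big),\qquad \alpha\notin 2\pi\Z .
\]
For $|t|<1$ one has $\sum_{n\geq1}\tfrac{t^n}{n}e^{in\alpha}=-\log(1-te^{i\alpha})$; taking real parts gives $\sum_{n\geq1}\tfrac{t^n}{n}\cos(n\alpha)=-\tfrac12\log(1-2t\cos\alpha+t^2)$, and since the series at $t=1$ converges (as just noted), Abel's theorem lets one pass to the limit $t\to1^-$, yielding $\sum_{n\geq1}\tfrac1n\cos(n\alpha)=-\tfrac12\log(2-2\cos\alpha)=-\log\!\big(2|\sin(\alpha/2)|\big)$, using $2-2\cos\alpha=4\sin^2(\alpha/2)$.

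Applying this with $\alpha=\alpha_-$ and $\alpha=\alpha_+$ and adding, the right-hand side of \eqref{eq:logexp} becomes
\[
-\log2+\log\!\big(2|\sin(\tfrac{\theta-\phi}{2})|\big)+\log\!\big(2|\sin(\tfrac{\theta+\phi}{2})|\big)
=-\log2+\log\!\big(4\,|\sin(\tfrac{\theta-\phi}{2})\,\sin(\tfrac{\theta+\phi}{2})|\big).
\]
Finally the product-to-sum identity $2\sin(\tfrac{\theta-\phi}{2})\sin(\tfrac{\theta+\phi}{2})=\cos\phi-\cos\theta$ gives $4|\sin(\tfrac{\theta-\phi}{2})\sin(\tfrac{\theta+\phi}{2})|=2|\cos\theta-\cos\phi|=2|x-y|$, so the whole expression equals $-\log2+\log(2|x-y|)=\log|x-y|$, which is \eqref{eq:logexp}.

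The computation is otherwise routine; the only points that genuinely require care are the convergence bookkeeping — the series in \eqref{eq:logexp} is merely conditionally convergent, so one must invoke Dirichlet's test to make sense of it and Abel's theorem (rather than a naive termwise limit inside the generating function) to evaluate it — together with the careful tracking of absolute values and the exclusion of the endpoints $x,y=\pm1$, where $\theta$ or $\phi$ reaches $0$ or $\pi$, a harmonic series appears, and the identity genuinely breaks down.
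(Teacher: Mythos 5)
Your proof is correct. The paper itself does not prove this lemma but only cites references (\cite[Appendix C]{ps}, \cite[Exercise 1.4.4]{for}, \cite[Lemma 3.1]{gp}), and your argument — substituting $x=\cos\theta$, $y=\cos\phi$, splitting via the product-to-sum formula, and evaluating $\sum_{n\ge1}\cos(n\alpha)/n=-\log(2|\sin(\alpha/2)|)$ by Abel summation — is precisely the standard derivation found in those sources, with the conditional-convergence issues handled properly.
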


Thus formally, if $(A_k)_{k=1}^\infty$ were i.i.d. standard Gaussians and one defined
$$\mathcal{G}(x)=\sum_{j=1}^\infty \frac{A_j}{\sqrt{j}}T_j(x),
$$
then one would have $\E \mathcal{G}(x)\mathcal{G}(y)=-\frac{1}{2}\log [2|x-y|]$. Motivated by this, we make the following definition.

\begin{definition}
Let $(A_k)_{k=1}^\infty$ be i.i.d. standard Gaussian random variables. For $x\in(-1,1)$ and $M\in \Z_+$, let
\begin{equation}\label{eq:gfieldm}
\mathcal{G}_M(x)=\sum_{j=1}^M \frac{A_j}{\sqrt{j}} T_j(x).
\end{equation}
\end{definition}

We then want to understand $e^{\beta \mathcal{G}}$ (for suitable $\beta$) as a limit related to $e^{\beta \mathcal{G}_M}$ as $M\to \infty$. The precise statement is the following:

\begin{lemma}\label{L:GMC}
Consider the random measure
\begin{equation}\label{eq:gmctrunc}
{\mu}_{\beta}^{(M)}(dx)=e^{\beta {\mathcal{G}}_M(x)-\frac{\beta^2}{2}\E {\mathcal{G}}_M(x)^2}dx
\end{equation}

\noindent on $(-1,1)$. For $\beta\in(-\sqrt{2},\sqrt{2})$, $\mu_{\beta}^{(M)}$ converges weakly almost surely $($when the i.i.d. Gaussians are realized on the same probability space$)$ to a non-trivial random measure $\mu_\beta$ on $(-1,1)$, as $M\to\infty$.
\end{lemma}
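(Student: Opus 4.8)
The plan is to run the classical martingale argument for existence of Gaussian multiplicative chaos in the $L^2$-phase, specialised to the Chebyshev cut-off $\mathcal{G}_M$. Set $\Filt_M=\sigma(A_1,\dots,A_M)$. Since $\mathcal{G}_{M+1}(x)=\mathcal{G}_M(x)+\frac{A_{M+1}}{\sqrt{M+1}}T_{M+1}(x)$ with $A_{M+1}$ independent of $\Filt_M$, and $\E\mathcal{G}_{M+1}(x)^2=\E\mathcal{G}_M(x)^2+\frac{T_{M+1}(x)^2}{M+1}$, a one-line Gaussian computation gives $\E\big[e^{\beta\mathcal{G}_{M+1}(x)-\frac{\beta^2}{2}\E\mathcal{G}_{M+1}(x)^2}\,\big|\,\Filt_M\big]=e^{\beta\mathcal{G}_M(x)-\frac{\beta^2}{2}\E\mathcal{G}_M(x)^2}$. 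Integrating over $x$ (Fubini), for every Borel set $A$ with $\overline A\subset(-1,1)$ the sequence $Y_M(A):=\mu_\beta^{(M)}(A)$ is a non-negative $(\Filt_M)$-martingale with $\E Y_M(A)=|A|$; by the martingale convergence theorem it converges a.s.\ to some $Y_\infty(A)\in[0,\infty)$, and more generally, since $\int g\,d\mu_\beta^{(M)}$ is a non-negative martingale for each non-negative $g\in C_c((-1,1))$, $\int f\,d\mu_\beta^{(M)}$ converges a.s.\ for each $f\in C_c((-1,1))$ (treat $f^+$ and $f^-$ separately).

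First I would establish the $L^2$-bound, which is where the constraint $\beta^2<2$ enters. By the same Gaussian computation, $\E[Y_M(A)^2]=\int_A\int_A e^{\beta^2\,\E[\mathcal{G}_M(x)\mathcal{G}_M(y)]}\,dx\,dy$, and $\E[\mathcal{G}_M(x)\mathcal{G}_M(y)]=\sum_{j=1}^M\frac1j T_j(x)T_j(y)$. Writing $x=\cos\theta$, $y=\cos\phi$ and using $T_j(\cos\theta)T_j(\cos\phi)=\frac12\big(\cos j(\theta-\phi)+\cos j(\theta+\phi)\big)$ together with the standard uniform estimate $\sum_{j=1}^M\frac{\cos jt}{j}\le\log\frac{1}{|2\sin(t/2)|}+C$ (valid for all $M\ge1$ and $t\in\R$ with an absolute constant $C$; it follows from boundedness of the partial sums away from the singularity plus Abel summation for the tail), one obtains, for $x,y$ in a fixed compact $A\Subset(-1,1)$, the bound $\E[\mathcal{G}_M(x)\mathcal{G}_M(y)]\le-\frac12\log|x-y|+C_A$ uniformly in $M$ — here the $\theta+\phi$ term is bounded because $\frac{\theta+\phi}{2}$ stays away from $0$ and $\pi$, and $\cos$ is bi-Lipschitz there. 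Hence $\sup_M\E[Y_M(A)^2]\le C_A'\int_A\int_A|x-y|^{-\beta^2/2}\,dx\,dy<\infty$ exactly because $\beta^2/2<1$. Thus $Y_M(A)$ is bounded in $L^2$, hence uniformly integrable, so $Y_M(A)\to Y_\infty(A)$ also in $L^1$ and $\E Y_\infty(A)=|A|$; in particular the limit is a.s.\ not the zero measure. The same estimate shows $\E\big[(\int f\,d\mu_\beta^{(M)})^2\big]$ is bounded uniformly in $M$ for each $f\in C_c$.

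Next I would upgrade to weak convergence of measures, almost surely and simultaneously for all test functions. Fix an exhaustion $(-1,1)=\bigcup_n K_n$ by compacts and a countable sup-norm-dense family $\{f_k\}\subset C_c((-1,1))$; on an event of probability one, $\int f_k\,d\mu_\beta^{(M)}$ converges as $M\to\infty$ for every $k$, and $\sup_M\mu_\beta^{(M)}(K_n)<\infty$ for every $n$ (dominate $\mathbf 1_{K_n}$ by a fixed $g_n\in C_c$ and use a.s.\ convergence of $\int g_n\,d\mu_\beta^{(M)}$). For general $f\in C_c$ with $\mathrm{supp}\,f\subset K_n$, the inequality $\big|\int(f-f_k)\,d\mu_\beta^{(M)}\big|\le\|f-f_k\|_\infty\,\mu_\beta^{(M)}(K_n)$ shows $\big(\int f\,d\mu_\beta^{(M)}\big)_M$ is Cauchy, hence convergent; the limiting functional is non-negative and linear on $C_c((-1,1))$, i.e.\ (Riesz--Markov) integration against a Radon measure $\mu_\beta$, and $\mu_\beta^{(M)}\to\mu_\beta$ vaguely a.s. Non-triviality is the identity $\E\mu_\beta(A)=|A|>0$ from the previous step. (Note $\mu_\beta$ need not be finite: its mass may concentrate near $\pm1$, which is why vague, rather than narrow, convergence is the natural statement here.)

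The step I expect to require the most care is the last one — turning convergence of individual linear statistics into a.s.\ convergence of the \emph{measures}, uniformly over all test functions, via the separability and tightness argument above and the Riesz--Markov representation. The $L^2$-estimate itself is short once one invokes the standard bound on the partial sums of $\sum_j\frac{\cos jt}{j}$, and it is there, and only there, that the hypothesis $\beta\in(-\sqrt2,\sqrt2)$ is used. As an alternative to the whole argument one could instead check that $(\mathcal{G}_M)$ is an admissible approximating sequence for the log-correlated field with kernel $-\frac12\log|2(x-y)|$ and quote the general existence theory of \cite{berestycki,rv1}; the self-contained martingale proof is preferable here since we only need the $L^2$-phase.
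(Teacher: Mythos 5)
Your proof is correct, but the key step is obtained by a genuinely different route from the paper's. The paper reduces the lemma to $L^2$-boundedness of $\mu_\beta^{(M)}(\varphi)$ and then \emph{derives} the required bound
\begin{equation*}
\int\!\!\int \varphi(x)\varphi(y)\,e^{\beta^2\sum_{k=1}^M\frac1k T_k(x)T_k(y)}\,dx\,dy\;\le\;\int\!\!\int \frac{\varphi(x)\varphi(y)}{|2(x-y)|^{\beta^2/2}}\,dx\,dy
\end{equation*}
from the random matrix asymptotics: by Propositions \ref{prop:2mom} and \ref{prop:mixedmom}, the difference of the two sides is $\lim_N\E[(\mu_{N,\beta}(\varphi)-\widetilde\mu_{N,\beta}^{(M)}(\varphi))^2]\ge 0$. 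You instead prove the $L^2$ bound directly (up to a multiplicative constant, which suffices) via the classical uniform estimate on the partial sums $\sum_{j\le M}\cos(jt)/j$, splitting $T_j(\cos\theta)T_j(\cos\phi)$ into the $\theta-\phi$ and $\theta+\phi$ frequencies. Your argument is self-contained and purely harmonic-analytic, whereas the paper's is essentially free given that the RMT propositions are proved anyway, at the cost of making the existence of the GMC limit logically dependent on the Riemann--Hilbert analysis. You also spell out the martingale property and the upgrade from a.s.\ convergence of countably many linear statistics to a.s.\ vague convergence of the measures, which the paper compresses into the parenthetical ``as it is a martingale as a function of $M$.'' One small remark: by bounding the $\theta+\phi$ term by $-\frac12\log|2\sin\frac{\theta+\phi}{2}|+C$ as well (rather than restricting to compacts where it is bounded), the two terms combine exactly into $-\frac12\log|2(x-y)|+C$ uniformly on all of $(-1,1)^2$; this gives $L^2$-boundedness of the total mass, matching what the paper obtains for continuous $\varphi$ on the closed interval and removing the caveat in your last parenthesis about possible mass accumulation at $\pm1$.
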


This measure $\mu_\beta$ is the limiting object in Theorem \ref{th:main}. The basic idea is that the sequence $\mu_\beta^{(M)}$ is a measure-valued martingale, and it turns out that for $\beta\in(-\sqrt{2},\sqrt{2})$, it is bounded in $L^2$ so by standard martingale theory it has a non-trivial limit. The $L^2$-boundedness is somewhat non-trivial and we will return to the details later.

\begin{remark}\label{rem:bigbeta}
The measure $\mu_\beta$ exists actually for larger values of $|\beta|$ as well. It essentially follows from the standard theory of multiplicative chaos, or alternatively the approach of \cite{berestycki}, that a non-trivial limiting measure exists for $\beta\in(-2,2)$. In fact, comparing with other log-correlated fields, it is natural to expect that with a suitable deterministic normalization, that differs from ours for some values of $\beta$, it is possible to construct a non-trivial limiting object for all $\beta\in \C$. However, for complex $\beta$, the limit might not be in general a measure $($not even a signed measure$)$, but only a distribution. We refer to \cite{lrv} for a study in complex multiplicative chaos and to \cite{mrv} for defining $\mu_\beta$ for large real $\beta$. Our approach for proving convergence relies critically on calculating second moments and it is known for example that the total mass of the measure $\mu_\beta$ has a finite second moment only for $\beta\in(-\sqrt{2},\sqrt{2})$, so our approach is not directly possible for proving a corresponding result in the full range of values of $\beta$ where we would expect the result to hold. However, combining our results, those of \cite{ck}, and the approach of \cite{LamOstSimm} should yield the result for $\beta\in(0,2)$.  This being said, we wish to point out that while the limiting object $\mu_\beta$ should exist for all complex $\beta$, one should not expect that  $\mu_{N,\beta}$ converges to it if the real part of $\beta$ is too negative -- e.g. if $\beta\leq -1$, then with overwhelming probability, $\int_{-1}^1 f(x)|P_N(x)|^{\beta}dx$ will be infinite and one can not hope for convergence. To avoid this type of complications, we focus on non-negative $\beta$.
\end{remark}

\subsection{Outline of the proof}  In this section we define the main objects we analyze in the proof of Theorem \ref{th:main}, and state the main results we need about them. Motivated by the approach in \cite{webb}, we will consider an approximation to $\mu_{N,\beta}$, and we will denote this by $\widetilde{\mu}_{N,\beta}^{(M)}$, where $M$ is an integer parametrizing the approximation. Using known results about the linear statistics of one-cut regular ensembles, it will be clear that as $N\to\infty$ for fixed $M$, $\widetilde{\mu}_{N,\beta}^{(M)}\to \mu_{\beta}^{(M)}$ in distribution. Thus our goal is to control the difference $\mu_{N,\beta}-\widetilde{\mu}_{N,\beta}^{(M)}$, when we first let $N\to\infty$ and then $M\to\infty$.

Let us begin by defining our approximation $\widetilde{\mu}_{N,\beta}^{(M)}$. It is essentially just truncating the Fourier-Chebyshev series of $X_N$, but we have to be slightly careful as the eigenvalues can be outside of $[-1,1]$ with non-zero probability.

\begin{definition}
Fix $M\in \Z_+$ and $\epsilon>0$ {\rm{(}}small and possibly depending on $M${\rm{)}}. Let $\widetilde{T}_j(x)$ be a $C^\infty(\R)$-function with compact support such that $\widetilde{T}_j(x)=T_j(x)$ for each $x\in(-1-\epsilon,1+\epsilon)$. Then define for $x\in(-1,1)$
\begin{equation}\label{eq:rmtfieldtrunc}
\widetilde{X}_{N,M}(x)=-\sum_{k=1}^M\frac{2}{k}\left[\sum_{j=1}^N \widetilde{T}_k\left(\lambda_j\right)\right]T_k(x),
\end{equation}

\noindent and
\begin{equation}\label{eq:rmtmeastrunc}
\widetilde{\mu}_{N,\beta}^{(M)}(dx)=\frac{e^{\beta \widetilde{X}_{N,M}(x)}}{\E e^{\beta \widetilde{X}_{N,M}(x)}}dx.
\end{equation}
\end{definition}

\begin{remark}
Our reasoning here is that if we pretended that all of the $\lambda_j$ are in the interval $(-1,1)$, we could make use of Lemma \ref{le:haagerup}. Then $X_N$ would coincide with the above expansion for $M=\infty$ and $\widetilde{T}_j$ replaced by $T_j$. Outside of the interval, we have to use $\widetilde{T}_k$ instead of $T_k$, as otherwise $\E e^{\beta \widetilde{X}_{N,M}(x)}$ might not exist for all values of $x$ and $M$.
\end{remark}

We will break our main statement down into parts now. The statement of our Theorem \ref{th:main} is equivalent to saying that for each bounded continuous $\varphi:(-1,1)\to [0,\infty)$, $\mu_{N,\beta}(\varphi):=\int_{-1}^1\varphi(x)\mu_{N,\beta}(dx)$ converges in distribution to $\mu_\beta(\varphi)$. It will actually be enough to assume that $\varphi$ has compact support in $(-1,1)$, i.e. to prove vague convergence. We will be more detailed about these statements in the actual proof in Section \ref{sec:mainproof}. The way we will prove vague convergence is to write
\begin{equation*}
\mu_{N,\beta}(\varphi)=[\mu_{N,\beta}(\varphi)-\widetilde{\mu}_{N,\beta}^{(M)}(\varphi)]+\widetilde{\mu}_{N,\beta}^{(M)}(\varphi).
\end{equation*}

By using standard central limit theorems for linear statistics of one-cut regular ensembles, and the definition of $\mu_\beta$, we will see that the second term here tends to $\mu_\beta(\varphi)$ in the limit where first $N\to\infty$, and then $M\to \infty$. Our main result will then follow from showing that the second moment of the first term tends to zero in the same limit. We formulate this as a proposition.

\begin{proposition}\label{prop:main}
If we first let $N\to\infty$ and then $M\to\infty$, then for $\beta\in(0,\sqrt{2})$ and each compactly supported continuous $\varphi:(-1,1)\to [0,\infty)$, $\widetilde \mu_{N,\beta}^{(M)}(\varphi)$ converges in distribution to $\mu_\beta(\varphi)$, and

\begin{equation}
\lim_{M\to\infty}\lim_{N\to\infty}\E |\mu_{N,\beta}(\varphi)-\widetilde{\mu}_{N,\beta}^{(M)}(\varphi)|^2=0.
\end{equation}

\end{proposition}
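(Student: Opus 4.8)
The plan is to split the analysis into the two claimed statements and treat them separately. For the convergence in distribution of $\widetilde\mu_{N,\beta}^{(M)}(\varphi)$, the key observation is that for fixed $M$, the vector $\left(\sum_{j=1}^N \widetilde T_k(\lambda_j)\right)_{k=1}^M$ is a vector of linear statistics of the one-cut regular ensemble with smooth (compactly supported, $C^\infty$) test functions. By the standard central limit theorem for linear statistics of one-cut regular unitary invariant ensembles (due to Johansson, and extended by Borot--Guionnet, Shcherbina, etc.), this vector converges jointly to a Gaussian vector; moreover the limiting covariance structure is exactly the one that makes $\widetilde X_{N,M}$ converge in law to $\mathcal{G}_M$ as a random element of the finite-dimensional space spanned by $T_1,\dots,T_M$. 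Since the map taking the coefficient vector to $\int_{-1}^1 \varphi(x) e^{\beta \widetilde X_{N,M}(x) - \log \E e^{\beta\widetilde X_{N,M}(x)}}\,dx$ is continuous (here one needs that the normalizing constants $\E e^{\beta \widetilde X_{N,M}(x)}$ converge to $e^{\frac{\beta^2}{2}\E\mathcal G_M(x)^2}$ uniformly in $x$, which follows from the same CLT together with control of exponential moments of linear statistics), the continuous mapping theorem gives $\widetilde\mu_{N,\beta}^{(M)}(\varphi)\to\mu_\beta^{(M)}(\varphi)$ in distribution as $N\to\infty$; and $\mu_\beta^{(M)}(\varphi)\to\mu_\beta(\varphi)$ a.s. (hence in distribution) as $M\to\infty$ by Lemma \ref{L:GMC}.

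For the $L^2$ estimate, I would expand the square:
\begin{equation*}
\E|\mu_{N,\beta}(\varphi)-\widetilde\mu_{N,\beta}^{(M)}(\varphi)|^2 = \iint \varphi(x)\varphi(y)\,\Big(\E[d\mu_{N,\beta}^x d\mu_{N,\beta}^y] - 2\E[d\mu_{N,\beta}^x d\widetilde\mu_{N,\beta}^{(M),y}] + \E[d\widetilde\mu_{N,\beta}^{(M),x}d\widetilde\mu_{N,\beta}^{(M),y}]\Big),
\end{equation*}
where the three cross-moments are, respectively, of the schematic form $\frac{\E[|P_N(x)|^\beta|P_N(y)|^\beta]}{\E|P_N(x)|^\beta\,\E|P_N(y)|^\beta}$, a mixed expectation $\frac{\E[|P_N(x)|^\beta e^{\beta\widetilde X_{N,M}(y)}]}{\E|P_N(x)|^\beta\,\E e^{\beta\widetilde X_{N,M}(y)}}$, and $\frac{\E[e^{\beta\widetilde X_{N,M}(x)}e^{\beta\widetilde X_{N,M}(y)}]}{\E e^{\beta\widetilde X_{N,M}(x)}\,\E e^{\beta\widetilde X_{N,M}(y)}}$. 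Each numerator is an expectation of the type $\E[e^{\mathrm{Tr}\,\mathcal T(H_N)}\prod_j |\det(H_N-x_j)|^{\beta_j}]$ flagged in the introduction, so by the Fisher--Hartwig / Hankel determinant asymptotics of Proposition \ref{prop:fh} one obtains precise large-$N$ asymptotics for all three, uniformly enough in $x,y$ away from the diagonal and from $\pm 1$. The expected outcome is that all three ratios converge, as $N\to\infty$, to $e^{\beta^2 \rho_\infty(x,y)}$-type quantities built from the relevant limiting covariances: the first gives $|2(x-y)|^{-\beta^2}$ (the covariance of $X$), the third gives $e^{\beta^2 \E\mathcal G_M(x)\mathcal G_M(y)}$, and the mixed term gives $e^{\beta^2 \E[\mathcal G(x)\mathcal G_M(y)]}$. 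One then checks that the resulting limiting double integral
\begin{equation*}
\iint \varphi(x)\varphi(y)\Big(e^{\beta^2 K(x,y)} - 2 e^{\beta^2 K_M(x,y)} + e^{\beta^2 K_{MM}(x,y)}\Big)\,dx\,dy
\end{equation*}
tends to $0$ as $M\to\infty$, since $K_M, K_{MM}\to K$ pointwise off the diagonal and the integrands are dominated by an integrable function — this is precisely where $\beta<\sqrt 2$ enters, guaranteeing $|2(x-y)|^{-\beta^2}$ is integrable near the diagonal so that no mass is lost.

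The main obstacle is the uniformity and integrability needed to pass to the limit in the double integrals. The Fisher--Hartwig asymptotics degenerate as $x\to y$ (two singularities merge) and as $x$ or $y\to\pm 1$ (a bulk singularity hits the hard edge), so Proposition \ref{prop:fh} alone gives pointwise asymptotics but one needs either uniform error bounds on compact subsets of $\{x\neq y\}\subset(-1,1)^2$ together with a separate, crude but integrable, a priori bound valid near the diagonal. For the near-diagonal region the natural strategy is to bound the second moment of $\mu_{N,\beta}$ and of $\widetilde\mu_{N,\beta}^{(M)}$ over a shrinking neighborhood of the diagonal by something like $C\iint_{|x-y|<\delta}|2(x-y)|^{-\beta^2}dx\,dy$ uniformly in $N$ and $M$ (using the two-point Fisher--Hartwig estimate with explicit control of the constant as the singularities merge, of the type established in \cite{cf}), and then let $\delta\to 0$ after $M\to\infty$. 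Handling the merging-singularity regime and the edge carefully, and assembling these pieces into a clean dominated-convergence argument, is the technical heart of the matter; the rest is bookkeeping with the CLT for linear statistics and with Lemma \ref{L:GMC}.
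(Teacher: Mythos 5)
Your overall strategy matches the paper's. The distributional convergence of $\widetilde\mu_{N,\beta}^{(M)}(\varphi)$ is obtained exactly as you describe (a CLT for the linear statistics $\mathrm{Tr}\,\widetilde T_k(H_N)$ — which the paper extracts from Proposition \ref{prop:fh} with all $\beta_j=0$, though it could equally cite Johansson or Borot--Guionnet — plus continuity of the relevant map and Lemma \ref{L:GMC}). The $L^2$ estimate is likewise handled by expanding the square, applying Proposition \ref{prop:fh} uniformly away from the diagonal for all three terms, and, for the pure term $\E[\mu_{N,\beta}(\varphi)^2]$, using the Claeys--Fahs merging-singularity asymptotics (Theorem \ref{th:cf}) in the regime $2N^{-\alpha}\le|x-y|<\epsilon$, supplemented by a Cauchy--Schwarz bound of order $N^{\beta^2/2}$ for $|x-y|\le 2N^{-\alpha}$ with $\alpha\in(\beta^2/2,1)$. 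One slip: the two-point ratio converges to $|2(x-y)|^{-\beta^2/2}$, not $|2(x-y)|^{-\beta^2}$, since $\E X(x)X(y)=-\tfrac12\log|2(x-y)|$; it is the exponent $\beta^2/2<1$ that encodes $\beta<\sqrt2$. (Also note the edge causes no trouble here because $\varphi$ is compactly supported in $(-1,1)$, so no separate edge analysis is needed in this step.)

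The one genuine gap is your final step. You propose to conclude by dominated convergence, letting $M\to\infty$ in $\iint\varphi(x)\varphi(y)\bigl(e^{\beta^2K}-2e^{\beta^2K_M}+e^{\beta^2K_{MM}}\bigr)dx\,dy$ while asserting that ``the integrands are dominated by an integrable function.'' You never exhibit such a function, and producing one is not routine: you would need a bound of the form $\sum_{k=1}^M\frac1kT_k(x)T_k(y)\le -\tfrac12\log|2(x-y)|+C$ uniformly in $M$, i.e.\ a uniform upper bound on the partial sums of the Chebyshev expansion in Lemma \ref{le:haagerup}; this is true (it reduces to uniform upper bounds on partial sums of $\sum_k\cos(k\alpha)/k$) but it is an extra harmonic-analysis input that must be proved. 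The paper sidesteps it entirely: since $\E|\mu_{N,\beta}(\varphi)-\widetilde\mu_{N,\beta}^{(M)}(\varphi)|^2\ge0$, the computed $N\to\infty$ limit forces $\limsup_{M}\iint\varphi(x)\varphi(y)\,e^{\beta^2 K_{MM}(x,y)}dx\,dy\le\iint\varphi(x)\varphi(y)(2|x-y|)^{-\beta^2/2}dx\,dy$, while Fatou's lemma together with Lemma \ref{le:haagerup} gives the matching $\liminf$ lower bound; the two squeeze the limit of the $L^2$ difference to zero with no dominating function needed. (The same positivity argument is what the paper uses to prove Lemma \ref{L:GMC} itself, so the $L^2$-boundedness you invoke is, in the paper's logic, downstream of these moment computations rather than independent of them.) Either prove the uniform partial-sum bound, or replace the dominated-convergence step with this positivity-plus-Fatou squeeze.
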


Proving the second statement takes up most of this article. Expanding the square, we see that what is critical is having uniform asymptotics for $\E e^{\beta X_N(x)}$, $\E e^{\beta \widetilde{X}_{N,M}(x)}$,  $\E e^{\beta (X_N(x)+X_N(y))}$, $\E e^{\beta (\widetilde{X}_{N,M}(x)+\widetilde{X}_{N,M}(y))}$, and $\E e^{\beta(X_N(x)+\widetilde{X}_{N,M}(y))}$. More precisely, we have:
\begin{align*}
\E |\mu_{N,\beta}(\varphi)-\widetilde{\mu}_{N,\beta}^{(M)}(\varphi)|^2 & = \iint \ph(x) \ph(y)  \frac{\E (e^{\beta X_N(x) + \beta X_N(y) })}{\E( e^{\beta X_N(x)})    \E( e^{\beta X_N(y)}) }dx dy\\
& \quad \quad - 2 \iint \ph(x) \ph(y)  \frac{\E (e^{\beta X_N(x) + \beta \widetilde X_{N,M}(y) })}{\E( e^{\beta X_N(x)})    \E( e^{\beta \widetilde X_{N,M}(y)}) }dx dy\\
& \quad  \quad + \iint \ph(x) \ph(y)  \frac{\E (e^{\beta \widetilde X_{N,M}(x) + \beta \widetilde X_{N,M}(y) })}{\E( e^{\beta \widetilde X_{N,M}(x)})    \E( e^{\beta \widetilde X_{N,M}(y)}) }dx dy .
\end{align*}
Each of these expectations here can be expressed as $\E \prod_{j=1}^Nh(\lambda_j)$ for a suitable function $h:\R\to \R$. For instance,
$$
e^{\beta X_N(x) + \beta \widetilde X_{N,M}(y) } = \prod_{j=1}^N |\lambda_j - x|^\beta
e^{  \cT(\lambda_j)} ; \text{ where } \cT(\lambda) = \cT(\lambda;y) = -\beta \sum_{k=1}^M \frac2k \widetilde T_k(\lambda) T_k(y).
$$
As we will recall in Section \ref{sec:hankel}, such quantities can be expressed in terms of Hankel determinants. Moreover, all of these Hankel determinants have a very specific type of symbol: one with so-called \textbf{Fisher--Hartwig} singularities. To explain what this means here, a Hankel matrix is a matrix in which the skew-diagonals are constant. They are closely related to Toeplitz matrices where the diagonals themselves are constant (these arise typically in the study of CUE and related random matrix ensembles rather than the GUE-type ensembles considered in this paper). In the case we will be interested in, the $(i,j)$th coefficient of the Hankel matrix will be of the form $\int_\R x^{i+j}h(x) e^{-NV(x)}dx$ where $h$ is as above. When $h$ is smooth enough and doesn't have any roots, then the asymptotic analysis of such determinants would follow from the classical strong Szeg\H{o} theorem (actually this theorem applies in the Toeplitz case rather than the Hankel case, but here this isn't a crucial distinction).   However in our situation $h$ typically contains at least one root of the form $|x-x_i|^{\beta_i}$, which greatly complicates the task of analysing the corresponding determinant. This type of behavior is an example of a Fisher--Hartwig singularity. (In general a Fisher--Hartwig singularity might also include a jump at $x_i$ corresponding to the symbol also having a term of the form  $e^{\gamma \mathrm{Im} \log (x- x_i)}$.)

The asymptotics of Hankel determinants with Fisher--Hartwig singularities is still very much a subject of active research, and much information is already available using the steepest descent technique due to Deift and Zhou \cite{dz}; see in particular the papers \cite{dik1,dik2,krasovsky,cf} which play an important role in our proof. Yet results in the generality we need seem to still be lacking in the literature. What suffices for us is the following result (which we will only use with $k=1$ or $k=2$, but since there is no added difficulty in proving it for a general value of $k$ we will do so).

\begin{proposition}\label{prop:fh}
Let $\mathcal{T}\in C^\infty(\R)$ be real analytic in some neighborhood of $[-1,1]$ and  have compact support. Let $k\in \Z_+$ be fixed, and let $\beta_1,...,\beta_k\in[0,\infty)$ be fixed. Moreover, let $x_1,...,x_k\in(-1,1)$ be distinct. Finally let $H_N$ be a $N\times N$ random Hermitian matrix drawn from a one-cut regular unitary invariant ensemble with potential $V$. Then for $C(\beta)=2^{\frac{\beta^2}{2}}\frac{G(1+\beta/2)^2}{G(1+\beta)}$, where $G$ is the Barnes $G$ function, we have as $N \to \infty$,
\begin{align}\label{eq:fh}
\E &\left[e^{\sum_{j=1}^N \mathcal{T}(\lambda_j)}\prod_{i=1}^k |\det(H_N-x_i)|^{\beta_i}\right]\\
\notag &=\prod_{j=1}^k C(\beta_j)\left(d(x_j)\frac{\pi}{2}\sqrt{1-x_j^2}\right)^{\frac{\beta_j^2}{4}}\left(\frac{N}{2}\right)^{\frac{\beta_j^2}{4}}e^{(V(x_j)+\ell_V)\frac{\beta_j}{2}N}\prod_{1\le i<j\le k}|2(x_i-x_j)|^{-\frac{\beta_i\beta_j}{2}}\\
\notag &\quad \times e^{N\int_{-1}^1 \Tree(x)d(x)\sqrt{1-x^2}dx+\sum_{j=1}^k \frac{\beta_j}{2}\left[\int_{-1}^1 \frac{\Tree(x)}{\pi \sqrt{1-x^2}}dx-\Tree(x_j)\right]}\\
\notag &\quad \times e^{\frac{1}{4\pi^2}\int_{-1}^1 dy\frac{\Tree(y)}{\sqrt{1-y^2}}P.V.\int_{-1}^1 \frac{\Tree'(x)\sqrt{1-x^2}}{y-x}dx}(1+o(1))
\end{align}

\noindent uniformly on compact subsets of $\lbrace (x_1,...,x_k)\in(-1,1)^k: x_i\neq x_j \ \mathrm{for} \ i\neq j\rbrace$. Here $P.V.\int$ denotes the Cauchy principal value integral. Moreover, if there exists a fixed $M\in \Z_+$, such that in some fixed neighborhood of $[-1,1]$, $\mathcal{T}(x)=\sum_{j=1}^M \alpha_j T_j(x)$, then the above asymptotics are uniform also in compact subsets of $\lbrace (\alpha_1,...,\alpha_M)\in \R^M\rbrace$.
\end{proposition}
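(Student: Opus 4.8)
The plan is to express the quantity on the left of \eqref{eq:fh} as a ratio of Hankel determinants, deform the Fisher--Hartwig symbol along a one-parameter family, and integrate the differential identity of \cite{dik2} (recalled in Section \ref{sec:hankel}), whose integrand we control by a Deift--Zhou steepest descent analysis of the associated Riemann--Hilbert problem. Concretely, the Andr\'eief (Heine) identity gives $\E\big[\prod_{j=1}^N h(\lambda_j)\big]=D_N(h\,e^{-NV})/D_N(e^{-NV})$, where $D_N(w)=\det\big(\int_\R x^{i+j}w(x)\,dx\big)_{i,j=0}^{N-1}$ and $Z_N(V)=N!\,D_N(e^{-NV})$. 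Taking $h(x)=e^{\mathcal{T}(x)}\prod_{i=1}^k|x-x_i|^{\beta_i}$ reduces the claim to the asymptotics of $\log D_N(w_1)-\log D_N(w_0)$ for the interpolating weights $w_t(x)=e^{-NV(x)}\,e^{t\mathcal{T}(x)}\prod_{i=1}^k|x-x_i|^{t\beta_i}$, $t\in[0,1]$, so that $w_1$ carries $k$ root-type Fisher--Hartwig singularities in the bulk of $[-1,1]$ together with the analytic factor $e^{\mathcal{T}}$; since $\mathcal{T}$ is real analytic near $[-1,1]$ and compactly supported, and the Euler--Lagrange inequality \eqref{eq:el2} is strict, the behaviour of $\mathcal{T}$ away from $[-1,1]$ affects only exponentially small terms. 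The differential identity then expresses $\frac{d}{dt}\log D_N(w_t)$ in terms of the solution $Y=Y^{(t)}$ of the Fokas--Its--Kitaev Riemann--Hilbert problem for the monic orthogonal polynomials with respect to $w_t$, in particular through its behaviour near $x_1,\dots,x_k$.

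The core of the argument is the steepest descent analysis of the Riemann--Hilbert problem for $Y^{(t)}$, carried out closely following \cite{krasovsky,dik2}, with details deferred to the appendices. The $g$-function is built from the equilibrium measure $\mu_V$, which is unchanged because $e^{t\mathcal{T}}$ and $|x-x_i|^{t\beta_i}$ are $O(1)$ perturbations of the $O(N)$ factor $e^{-NV}$ (equivalently, the effective potential $V-t\mathcal{T}/N$ tends to $V$). After the standard normalization and opening of lenses one obtains an RHP with jumps of the form $I+O(1/N)$ away from $\pm1$ and the points $x_i$; the global parametrix is built from the Szeg\H{o} function of the symbol $e^{t\mathcal{T}}\prod_i|\cdot-x_i|^{t\beta_i}$ relative to $\mu_V$; the soft edges $\pm1$ are handled by the usual Airy parametrices; and at each $x_i$ we insert the root-type Fisher--Hartwig local parametrix built from Bessel functions. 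Matching on the boundary circles yields a final small-norm RHP with solution $R^{(t)}=I+O(1/N)$, and the error is uniform over $t\in[0,1]$, over $(x_1,\dots,x_k)$ ranging in a compact subset of $\{x_i\neq x_j\}$ (so that the local disks can be taken of a fixed, mutually disjoint radius bounded away from $\pm1$), and over $(\alpha_1,\dots,\alpha_M)$ in a compact set in the case $\mathcal{T}=\sum_{j=1}^M\alpha_jT_j$ near $[-1,1]$.

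Substituting these asymptotics into the differential identity and integrating in $t$ from $0$ to $1$ reconstructs the right-hand side of \eqref{eq:fh}. The $N$-order exponentials $e^{N\int_{-1}^1\mathcal{T}\,d\mu_V}$ and $\prod_j e^{(V(x_j)+\ell_V)\frac{\beta_j}{2}N}$ come from the $g$-function together with \eqref{eq:el1} (the insertion of $|x-x_j|^{\beta_j}$ shifts $\log D_N$ at leading order by $N\beta_j\int\log|x_j-y|\,\mu_V(dy)=N\frac{\beta_j}{2}(V(x_j)+\ell_V)$). The $O(1)$ factors are produced by the global and local parametrices and the $t$-integration: the global Szeg\H{o}-type parametrix yields the pairwise interactions $\prod_{i<j}|2(x_i-x_j)|^{-\beta_i\beta_j/2}$, the cross terms $\exp\big(\sum_j\frac{\beta_j}{2}\big[\int_{-1}^1\frac{\mathcal{T}(x)}{\pi\sqrt{1-x^2}}\,dx-\mathcal{T}(x_j)\big]\big)$, and the double principal-value integral (which is one half the limiting variance of the linear statistic $\sum_j\mathcal{T}(\lambda_j)$ in the one-cut regular ensemble); the Bessel-type local parametrix at $x_j$ yields the power $\big(d(x_j)\frac{\pi}{2}\sqrt{1-x_j^2}\cdot\frac{N}{2}\big)^{\beta_j^2/4}$, and the elementary $t$-integral of its contribution produces the Barnes $G$-function constant $C(\beta_j)$ via the standard integral representation of $\log G$.

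I expect the main obstacle to be precisely this last bookkeeping: propagating all the explicit constants simultaneously through the global Szeg\H{o}-type parametrix and the Bessel-type local parametrices at the $x_i$, verifying that the $t$-integrations reproduce exactly $C(\beta_j)$, the exponents $\beta_j^2/4$, and the cross terms, and --- most importantly --- keeping every error estimate uniform as the $x_i$ approach one another or the edge $\pm1$ and as $\mathcal{T}$ varies over a compact family. This uniformity is exactly what forces the hypotheses to a compact set of separated $x_i$ and bounded Chebyshev coefficients: the right-hand side of \eqref{eq:fh} itself blows up like $|x_i-x_j|^{-\beta_i\beta_j/2}$ as $x_i\to x_j$, so the local disks in the parametrix construction must shrink and the uniformity genuinely fails without the separation built into the statement.
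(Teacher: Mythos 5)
Your overall toolbox (Heine/Andr\'eief identity, a one-parameter deformation of the symbol, the differential identity of \cite{dik2}, and a Deift--Zhou analysis with Szeg\H{o}-type, Airy and Bessel-type parametrices) matches the paper's, but the deformation you chose is not the one the paper uses, and the difference is where your argument has a genuine gap. You interpolate the Fisher--Hartwig exponents themselves, $w_t=e^{-NV}e^{t\mathcal{T}}\prod_i|x-x_i|^{t\beta_i}$, starting from the pure weight at $t=0$; consequently \emph{every} constant in \eqref{eq:fh} --- in particular $C(\beta_j)$, the prefactor $\bigl(d(x_j)\tfrac{\pi}{2}\sqrt{1-x_j^2}\cdot\tfrac{N}{2}\bigr)^{\beta_j^2/4}$ and the interactions $|2(x_i-x_j)|^{-\beta_i\beta_j/2}$ --- must be manufactured by the $t$-integration. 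Two concrete problems follow. First, $\partial_t\log w_t$ contains $\sum_i\beta_i\log|x-x_i|$, so the integrand of the differential identity has logarithmic singularities at the $x_i$ and must be evaluated there through the \emph{local} (Bessel/confluent hypergeometric) parametrices, not merely the global one; the resulting $t$-integral that is supposed to yield $\log G$ is precisely the delicate computation carried out in \cite{dik1} and \cite{krasovsky}, and declaring it ``elementary bookkeeping'' leaves the hardest constant underived. Second, with all exponents equal to $t\beta_i$ you must control the local parametrices uniformly in $t$, including near $t=0$ where the singularity degenerates.

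The paper circumvents exactly this by never deforming the Fisher--Hartwig data. It uses two separate interpolations: Lemma \ref{le:di} deforms only the smooth factor via $f_t=[1-t+te^{\mathcal{T}}]\prod_j|\lambda-x_j|^{\beta_j}$, so that $\partial_tf_t/f_t=\partial_t\mathcal{T}_t$ is analytic near $[-1,1]$ and the integration contour can be pushed onto $\tau_\pm$ away from the singularities (Lemma \ref{le:di1global}, Proposition \ref{prop:di1int}); and Lemma \ref{le:di2} deforms the potential $V_s=(1-s)2x^2+sV$ at $\mathcal{T}=0$ (Proposition \ref{prop:di2int}). The base point of the whole scheme is Krasovsky's Theorem \ref{th:krasovsky} for the GUE, which is where $C(\beta_j)$, $N^{\beta_j^2/4}$ and $\prod_{i<j}|2(x_i-x_j)|^{-\beta_i\beta_j/2}$ actually come from; the paper's own Riemann--Hilbert work only produces the $\mathcal{T}$-dependent factors and the change of $(1-x_j^2)^{\beta_j^2/8}$ into $\bigl(d(x_j)\tfrac{\pi}{2}\sqrt{1-x_j^2}\bigr)^{\beta_j^2/4}$ under the change of potential. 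To make your route rigorous you would in effect have to reprove Krasovsky's theorem for general $V$ along the way; otherwise, restructure the proof as two exponent-preserving deformations anchored at the known GUE asymptotics. (Your final paragraph's worry about uniformity as $x_i\to x_j$ is beside the point for this proposition, which only claims uniformity on compact subsets of the configuration space with $x_i\neq x_j$; the merging regime is handled separately in the paper via the Claeys--Fahs result.)
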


\begin{remark}\label{rem:ff}
As mentioned in the introduction, this settles some conjectures due to Forrester and Frankel -- see \cite[Conjecture 5 and Conjecture 8]{ff} for more details. In terms of the potential $V$, we actually improve on the conjectures as these are only stated for polynomial $V$, but concerning the functions $\Tree$, our results are not as general as those appearing in the conjectures of Forrester and Frankel. This being said, one could easily relax some of our regularity assumptions on $\Tree$. In fact, the compact support or smoothness outside of a neighborhood of the interval $[-1,1]$ play essentially no role in our proof, but as this is a simple and clear way of stating the result, we do not attempt to state things in their greatest generality. Moreover, using techniques from \cite{dik2}, one could attempt to generalize our estimates and prove a corresponding result when $\mathcal{T}$ is less smooth also on $[-1,1]$. Again, this is not necessary for our main goal, so we don't pursue this further.

 We also mention that after the first version of this article appeared, Charlier $($in \cite{charlier}$)$ proved an extension of this result to the case where the symbol can also have jump-type singularities.

\end{remark}

We prove our results through Riemann--Hilbert methods. In particular, we first show that with a suitable differential identity, and some analysis of a Riemann--Hilbert problem, we can relate the $\mathcal{T}=0$ case to the $\mathcal{T}\neq 0$ case.  Then with another differential identity (and further analysis of another Riemann--Hilbert problem) we relate the $\Tree=0$, general $V$ -case to the GUE with $\Tree=0$. The asymptotics in the $\Tree=0$ case for the GUE have been obtained by Krasovsky \cite{krasovsky}. Using these, we are able to prove Proposition \ref{prop:fh}.

As we will need uniform asymptotics for $\E e^{\beta X_N(x)+\beta X_N(y)}$ and other terms, Proposition \ref{prop:fh} is not quite enough for us. For uniform estimates, we will rely on a recent result of Claeys and Fahs \cite{cf}, which combined with Proposition \ref{prop:fh} will let us prove Proposition \ref{prop:main}.

Next we review the connection between expectations of the form \eqref{eq:fh}, Hankel determinants, and Riemann--Hilbert problems.

\section{Hankel determinants and  Riemann--Hilbert problems}\label{sec:hankel}

In this section, we recall how the expectations we are interested in can be written as Hankel determinants, which are related to orthogonal polynomials, which in turn can be encoded into a Riemann--Hilbert problem. We also recall certain differential identities we will need for analyzing the expectations we are interested in. While our discussion is very similar to that in e.g. \cite{dik1,dik2}, there are some minor differences as we are dealing with Hankel determinants instead of Toeplitz ones. We choose to give some details for the convenience of a reader with limited experience with Riemann-Hilbert problems.

\subsection{Hankel determinants and orthogonal polynomials}

Terms of the form $\E\prod_{j=1}^N f(\lambda_j)$ can be written in determinantal form due to Andreief's identity -- for a proof, one can use e.g. \cite[Lemma 3.2.3]{agz} with the functions $f_i(x)=f(x)e^{-NV(x)}x^{i-1}$ and $g_i(x)=x^{i-1}$ as well as the product representation of the Vandermonde determinant.

\begin{lemma}\label{le:andr}
Let $f:\R\to \R$ be a nice enough function {\rm{(}}measurable and nice enough decay that all the relevant integrals converge absolutely{\rm{)}}. Then
\begin{equation}\label{eq:andr}
\E \prod_{j=1}^N f(\lambda_j)=\frac{N!}{Z_N(V)}\det\left(\int_\R x^{i+j}f(x) e^{-NV(x)}dx\right)_{i,j=0}^{N-1}.
\end{equation}
where $Z_N(V)$ is as in \eqref{eq:evlaw}. 
\end{lemma}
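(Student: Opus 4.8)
The plan is to unwind the expectation directly from the joint eigenvalue density \eqref{eq:evlaw} and to recognize the resulting multiple integral as an instance of Andreief's identity. Writing out the expectation,
\begin{equation*}
\E \prod_{j=1}^N f(\lambda_j)=\frac{1}{Z_N(V)}\int_{\R^N}\prod_{j=1}^N f(\lambda_j)\prod_{i<j}|\lambda_i-\lambda_j|^2\prod_{j=1}^N e^{-NV(\lambda_j)}\,d\lambda_j,
\end{equation*}
and invoking the Vandermonde product representation $\prod_{i<j}(\lambda_j-\lambda_i)=\det(\lambda_i^{j-1})_{i,j=1}^N$, so that $\prod_{i<j}|\lambda_i-\lambda_j|^2=\big[\det(\lambda_i^{j-1})_{i,j=1}^N\big]^2$, the integrand becomes a product of two $N\times N$ determinants integrated against the product measure $\prod_{j=1}^N f(\lambda_j)e^{-NV(\lambda_j)}\,d\lambda_j$.

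Next I would apply Andreief's identity (equivalently, \cite[Lemma 3.2.3]{agz}): for a possibly signed measure $d\mu$ and families of functions $(\phi_i)$, $(\psi_i)$,
\begin{equation*}
\int_{\R^N}\det(\phi_i(x_j))_{i,j=1}^N\,\det(\psi_i(x_j))_{i,j=1}^N\,\prod_{j=1}^N d\mu(x_j)=N!\,\det\left(\int_\R \phi_i(x)\psi_j(x)\,d\mu(x)\right)_{i,j=1}^N.
\end{equation*}
Taking $d\mu(x)=f(x)e^{-NV(x)}\,dx$ and $\phi_i(x)=\psi_i(x)=x^{i-1}$ (or, equivalently and to match the hint in the excerpt, absorbing the weight asymmetrically via $f_i(x)=f(x)e^{-NV(x)}x^{i-1}$, $g_i(x)=x^{i-1}$ against Lebesgue measure) turns the right-hand side into $N!\,\det\big(\int_\R x^{i+j-2}f(x)e^{-NV(x)}\,dx\big)_{i,j=1}^N$; re-indexing $i,j$ to run from $0$ to $N-1$ and dividing by $Z_N(V)$ yields exactly \eqref{eq:andr}. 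I would note that Andreief's identity itself is proved in one line by expanding both determinants with the Leibniz formula, interchanging the two finite sums over permutations with the integral, and collapsing the double sum into $N!$ times a single determinant.

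The only point requiring any care — and the reason for the hypothesis that $f$ have "nice enough decay" — is justifying the interchange of finite summation and integration in the proof of Andreief's identity, i.e. the absolute convergence of $\int_\R |x|^{i+j-2}|f(x)|e^{-NV(x)}\,dx$ for all $0\le i,j\le N-1$. Since $\lim_{|x|\to\infty}V(x)/\log|x|=\infty$, the (faster-than-polynomial) decay of $e^{-NV(x)}$ dominates the polynomial weights provided $f$ is measurable and, say, polynomially bounded; no positivity of $f$ is needed, as the identity is purely algebraic. Thus there is no substantive obstacle: the statement is a bookkeeping exercise once \eqref{eq:evlaw} and the Vandermonde identity are in hand.
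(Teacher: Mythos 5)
Your proof is correct and follows exactly the route the paper indicates: expand the expectation via the eigenvalue density \eqref{eq:evlaw}, square the Vandermonde determinant, and apply Andreief's identity with $f_i(x)=f(x)e^{-NV(x)}x^{i-1}$, $g_i(x)=x^{i-1}$. The indexing and the remark on absolute convergence are both handled properly, so there is nothing to add.
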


Let us introduce some notation for the Hankel determinant here.

\begin{definition}
For nice enough functions $f:\R\to \R$, {\rm{(}}so that the integrals exist{\rm{)}} let

\begin{equation}\label{eq:det}
D_k(f)=D_k(f;V)=\det\left(\int_\R x^{i+j}f(x) e^{-NV(x)}dx\right)_{i,j=0}^{k}.
\end{equation}
\end{definition}

As the notation suggests, we will suppress the dependence on $V$ when it's convenient. We suppress the dependence on $N$ always.

It is a well known result in the theory of orthogonal polynomials, that such determinants can be written in terms of orthogonal polynomials. For the convenience of the reader, we offer a proof for the following result.

\begin{lemma}\label{le:poly}
Let $f:\R\to\R$ be positive Lebesgue almost everywhere, have nice enough regularity and growth at infinity, and let $(p_j(x;f,V))_{j=0}^\infty$ be the sequence of real polynomials which have a positive leading order coefficient and which are orthonormal with respect to the measure $f(x)e^{-NV(x)}dx$ on $\R$ {\rm{(}}we will write $p_j(x;f)$ when we wish to suppress the dependence on $V$ and we will always suppress the dependence on $N${\rm{)}}:

\begin{equation}\label{eq:ortho}
\int_\R p_j(x;f)p_k(x;f) f(x)e^{-NV(x)}dx=\delta_{j,k},
\end{equation}

\noindent and $p_j(x;f)=\chi_j(f)x^j+\mathcal{O}(x^{j-1})$ as $x\to\infty$, where $\chi_j(f)>0$. Then

\begin{equation}\label{eq:dchi}
D_k(f)=\prod_{j=0}^k \chi_j(f)^{-2}.
\end{equation}
\end{lemma}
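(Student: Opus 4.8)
The plan is to recognize $D_k(f)$ as the determinant of the Gram matrix of the monomials $1,x,\dots,x^k$ in $L^2(d\mu)$, where $d\mu(x)=f(x)e^{-NV(x)}\,dx$, and then to diagonalize this Gram matrix using the orthogonal polynomials. Write $m_i=\int_\R x^i\,d\mu(x)$ for the moments, which are finite by the assumed growth of $V$ and regularity of $f$. Then, by definition, $D_k(f)=\det(m_{i+j})_{i,j=0}^{k}=\det G$, where $G=(G_{ij})_{i,j=0}^{k}$ with $G_{ij}=\int_\R x^i x^j\,d\mu(x)$.

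Next, I would introduce the monic orthogonal polynomials. Since $f>0$ Lebesgue almost everywhere, the bilinear form $\langle g,h\rangle=\int_\R gh\,d\mu$ is positive definite on polynomials, so Gram--Schmidt applied to $1,x,x^2,\dots$ produces monic polynomials $\pi_j(x)=x^j+\sum_{\ell=0}^{j-1}a_{j\ell}x^\ell$, orthogonal with respect to $d\mu$, with $\|\pi_j\|^2=:h_j>0$. These are related to the orthonormal polynomials of the statement by $p_j(x;f)=h_j^{-1/2}\pi_j(x)$, so comparing leading coefficients gives $\chi_j(f)=h_j^{-1/2}$, i.e. $h_j=\chi_j(f)^{-2}$.

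The key step is then a single determinant identity. Let $A=(A_{j\ell})_{j,\ell=0}^{k}$ be the coefficient matrix of the $\pi_j$, so $A_{j\ell}=a_{j\ell}$ for $\ell<j$, $A_{jj}=1$, and $A_{j\ell}=0$ for $\ell>j$; this is lower triangular with unit diagonal, hence $\det A=1$. Expanding $\langle\pi_i,\pi_j\rangle$ in monomials gives $\langle\pi_i,\pi_j\rangle=\sum_{\ell,m}A_{i\ell}A_{jm}G_{\ell m}=(AGA^{T})_{ij}$, and orthogonality of the $\pi_j$ says exactly that $AGA^{T}=\mathrm{diag}(h_0,\dots,h_k)$. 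Taking determinants, $(\det A)^2\det G=\prod_{j=0}^{k}h_j$, whence $D_k(f)=\det G=\prod_{j=0}^{k}h_j=\prod_{j=0}^{k}\chi_j(f)^{-2}$, as claimed.

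There is essentially no analytic obstacle here: the only points requiring care are the existence of the orthonormal system (guaranteed by positivity of $f$ a.e.\ together with finiteness of all moments, which follows from $\lim_{x\to\pm\infty}V(x)/\log|x|=\infty$ and the regularity of $f$) and the bookkeeping of the triangular change-of-basis matrix $A$. An equivalent alternative route is the classical determinantal formula expressing $\pi_n$ through a bordered Hankel determinant, which yields $\chi_n^2=D_{n-1}/D_n$ and then the telescoping product $\prod_{j=0}^{k}\chi_j^{-2}=D_k/D_{-1}=D_k$ with the convention $D_{-1}=1$; the Gram-matrix argument above has the advantage of not requiring these determinants to be written out explicitly.
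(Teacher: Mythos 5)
Your proof is correct, and it takes a mildly different route from the paper's. The paper quotes the bordered Hankel determinant representation of the Gram--Schmidt polynomials (its formula \eqref{eq:gramdet}), reads off from it that $\chi_j(f)=\sqrt{D_{j-1}(f)/D_j(f)}$, and telescopes the product with the convention $D_{-1}(f)=1$ --- exactly the ``alternative route'' you sketch in your closing paragraph. Your primary argument instead diagonalizes the Gram matrix $G$ of the monomials by the unit lower triangular change-of-basis matrix $A$, so that $AGA^{T}=\mathrm{diag}(h_0,\dots,h_k)$ and $\det G=\prod_{j}h_j=\prod_j\chi_j(f)^{-2}$. The two proofs are two presentations of the same $LDL^{T}$ factorization of the Hankel matrix, but yours is the more self-contained: it never needs the explicit determinantal formula for $p_j$, and it makes transparent that the only hypotheses actually used are positive definiteness of the bilinear form on polynomials (from $f>0$ a.e.) and finiteness of all moments (from the superlogarithmic growth of $V$). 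What the paper's route buys is the intermediate identity $\chi_j(f)^2=D_{j-1}(f)/D_j(f)$ stated explicitly along the way, together with the existence of the orthogonal polynomials via an explicit formula; but the ratio identity also follows at once from your product formula by comparing consecutive $k$, so nothing essential is lost.
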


Note that due to our assumptions on $f$, the above polynomials do exist as we can construct them by applying the \emph{determinantal representation} associated with the Gram--Schmidt procedure to the monomials.

\begin{proof}
Consider the space of real polynomials, equipped with  an inner product given by the $L^2$ inner product on $\R$ with weight $f(x)e^{-NV(x)}$. A consequence of the Gram--Schmidt procedure applied to the sequence of monomials in this inner product space is the following: 
 for $j\geq1$ 
\begin{equation}\label{eq:gramdet}
p_{j}(x;f)=\frac{1}{\sqrt{D_{j-1}(f)D_j(f)}}\begin{vmatrix}
\int f(y)e^{-NV(y)}dy & \cdots & \int y^{j} f(y)e^{-NV(y)}dy\\
\vdots & \ddots & \vdots\\
\int y^{j-1}f(y)e^{-NV(y)}dy & \cdots & \int y^{2j-1}f(y)e^{-NV(y)}dy\\
1 & \cdots & x^{j}
\end{vmatrix}.
\end{equation}
where for $j=0$ the determinant is replaced by $1$, and  $D_{-1}(f)=1$.

Note that from our assumption on $f$ and an easy generalization of Lemma \ref{le:andr}, $D_j(f)>0$ for all $j\geq 0$, so these polynomials exist. From \eqref{eq:gramdet} one sees that $\chi_j(f)$ -- the coefficient of $x^j$ in $p_j(x;f)$ --  equals $\sqrt{D_{j-1}(f)/D_j(f)}$. The claim then follows as the product has a telescopic form, and we defined $D_{-1}(f)=1$.
\end{proof}

\subsection{Riemann-Hilbert problems and orthogonal polynomials}

We now recall a result going back to Fokas, Its, and Kitaev \cite{fik} about encoding orthogonal polynomials on the real line into a Riemann-Hilbert problem. In our setting, the relevant result is formulated in the following way.

\begin{proposition}[Fokas, Its, and Kitaev]\label{prop:yrhp}
Let $\mathcal{T}$ be a real valued $C^\infty(\R)$ function with compact support, let $(\beta_j)_{j=1}^k\in [0,\infty)^k$ , $(x_j)_{j=1}^k\in(-1,1)^k$, and $x_i\neq x_j$ for $i\neq j$. Let $V$ be some real analytic function on $\R$ satisfying $\lim_{x\to\pm \infty} V(x)/\log |x|=\infty$. For $\lambda\in \R$, define

\begin{equation}\label{eq:ffh}
f(\lambda)=e^{\mathcal{T}(\lambda)}\prod_{j=1}^k\left|\lambda-x_j\right|^{\beta_j},
\end{equation}

\noindent and let $p_j(x;f)$ be as in Lemma \ref{le:poly}, with the relevant measure being $f(\lambda) e^{-NV(\lambda)}d\lambda$ on $\R$. Consider the $2\times 2$ matrix-valued function

\begin{align}\label{eq:Y_RHP}
Y(z)&=Y_{j}(z;f,V)=\begin{pmatrix}
\frac{1}{\chi_j(f)}p_j(z;f) & \frac{1}{\chi_j(f)}\int_\R\frac{p_j(\lambda;f)}{\lambda-z}\frac{f(\lambda)e^{-NV(\lambda)}d\lambda}{2\pi i}\\
-2\pi i \chi_{j-1}(f)p_{j-1}(z;f) & -\chi_{j-1}(f)\int_\R \frac{p_{j-1}(\lambda;f)}{\lambda-z}f(\lambda)e^{-NV(\lambda)}d\lambda
\end{pmatrix},
\end{align}

\noindent for $z\in \C\setminus \R$. Then $Y$ is the unique solution to the following Riemann-Hilbert problem: find a function $Y:\C\setminus \R\to \C^{2\times 2}$ such that

\begin{itemize}[leftmargin=0.5cm]
\item[1.] $Y$ is analytic.
\item[2.] On $\R$, $Y$ has continuous boundary values $Y_\pm$, i.e. $Y_\pm(\lambda)=\lim_{\epsilon\to 0^+}Y(\lambda\pm i\epsilon)$ exists and is continuous for all $\lambda\in\R$. Moreover, $Y_\pm$ are related by the jump condition

\begin{equation}\label{eq:Yjump}
Y_+(\lambda)=Y_-(\lambda) \begin{pmatrix}
1 & f(\lambda) e^{-NV(\lambda)}\\
0 & 1
\end{pmatrix},  \qquad \lambda\in \R.
\end{equation}

\item[3.] As $z\to\infty$,

\begin{equation}\label{eq:Ynorm}
Y(z)=(I+\mathcal{O}(z^{-1}))\begin{pmatrix}
z^j & 0\\
0 & z^{-j}
\end{pmatrix}.
\end{equation}
\end{itemize}
\end{proposition}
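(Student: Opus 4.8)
The plan is to recognize this as the classical Fokas--Its--Kitaev correspondence and to argue in two stages: first verify that the explicit matrix $Y$ in \eqref{eq:Y_RHP} solves the stated Riemann--Hilbert problem, and then show the problem has at most one solution. Throughout, write $w(\lambda)=f(\lambda)e^{-NV(\lambda)}$ for the weight. The growth hypothesis $V(x)/\log|x|\to\infty$ together with $f$ being continuous of at most polynomial growth guarantees $w\in L^1(\R)$ with all polynomial moments finite, so every integral in \eqref{eq:Y_RHP} converges absolutely and each Cauchy transform defines a function analytic on $\C\setminus\R$; since the $p_j(\cdot;f)$ are entire, condition~1 is immediate.

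For condition~2 I would invoke the Sokhotski--Plemelj formula: for a H\"older-continuous integrable density $\phi$, the transform $z\mapsto\frac{1}{2\pi i}\int_\R\frac{\phi(\lambda)}{\lambda-z}\d\lambda$ has continuous boundary values from above and below whose difference (plus minus minus) is exactly $\phi$. Applying this to $\phi=\frac{1}{\chi_j(f)}p_j(\cdot;f)w$ in the $(1,2)$ entry and to $\phi=-2\pi i\chi_{j-1}(f)p_{j-1}(\cdot;f)w$ in the $(2,2)$ entry (the factor $2\pi i$ reconciling the two normalizations of the transforms), and using that the first column of $Y$ is entire, one reads off precisely \eqref{eq:Yjump}, because $(Y_-)_{11}=\frac{1}{\chi_j(f)}p_j$ and $(Y_-)_{21}=-2\pi i\chi_{j-1}(f)p_{j-1}$ are exactly the multipliers of $w$ in $Y_- \left(\begin{smallmatrix}1 & w\\0&1\end{smallmatrix}\right)-Y_-$. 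The only regularity point to check is that these boundary values stay continuous across the Fisher--Hartwig points $x_i$; this holds since $w$ there either vanishes (when $\beta_i>0$) or is smooth (when $\beta_i=0$), so $\phi$ is at worst H\"older continuous at $x_i$.

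The substance of the verification is condition~3, and this is where the orthogonality \eqref{eq:ortho} enters. Expanding $\frac{1}{\lambda-z}=-\sum_{m\ge0}\lambda^m z^{-m-1}$ for large $|z|$ and integrating termwise, the Cauchy transform of $p_\ell(\cdot;f)w$ has a $1/z$-expansion whose coefficients are the moments $\int_\R\lambda^m p_\ell(\lambda;f)w(\lambda)\d\lambda$; by orthogonality these vanish for $m<\ell$ and the first nonzero one ($m=\ell$) equals $1/\chi_\ell(f)$. Substituting this into \eqref{eq:Y_RHP} and multiplying on the right by $\mathrm{diag}(z^{-j},z^{j})$, one checks entrywise that $Y(z)\,\mathrm{diag}(z^{-j},z^{j})=I+\mathcal O(z^{-1})$: the $(1,1)$ entry is $p_j/(\chi_j z^j)=1+\mathcal O(1/z)$; the $(2,1)$ entry is a degree-$(j-1)$ polynomial divided by $z^j$, hence $\mathcal O(1/z)$; the $(1,2)$ entry begins at order $z^{-(j+1)}$ and is multiplied by $z^j$; and the $(2,2)$ entry yields exactly $1$ at leading order once $\int\lambda^{j-1}p_{j-1}w=1/\chi_{j-1}$ is used, with an $\mathcal O(1/z)$ remainder.

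For uniqueness I would run the standard Liouville argument. The jump matrix in \eqref{eq:Yjump} has determinant $1$, so $\det Y$ has no jump across $\R$; being continuous up to $\R$ and analytic off it, it extends to an entire function, and by \eqref{eq:Ynorm} it tends to $1$ at infinity, whence $\det Y\equiv1$ and $Y$ is invertible on all of $\C$. If $\widetilde Y$ is another solution, then $R:=\widetilde Y Y^{-1}$ has no jump across $\R$ (the jump matrices cancel on the right), so $R$ extends to an entire function with $R(z)\to I$ at infinity by \eqref{eq:Ynorm}, and Liouville gives $R\equiv I$, i.e. $\widetilde Y=Y$. I expect the only genuinely delicate point to be the regularity bookkeeping in condition~2 --- that the Cauchy transforms extend continuously to $\R$, including across the points $x_i$ carrying the factor $|\lambda-x_i|^{\beta_i}$ --- since everything else is either an exact computation or a soft complex-analytic argument; in the range $\beta_i\ge0$ this causes no real difficulty, but it is precisely where the hypotheses on $f$ are used.
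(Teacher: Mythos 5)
Your proposal is correct and follows essentially the same route as the paper's sketch: Sokhotski--Plemelj for the boundary values and the jump condition, orthogonality of the $p_j$ for the asymptotics at infinity, and the $\det Y\equiv 1$ plus Liouville argument for uniqueness. The only difference is that you write out the jump and normalization computations that the paper defers to Deift's lecture notes, and these are carried out correctly.
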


\begin{remark}
Typically for Riemann-Hilbert problems related to Toeplitz and Hankel determinants with Fisher-Hartwig singularities {\rm{(}}e.g. \cite{dik1,dik2,cf}{\rm{)}} one says that the boundary values are continuous on the relevant contour minus the singularities $x_j$, and then imposes conditions on the behavior of $Y$ near the singularities. This is relevant when one of the $\beta_j$ is negative or non-real, but as we will shortly mention, in our case the boundary values are truly continuous on $\R$ and no further condition is needed.
\end{remark}

\begin{proof}[Sketch of proof]
The proof for uniqueness is the standard one: one first looks at some solution to the RHP, say $Y$. From the jump condition, it follows that $\det Y$ is continuous across $\R$, so it is entire. From the behavior of $Y$ at infinity, it follows that $\det Y$ is bounded, so by Liouville's theorem and the behavior at infinity, one sees that $\det Y=1$. In particular, (as a matrix) $Y$ is invertible and the inverse matrix $Y^{-1}$ is analytic in $\C\setminus \R$. Now if $\widetilde{Y}$ is another solution, we see that $\widetilde{Y}Y^{-1}$ is analytic in $\C\setminus \R$ and continuous across $\R$, so it is entire. From the behavior at infinity, $\widetilde{Y}(z)Y(z)^{-1}\to I$ (the $2\times 2$ identity matrix) as $z\to \infty$, so again by Liouville, $\widetilde{Y}=Y$.

Consider then the statement that $Y$ given in terms of the orthogonal polynomials is a solution. The analyticity condition is obvious. The continuity of the boundary values of the first column is obvious since we are dealing with polynomials. For the second column, the Sokhotski-Plemelj theorem implies that the boundary values of the second column can be expressed in terms of $p_j f e^{-NV}$ (or $p_j$ replaced by $p_{j-1}$) and its Hilbert transform (see  e.g. \cite[Chapter V]{titchmarsh} for an introduction to the Hilbert transform). 
The first term is obviously continuous. For the Hilbert transform, we note that $p_j fe^{-NV}$ is H\"older continuous, so as the Hilbert transform preserves H\"older regularity (see \cite[Chapter V.15]{titchmarsh}), we see that the boundary values of $Y$ are continuous.

For the jump condition \eqref{eq:Yjump} and behavior at infinity \eqref{eq:Ynorm}, we refer to analogous problems in \cite[Section 3.2 and Section 7]{deift}.
\end{proof}

We next discuss how deforming $V$ or $\mathcal{T}$ changes $D_{N-1}(f;V)$.

\subsection{Differential identities}

Let us fix our potential $V$ (and drop dependence on it from our notation) and first consider how deforming $\mathcal{T}$ changes $D_{N-1}(f)$.

The proof of the following result is a minor modification of the proof of \cite[Proposition 3.3]{dik2}, but for completeness, we give a proof in Appendix \ref{app:di}. The role of this result is that if we know the asymptotics in the case $\Tree=0$, instead of studying $Y_j$ for all $j$, it's enough to study $Y_N$ though with a one-parameter family of deformations of $\mathcal{T}$.

\begin{lemma}\label{le:di}
Let $\mathcal{T}:\R\to \R$ be a $C^\infty$ function with compact support, let $(\beta_j)_{j=1}^k\in [0,\infty)^k$ , $(x_j)_{j=1}^k\in(-1,1)^k$, and $x_i\neq x_j$ for $i\neq j$. For $t\in[0,1]$ and $\lambda\in \R$, define

\begin{equation}\label{eq:ftdef}
f_t(\lambda)=\left[1-t+t e^{\mathcal{T}(\lambda)}\right]\prod_{j=1}^k |\lambda-x_j|^{\beta_j}.
\end{equation}

Let $Y(z,t)$ be as in \eqref{eq:Y_RHP} with $j=N$, $f=f_t$, and $p_l(x;f)=p_l(x;f_t)$ the orthonormal polynomials with respect to the measure $f_t(\lambda)e^{-NV(\lambda)}d\lambda$ on $\R$. Then

\begin{equation}\label{eq:di1}
\partial_t\log D_{N-1}(f_t)=\frac{1}{2\pi i}\int_\R\left[Y_{11}(x,t)\partial_x Y_{21}(x,t)-Y_{21}(x,t)\partial_x Y_{11}(x,t)\right]\partial_t f_t(x)e^{-NV(x)}dx,
\end{equation}

\noindent where the indices of $Y$ refer to matrix entries.
\end{lemma}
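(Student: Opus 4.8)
The plan is to derive \eqref{eq:di1} by differentiating the Hankel determinant formula \eqref{eq:dchi}, rewriting everything in terms of the Riemann--Hilbert solution $Y=Y_N$, and exploiting the orthogonality of the polynomials. First I would recall from Lemma \ref{le:poly} that $D_{N-1}(f_t)=\prod_{j=0}^{N-1}\chi_j(f_t)^{-2}$, so that $\partial_t\log D_{N-1}(f_t)=-2\sum_{j=0}^{N-1}\partial_t\log\chi_j(f_t)$. The standard way to handle such a sum is to express each $\partial_t\log\chi_j$ via the orthogonality relations: differentiating $\int p_j(x;f_t)^2 f_t(x)e^{-NV(x)}\,dx=1$ and $\int p_j(x;f_t)x^{j}f_t(x)e^{-NV(x)}\,dx=1/\chi_j(f_t)$ in $t$, and using that $\partial_t p_j$ is a polynomial of degree $\le j$ orthogonal to $p_j$ modulo a multiple of $p_j$ itself, one gets $\partial_t\log\chi_j(f_t)=-\tfrac12\int p_j(x;f_t)^2\,\partial_t f_t(x)\,e^{-NV(x)}\,dx$. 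Summing over $j=0,\dots,N-1$ produces the Christoffel--Darboux kernel: $\partial_t\log D_{N-1}(f_t)=\sum_{j=0}^{N-1}\int p_j(x;f_t)^2\,\partial_t f_t(x)\,e^{-NV(x)}\,dx=\int K_{N}(x,x;f_t)\,\partial_t f_t(x)\,e^{-NV(x)}\,dx$, where $K_N$ is the reproducing kernel of the degree-$(N-1)$ polynomial space.

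Next I would convert the diagonal of the Christoffel--Darboux kernel into the entries of $Y=Y_N$. By the Christoffel--Darboux formula, $K_N(x,y;f_t)=\chi_{N-1}(f_t)/\chi_N(f_t)\cdot\frac{p_N(x)p_{N-1}(y)-p_N(y)p_{N-1}(x)}{x-y}$, and taking the confluent limit $y\to x$ gives $K_N(x,x;f_t)=\tfrac{\chi_{N-1}}{\chi_N}\big(p_N'(x)p_{N-1}(x)-p_N(x)p_{N-1}'(x)\big)$. Comparing with \eqref{eq:Y_RHP} with $j=N$, one has $Y_{11}(x,t)=\tfrac{1}{\chi_N}p_N(x)$ and $Y_{21}(x,t)=-2\pi i\,\chi_{N-1}p_{N-1}(x)$, so that $Y_{11}\partial_xY_{21}-Y_{21}\partial_xY_{11}=-2\pi i\,\tfrac{\chi_{N-1}}{\chi_N}\big(p_N(x)p_{N-1}'(x)-p_{N-1}(x)p_N'(x)\big)=2\pi i\,K_N(x,x;f_t)$. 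Substituting this into the displayed formula for $\partial_t\log D_{N-1}(f_t)$ yields precisely \eqref{eq:di1}, after dividing by $2\pi i$. A small point to check is that the boundary terms arising when differentiating under the integral sign and integrating by parts vanish, which follows from the rapid decay of $e^{-NV}$ and the polynomial growth of the $p_j$, together with the fact that $\partial_t f_t=(e^{\mathcal T}-1)\prod_j|\lambda-x_j|^{\beta_j}$ is bounded with compact support (since $\mathcal T$ has compact support, $e^{\mathcal T}-1$ does too).

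The main obstacle is the careful justification of the interchange of differentiation and integration, and making precise the claim that $\partial_t p_j$, which a priori is a polynomial of degree exactly $j$ (the leading coefficient $\chi_j$ does depend on $t$), decomposes as $\partial_t p_j=(\partial_t\log\chi_j)p_j+(\text{lower-degree terms orthogonal to }p_j)$ whose contribution to $\partial_t\|p_j\|^2$ is controlled purely by the explicit $\partial_t f_t$ term. Here one uses that $\partial_t\langle p_j,p_j\rangle_{f_t}=0$ splits as $2\langle\partial_t p_j,p_j\rangle_{f_t}+\langle p_j^2,\partial_t f_t e^{-NV}\rangle$, and that $\langle\partial_t p_j,p_j\rangle_{f_t}=(\partial_t\log\chi_j)\langle p_j,p_j\rangle_{f_t}=\partial_t\log\chi_j$ because all lower-order terms in $\partial_t p_j$ are orthogonal to $p_j$. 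This is exactly the computation carried out for Toeplitz determinants in \cite[Proposition 3.3]{dik2}, and the modification to the Hankel/real-line setting is routine, which is why I would relegate the full details to Appendix \ref{app:di} as the authors do; the only genuinely new verification is that the Fisher--Hartwig factors $|\lambda-x_j|^{\beta_j}$ with $\beta_j\ge 0$ are harmless, since they are bounded and $t$-independent, and the positivity and integrability hypotheses needed to invoke Lemma \ref{le:poly} hold uniformly for $t\in[0,1]$ because $1-t+te^{\mathcal T(\lambda)}>0$ everywhere.
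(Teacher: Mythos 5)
Your proposal is correct and follows essentially the same route as the paper's proof in Appendix \ref{app:di}: differentiate $\log D_{N-1}(f_t)=-2\sum_j\log\chi_j(f_t)$, use orthogonality to reduce $\partial_t\log\chi_j$ to an integral against $\partial_t f_t$, apply Christoffel--Darboux, and identify the result with the entries of $Y_N$. The only (cosmetic) difference is the order of steps -- you move the $t$-derivative onto $f_t$ via the normalization identity before invoking Christoffel--Darboux, whereas the paper applies Christoffel--Darboux first and then observes that the total-derivative term is the derivative of the constant $N$ and hence vanishes.
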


The object we are interested in is $D_{N-1}(f_1)$ which we can analyze by writing

\begin{equation*}
\log D_{N-1}(f_1)=\log D_{N-1}(f_0)+\int_0^1\frac{\partial}{\partial t}\log D_{N-1}(f_t)dt.
\end{equation*}

For the GUE, the asymptotics of $D_{N-1}(f_0)$ -- the case $\Tree=0$ -- were investigated in \cite{krasovsky}, so a consequence of Lemma \ref{le:di} is that if we understand the asymptotics of $Y(z,t)$ well enough, we are able to study the asymptotics of $D_{N-1}(f_1)$ in the GUE case.

The other deformation we will consider is what happens when we interpolate between the potentials $V_0(x)=2x^2$ (the GUE) and $V_1(x)=V(x)$ in the $\Tree=0$ case.

\begin{lemma}\label{le:di2}
Let $(\beta_j)_{j=1}^k\in [0,\infty)^k$, $(x_j)_{j=1}^k\in(-1,1)^k$, and $x_i\neq x_j$ for $i\neq j$. Let $f$ be defined by \eqref{eq:ffh} with $\Tree=0$ and let $V:\R\to \R$ be a real analytic function satisfying $\lim_{x\to \pm \infty}V(x)/\log |x|=\infty$. Define for $s\in[0,1]$

\begin{equation}\label{eq:vs}
V_s(x)=(1-s)2x^2+sV(x).
\end{equation}

Let us then write $Y(z;V_s)$ for $Y$ defined as in \eqref{eq:Y_RHP} with $j=N$, $V=V_s$ and $p_j(x;f)=p_j(x;f,V_s)$. Then using the notation of \eqref{eq:det}

\begin{align}\label{eq:di2}
\partial_s &\log D_{N-1}(f;V_s)\\
\notag &=-N\frac{1}{2\pi i}\int_\R\left[Y_{11}(x;V_s)\partial_x Y_{21}(x;V_s)-Y_{21}(x;V_s)\partial_x Y_{11}(x;V_s)\right]f(x)[\partial_s V_s(x)]e^{-NV_s(x)}dx.
\end{align}
\end{lemma}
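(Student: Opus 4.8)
The plan is to derive \eqref{eq:di2} by differentiating the determinantal formula for $D_{N-1}(f;V_s)$ with respect to the parameter $s$, exactly as in the proof of Lemma \ref{le:di}, the only difference being that here the $s$-dependence enters through the exponential weight $e^{-NV_s(\lambda)}$ rather than through $f$ itself, which is why the factor $-N$ and the derivative $\partial_s V_s$ appear. Concretely, I would start from $D_{N-1}(f;V_s) = N!/Z_N(V_s)\cdot(\text{Hankel determinant})$ together with the representation $D_{N-1}(f;V_s) = \prod_{j=0}^{N-1}\chi_j(f,V_s)^{-2}$ from Lemma \ref{le:poly}, or equivalently work directly with $\log D_{N-1} = -\sum_{j=0}^{N-1} 2\log\chi_j$. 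Differentiating the orthonormality relations \eqref{eq:ortho} with respect to $s$ gives, for the measure $d\mu_s(\lambda) = f(\lambda)e^{-NV_s(\lambda)}d\lambda$ with $\partial_s d\mu_s = -N(\partial_s V_s)\,d\mu_s$, a system of identities for $\partial_s p_j$ and $\partial_s\chi_j$; taking $j=k$ in the differentiated relation and using that $\partial_s p_j$ has degree at most $j$ isolates $\partial_s\log\chi_j = \tfrac12\int (\partial_s V_s)\, p_j^2\, N\, d\mu_s - (\text{cross terms})$, and summing the telescoping contributions over $j=0,\dots,N-1$ collapses everything to an expression involving only $p_{N-1}$ and $p_N$ (the Christoffel–Darboux kernel on the diagonal), integrated against $N(\partial_s V_s)f e^{-NV_s}$.

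The second and more bookkeeping-heavy step is to rewrite that expression in terms of the matrix $Y=Y_N$. Here I would use that the $(1,1)$ and $(2,1)$ entries of $Y$ are precisely $\chi_N^{-1}p_N$ and $-2\pi i\,\chi_{N-1}p_{N-1}$, so that the Christoffel–Darboux combination $p_{N-1}(x)p_N'(x) - p_N(x)p_{N-1}'(x)$ — which is what naturally emerges from summing $\partial_s\log\chi_j$ — is, up to the constant $\chi_{N-1}\chi_N^{-1}$ and a factor $-\tfrac{1}{2\pi i}$, exactly $Y_{11}(x)\partial_x Y_{21}(x) - Y_{21}(x)\partial_x Y_{11}(x)$. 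Matching constants (using $\det Y \equiv 1$, which forces $\chi_{N-1}/\chi_N \cdot 2\pi i \cdot (\text{Wronskian of Cauchy transforms}) = 1$ in the appropriate sense, as in the uniqueness argument sketched after Proposition \ref{prop:yrhp}) yields \eqref{eq:di2}. Since this is a minor modification of \cite[Proposition 3.3]{dik2} and of the companion Lemma \ref{le:di}, I would in practice present it by reducing to that proof: set $\mathcal{T}=0$, replace the $t$-derivative of $f_t e^{-NV}$ by the $s$-derivative of $f e^{-NV_s}$, note $\partial_s(f e^{-NV_s}) = -N(\partial_s V_s) f e^{-NV_s}$, and observe that the argument of Lemma \ref{le:di} never used anything about $\partial_t f_t$ beyond its being a fixed (in $z$) integrable weight perturbation, so it applies verbatim with this substitution.

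The main obstacle is purely technical: justifying that one may differentiate under the integral sign and interchange $\partial_s$ with the infinite Gram–Schmidt / orthogonalization procedure — i.e., that $s\mapsto \chi_j(f,V_s)$ and $s\mapsto p_j(\cdot\,;f,V_s)$ are $C^1$ with the expected derivatives, uniformly enough that the telescoping sum and the integral manipulations are legitimate. This requires the decay hypothesis $\lim_{x\to\pm\infty}V(x)/\log|x| = \infty$ (hence the same for every $V_s$, by convexity of the bound, since $V_s$ is a convex combination of $2x^2$ and $V$) to ensure all moments $\int x^m f(x) e^{-NV_s(x)}dx$ are finite and depend smoothly on $s$, which in turn makes the Hankel determinant entries and their $s$-derivatives well behaved; the compact support of the $|\lambda - x_j|^{\beta_j}$-free part is irrelevant here, only the integrability of $f$ against the Gaussian-type weight matters. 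Once this regularity is in place the computation is routine, so I would relegate the full verification to the same appendix as Lemma \ref{le:di} (Appendix \ref{app:di}).
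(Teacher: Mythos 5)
Your proposal is correct and follows essentially the same route as the paper: the paper's proof of Lemma \ref{le:di2} simply repeats the argument of Lemma \ref{le:di} up to \eqref{eq:dlogt}, applies Christoffel--Darboux and orthogonality, and lets the $s$-derivative fall on the weight $e^{-NV_s}$ (producing the factor $-N\partial_s V_s$), exactly as you describe. The only cosmetic difference is that the identification with the $Y$-entries needs no determinant or Wronskian matching --- it is immediate from the explicit formulas $Y_{11}=\chi_N^{-1}p_N$ and $Y_{21}=-2\pi i\chi_{N-1}p_{N-1}$.
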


Again, we give a proof in Appendix \ref{app:di}. The role of this differential identity is that if we understand the asymptotics of $Y(z;V_s)$ well enough, then by integrating \eqref{eq:di2}, we can move from the GUE asymptotics to the general ones.

We mention that both of these identities are of course true for a much wider class of symbols than what we state in the results (in particular, in Lemma \ref{le:di2} the condition $\Tree=0$ is not necessary for anything). This is simply the generality we use them in. Next we move on to describing how to study the large $N$ asymptotics of $Y(z,t)$ and $Y(z;V_s)$.

\section{Solving the Riemann-Hilbert problem} \label{sec:rh}

In this section we will finally describe the asymptotic behavior of $Y(z,t)$ and $Y(z;V_s)$ as $N\to\infty$. The typical way this is done is through a series of transformations to the RHP, ultimately leading to a RHP where the jump matrix is asymptotically close to the identity matrix as $N \to \infty$, and the behavior at infinity is close to the identity matrix. Then using properties of the Cauchy-kernel, the final RHP can be solved in terms of a Neumann series solution of a suitable integral equation. Moreover, each term in the series expansion is of lower and lower order in $N$. We will go into further details about this part of the problem in Section \ref{sec:rhpasy}, but we will start with transforming the problem.

While we never have both $s,t\in(0,1)$, we will find it notationally convenient to consider $Y(z)$ to be defined as in \eqref{eq:Y_RHP} with $f=f_t$ and $V=V_s$. We suppress all of this in our notation for $Y$. We will also focus on functions $\Tree$ with the regularity claimed in Proposition \ref{prop:fh} which was stronger than what we stated in the differential identities.

\subsection{Transforming the Riemann-Hilbert problem} Let us introduce some further notation to simplify things later on. Let $\Tree$ satisfy the conditions of Proposition \ref{prop:fh}, and let

\begin{equation}\label{eq:mathcaltt}
\mathcal{T}_t(\lambda)=\log(1-t+te^{\mathcal{T}(\lambda)})
\end{equation}

\noindent so that in the notation of Lemma \ref{le:di}

\begin{equation*}
f_t(\lambda)=e^{\mathcal{T}_t(\lambda)}\prod_{j=1}^k |\lambda-x_j|^{\beta_j},
\end{equation*}

\noindent and let us assume that the singularities are ordered: $x_j<x_{j+1}$.

The series of transformations we will now start implementing is a minor modification of that in \cite[Section 4]{krasovsky}.

\subsubsection{The first transformation}\label{sec:trans1} Our first transformation will change the asymptotic behavior of the solution to the RHP so that it is close to the identity as $z\to\infty$, as well as cause the distance between the jump matrix and the identity matrix to be exponentially small in $N$ when we're off of the interval $[-1,1]$. The proofs of the statements of this section are either elementary or straightforward generalizations of standard ones in the RHP-literature, but for the convenience of readers unfamiliar with the literature, they are sketched in Appendix \ref{app:trans1}. Let us now make the relevant definitions.

\begin{definition}\label{def:gandt}
In the notation of \eqref{eq:equilib}, for $s \in[0,1]$ as above, let

\begin{equation}\label{eq:dsdef}
d_s(\lambda)=(1-s)\frac{2}{\pi}+s d(\lambda),
\end{equation}

\noindent and for $z\in\C\setminus(-\infty,1]$, let

\begin{equation}\label{eq:gsdef}
g_s(z)=\int_{-1}^1 d_s(\lambda)\sqrt{1-\lambda^2}\log(z-\lambda)d\lambda,
\end{equation}

\noindent where the branch of the logarithm is the principal one. We also define

\begin{equation}\label{eq:lsdef}
\ell_s=(1-s)(-1-2\log 2)+s\ell_V,
\end{equation}

\noindent where $\ell_V$ is the constant from \eqref{eq:el1} and \eqref{eq:el2}. Finally, for $z\in \C\setminus \R$, let

\begin{equation}\label{eq:tdef}
T(z)=e^{-N\ell_s\sigma_3/2}Y(z)e^{-N(g_s(z)-\ell_s/2)\sigma_3},
\end{equation}

\noindent where

\begin{equation*}
\sigma_3=\begin{pmatrix}
1 & 0\\
0 & -1
\end{pmatrix} \qquad and \qquad e^{q \sigma_3}=\begin{pmatrix}
e^q & 0\\
0 & e^{-q}
\end{pmatrix}.
\end{equation*}

\end{definition}

Before describing the jump structure and normalization of $T$ near infinity, we first point out some simple facts about the boundary values of $g_s$ on $\R$ which follow from its definition and \eqref{eq:el1} (details may be found in  Appendix \ref{app:trans1}). 

\begin{lemma}\label{le:gbv}
For $\lambda\in \R$, let  $g_{s,\pm}(\lambda)=\lim_{\epsilon\to 0^+}g_s(\lambda\pm i \epsilon)$. Then for $\lambda\in(-1,1)$ and $s\in[0,1]$

\begin{equation}\label{eq:gbv1}
g_{s,+}(\lambda)+g_{s,-}(\lambda)=V_s(\lambda)+\ell_s.
\end{equation}

There exist $M,C>0$ {\rm{(}}independent of $s${\rm{)}} so that for $\lambda\in \R\setminus[-1,1]$,

\begin{equation}\label{eq:gbv2}
g_{s,+}(\lambda)+g_{s,-}(\lambda)-V_s(\lambda)-\ell_s\leq \begin{cases}
-C(|\lambda|-1)^{3/2}, & |\lambda|-1\in(0,M)\\
-\log |\lambda|, & |\lambda|-1>M
\end{cases}.
\end{equation}

For $\lambda\in \R$

\begin{equation}\label{eq:gbv3}
g_{s,+}(\lambda)-g_{s,-}(\lambda)=\begin{cases}
2\pi i, & \lambda<-1\\
2\pi i\int_{\lambda}^1 d_s(x)\sqrt{1-x^2}dx, & |\lambda|<1\\
0, & \lambda>1
\end{cases}.
\end{equation}
\end{lemma}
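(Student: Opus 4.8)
The key structural observation is that every ingredient of the statement is affine in $s$: $d_s$, $\ell_s$, $V_s$ are affine in $s$ by definition, and hence so is $z\mapsto g_s(z)$, since it depends linearly on $d_s$. Consequently the two \emph{equalities} \eqref{eq:gbv1} and \eqref{eq:gbv3} will follow once they are verified at $s=0$ (the GUE, where $V_0(x)=2x^2$, $d_0=2/\pi$, and it is classical that the equilibrium measure is $\tfrac2\pi\sqrt{1-x^2}\,dx$ with constant $\ell_0=-1-2\log 2$) and at $s=1$ (the given potential $V$), while the \emph{inequality} \eqref{eq:gbv2} will follow from its validity at $s=0,1$ together with the fact that a convex combination of two numbers is at most their maximum and at least their minimum. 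One preliminary fact used throughout is that $\int_{-1}^1 d_s(\lambda)\sqrt{1-\lambda^2}\,d\lambda=1$ for all $s$: this holds at $s=0$ (the semicircle has total mass $1$) and at $s=1$ ($\mu_V$ is a probability measure), and the integral is affine in $s$.

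For \eqref{eq:gbv3} I would argue directly from the branch structure of $\log(z-\lambda)$, which for fixed $\lambda$ is analytic in $z\in\C\setminus(-\infty,\lambda]$ and jumps by $2\pi i$ across $(-\infty,\lambda)$. Thus for $x>1$ there is no cut through $x$ and $g_{s,+}(x)-g_{s,-}(x)=0$; for $x<-1$ every $\lambda\in[-1,1]$ contributes a jump $2\pi i$, giving $g_{s,+}(x)-g_{s,-}(x)=2\pi i\int_{-1}^1 d_s(\lambda)\sqrt{1-\lambda^2}\,d\lambda=2\pi i$; and for $x\in(-1,1)$ only $\lambda\in(x,1)$ contribute, giving $2\pi i\int_x^1 d_s(\lambda)\sqrt{1-\lambda^2}\,d\lambda$. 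For \eqref{eq:gbv1}, the same bookkeeping shows that on $(-1,1)$ one has $g_{s,+}(x)+g_{s,-}(x)=2\int_{-1}^1 d_s(\lambda)\sqrt{1-\lambda^2}\log|x-\lambda|\,d\lambda$, the $\pm i\pi$ contributions from $\lambda>x$ cancelling in the sum. By linearity in $s$ this equals $(1-s)\cdot 2\int \tfrac2\pi\sqrt{1-\lambda^2}\log|x-\lambda|\,d\lambda+s\cdot 2\int\log|x-\lambda|\,\mu_V(d\lambda)$; the second term is $V(x)+\ell_V$ by the Euler--Lagrange identity \eqref{eq:el1}, and the first is $2x^2+\ell_0$ by the analogous identity for the semicircle law, with the value $\ell_0=-1-2\log 2$ obtained by evaluating at $x=0$. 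Summing yields $V_s(x)+\ell_s$, i.e.\ \eqref{eq:gbv1}.

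For \eqref{eq:gbv2}, write $F_s(x):=g_{s,+}(x)+g_{s,-}(x)-V_s(x)-\ell_s=2\int_{-1}^1 d_s(\lambda)\sqrt{1-\lambda^2}\log|x-\lambda|\,d\lambda-V_s(x)-\ell_s$ for $|x|>1$ (the $\pm$ boundary values of $g_s$ differing only by the purely imaginary constant from \eqref{eq:gbv3} there). That $F_s\le 0$ is immediate from \eqref{eq:el2}, its GUE analogue, and convexity; the content is the quantitative rate, which I would obtain uniformly in $s$ as follows. Near the endpoints: differentiating \eqref{eq:gbv1} and \eqref{eq:gbv3} shows that $G_s(z):=2g_s'(z)-V_s'(z)$ is analytic in a complex neighbourhood of $[-1,1]$ off the interval, with $G_{s,\pm}(x)=\mp 2\pi i\,d_s(x)\sqrt{1-x^2}$ on $(-1,1)$; hence $H_s(z):=G_s(z)/\sqrt{z^2-1}$ (principal branch, $\sim z$ at $\infty$) has the equal boundary values $-2\pi d_s(x)$ on $(-1,1)$ and a removable singularity at $\pm1$ (as $G_s$ vanishes there), so $H_s$ is analytic in a full neighbourhood of $[-1,1]$ with $H_s|_{[-1,1]}=-2\pi d_s\le -2\pi c_0<0$, where $c_0:=\min\{2/\pi,\ \inf_{[-1,1]}d\}>0$. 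Since $H_s$ is affine in $s$, for $x>1$ with $x-1$ small enough (uniformly in $s$) one gets $F_s'(x)=G_s(x)=H_s(x)\sqrt{x^2-1}\le-\pi c_0\sqrt{x-1}$, and integrating from $1$ — where $F_s=0$ by continuity and \eqref{eq:gbv1} — yields $F_s(x)\le -C(x-1)^{3/2}$; the estimate at $x<-1$ is the same. For $|x|$ large: $g_s(x)\le\log(|x|+1)$ because $\int d_s\sqrt{1-\lambda^2}=1$, while $V_s(x)\ge\min\{2x^2,V(x)\}$ exceeds $3\log|x|$ for $|x|$ large by the growth hypothesis on $V$, and $|\ell_s|$ is bounded, so $F_s(x)\le 2\log|x|+O(1)-V_s(x)-\ell_s\le -\log|x|$ for $|x|\ge M''$, uniformly in $s$. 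On the remaining compact range $\{M_0\le|x|-1\le M''\}$, $F_s<0$ strictly — a convex combination of two strictly negative quantities, using strictness of \eqref{eq:el2} and of its GUE analogue — and $(s,x)\mapsto F_s(x)$ is continuous, so $F_s\le-\delta<0$ there; shrinking $C$ so that $\delta\ge C(M'')^{3/2}$ and taking $M:=M''$, the two stated bounds hold for all $s\in[0,1]$.

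The only genuinely delicate point is \eqref{eq:gbv2}: one must produce the explicit $(|x|-1)^{3/2}$ and $\log|x|$ rates with constants \emph{uniform in} $s$, which forces careful tracking of the endpoint behaviour (through the factorization $G_s=H_s\sqrt{z^2-1}$ and the uniform positivity of $d_s$) and of the behaviour at infinity (through the growth of $V$), and a consistent patching of the edge, intermediate, and far regimes; by contrast \eqref{eq:gbv1} and \eqref{eq:gbv3} are essentially immediate from the branch structure of the logarithm and the Euler--Lagrange equations. Everything here is standard for one-cut regular ensembles — indeed the argument shows along the way that $d_s(\lambda)\sqrt{1-\lambda^2}\,d\lambda$ is the equilibrium measure of $V_s$ and that $V_s$ is one-cut regular for every $s\in[0,1]$ — so one could alternatively deduce the decay rates from known structure results (e.g.\ \cite{kml,dkml}) once the equilibrium measure has been identified.
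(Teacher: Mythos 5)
Your proposal is correct, and for \eqref{eq:gbv1} and \eqref{eq:gbv3} it coincides with the paper's proof (linearity in $s$ plus the Euler--Lagrange identities for $V$ and for the semicircle law, and the branch structure of $\log(z-\lambda)$, respectively). The difference is in the edge estimate of \eqref{eq:gbv2}. The paper argues by a direct computation: it Taylor-expands $F(\lambda)-F(1)$ using the differentiated Euler--Lagrange equation at $x=1$, arrives at $F(\lambda)=-2\int_1^\lambda(u-1)\int_{-1}^1\frac{d(x)\sqrt{1-x^2}}{(u-x)(1-x)}dx\,du+\mathcal{O}((\lambda-1)^2)$, and extracts the exact order $(u-1)^{-1/2}$ of the inner integral by the substitution $1-x=(u-1)y$, whence $F'\asymp-\sqrt{u-1}$ and the $3/2$-power follows. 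You instead observe that $G_s=2g_s'-V_s'$ has boundary values $\mp2\pi i\,d_s\sqrt{1-x^2}$ on $(-1,1)$, so $H_s=G_s/r$ continues analytically across $[-1,1]$ with $H_s|_{[-1,1]}=-2\pi d_s\le-2\pi c_0$, giving $F_s'(x)=H_s(x)r(x)\lesssim-\sqrt{x-1}$ uniformly in $s$ near the edge. This is more structural and is in fact the same mechanism the paper itself exploits later when it factors $-\tfrac32\phi_s(z)=G_s^{(1)}(z)(z-1)^{3/2}$ to build the Airy parametrix (Appendix E), so your route unifies the two computations; the paper's version is more self-contained but requires the change-of-variables estimate to be read as a two-sided bound rather than a mere $\mathcal{O}$. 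One small repair to your write-up: the removability of the singularity of $H_s$ at $\pm1$ should not be justified by ``$G_s$ vanishes there'' (that is the conclusion, not the hypothesis); rather, $g_s'$ and hence $G_s$ are bounded near $\pm1$ because the density vanishes like a square root, so $H_s$ blows up at worst like $|z\mp1|^{-1/2}$, which forces the isolated singularity to be removable, and \emph{then} $G_s(\pm1)=0$ follows. The handling of the large-$|\lambda|$ regime and the compact intermediate regime (strict negativity of $F_0$ and $F_1$ plus convexity and compactness) matches the paper's.
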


The function $g_{s,+}-g_{s,-}$ along with an analytic continuation of it will play a significant role in our analysis of the Riemann-Hilbert problem, so we give it a name.

\begin{definition}\label{def:hdef}
Let $U\subset \C$ be an open neighborhood of $\R$ into which $d$ has an analytic continuation. For $z\in U\setminus ((-\infty,-1]\cup [1,\infty))$ and $s\in[0,1]$, let

\begin{equation}\label{eq:hdef}
h_s(z)=-2\pi i\int_{1}^z d_s(w)\sqrt{1-w^2}dw,
\end{equation}

\noindent where the square root is according to the principal branch {\rm{(}}i.e. $\sqrt{1-w^2}=e^{\frac{1}{2}\log(1-w^2)}$ and the branch of the logarithm is the principal one{\rm{)}}, and the contour of integration is such that it stays in $U$ and does not cross $(-\infty,-1]\cup[1,\infty)$.
\end{definition}

The function $h_s$ will often appear in the form $e^{\pm Nh_s}$ and to estimate the size of such an exponential, we will need to know the sign of $\mathrm{Re}(h_s)$. For this, we use the following elementary fact.

\begin{lemma}\label{le:hsign}
In a small enough open neighborhood of $(-1,1)$ {\rm{(}}independent of $s${\rm{)}} in the complex plane,
\begin{equation*}
\mathrm{Re}(h_s(z))>0 \qquad  if \qquad \mathrm{Im}(z)>0
\end{equation*}
 and
\begin{equation*}
\mathrm{Re}(h_s(z))<0 \qquad if \qquad \mathrm{Im}(z)<0
\end{equation*}
for all $s\in[0,1]$, and if we restrict to a fixed set in the upper half plane  such that the set is bounded away from the real axis, but inside this neighborhood of $(-1,1)$, we have e.g. $\mathrm{Re}(h_s(z))\geq\epsilon>0$ for some $\epsilon>0$ independent of $s$. A similar result holds in the lower half plane.
\end{lemma}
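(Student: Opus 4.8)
The plan is to reduce everything to the local behavior of $h_s$ near the real axis and to exploit the explicit square-root structure of the integrand. First I would observe that for $z$ in a complex neighborhood of $(-1,1)$ we may differentiate under the integral sign in \eqref{eq:hdef} to get $h_s'(z) = -2\pi i\, d_s(z)\sqrt{1-z^2}$, where $d_s = (1-s)\tfrac{2}{\pi} + s\, d$ is real analytic and, by one-cut regularity (condition 5 of Definition \ref{def:1cp}) together with $d(x)>0$ on $[-1,1]$, strictly positive on $[-1,1]$; hence on a sufficiently small neighborhood independent of $s\in[0,1]$ we have $\Re d_s(z) > 0$ and $|\Im d_s(z)|$ small. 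Since $h_s(1)=0$, for a point $z = x + iy$ with $x\in(-1,1)$ and $|y|$ small I would write $h_s(z) = \int_1^z h_s'(w)\,dw$ and, deforming the contour to run first along the real axis from $1$ to $x$ and then vertically from $x$ to $x+iy$, note that the real-axis portion contributes a purely imaginary quantity (there $d_s(w)\sqrt{1-w^2}$ is real). Thus $\Re h_s(x+iy) = \Re \int_0^y h_s'(x+it)\, i\, dt = -\int_0^y \Re\big(i\cdot 2\pi i\, d_s(x+it)\sqrt{1-(x+it)^2}\big)\,dt$, i.e. $\Re h_s(x+iy) = 2\pi \int_0^y \Re\big(d_s(x+it)\sqrt{1-(x+it)^2}\big)\,dt$.

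The key computation is then the sign of $\Re\big(d_s(w)\sqrt{1-w^2}\big)$ for $w = x+it$ with $x\in(-1,1)$ and $t$ small. For $t=0$ this is $d_s(x)\sqrt{1-x^2} > 0$, strictly, and bounded below by a positive constant on any compact subinterval; by continuity (and compactness in $s$) it stays $\geq c > 0$ for $|t|$ below some threshold $\delta$ independent of $s$, uniformly for $x$ in a compact subinterval of $(-1,1)$. Feeding this into the displayed formula for $\Re h_s(x+iy)$ gives $\Re h_s(x+iy) \geq 2\pi c\, y > 0$ when $y>0$ and $\Re h_s(x+iy) \leq 2\pi c\, y < 0$ when $y<0$, which is exactly the claimed sign; moreover if we restrict to a set bounded away from the real axis, say $|y|\geq \eta$, we get $|\Re h_s(x+iy)| \geq 2\pi c\, \eta =: \epsilon > 0$, giving the quantitative statement. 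Near the endpoints $\pm 1$ one has to be slightly more careful because $\sqrt{1-x^2}\to 0$, but since the lemma only claims a statement "in a small enough open neighborhood of $(-1,1)$" one may simply take the neighborhood to shrink near $\pm 1$; alternatively, the argument principle / conformal structure of $w\mapsto \sqrt{1-w^2}$ near a branch point shows $\Re \sqrt{1-w^2}$ still has the correct sign on the relevant sectors, but invoking this is unnecessary for the statement as phrased.

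The main obstacle, and the only place requiring genuine care, is establishing the uniformity in $s\in[0,1]$ and the control of $\Re\big(d_s(w)\sqrt{1-w^2}\big)$ uniformly as $w$ approaches the endpoints $\pm 1$ from inside the strip: the factor $\sqrt{1-w^2}$ rotates its argument by $\pi/2$ as $w$ crosses a branch point, so a naive "$\Re d_s > 0$ and $\Re\sqrt{1-w^2}>0$" argument fails in a neighborhood of $\pm 1$. I would handle this by working in the interior first (compact subintervals of $(-1,1)$, where everything is clean and uniform because $d_s$ is a convex combination over a compact parameter set of positive analytic functions), and then treat a fixed small neighborhood of each endpoint separately using the local model $\sqrt{1-w^2}\approx \sqrt{2}\,\sqrt{1-w}$ near $w=1$ (and similarly at $-1$): in the cut plane the image of the upper half-disc under $\sqrt{1-w}$ lies in a sector where, after multiplying by the $-2\pi i$ and integrating from the branch point, the real part of $h_s$ is again strictly positive for $\Im z>0$. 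Since the rest of the paper only ever uses $h_s$ on sets where this sign information is needed, and the constants $M,C,\epsilon$ are allowed to depend only on $V$, this endpoint analysis — which is the standard one underlying the local parametrix construction — closes the argument; the details are routine and are exactly of the type deferred to the appendices, so I would relegate the endpoint verification there and keep the interior estimate, which carries the essential content, in the main text.
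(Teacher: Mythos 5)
Your proposal is correct and follows essentially the same route as the paper's proof in Appendix \ref{app:trans1}: both decompose the integral defining $h_s$ into the real-axis segment from $1$ to $x$ (purely imaginary, since $d_s\sqrt{1-(\cdot)^2}$ is real there) plus a short vertical segment, and then read off the sign of the real part from the positivity of $d_s(x)\sqrt{1-x^2}$ at the foot of that segment. The paper phrases the last step as a Taylor expansion in $\epsilon$ (zero value, positive first derivative, bounded second derivative) rather than your continuity-of-the-integrand argument, but these are interchangeable, and your observation that the neighborhood of the open interval $(-1,1)$ may shrink near $\pm 1$ is exactly what makes the endpoint issue moot.
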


Again, see Appendix \ref{app:trans1} for details on the proof of this and the next result, which describes the Riemann-Hilbert problem $T$ solves.

\begin{lemma}\label{le:trhp}
The function $T:\C\setminus \R\to \C^{2\times 2}$ defined by \eqref{eq:tdef} is the unique solution to the following Riemann-Hilbert problem.

\begin{itemize}[leftmargin=0.5cm]
\item[1.] $T:\C\setminus \R\to \C^{2\times 2}$ is analytic.
\item[2.] On $\R$, $T$ has continuous boundary values $T_\pm$ and these are related by the jump conditions

\begin{equation}\label{eq:tjump1}
T_+(\lambda)=T_-(\lambda)\begin{pmatrix}
e^{-Nh_s(\lambda)} & f_t(\lambda)\\
0 & e^{Nh_s(\lambda)}
\end{pmatrix}, \qquad \lambda\in(-1,1)
\end{equation}

\noindent and

\begin{equation}\label{eq:tjump2}
T_+(\lambda)=T_-(\lambda)\begin{pmatrix}
1 & f_t(\lambda)e^{N(g_{s,+}(\lambda)+g_{s,-}(\lambda)-\ell_s-V_s(\lambda))}\\
0 & 1
\end{pmatrix}, \qquad \lambda\in \R\setminus[-1,1].
\end{equation}

\item[3.] As $z\to\infty$,

\begin{equation}\label{eq:tnorm}
T(z)=I+\mathcal{O}(|z|^{-1}).
\end{equation}
\end{itemize}
\end{lemma}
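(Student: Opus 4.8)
The plan is to verify directly that the explicit formula \eqref{eq:tdef} produces a function with the four listed properties, reading everything off from the properties of $Y$ in Proposition \ref{prop:yrhp} and from the boundary-value identities for $g_s$ in Lemma \ref{le:gbv}. Since $T$ is obtained from $Y$ by multiplying on the left by the fixed matrix $e^{-N\ell_s\sigma_3/2}$ and on the right by the diagonal matrix $e^{-N(g_s(z)-\ell_s/2)\sigma_3}$, no new analytic input is needed beyond $Y$ and $g_s$; the content is bookkeeping with $2\times 2$ matrices, branch cuts, and the integrality of $N$.

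First I would check analyticity and continuity of the boundary values. The function $g_s$ defined in \eqref{eq:gsdef} is analytic on $\C\setminus(-\infty,1]$, so $e^{-N(g_s(z)-\ell_s/2)\sigma_3}$ is analytic there; combined with the analyticity of $Y$ on $\C\setminus\R$ this gives analyticity of $T$ on $\C\setminus\R$. The boundary values $T_\pm$ are continuous on all of $\R$ --- including at the Fisher--Hartwig points $x_j$ --- because $Y_\pm$ are (as in Proposition \ref{prop:yrhp}, using $\beta_j\ge0$ so that no local condition at $x_j$ is required) and $g_{s,\pm}$ are continuous on $\R$ by the integral representation \eqref{eq:gsdef}.

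Next, for the jump relation I would write $T_\pm(\lambda)=e^{-N\ell_s\sigma_3/2}\,Y_\pm(\lambda)\,e^{-N(g_{s,\pm}(\lambda)-\ell_s/2)\sigma_3}$, insert the jump \eqref{eq:Yjump} of $Y$ (here $f=f_t$ and $V=V_s$), and collect exponents using the elementary identity $e^{a\sigma_3}\left(\begin{smallmatrix}1&b\\0&1\end{smallmatrix}\right)e^{-c\sigma_3}=\left(\begin{smallmatrix}e^{a-c}&e^{a+c}b\\0&e^{c-a}\end{smallmatrix}\right)$ with $a=N(g_{s,-}(\lambda)-\ell_s/2)$, $c=N(g_{s,+}(\lambda)-\ell_s/2)$ and $b=f_t(\lambda)e^{-NV_s(\lambda)}$. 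This shows that the jump matrix of $T$ has diagonal entries $e^{\mp N(g_{s,+}(\lambda)-g_{s,-}(\lambda))}$ and upper-right entry $f_t(\lambda)e^{N(g_{s,+}(\lambda)+g_{s,-}(\lambda)-\ell_s-V_s(\lambda))}$. On $(-1,1)$ the identity \eqref{eq:gbv1} turns the upper-right entry into $f_t(\lambda)$, while \eqref{eq:gbv3} together with Definition \ref{def:hdef} identifies $g_{s,+}(\lambda)-g_{s,-}(\lambda)=h_s(\lambda)$, giving \eqref{eq:tjump1}; on $\R\setminus[-1,1]$ the diagonal entries are $e^{\pm 2\pi i N}=1$ (this is where $N\in\Z_+$ is used) and the upper-right entry is exactly the one in \eqref{eq:tjump2}.

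Finally, for the behaviour at infinity I would expand $\log(z-\lambda)=\log z-\lambda/z+\mathcal{O}(z^{-2})$ in \eqref{eq:gsdef} and use $\int_{-1}^1 d_s(\lambda)\sqrt{1-\lambda^2}\,d\lambda=1$ (both $\tfrac{2}{\pi}\sqrt{1-\lambda^2}$ and $d(\lambda)\sqrt{1-\lambda^2}$ are probability densities on $[-1,1]$) to get $g_s(z)=\log z+\mathcal{O}(z^{-1})$; then $\left(\begin{smallmatrix}z^N&0\\0&z^{-N}\end{smallmatrix}\right)e^{-N(g_s(z)-\ell_s/2)\sigma_3}=e^{N\ell_s\sigma_3/2}(I+\mathcal{O}(z^{-1}))$, and together with \eqref{eq:Ynorm} this yields $T(z)=e^{-N\ell_s\sigma_3/2}(I+\mathcal{O}(z^{-1}))e^{N\ell_s\sigma_3/2}(I+\mathcal{O}(z^{-1}))=I+\mathcal{O}(z^{-1})$. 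Uniqueness is the standard Liouville argument: each of the jump matrices above has determinant $1$, so $\det T$ extends to an entire function with limit $1$ at infinity, hence $\det T\equiv1$ and $T$ is invertible; if $\widetilde T$ is another solution then $\widetilde T T^{-1}$ is analytic off $\R$, continuous across $\R$ (the jumps cancel), hence entire, and tends to $I$, so $\widetilde T\equiv T$. I expect the only genuinely delicate points to be tracking the branch cut of $g_s$ carefully --- so that the jump of $T$ across $(1,\infty)$ is only the exponentially small one inherited from $Y$ and carries no contribution from $g_s$ --- and the use of $N\in\Z_+$ in trivializing the diagonal of the jump outside $[-1,1]$; the rest is routine.
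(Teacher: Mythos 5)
Your proposal is correct and follows essentially the same route as the paper's proof in Appendix \ref{app:trans1}: direct verification of the jump via the conjugation identity for upper-triangular matrices together with \eqref{eq:gbv1} and \eqref{eq:gbv3}, the expansion $g_s(z)=\log z+\mathcal{O}(z^{-1})$ for the normalization at infinity, and the standard Liouville argument for uniqueness. The only cosmetic difference is that you package both jump cases into one matrix identity before specializing to $(-1,1)$ and $\R\setminus[-1,1]$, whereas the paper treats the two intervals separately.
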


The jump matrix given by \eqref{eq:tjump1} and \eqref{eq:tjump2} already looks good for $\lambda\notin [-1,1]$, in the sense that it is exponentially close to the identity, (compare \eqref{eq:tjump2} with \eqref{eq:gbv2}).
However, the issue is that across $(-1,1)$, the jump matrix is not close to the identity in any way. We will next address this issue by performing a second transformation.

\subsubsection{The second transformation}\label{sec:trans2}

As customary in this type of problems, the next step is to ``open lenses". That is, we will add further jumps to the problem off of the real line. Due to a nice factorization property of the jump matrix for $T$, the new jump matrix will be close to the identity on the new jump contours when we are not too close to the points $\pm 1$ or $x_j$.

Before going into the details of this, we will define an analytic continuation of $f_t$ into a subset of $\C$. Recall from our assumptions in Proposition \ref{prop:fh} that on $(-1-\epsilon,1+\epsilon)$, $\mathcal{T}(x)$ is real analytic. Thus $\mathcal{T}$ certainly has an analytic continuation to some neighborhood of $[-1,1]$. Moreover as it is real on $[-1,1]$, we see that in some small enough complex
neighborhood of $[-1,1]$ (which is independent of $t$), $1-t+te^{\mathcal{T}(z)}$ has no zeroes for any $t\in[0,1]$. Thus $\mathcal{T}_t$ (see \eqref{eq:mathcaltt}) has an analytic continuation to this neighborhood for all $t\in[0,1]$. We use this to define the analytic continuation of $f_t$.

\begin{definition}\label{def:ftcont}
Let $U_{[-1,1]}$ be some neighborhood of $[-1,1]$ which is independent of $t$ and in which $\mathcal{T}_t$ is analytic for $t\in[0,1]$. In this domain, and for $1\leq l\leq k-1$, let

\begin{equation}\label{eq:ftcontdef}
f_t(z)=e^{\mathcal{T}_t(z)}\times\begin{cases}
\prod_{j=1}^k (x_j-z)^{\beta_j}, & \mathrm{Re}(z)<x_1\\
\prod_{j=1}^l (x_j-z)^{\beta_j}\prod_{j=l+1}^k (z-x_j)^{\beta_j}, & \mathrm{Re}(z)\in(x_l,x_{l+1})\\
\prod_{j=1}^k (z-x_j)^{\beta_j}, & \mathrm{Re}(z)>x_k
\end{cases},
\end{equation}

\noindent where the powers are according to the principal branch.

\end{definition}

We will now impose some conditions on our new jump contours. Later on, we will be more precise about what we exactly want from them, but for now, we will ignore the details.

\begin{definition}\label{def:lenses}
For $j=1,...,k+1$, let $\Sigma_{j}^+$ $(\Sigma_{j}^-)$, be a smooth curve in the upper {\rm{(}}lower{\rm{)}} half plane from $x_{j-1}$ to $x_j$, where we understand $x_0$ as $-1$ and $x_{k+1}$ as $1$. The curves are oriented from $x_{j-1}$ to $x_j$ and independent of $t,$ $s,$ and $N$. Moreover, they are contained in $U_{[-1,1]}$.

The domain between $\Sigma_j^+$ and $\Sigma_j^{-}$ is called a lens. The domain between $\Sigma_j^+$ and $\R$ is called the top part of the lens, and that between $\Sigma_j^-$ and $\R$ the bottom part of the lens. See Figure \ref{fig:lenses} for an illustration.
\end{definition}

\begin{remark}
Our definition here and our coming construction implicitly assume that $\beta_j\neq 0$ for all $j$. If one $($or more$)$ $\beta_j=0$, one simply ignores the corresponding $x_j$ $($so e.g. one connects $x_{j-1}$ to $x_{j+1}$ with a curve in the upper half plane etc$)$.
\end{remark}

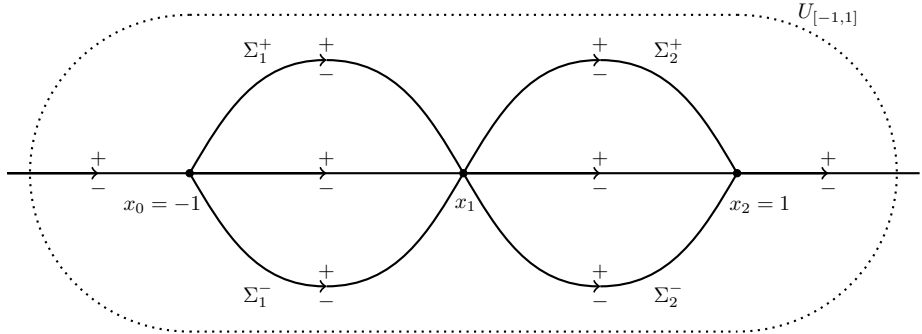
\begin{figure}[h!]
\begin{center}
\begin{tikzpicture}[scale=0.6, every node/.style={scale=0.8}]
\draw[thick, ->] (-4, 0) -- (0,0) (-4, 0) --(-2, 0) node[below]{$-$} node[above]{$+$};
\draw[thick, ->] (12, 0) -- (16,0) (12, 0) -- (14,0) node[below]{$-$} node[above]{$+$};
\draw[thick, ->] (0,0) -- (6,0) (0,0) -- (3, 0) node[below]{$-$} node[above]{$+$};
\draw[thick, ->] (6,0) -- (12, 0) (6,0) -- (9, 0) node[below]{$-$} node[above]{$+$};
\draw[fill = black] (0,0) circle [radius = 0.08]
			(6,0) circle [radius = 0.08]
			(12,0) circle[radius = 0.08];
\node at (-0.6, -0.7) {$x_0 = -1$};
\node at (6.05, -0.7) {$x_1$};
\node at (12.5, -0.7) {$x_2 = 1$};

\draw[thick, dotted] (0,3.5) arc [radius=3.5, start angle=90, end angle= 270]
			(12,3.5) arc [radius=3.5, start angle=90, end angle= -90];
\draw[thick, dotted] (0, 3.5) -- (12, 3.5) (0, -3.5) -- (12, -3.5);
\node at (14,3.5) {$U_{[-1, 1]}$};

\draw[thick, ->] (0,0) to [out = 60, in = 180] (3,2.5) node[below]{$-$} node[above]{$+$}; \node at (1.5, 2.7) {$\Sigma_1^+$};
\draw[thick, ->] (0,0) to [out = -60, in = -180] (3,-2.5) node[below]{$-$} node[above]{$+$}; \node at (1.5, -2.7) {$\Sigma_1^-$};
\draw[thick, ->] (6,0) to [out = 60, in = 180] (9,2.5) node[below]{$-$} node[above]{$+$}; \node at (10.5, 2.7) {$\Sigma_2^+$};
\draw[thick, ->] (6,0) to [out = -60, in = -180] (9,-2.5) node[below]{$-$} node[above]{$+$};\node at (10.5, -2.7) {$\Sigma_2^-$};
\draw[thick](3, 2.5) to [out = 0, in = 120] (6,0) 	(3, -2.5) to [out = 0, in = -120] (6,0)
		(9, 2.5) to [out = 0, in = 120] (12,0) (9, -2.5) to [out = 0, in = -120] (12,0);

\end{tikzpicture}
\caption{Opening of lenses, $k = 1$. The signs indicate the orientation of the curves: the $+$ side is the left side of the curve and $-$ the right.}
\label{fig:lenses}
\end{center}
\end{figure}

We use these contours in our next transformation.

\begin{definition}\label{def:s}
For $z\notin \Sigma:=\cup_{j=1}^{k+1}(\Sigma_j^+\cup\Sigma_j^-)\cup \R$, let

\begin{equation} \label{eq:T_to_S}
S(z)=\begin{cases}
T(z), & \mathrm{outside\ of \ the\  lenses}\\
T(z)\begin{pmatrix}
1 & 0\\
-f_t(z)^{-1} e^{-Nh_s(z)} & 1
\end{pmatrix}, & \mathrm{top \ part \ of \ the \ lenses}\\
T(z)\begin{pmatrix}
1 & 0\\
f_t(z)^{-1} e^{Nh_s(z)} & 1
\end{pmatrix}, & \mathrm{bottom \ part \ of \ the \ lenses}
\end{cases}.
\end{equation}
\end{definition}

\begin{remark}
Note that $S$ depends on our choice of the contours $\Sigma$ {\rm{(}}as well as $s,t,$ and $N${\rm{)}}, but we suppress this in our notation. We also point out that as $f_t$ has zeroes at the singularities, the entries in the first column of $S(z)$ blow up when $z$ approaches a singularity from within the lens. Moreover, we see that we have  discontinuities at the points $\pm 1$. Thus the boundary values are no longer continuous on $\R$, but on $\R\setminus \lbrace x_j: j=0,...,k+1\rbrace$, where again $x_0=-1$ and $x_{k+1}=1$.
\end{remark}

Using the definition of $S$, the RHP for $T$, and the fact that

\begin{equation*}
\begin{pmatrix}
e^{-Nh_s(\lambda)} & f_t(\lambda) \\
0 & e^{Nh_s(\lambda)}\end{pmatrix}
=\begin{pmatrix}
1 & 0\\
{e^{Nh_s(\lambda)} }{f_t(\lambda)^{-1}}& 1
\end{pmatrix}
\begin{pmatrix}
0 & f_t(\lambda)\\
-f_t(\lambda)^{-1} & 0
\end{pmatrix}\begin{pmatrix}
1 & 0\\
{e^{-Nh_s(\lambda)}f_t(\lambda)^{-1}}& 1
\end{pmatrix}
\end{equation*}

\noindent it is simple to check what the Riemann--Hilbert problem for $S$ should be; we omit the proof.

\begin{lemma}\label{le:srhp}
$S$ is the unique solution to the following Riemann--Hilbert problem:

\begin{itemize}[leftmargin=0.5cm]
\item[1.] $S:\C\setminus \Sigma\to \C^{2\times 2}$ is analytic.

\item[2.] $S$ has continuous boundary values on $\Sigma\setminus \lbrace x_j\rbrace_{j=0}^{k+1}$ and they are related by the jump conditions

\begin{equation} \label{eq:S_jump_lenses}
S_+(\lambda)=S_-(\lambda)\begin{pmatrix}
1 & 0\\
f_t(\lambda)^{-1}e^{\mp N h_s(\lambda)} & 1
\end{pmatrix}, \quad \lambda\in \cup_{j=1}^{k+1}\Sigma_j^{\pm}\setminus \lbrace x_l\rbrace_{l=0}^{k+1},
\end{equation}

\begin{equation} \label{eq:S_jump_main}
S_+(\lambda)=S_-(\lambda)\begin{pmatrix}
0 & f_t(\lambda)\\
-f_t(\lambda)^{-1} & 0
\end{pmatrix}, \quad \lambda\in(-1,1)\setminus \lbrace x_j\rbrace_{j=1}^{k},
\end{equation}

\noindent and

\begin{equation}
S_+(\lambda)=S_-(\lambda)\begin{pmatrix}
1 & f_t(\lambda)e^{N(g_{s,+}(\lambda)+g_{s,-}(\lambda)-\ell_s-V_s(\lambda))}\\
0 & 1
\end{pmatrix}, \quad \lambda\in\R\setminus[-1,1].
\end{equation}

In \eqref{eq:S_jump_lenses} the $\mp$ and $\pm$ notation means that we have $e^{-Nh_s}$ in the jump matrix when we cross $\Sigma_j^+$ and $e^{Nh_s}$ when we cross $\Sigma_j^-$.

\item[3.] $S(z)=I+\mathcal{O}(|z|^{-1})$ as $z\to\infty$.

\item[4.] For $j=1,...,k$, $S(z)$ is bounded as $z\to x_j$ from outside of the lenses, but when $z\to x_j$ from inside of the lenses,

\begin{equation}\label{eq:srhpsing}
S(z)=\begin{pmatrix}
\mathcal{O}\big(|z-x_j|^{-\beta_j}\big) & \mathcal{O}(1)\\
\mathcal{O}\big(|z-x_j|^{-\beta_j}\big) & \mathcal{O}(1)
\end{pmatrix}.
\end{equation}

\noindent Moreover, $S$ is bounded at $\pm 1$.

\end{itemize}
\end{lemma}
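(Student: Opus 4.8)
The plan is to read off each of the four defining conditions of the Riemann--Hilbert problem for $S$ directly from the formula \eqref{eq:T_to_S}, the Riemann--Hilbert problem for $T$ (Lemma \ref{le:trhp}), and the matrix factorization displayed just above the statement, and then to prove uniqueness by the standard Liouville argument.

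\emph{Analyticity and jumps.} Outside the lenses $S=T$, which is analytic off $\R$; inside the top (resp.\ bottom) part of a lens $S$ is the product of $T$ with the lower-triangular matrix whose $(2,1)$-entry is $-f_t^{-1}e^{-Nh_s}$ (resp.\ $f_t^{-1}e^{Nh_s}$), and both $f_t$ (via its continuation from Definition \ref{def:ftcont}) and $h_s$ are analytic there, so $S$ is analytic on $\C\setminus\Sigma$. For the jumps: on $\R\setminus[-1,1]$ there are no lenses, so $S=T$ and \eqref{eq:tjump2} transfers verbatim. Across a lens boundary $\Sigma_j^{\pm}$ the matrix $T$ is analytic (the curve lies in the open upper/lower half-plane), so the jump of $S$ is exactly the conjugating lower-triangular factor, or its inverse, according to the orientation convention of Definition \ref{def:lenses}; in either case this is \eqref{eq:S_jump_lenses}. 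Across $(-1,1)\setminus\{x_j\}$ one combines \eqref{eq:tjump1} with the factorization
\[
\begin{pmatrix} e^{-Nh_s} & f_t \\ 0 & e^{Nh_s}\end{pmatrix}
=\begin{pmatrix} 1 & 0 \\ e^{Nh_s}f_t^{-1} & 1\end{pmatrix}
\begin{pmatrix} 0 & f_t \\ -f_t^{-1} & 0\end{pmatrix}
\begin{pmatrix} 1 & 0 \\ e^{-Nh_s}f_t^{-1} & 1\end{pmatrix};
\]
the two outer triangular factors are precisely the matrices absorbed into the definition of $S$ in the bottom and top halves of the lens, so they cancel and leave the anti-diagonal jump \eqref{eq:S_jump_main}. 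Finally, since the lenses are bounded, $S=T$ for large $|z|$ and \eqref{eq:tnorm} gives $S(z)=I+\mathcal{O}(|z|^{-1})$.

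\emph{Behaviour at the singular points.} Approaching $x_j$ from outside the lenses, $S=T=e^{-N\ell_s\sigma_3/2}Ye^{-N(g_s-\ell_s/2)\sigma_3}$; here $g_s$ extends continuously to $(-1,1)$ and $Y$ has continuous boundary values on all of $\R$ (the weight $fe^{-NV}$ is H\"older continuous up to $x_j$ because $\beta_j\ge 0$, and the Hilbert transform preserves H\"older regularity, exactly as in the sketch of Proposition \ref{prop:yrhp}), so $S$ is bounded. Approaching $x_j$ from inside a lens, $S$ equals $T$ times a lower-triangular matrix whose $(2,1)$-entry is $\mp f_t^{-1}e^{\mp Nh_s}$; since $h_s$ is analytic at $x_j$ this exponential is bounded, while $f_t(z)^{-1}=\mathcal{O}(|z-x_j|^{-\beta_j})$ because all the other factors of $f_t$ are bounded and bounded away from $0$. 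Multiplying out, the first column of $S$ is the first column of $T$ plus an $\mathcal{O}(|z-x_j|^{-\beta_j})$ multiple of the (bounded) second column, and the second column of $S$ equals that of $T$; this is precisely \eqref{eq:srhpsing}. At $\pm1$ the symbol $f_t$ has no zero and $h_s$ is continuous (indeed $h_s(1)=0$ and $h_s(-1)=2\pi i\int_{-1}^1 d_s(w)\sqrt{1-w^2}\, dw=2\pi i$, the density integrating to one), so the extra factors are bounded and $S$ is bounded at $\pm1$.

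\emph{Uniqueness.} Let $\widetilde S$ be another solution. All the jump matrices above have determinant $1$, so $\det\widetilde S$ extends analytically across $\Sigma$ to a function holomorphic on $\C\setminus\{x_0,\dots,x_{k+1}\}$; near $x_j$ it is bounded from outside the lenses and $\mathcal{O}(|z-x_j|^{-\beta_j})$ from inside, hence it has at worst a pole there, but being bounded along a full sector it cannot have a genuine pole and the singularity is removable; likewise at $\pm1$. Thus $\det\widetilde S$ is entire and tends to $1$ at infinity, so $\det\widetilde S\equiv1$ by Liouville; in particular $\widetilde S^{-1}$ is analytic on $\C\setminus\Sigma$. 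Then $R:=\widetilde S S^{-1}$ has no jump across $\Sigma$, is $\mathcal{O}(|z-x_j|^{-\beta_j})$ yet bounded along a sector near each $x_j$ (and bounded near $\pm1$), so its singularities are removable, and $R(z)\to I$ as $z\to\infty$; Liouville again yields $R\equiv I$, i.e.\ $\widetilde S=S$. (Alternatively, uniqueness is inherited from that of the $Y$-problem in Proposition \ref{prop:yrhp} via the explicit invertible transformations $Y\mapsto T\mapsto S$.)

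\emph{Main obstacle.} The only step that is not purely mechanical is the local analysis at the Fisher--Hartwig points $x_j$: one must check that \eqref{eq:srhpsing} has exactly the asserted shape -- bounded from outside, singular only in the first column from inside, with the precise exponent $-\beta_j$ -- and then verify that this growth is mild enough, and directional enough, for the determinant argument (hence uniqueness) to go through. The remaining bookkeeping of signs, orientations, and branches of $f_t$ on $\Sigma_j^{\pm}$ and on $(-1,1)$ is routine once the conventions of Definition \ref{def:lenses} and Definition \ref{def:ftcont} are fixed.
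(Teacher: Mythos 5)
Your proof is correct and is exactly the standard verification the paper has in mind (the paper explicitly omits the proof, referring only to the displayed factorization of the jump matrix, which is precisely the identity you use to derive \eqref{eq:S_jump_main}). The local analysis at the $x_j$ and the Liouville/removable-singularity uniqueness argument are carried out properly, so nothing is missing.
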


We are now in a situation where if we are on one of the $\Sigma_j^\pm$ or on $\R\setminus[-1,1]$ and not close to one of the points $\pm 1$ or $x_j$, then the distance of the jump matrix from the identity matrix is exponentially small in $N$. We thus need to do something close to the points $\pm 1$ and $x_j$ as well as on the interval $(-1,1)$ to get a small norm problem, i.e. one that can be solved in terms of a Neumann series.

The way to proceed here is to construct functions which are solutions to approximations of the Riemann-Hilbert problem where we expect the approximations to be good if we are close to one of the points $\pm 1$ or $x_j$, or then alternatively when we are far away from them and we expect the approximate problem related to the behavior on $(-1,1)$ to determine the behavior of $S$. We then construct an ansatz to the original problem in terms of these approximations. This will lead to a small norm problem.

These approximations are often called parametrices, and we will start with the solution far away from the points $\pm 1$ and $x_j$. This case is often called the global parametrix.

\subsection{The global parametrix}\label{sec:global} Our goal is to find a function $P^{(\infty)}(z)$ such that it has the same jumps as $S(z)$ across $(-1,1)$, is analytic elsewhere, and has the correct behavior at infinity. We won't go into great detail about how such problems are solved, but we will build on  similar problems solved in \cite[Section 4.2]{krasovsky} (see also for example \cite[Section 5]{kmlvav}). We will simply state the result here and sketch a proof in Appendix \ref{app:global}. Later on we will need some regularity properties of the solution considered here so we will state and prove the relevant facts here.

We now define our global parametrix.

\begin{definition}\label{def:global}
Let us write for $z\notin(-\infty,1]$

\begin{equation}\label{eq:rdef}
r(z)=(z-1)^{1/2}(z+1)^{1/2}
\end{equation}

\noindent and

\begin{equation}\label{eq:adef}
a(z)=\frac{(z-1)^{1/4}}{(z+1)^{1/4}},
\end{equation}

\noindent where the powers are taken according to the principal branch. Then for $t\in[0,1]$ and $z\notin(-\infty,1]$, let

\begin{equation}\label{eq:ddef}
\mathcal{D}_t(z)=(z+r(z))^{-\mathcal{A}}\exp\left[\frac{r(z)}{2\pi}\int_{-1}^1 \frac{\mathcal{T}_t(\lambda)}{\sqrt{1-\lambda^2}}\frac{1}{z-\lambda}d\lambda\right]\prod_{j=1}^k(z-x_j)^{\beta_j/2}
\end{equation}

\noindent where $\mathcal{A}=\sum_{j=1}^k \beta_j/2$ and the powers are according to the principal branch. Finally, for $z\notin(-\infty,1]$ and $t\in[0,1]$, define the global parametrix

\begin{equation}\label{eq:global}
P^{(\infty)}(z)=P^{(\infty)}(z,t)=\frac{1}{2}\mathcal{D}_t(\infty)^{\sigma_3}\begin{pmatrix}
a(z)+a(z)^{-1} & -i(a(z)-a(z)^{-1})\\
i(a(z)-a(z)^{-1}) & a(z)+a(z)^{-1}
\end{pmatrix}\mathcal{D}_t(z)^{-\sigma_3},
\end{equation}

\noindent where $\mathcal{D}_t(\infty)=\lim_{z\to\infty}\mathcal{D}_t(z)=2^{-\mathcal{A}}e^{\frac{1}{2\pi}\int_{-1}^1 \frac{\mathcal{T}_t(\lambda)}{\sqrt{1-\lambda^2}}d\lambda}$.
\end{definition}

\begin{remark}\label{rem:globalholo}
It's simple to check that $r$ and $a$ are continuous across $(-\infty,-1)$ so they can be analytically continued to $\C\setminus[-1,1]$. Using the fact that $r(\lambda)$ is negative for $\lambda<-1$, one can check that also $\mathcal{D}_t$ is continuous across $(-\infty,-1)$, so in fact $P^{(\infty)}$ is analytic in $\C\setminus[-1,1]$.

We also point out that as $\mathcal{T}_0(\lambda)=0$  {\rm{(}}recall \eqref{eq:mathcaltt}{\rm{)}} we can also write

\begin{equation}\label{eq:globalalt}
P^{(\infty)}(z,t)=e^{\frac{\sigma_3}{2\pi}\int_{-1}^1 \frac{\mathcal{T}_t(\lambda)}{\sqrt{1-\lambda^2}}d\lambda}P^{(\infty)}(z,0)e^{-\sigma_3\frac{r(z)}{2\pi}\int_{-1}^1 \frac{\mathcal{T}_t(\lambda)}{\sqrt{1-\lambda^2}}\frac{d\lambda}{z-\lambda}}.
\end{equation}
\end{remark}

The relevance of this parametrix stems from the following lemma.
\begin{lemma}\label{le:global}
For each $t\in[0,1]$, $P^{(\infty)}(\cdot) = P^{(\infty)}(\cdot, t)$ satisfies the following Riemann--Hilbert problem.

\begin{itemize}[leftmargin=0.5cm]
\item[1.] $P^{(\infty)}:\C\setminus [-1,1]\to \C^{2\times 2}$ is analytic.
\item[2.] $P^{(\infty)}$ has continuous boundary values on $(-1,1)\setminus \lbrace x_j\rbrace_{j=1}^k$, and satisfies the jump condition

\begin{equation}\label{eq:globaljump}
P^{(\infty)}_+(\lambda)=P^{(\infty)}_-(\lambda)\begin{pmatrix}
0 & f_t(\lambda)\\
-f_t(\lambda)^{-1} & 0
\end{pmatrix}, \qquad \lambda\in(-1,1)\setminus\lbrace x_j\rbrace_{j=1}^k.
\end{equation}

\item[3.] As $z\to \infty$,

\begin{equation}\label{eq:globalnorm}
P^{(\infty)}(z)=I+\mathcal{O}(|z|^{-1}).
\end{equation}
\end{itemize}

\end{lemma}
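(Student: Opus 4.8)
The plan is to verify the three listed properties directly from the explicit formula \eqref{eq:global}, reducing everything to the scalar functions $a$ and $\mathcal{D}_t$. The first thing I would record is the algebraic identity, with $A=\begin{pmatrix}1 & i\\ i & 1\end{pmatrix}$ (so $A^{-1}=\tfrac12\begin{pmatrix}1 & -i\\ -i & 1\end{pmatrix}$),
\[
\tfrac12\begin{pmatrix} a(z)+a(z)^{-1} & -i(a(z)-a(z)^{-1})\\ i(a(z)-a(z)^{-1}) & a(z)+a(z)^{-1}\end{pmatrix}=A\,a(z)^{\sigma_3}A^{-1},
\]
so that $P^{(\infty)}(z,t)=\mathcal{D}_t(\infty)^{\sigma_3}A\,a(z)^{\sigma_3}A^{-1}\mathcal{D}_t(z)^{-\sigma_3}$; in this diagonalised form the jump across $(-1,1)$ and the behaviour at infinity become transparent.

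\emph{Analyticity.} I would first recall, as in Remark~\ref{rem:globalholo}, that $r$ and $a$ extend continuously, hence analytically, across $(-\infty,-1)$ — the two square roots (resp.\ fourth roots) of $z\mp1$ each change sign (resp.\ acquire a factor $\pm i$), and these changes cancel in the product (resp.\ quotient) — and that $r(\lambda)<0$ for $\lambda<-1$. I would then check the same factor by factor for $\mathcal{D}_t$ in \eqref{eq:ddef}: the Cauchy-type exponential is analytic on $\C\setminus[-1,1]$ because its density is supported on $[-1,1]$, and for $(z+r(z))^{-\mathcal{A}}$ and $\prod_j(z-x_j)^{\beta_j/2}$ one uses that along $(-\infty,-1)$ the arguments $z+r(z)$ and $z-x_j$ tend to negative reals while $r$ is already continuous there, so the relevant branch glues through. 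This gives analyticity of $P^{(\infty)}$ on $\C\setminus[-1,1]$ (nothing has to be checked across $(1,\infty)$, where every principal branch is analytic), and continuity of the boundary values on $(-1,1)\setminus\{x_j\}$ follows from the same description together with the Plemelj formula for the Cauchy integral of the H\"older density $\mathcal{T}_t/\sqrt{1-(\cdot)^2}$.

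\emph{The jump on $(-1,1)$.} From $a(z)^2=(z-1)^{1/2}(z+1)^{-1/2}$ I get $a_+(\lambda)=i\,a_-(\lambda)$ for $\lambda\in(-1,1)$, hence $a_+^{\sigma_3}=i^{\sigma_3}a_-^{\sigma_3}$ and $(A a_-^{\sigma_3}A^{-1})^{-1}(A a_+^{\sigma_3}A^{-1})=A i^{\sigma_3}A^{-1}=\begin{pmatrix}0 & 1\\ -1 & 0\end{pmatrix}$. The constant factor $\mathcal{D}_t(\infty)^{\sigma_3}$ cancels, and conjugating by the diagonal boundary values $\mathcal{D}_{t,\pm}(\lambda)^{-\sigma_3}$ yields
\[
\big(P^{(\infty)}_-(\lambda)\big)^{-1}P^{(\infty)}_+(\lambda)=\begin{pmatrix}0 & \mathcal{D}_{t,+}(\lambda)\mathcal{D}_{t,-}(\lambda)\\ -\big(\mathcal{D}_{t,+}(\lambda)\mathcal{D}_{t,-}(\lambda)\big)^{-1} & 0\end{pmatrix},
\]
so \eqref{eq:globaljump} is equivalent to the Szeg\H{o}-type identity $\mathcal{D}_{t,+}(\lambda)\mathcal{D}_{t,-}(\lambda)=f_t(\lambda)$ on $(-1,1)\setminus\{x_j\}$. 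I would verify this factor by factor: on $(-1,1)$, $\lambda+r_\pm(\lambda)=e^{\pm i\arccos\lambda}$, so $(\lambda+r_+)^{-\mathcal{A}}(\lambda+r_-)^{-\mathcal{A}}=1$; the boundary values of $\prod_j(z-x_j)^{\beta_j/2}$ multiply to $\prod_j|\lambda-x_j|^{\beta_j}$ (the phases $e^{\pm i\pi\beta_j/2}$ appearing for $\lambda<x_j$ cancelling); and by Sokhotski--Plemelj the Cauchy exponential has boundary values $\exp\!\big(r_\pm(\lambda)(J(\lambda)\mp\tfrac{i}{2}\mathcal{T}_t(\lambda)/\sqrt{1-\lambda^2})\big)$, where $J$ is the principal-value part, so that, using $r_+(\lambda)=-r_-(\lambda)=i\sqrt{1-\lambda^2}$, the $J$-terms cancel and the product of the two exponentials equals $e^{\mathcal{T}_t(\lambda)}$. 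Multiplying the three contributions gives $f_t(\lambda)$, as wanted.

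\emph{Behaviour at infinity, and the main obstacle.} As $z\to\infty$, $a(z)=1+\mathcal{O}(z^{-1})$ so $A a(z)^{\sigma_3}A^{-1}=I+\mathcal{O}(z^{-1})$, while $\mathcal{D}_t(z)=\mathcal{D}_t(\infty)(1+\mathcal{O}(z^{-1}))$ because $(z+r(z))^{-\mathcal{A}}\sim(2z)^{-\mathcal{A}}$ cancels the $z^{\mathcal{A}}$ coming from $\prod_j(z-x_j)^{\beta_j/2}$ and $r(z)/(z-\lambda)\to1$ inside the Cauchy integral — this limit being precisely the constant $\mathcal{D}_t(\infty)$ of Definition~\ref{def:global}. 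Since $\mathcal{D}_t(\infty)^{\sigma_3}$ is a fixed matrix, this gives $P^{(\infty)}(z)=I+\mathcal{O}(z^{-1})$, i.e.\ \eqref{eq:globalnorm}; and all of this works for every $t\in[0,1]$ because $\mathcal{T}_t$ is analytic on a fixed complex neighbourhood of $[-1,1]$ for all such $t$. I expect the only genuine difficulty to be the branch bookkeeping: confirming that $\mathcal{D}_t$ really is single-valued and analytic across $(-\infty,-1)$, and that the various $\pm i$, $e^{\pm i\pi\beta_j/2}$ and $e^{\pm i\arccos\lambda}$ phases cancel precisely in $\mathcal{D}_{t,+}\mathcal{D}_{t,-}$. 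None of it is deep, but it is the sort of sign-sensitive calculation that makes it natural to defer the details to Appendix~\ref{app:global}.
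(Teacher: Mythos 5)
Your proposal is correct, but it takes a more self-contained route than the paper. The paper's proof (Appendix~\ref{app:global}) treats the $t=0$ case as known from \cite[Section 4.2]{krasovsky} and then uses the factorization \eqref{eq:globalalt} to reduce general $t$ to $t=0$: one only has to compute, via Sokhotski--Plemelj, the boundary values of the scalar Szeg\H{o} factor $e^{\pm\sigma_3 q_{Sz}}$ and check that conjugating the $t=0$ jump matrix by them multiplies the off-diagonal entries by $e^{\pm\mathcal{T}_t(\lambda)}$, which is exactly the ratio $f_t/f_0$. You instead verify all three conditions from scratch, including the Fisher--Hartwig part $(z+r)^{-\mathcal{A}}\prod_j(z-x_j)^{\beta_j/2}$ and the diagonalization $A\,a^{\sigma_3}A^{-1}$ with $A\,i^{\sigma_3}A^{-1}=\bigl(\begin{smallmatrix}0&1\\-1&0\end{smallmatrix}\bigr)$; all of these computations check out (in particular $a_+=ia_-$, the cancellation $(\lambda+r_+)^{-\mathcal{A}}(\lambda+r_-)^{-\mathcal{A}}=1$ via $e^{\pm i\arccos\lambda}$, and $\mathcal{D}_{t,+}\mathcal{D}_{t,-}=f_t$). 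What your approach buys is independence from Krasovsky's computation; what the paper's buys is brevity and the reusable identity \eqref{eq:globalalt}, which is invoked again for the local parametrices.

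One small imprecision worth tightening: across $(-\infty,-1)$ the factors $(z+r(z))^{-\mathcal{A}}$ and $\prod_j(z-x_j)^{\beta_j/2}$ do \emph{not} individually ``glue through'' -- each has boundary values differing by $e^{\mp 2\pi i\mathcal{A}}$, since $z+r(z)$ approaches a negative real from opposite half-planes and each $z-x_j$ does likewise. It is only their \emph{product} that is continuous there, because the two phase jumps are exactly inverse to each other. This is the content of the paper's Remark~\ref{rem:globalholo}, and your argument should make the cancellation between the two factors explicit rather than asserting continuity factor by factor.
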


See Appendix \ref{app:global} for a proof.
Later on, we will need some estimates on the regularity of the Cauchy transform appearing in \eqref{eq:ddef} near the interval $[-1,1]$. The fact we need is the following one.

\begin{lemma}\label{le:cauchybounds}
The function

\begin{equation*}
z\mapsto r(z)\int_{-1}^1\frac{\mathcal{T}_t(\lambda)}{\sqrt{1-\lambda^2}}\frac{1}{z-\lambda}d\lambda
\end{equation*}

\noindent is bounded uniformly in $t\in[0,1]$ and $z$ in a small enough neighborhood of $[-1,1]$. Moreover, if in a neighborhood of $[-1,1]$, $\Tree$ is a real polynomial of fixed degree, and if we restrict its coefficients to be in some bounded set, then we have uniform boundedness of the above function in the coefficients of $\Tree$ as well.
\end{lemma}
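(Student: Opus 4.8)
The plan is to make explicit the cancellation, already visible in the statement, between the prefactor $r(z)$ and the endpoint singularities of the Cauchy transform
\[
F_t(z):=\int_{-1}^1\frac{\mathcal{T}_t(\lambda)}{\sqrt{1-\lambda^2}}\,\frac{d\lambda}{z-\lambda}
\]
that occurs in the exponent of \eqref{eq:ddef}. Away from $\pm 1$ the transform $F_t$ is manifestly harmless, so the only issue is that near the endpoints the non-integrable weight $1/\sqrt{1-\lambda^2}$ forces $F_t(z)$ to grow like $|z\mp 1|^{-1/2}$, which is exactly the rate at which $r(z)=(z-1)^{1/2}(z+1)^{1/2}$ vanishes. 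The cleanest way to expose this cancellation is to deform the contour of integration off $[-1,1]$.

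First I would recall (Remark \ref{rem:globalholo}) that $r$ extends analytically to $\C\setminus[-1,1]$, with boundary values $r_\pm(\lambda)=\pm i\sqrt{1-\lambda^2}$ on $(-1,1)$, so that $1/\sqrt{1-\lambda^2}=\tfrac{i}{2}\bigl(r_+(\lambda)^{-1}-r_-(\lambda)^{-1}\bigr)$. Fix a bounded complex neighbourhood $U'$ of $[-1,1]$ with $\overline{U'}\subset U_{[-1,1]}$, so that $\mathcal{T}_t$ is analytic on $U_{[-1,1]}$ for every $t\in[0,1]$ (cf.\ the discussion preceding Definition \ref{def:ftcont}), and let $\gamma\subset U_{[-1,1]}$ be a positively oriented simple loop encircling $\overline{U'}$. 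For $z\in U'\setminus[-1,1]$ one can then read $F_t(z)$ as $\tfrac{i}{2}$ times the integral over $[-1,1]$ of the jump of $w\mapsto \mathcal{T}_t(w)/\bigl((z-w)\,r(w)\bigr)$; deforming that contour outward onto $\gamma$ and collecting the residue at the simple pole $w=z$ gives
\[
r(z)\,F_t(z)=-\frac{i}{2}\,r(z)\oint_\gamma\frac{\mathcal{T}_t(w)}{(z-w)\,r(w)}\,dw\;+\;\pi\,\mathcal{T}_t(z),
\]
the small arcs around $\pm 1$ contributing nothing in the limit since the integrand is $O(|w\mp1|^{-1/2})$ there.

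The bound is then immediate. The map $z\mapsto r(z)$ is bounded on the compact set $\overline{U'}$; along $\gamma$ we have $\operatorname{dist}(\gamma,[-1,1])>0$ and, for $z\in U'$, $\operatorname{dist}(z,\gamma)>0$, so $r(w)$ is bounded away from $0$ and $|z-w|$ bounded away from $0$ on $\gamma$; and $\mathcal{T}_t=\log(1-t+te^{\mathcal{T}})$ is bounded on $\overline{U'}\cup\gamma$ uniformly in $t\in[0,1]$, because (as noted before Definition \ref{def:ftcont}) $1-t+te^{\mathcal{T}(w)}$ stays bounded, and bounded away from $0$, uniformly in $t\in[0,1]$ and in $w$ in a fixed neighbourhood of $[-1,1]$ that we may take to contain $\gamma$. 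Hence both terms are bounded uniformly in $t\in[0,1]$ and $z\in U'\setminus[-1,1]$, which is the first claim. For the second claim, if $\mathcal{T}=\sum_{j=1}^M\alpha_j T_j$ with $(\alpha_j)$ in a bounded set, then $\mathcal{T}$ and $e^{\mathcal{T}}$ are bounded on $\overline{U'}\cup\gamma$ uniformly in $(\alpha_j)$ (fixed-degree polynomials with bounded coefficients are uniformly bounded on compacts), and on shrinking $U'$ and $\gamma$ — which can be done uniformly over the bounded parameter set, since $|\operatorname{Im}\mathcal{T}(w)|\le C\operatorname{dist}(w,[-1,1])$ with $C$ independent of $(\alpha_j)$ — the quantity $e^{\mathcal{T}(w)}$ avoids $(-\infty,0]$, so $1-t+te^{\mathcal{T}(w)}$ is bounded away from $0$ uniformly in $t$ and $(\alpha_j)$. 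The displayed identity then yields the bound with constants independent of $(\alpha_j)$ as well.

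I do not expect a genuine obstacle: the only delicate point is the branch bookkeeping (the boundary values $r_\pm$ and the orientation/residue in the contour deformation), which is routine. As an alternative one may avoid the loop integral altogether by writing $\mathcal{T}_t(\lambda)=\mathcal{T}_t(z)+\bigl(\mathcal{T}_t(\lambda)-\mathcal{T}_t(z)\bigr)$, using the explicit value $\int_{-1}^1\frac{d\lambda}{\sqrt{1-\lambda^2}\,(z-\lambda)}=\frac{\pi}{r(z)}$ for the first piece and bounding the remaining divided-difference integral by $\bigl(\sup_{\overline{U'}}|\mathcal{T}_t'|\bigr)\int_{-1}^1\frac{d\lambda}{\sqrt{1-\lambda^2}}=\pi\sup_{\overline{U'}}|\mathcal{T}_t'|$; this gives the same conclusion with the same uniformity, the uniform control of $\sup_{\overline{U'}}|\mathcal{T}_t'|$ following exactly as above.
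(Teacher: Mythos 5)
Your main argument is correct but takes a different route from the paper, while the ``alternative'' you sketch in your final paragraph is essentially verbatim the paper's own proof. The paper writes $\mathcal{T}_t(\lambda)=\mathcal{T}_t(z)+(\mathcal{T}_t(\lambda)-\mathcal{T}_t(z))$, bounds the divided-difference integral by $\sup_t\|\mathcal{T}_t'\|_\infty\int_{-1}^1(1-\lambda^2)^{-1/2}d\lambda$, and uses the explicit evaluation $\int_{-1}^1\frac{d\lambda}{\sqrt{1-\lambda^2}(z-\lambda)}=\pi/r(z)$ (which it verifies by a large-$|z|$ series expansion plus analytic continuation) so that the singular piece cancels $r(z)$ exactly. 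Your primary route instead represents the weight $1/\sqrt{1-\lambda^2}$ as the jump of $1/r$ across the cut, collapses the integral to a loop $\gamma$ well separated from $[-1,1]$, and picks up the residue $\pi\mathcal{T}_t(z)/r(z)$ at $w=z$; the sign and branch bookkeeping in your displayed identity check out (and, specialized to $\mathcal{T}_t\equiv 1$, it reproves the arcsine formula the paper quotes). What each buys: the contour method avoids having to know the explicit Cauchy transform of the arcsine weight at the cost of the orientation/residue bookkeeping and the vanishing of the small arcs at $\pm 1$, which you correctly justify via the $O(|w\mp 1|^{-1/2})$ bound; the paper's method is shorter once the explicit formula is granted. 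Your treatment of the uniformity in $t$ and in the polynomial coefficients (in particular the observation that the zero-free neighbourhood of $1-t+te^{\mathcal{T}}$ can be chosen uniformly over a bounded coefficient set because $|\mathrm{Im}\,\mathcal{T}(w)|\leq C\,\mathrm{dist}(w,[-1,1])$ with uniform $C$) is a point the paper glosses over, and is a welcome addition.
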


\begin{proof}
Let us fix a neighborhood of $[-1,1]$ such that for all $t\in[0,1]$, $\mathcal{T}_t$ is analytic in the closure of this neighborhood (this exists by similar reasoning as in the beginning of Section \ref{sec:trans2}). Now write

\begin{align*}
\int_{-1}^1\frac{\mathcal{T}_t(\lambda)}{\sqrt{1-\lambda^2}}\frac{1}{z-\lambda}d\lambda&=\int_{-1}^1\frac{\mathcal{T}_t(\lambda)-\mathcal{T}_t(z)}{z-\lambda}\frac{1}{\sqrt{1-\lambda^2}}d\lambda+\mathcal{T}_t(z)\int_{-1}^{1}\frac{1}{\sqrt{1-\lambda^{2}}}\frac{1}{z-\lambda}d\lambda.
\end{align*}

As $\mathcal{T}_t$ is analytic, the first term is of order $\mathcal{O}(\sup_{t\in[0,1]}||\mathcal{T}_t'||_\infty)$ (the prime referring to the $z$-variable and the sup-norm is over $z$ in the neighborhood we are considering) which is a finite constant depending on our neighborhood of $[-1,1]$ and the function $\mathcal{T}$. In the polynomial case, one can easily check that it is bounded uniformly in the coefficients when they are restricted to a compact set. The second integral can be calculated exactly:

\begin{align*}
\int_{-1}^1 \frac{1}{\sqrt{1-\lambda^2}}\frac{1}{z-\lambda}d\lambda&=\frac{\pi}{r(z)}.
\end{align*}

This can be seen for example by expanding the Cauchy kernel for large $|z|$ as a geometric series. The integrals resulting from this are simple to calculate and one can then also calculate the remaining sum exactly. The resulting quantity agrees with $\pi/r(z)$ on $(1,\infty)$ so by analyticity, the statement holds. The claim now follows from the uniform boundedness of $\mathcal{T}_t$ (for which the uniform boundedness in the polynomial case is again easy to check).
\end{proof}

\subsection{Local parametrices near the singularities}\label{sec:locals}

We now wish to find functions approximating $S(z)$ well near the points $x_j$. We will thus look for functions that satisfy the same jump conditions as $S(z)$ in some fixed neighborhoods of the points $x_j$ for $j=1,...,k$, but we will also want these approximations to be consistent with the global approximation, so we will replace a normalization at infinity with a matching condition, where we demand that the two approximations are close to each other on the boundary of the neighborhood we are looking at at. Our argument is built on \cite[Section 4.3]{krasovsky}, which in turn relies on \cite[Section 4]{vanlessen}. Again, we state the relevant facts here and give some further details in Appendix \ref{app:locals}.

In this case, we will have to introduce a bit more notation before defining our actual object. We first introduce a change of coordinates that will blow up in a neighborhood of a singularity in a good way.

\begin{definition}\label{def:zeta}
Fix some $\delta>0$ $($independent of $N$, $s$, and $t)$. Let us write $U_{x_j}$ for the open $\delta$-disk surrounding $x_j$. We assume that $\delta$ is small enough that the following conditions are satisfied:

\begin{itemize}
\item[i)] $|x_i-x_j|>3\delta$ for $i\neq j$.
\item[ii)] $|x_j\pm 1|>3\delta$ for all $j\in\lbrace 1,...,k\rbrace$.
\item[iii)] For all $j$, $U_{x_j}'$ -- the open $3\delta/2$-disk around $x_j$ -- is contained in $U$, which is some neighborhood of $\R$ into which $d$ has an analytic continuation $($see e.g. Definition \ref{def:hdef}$)$.
\end{itemize}

For $z\in U_{x_j}'$, let

\begin{equation}\label{eq:zetadef}
\zeta_s(z)=\pi N \int_{x_j}^z\left[\frac{2}{\pi}(1-s)+sd(w)\right]\sqrt{1-w^2}dw,
\end{equation}

\noindent where the root is according to the principal branch, and the integration contour does not leave $U_{x_j}'$.
\end{definition}

\begin{remark}\label{rem:zetainj}
The reason for introducing the two neighborhoods $U_{x_j}$ and $U_{x_j}'$, is that we will want the local parametrices to be analytic functions approximately agreeing with $P^{(\infty)}$ on the boundary of $U_{x_j}$, but to ensure that they behave nicely near the boundary, we will construct them such that they are analytic in $U_{x_j}'$.

We also point out that by taking $\delta$ smaller if needed, $\zeta_s$ can be seen to be injective as $d$ is positive on $[-1,1]$. More precisely, we see that $\zeta_s'(x_j)>c N$ for some constant $c$ which is independent of $s$ $($but not necessarily of $\delta)$ and $|\zeta_s''(z)|\leq C N$ uniformly in $z\in U_{x_j}'$ for some $C>0$ independent of $s$ $($but not necessarily of $\delta)$. From this one sees that $\zeta_s$ is injective in a small enough $(N$- and $s$-independent$)$ neighborhood of $x_j$. 
\end{remark}

In addition to this change of coordinates, we will need to add further jumps to make our jump contour more symmetric, in order to obtain an approximate problem with a known solution.

\begin{definition}\label{def:wandphi}
For $z\in U_{x_j}'$, let

\begin{align}\label{eq:wdef}
W_j(z)&=W_j(z,t)\\
\notag &=e^{\mathcal{T}_t(z)/2}\prod_{l=1}^{j-1}(z-x_l)^{\beta_l/2}\prod_{l=j+1}^k (x_l-z)^{\beta_l/2}\times\begin{cases}
(z-x_j)^{\beta_j/2}, & |\mathrm{arg}\ \zeta_s(z)|\in(\pi/2,\pi)\\
(x_j-z)^{\beta_j/2}, & |\mathrm{arg}\ \zeta_s(z)|\in(0,\pi/2)
\end{cases},
\end{align}

\noindent where the roots are principal branch roots. Moreover, let

\begin{equation}\label{eq:phidef}
\phi_s(z)=\begin{cases}
\frac{h_s(z)}{2}, & \mathrm{Im}(z)>0\\
-\frac{h_s(z)}{2}, & \mathrm{Im}(z)<0
\end{cases}.
\end{equation}
\end{definition}

The precise form of $\zeta_s$ will be important for us to be able to see that the local parametrices indeed approximately agree with $P^{(\infty)}$ on the boundary of $U_{x_j}$. We also point out that for small enough $\delta$, $\zeta_s$ is one-to-one, and it preserves the real axis (along with the orientation of the plane as it's conformal).

We also point out that $W_j$ is almost identical to $f_t^{1/2}$, apart from the fact that it introduces some further branch cuts to it: along the imaginary axis in the $\zeta_s$-plane, as well as on the real axis (recall that $f_t$ has no branch cut along the real axis). These further branch cuts are useful in transforming the Riemann-Hilbert problem for the parametrix into one with certain constant jump matrices along a very special contour. This problem has been studied in \cite{vanlessen}.

We are now able to clarify our choice of the contours $\Sigma_j^{\pm}$ apart from the behavior near the end points $\pm 1$.

\begin{definition}\label{def:lenses2}
Let $(\Sigma_l^{\pm})_l$ be such that

\begin{equation}\label{eq:sigmaj1}
\zeta_s\left(\Sigma_{j-1}^{\pm}\cap U_{x_j}'\right)=\left[e^{\pm 3\pi i/4}\times[0,\infty)\right]\cap \zeta_s\left(U_{x_j}'\right)
\end{equation}

\noindent and

\begin{equation}\label{eq:sigmaj2}
\zeta_s\left(\Sigma_{j}^{\pm}\cap U_{x_j}'\right)=\left[e^{\pm \pi i/4}\times[0,\infty)\right]\cap \zeta_s\left(U_{x_j}'\right).
\end{equation}

Outside of $U_{x_j}'$ $($apart from close to $\pm 1)$, we take $(\Sigma_l^{\pm})_l$ to be smooth, without self-intersections and the distance between them and the real axis to be bounded away from zero and of order $\delta$, and such that the contours are contained in $U$ -- the neighborhood of $\R$ into which $d$ has an analytic continuation. For an illustration, see Figure \ref{fig:cont1}.
\end{definition}

Using the injectivity of $\zeta_s$ we argued in Remark \ref{rem:zetainj} and the Koebe quarter theorem, it is immediate that $\Sigma_j^\pm$ and $\Sigma_{j-1}^\pm$ are well defined for large enough $N$ and small enough $\delta$ (large and small enough being independent of $s$).

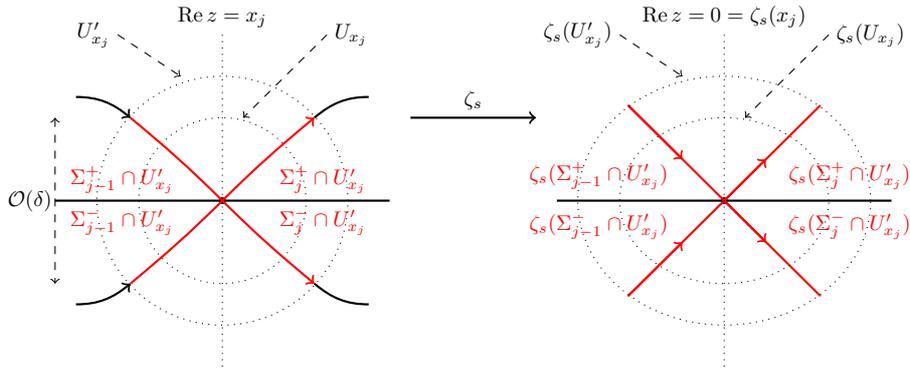
\begin{figure}[ht]
\begin{center}
\begin{tikzpicture}[scale=0.55, every node/.style={scale=0.8}]

\draw[thick] (-4, 0) -- (4, 0);
\draw[fill = black] (0,0) circle[radius = 0.07];
\draw[dotted] (0, 4) node[above]{$\Re z = x_j$} -- (0, -4);
\draw[dotted] (0,0) circle [radius = 2] circle[radius = 3];
\draw[thick,red] (-2.2, 2) to [out = -40, in = 135] (0,0); \node at (-2.4, 0.5) {\color{red}$\Sigma_{j-1}^+\cap U'_{x_j}$};
\draw[thick, ->] (-3.5, 2.5) to [out = 0, in = 140] (-2.2, 2);
\draw[thick, red] (-2.2, -2) to [out = 40, in = -135] (0,0); \node at (-2.4, -0.5) {\color{red}$\Sigma_{j-1}^-\cap U'_{x_j}$};
\draw[thick, ->] (-3.5, -2.5) to [out = 0, in = -140] (-2.2, -2);
\draw[thick] (2.2, 2) to [out = 40, in = 180] (3.5,2.5);
\draw[thick, ->, red] (0, 0) to [out = 45, in = 220] (2.2, 2); \node at (2.4, 0.5) {\color{red}$\Sigma_{j}^+\cap U'_{x_j}$};
\draw[thick] (2.2, -2) to [out = -40, in = -180] (3.5,-2.5);
\draw[thick, ->, red] (0, 0) to [out = -45, in = -220] (2.2, -2); \node at (2.4, -0.5) {\color{red}$\Sigma_{j}^-\cap U'_{x_j}$};
\draw[dashed, <->] (-4, 2) -- (-4, -2); \node at (-4.6, 0) {$\mathcal{O}(\delta)$};
\draw[dashed, ->] (-2.5, 4) node[left] {$U'_{x_j}$} -- (-1, 3);
\draw[dashed, ->] (2.5, 4) node[right] {$U_{x_j}$} -- (0.5, 2);

\draw[thick, ->] (4.5, 2) -- (6, 2) node[above]{$\zeta_s$} -- (7.5, 2);

\draw[thick] (8,0) -- (16,0);
\draw[fill = black] (12,0) circle [radius = 0.07];
\draw[thick, ->, red] (9.7, 2.3) -- (11, 1);  \node at (9, 0.6) {\color{red}$\zeta_s(\Sigma_{j-1}^+ \cap U'_{x_j})$};
\draw[thick, ->, red] (9.7, -2.3) -- (11, -1); \node at (9, -0.6) {\color{red}$\zeta_s(\Sigma_{j-1}^- \cap U'_{x_j})$};
\draw[thick, ->, red] (12, 0) -- (13, 1); \draw[thick, ->, red] (12,0) -- (13,-1); \node at (15, 0.6) {\color{red}$\zeta_s(\Sigma_{j}^+ \cap U'_{x_j})$};
\draw[thick, red] (9.7, 2.3) -- (14.3, -2.3) (9.7, -2.3) -- (14.3, 2.3); \node at (15, -0.6) {\color{red}$\zeta_s(\Sigma_{j}^- \cap U'_{x_j})$};
\draw[dotted] (12, -4) -- (12, 4) node[above] {$\Re z = 0 = \zeta_s(x_j)$};
\draw[dotted] (12,0) ellipse (3.5 and 3) (12,0) ellipse (2.5 and 2) ;

\draw[dashed, ->] (9.5, 4) node[left] {$\zeta_s(U'_{x_j})$} -- (11, 3);
\draw[dashed, ->] (14.5, 4) node[right] {$\zeta_s(U_{x_j})$} -- (12.5, 2);

\end{tikzpicture}

\caption{Choice of the jump contours near the singularities.}
\label{fig:cont1}
\end{center}
\end{figure}

We still need one further ingredient before defining our local parametrix. This is a solution to a model Riemann-Hilbert problem -- a problem where the jump contours and matrices are particularly simple and a solution can be given explicitly in terms of suitable special functions. We will give a rather compact definition here with a more detailed description in Appendix \ref{app:locals}.

\begin{definition}\label{def:psi}
Let us denote by Roman numerals the octants of the complex plane -- so we write $\mathrm{I}=\lbrace r e^{i\theta}: r>0,\theta\in(0,\pi/4)\rbrace$ and so on. Denote by $\Gamma_l$ the boundary rays of these octants: for $ 1\le l \le 8$, $\Gamma_l=\{ re^{i\frac{\pi}{4}(l-1)}, r >0\}$, oriented as in Figure \ref{fig:modelrhp}. 

For $\zeta\in \mathrm{I}$, let

\begin{equation}\label{eq:psiI}
\Psi(\zeta)=\frac{1}{2}\sqrt{\pi \zeta}\begin{pmatrix}
H_{\frac{\beta_j+1}{2}}^{(2)}(\zeta) & -i H_{\frac{\beta_j+1}{2}}^{(1)}(\zeta)\\
H_{\frac{\beta_j-1}{2}}^{(2)}(\zeta) & -iH_{\frac{\beta_j-1}{2}}^{(1)}(\zeta)
\end{pmatrix} e^{-\left(\frac{\beta_j}{2}+\frac{1}{4}\right)\pi i \sigma_3},
\end{equation}

\noindent where $H_\nu^{(i)}$ are Hankel functions and the root is according to the principal branch. In other octants, $\Psi$ satisfies the following Riemann-Hilbert problem:

\begin{itemize}[leftmargin=0.5cm]
\item[1.] $\Psi: \C\setminus \cup_{l=1}^8 \overline{\Gamma_l}\to \C^{2\times 2}$ is analytic.

\item[2.] $\Psi$ has continuous boundary values on each $\Gamma_l$ and satisfies the following jump condition $($again for the orientation, see Figure \ref{fig:modelrhp}$)$ $\Psi_+(\zeta)=\Psi_-(\zeta)K(\zeta)$ for $\zeta\in \cup_{l=1}^8 \Gamma_l$, where

\begin{equation}\label{eq:psijump}
K(\zeta)=\begin{cases}
\begin{pmatrix}
0 & 1\\
-1 & 0
\end{pmatrix}, &\zeta\in \Gamma_1\cup \Gamma_5\\
\begin{pmatrix}
1& 0\\
e^{-\pi i \beta_j} & 1
\end{pmatrix}, & \zeta\in \Gamma_2\cup\Gamma_6\\
e^{\pi i \frac{\beta_j}{2}\sigma_3}, & \zeta\in \Gamma_3\cup \Gamma_7\\
\begin{pmatrix}
1& 0\\
e^{\pi i \beta_j} & 1
\end{pmatrix}, & \zeta\in \Gamma_4\cup\Gamma_8
\end{cases}
\end{equation}
\end{itemize}
\end{definition}

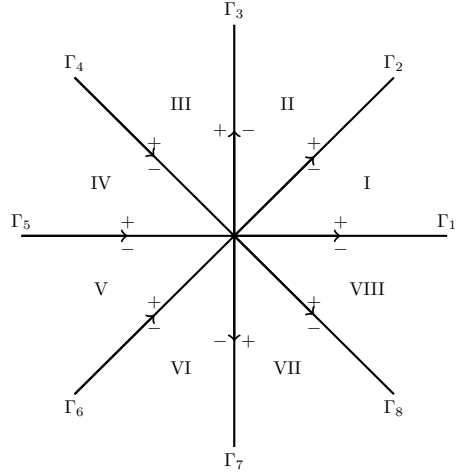
\begin{figure}[ht]
\begin{center}
\begin{tikzpicture}[scale=0.7, every node/.style={scale=0.7}]

\draw[thick, ->] (-4, 0) -- (-2,0) node[above] {$+$} node[below]{$-$};
\draw[thick, ->] (0, 0) -- (2,0) node[above] {$+$} node[below]{$-$};
\draw[thick, ->] (-3, 3) -- (-1.5, 1.5) node[below] {$-$} node[above]{$+$};
\draw[thick, ->] (0, 0) -- (1.5, -1.5) node[below] {$-$} node[above]{$+$};
\draw[thick, ->] (-3, -3) -- (-1.5, -1.5) node[above] {$+$} node[below]{$-$};
\draw[thick, ->] (0, 0) -- (1.5, 1.5) node[above] {$+$} node[below]{$-$};
\draw[thick, ->] (0, 0) -- (0, 2) node[left]{$+$} node[right]{$-$};
\draw[thick, ->] (0, 0) -- (0, -2) node[left]{$-$} node[right]{$+$};
\draw[thick] (-4,0) -- (4, 0) (0, -4) -- (0, 4) (-3, 3) -- (3, -3) (-3, -3) -- (3, 3);

\node at (2.5, 1) {I};
\node at (1, 2.5) {II};
\node at (-1, 2.5) {III};
\node at (-2.5, 1) {IV};
\node at (-2.5, -1) {V};
\node at (-1, -2.5) {VI};
\node at (1, -2.5) {VII};
\node at (2.5, -1) {VIII};

\node[above] at (4,0) {$\Gamma_1$};
\node[above] at (3,3) {$\Gamma_2$};
\node[above] at (0,4) {$\Gamma_3$};
\node[above] at (-3,3) {$\Gamma_4$};
\node[above] at (-4,0) {$\Gamma_5$};
\node[below] at (-3,-3) {$\Gamma_6$};
\node[below] at (0,-4) {$\Gamma_7$};
\node[below] at (3,-3) {$\Gamma_8$};

\end{tikzpicture}
\caption{Jump contour of the model RHP}
\label{fig:modelrhp}
\end{center}
\end{figure}

Uniqueness of such a $\Psi$ can be argued in a similar manner as usual. First of all, one can check that for $\zeta\in \mathrm{I}$, $\det \Psi(\zeta)=1$. As the jump matrices all have unit determinant, $\det \Psi$ is analytic in $\C\setminus \lbrace 0\rbrace$, so  $\det\Psi(\zeta)=1$ for $\zeta\in \C$ (one can check that $\zeta=0$ is a removable singularity). Consider then some other solution to the problem, say $\widetilde{\Psi}$. As $\det \Psi=\det\widetilde{\Psi}=1$,  $\Psi(\zeta) \widetilde{\Psi}(\zeta)^{-1}$ is analytic in $\C\setminus \cup_{l=1}\overline{\Gamma_l}$ and equals $I$ for $\zeta\in \mathrm{I}$. Again it follows from the jump structure that $\Psi(\zeta) \widetilde{\Psi}(\zeta)^{-1}$ continues analytically to $\C\setminus \lbrace 0\rbrace$ so it must equal $I$ everywhere. For an explicit description of the solution, see Appendix \ref{app:locals}.

The local parametrices will then be formulated in terms of this function $\Psi$, a coordinate change given by $\zeta_s$, the function $W_j$, and an analytic ($\C^{2\times 2}$-valued) ``compatibility matrix" $E$, which is needed for the matching condition to be satisfied. We now make the relevant definitions.

\begin{definition}\label{def:local}
For $z\in U_{x_j}'\cap \lbrace \mathrm{Im}(z)>0\rbrace$, write

\begin{equation}\label{eq:edef1}
E(z)=E(z,t,s)=P^{(\infty)}(z,t)W_j(z,t)^{\sigma_3} e^{N\phi_{s,+}(x_j)\sigma_3}e^{-(1\mp \beta_j)\pi i \sigma_3/4}\frac{1}{\sqrt{2}}\begin{pmatrix}
1 & i\\
i & 1
\end{pmatrix}
\end{equation}

\noindent where the $-$ sign is in the domain $\lbrace z\in \C: \mathrm{arg}(\zeta_s(z))\in (0,\pi/2)\rbrace$ and the $+$ sign is in the domain $\lbrace z\in \C: \mathrm{arg}(\zeta_s(z))\in (\pi/2,\pi)\rbrace$. For $z\in U_{x_j}'\cap \lbrace \mathrm{Im}(z)<0\rbrace$, write

\begin{equation}\label{eq:edef2}
E(z)=P^{(\infty)}(z)W_j(z)^{\sigma_3} \begin{pmatrix}
0 & 1\\
-1 & 0
\end{pmatrix}e^{N\phi_{s,+}(x_j)\sigma_3}
e^{-(1\mp \beta_j)\pi i \sigma_3/4}\frac{1}{\sqrt{2}}\begin{pmatrix}
1 & i\\
i & 1
\end{pmatrix}
\end{equation}

\noindent where $-$ sign is in the domain $\lbrace z\in \C: \mathrm{arg}(\zeta_s(z))\in (-\pi/2,0)\rbrace$ and the $+$ sign is in the domain $\lbrace z\in \C: \mathrm{arg}(\zeta_s(z))\in (-\pi,-\pi/2)\rbrace$.

\vspace{0.3cm}

Finally, for $z\in U_{x_j}'\setminus \Sigma$, let

\begin{equation}\label{eq:localdef}
P^{(x_j)}(z)=P^{(x_j)}(z,s,t)=E(z,s,t)\Psi(\zeta_s(z))W_{j}(z,t)^{-\sigma_3}e^{-N\phi_s(z)\sigma_3}.
\end{equation}
\end{definition}

\begin{remark}\label{rem:eholo}
Using \eqref{eq:wdef} -- the definition of $W_j$ -- as well as \eqref{eq:globaljump} -- the jump conditions of $P^{(\infty)}$, one can check that $E$ has no jumps in $U_{x_j}'$. Moreover, using the behavior of both functions near $x_j$, one can check that $E$ does not have an isolated singularity at $x_j$, so $E$ is analytic in $U_{x_j}'$.

We also point out that it follows directly from the definitions, i.e. \eqref{eq:wdef}, \eqref{eq:edef1}, \eqref{eq:edef2}, and \eqref{eq:localdef}, that for $z\in U_{x_j}'\setminus \Sigma$

\begin{equation}\label{eq:localalt}
P^{(x_j)}(z,t,s)=P^{(\infty)}(z,t)e^{\frac{1}{2}\mathcal{T}_t(z)\sigma_3}\left[P^{(\infty)}(z,0)\right]^{-1}P^{(x_j)}(z,0,s)e^{-\frac{1}{2}\mathcal{T}_t(z)\sigma_3}.
\end{equation}
\end{remark}

The main claim about $P^{(x_j)}$ is the following, whose proof we sketch in Appendix \ref{app:locals}.

\begin{lemma}\label{le:localrhp}
The function $P^{(x_j)}$ satisfies the following Riemann-Hilbert problem.

\begin{itemize}[leftmargin=0.5cm]
\item[1.] $P^{(x_j)}:U_{x_j}'\setminus \Sigma\to \C^{2\times 2}$ is analytic.
\item[2.] $P^{(x_j)}$ has continuous boundary values on $\Sigma\cap U_{x_j}'\setminus \lbrace x_j\rbrace$ and these satisfy the following jump conditions $($with the same orientation as for $S$ and same convention for the sign in $e^{\mp Nh_s(\lambda)})$: for $\lambda\in(U_{x_j}'\setminus \lbrace x_j\rbrace)\cap(\Sigma_{j-1}^+\cup\Sigma_{j-1}^-\cup \Sigma_j^+\cup\Sigma_j^{-1})$

\begin{equation}\label{eq:localjumplenses}
P^{(x_j)}_+(\lambda)=P^{(x_j)}_-(\lambda)\begin{pmatrix}
1 & 0\\
f_t(\lambda)^{-1} e^{\mp Nh_s(\lambda)} & 1
\end{pmatrix},
\end{equation}

\noindent and for $\lambda\in \R \cap U_{x_j}'\setminus \lbrace x_j\rbrace$

\begin{equation}\label{eq:localjumpr}
P^{(x_j)}_+(\lambda)=P^{(x_j)}_-(\lambda)\begin{pmatrix}
0 & f_t(\lambda)\\
-f_t(\lambda)^{-1} & 0
\end{pmatrix}.
\end{equation}
\item[3.] $P^{(x_j)}(z)$ is bounded as $z\to x_j$ from outside of the lenses, but when $z\to x_j$ from inside of the lenses

\begin{equation}\label{eq:localrhpsing}
P^{(x_j)}(z)=\begin{pmatrix}
\mathcal{O}(|z-x_j|^{-\beta_j}) & \mathcal{O}(1)\\
\mathcal{O}(|z-x_j|^{-\beta_j}) & \mathcal{O}(1)\\
\end{pmatrix}.
\end{equation}

\item[4.] For $z\in \partial U_{x_j}$

\begin{equation}\label{eq:localmatch}
P^{(x_j)}(z)\left[P^{(\infty)}(z)\right]^{-1}=I+\mathcal{O}(N^{-1}),
\end{equation}

\noindent where the $\mathcal{O}(N^{-1})$-term is a $2\times 2$ matrix whose entries are $\mathcal{O}(N^{-1})$ uniformly in $z,s,t,$ $\lbrace |x_i-x_j|\geq 3\delta \ \mathrm{for \ } i\neq j\rbrace$, and $\lbrace |1\pm x_j|\geq 3\delta \ \mathrm{for \ all \ } j\in\lbrace 1,...,k\rbrace\rbrace$. If in a neighborhood of $[-1,1]$, $\Tree$ is a real polynomial of fixed degree, the error is also uniform in the coefficients once they are restricted to some bounded set.
\end{itemize}
\end{lemma}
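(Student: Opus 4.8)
The plan is to verify the four items of the Riemann--Hilbert problem for $P^{(x_j)}$ directly from its definition \eqref{eq:localdef}, exploiting the fact that $\Psi$ solves the model problem of Definition \ref{def:psi} and that $E$ is analytic in $U_{x_j}'$ (Remark \ref{rem:eholo}). Item 1 is essentially immediate: $E$ is analytic in $U_{x_j}'$, $\Psi(\zeta_s(\cdot))$ is analytic off the preimages of the rays $\Gamma_l$, the map $\zeta_s$ is analytic on $U_{x_j}'$, and $W_j(z,t)^{-\sigma_3}e^{-N\phi_s(z)\sigma_3}$ is analytic away from its branch cuts; by Definition \ref{def:lenses2} the preimages of the rays $\Gamma_1,\dots,\Gamma_8$ under $\zeta_s$ are precisely the pieces of $\R$ and of $\Sigma_{j-1}^\pm,\Sigma_j^\pm$ inside $U_{x_j}'$, so $P^{(x_j)}$ is analytic on $U_{x_j}'\setminus\Sigma$.

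For item 2, I would compute the jumps of $P^{(x_j)}$ across each contour piece. Since $E$ is analytic it contributes nothing to the jumps, so $P^{(x_j)}_+\big(P^{(x_j)}_-\big)^{-1}$ is conjugation of the jump of $\Psi(\zeta_s(\cdot))W_j^{-\sigma_3}e^{-N\phi_s\sigma_3}$ by $E$ (on the left) and one must check the jump of $W_j^{-\sigma_3}e^{-N\phi_s\sigma_3}$ cancels against $\Psi$'s jump to leave the desired matrices. Concretely: on the arcs $\Sigma_{j-1}^\pm,\Sigma_j^\pm$ inside $U_{x_j}'$, $W_j$ is analytic there (its branch cut is along $\R$ and the imaginary $\zeta_s$-axis), $h_s$ is analytic, so only $\Psi$ jumps, by $K(\zeta)=\begin{pmatrix}1&0\\e^{\pm\pi i\beta_j}&1\end{pmatrix}$ on $\Gamma_2,\Gamma_4,\Gamma_6,\Gamma_8$; conjugating through $W_j^{-\sigma_3}e^{-N\phi_s\sigma_3}$ converts the off-diagonal entry $e^{\pm\pi i\beta_j}$ into $W_j^{-2}e^{\pm 2N\phi_s}=f_t^{-1}e^{\mp Nh_s}$ — here one uses that $W_j^2$ equals $f_t$ up to a branch factor $e^{\mp\pi i\beta_j}$ coming from the extra cut along the imaginary axis, which exactly absorbs the $e^{\pm\pi i\beta_j}$ from $K$, and that $2\phi_s=\pm h_s$ by \eqref{eq:phidef}. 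This reproduces \eqref{eq:localjumplenses}. Across the real axis inside $U_{x_j}'$ one has two contributions: $\Psi$ jumps by $\begin{pmatrix}0&1\\-1&0\end{pmatrix}$ on $\Gamma_1,\Gamma_5$ and by $e^{\pi i\frac{\beta_j}{2}\sigma_3}$ on $\Gamma_3,\Gamma_7$, while $W_j^{-\sigma_3}$ has its own jump across $\R$ (coming from the $(z-x_j)^{\beta_j/2}$ vs $(x_j-z)^{\beta_j/2}$ switch and, to the left/right of $x_j$, from the other $(z-x_l)^{\beta_l/2}$ factors), and $f_t$ is analytic across $\R$; combining these on the left of $x_j$ and on the right of $x_j$ (and also using the jump of $E$ itself across $\R$, since $E$ is defined piecewise in the upper and lower half-planes and one must check these pieces glue — this is part of Remark \ref{rem:eholo}) should collapse to $\begin{pmatrix}0&f_t\\-f_t^{-1}&0\end{pmatrix}$, giving \eqref{eq:localjumpr}.

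Item 3 follows from the local behavior of $\Psi$ near $\zeta=0$, which is encoded in the Hankel-function formula \eqref{eq:psiI} and its analytic continuation: $H^{(1,2)}_\nu(\zeta)\sim \zeta^{-|\nu|}$ or $\sim\zeta^{|\nu|}$ as $\zeta\to 0$, so $\Psi(\zeta)$ has entries of size $\mathcal{O}(|\zeta|^{-\beta_j/2})$ in the first column and $\mathcal{O}(|\zeta|^{\beta_j/2})$ in the second (for $\beta_j>0$); multiplying by $W_j(z)^{-\sigma_3}=\mathcal{O}(|z-x_j|^{\mp\beta_j/2})$ and recalling $|\zeta_s(z)|\asymp N|z-x_j|$ gives, inside the lenses, the first column $\mathcal{O}(|z-x_j|^{-\beta_j})$ and the second column $\mathcal{O}(1)$; $E$ and $e^{-N\phi_s\sigma_3}$ are bounded, yielding \eqref{eq:localrhpsing}, and outside the lenses the boundedness of $P^{(x_j)}$ follows similarly since there the exponent pieces of $\Psi$ rearrange. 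The main obstacle — and the step I would spend the most care on — is item 4, the matching condition \eqref{eq:localmatch} on $\partial U_{x_j}$, together with its uniformity. For this I would first use \eqref{eq:localalt} and \eqref{eq:globalalt} to reduce to the case $\mathcal{T}=0$ (so $\mathcal{T}_t\equiv 0$), since the conjugating factors $e^{\pm\frac12\mathcal{T}_t(z)\sigma_3}$ and the extra $P^{(\infty)}(z,t)[P^{(\infty)}(z,0)]^{-1}$ are uniformly bounded with uniformly bounded inverses on $\partial U_{x_j}$ by Lemma \ref{le:cauchybounds} (this also handles the uniformity in the polynomial coefficients). In the $\mathcal{T}=0$ case, on $\partial U_{x_j}$ one has $|\zeta_s(z)|\asymp N\delta\to\infty$, so one substitutes the large-$\zeta$ asymptotics of the Hankel functions — equivalently of $\Psi$, which takes the form $\Psi(\zeta)=\big(\text{const} + \mathcal{O}(\zeta^{-1})\big)\zeta^{-\sigma_3/4}e^{\text{(something)}}$ type expansion — and checks that the leading term of $E(z)\Psi(\zeta_s(z))W_j(z)^{-\sigma_3}e^{-N\phi_s(z)\sigma_3}$ is exactly $P^{(\infty)}(z)$, by the very way $E$ was defined in \eqref{eq:edef1}--\eqref{eq:edef2} (that definition was engineered so that the leading-order singular/oscillatory factors cancel). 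The remainder is then $\mathcal{O}(|\zeta_s(z)|^{-1})=\mathcal{O}((N\delta)^{-1})=\mathcal{O}(N^{-1})$ for fixed $\delta$; uniformity in $z\in\partial U_{x_j}$, in $s\in[0,1]$, and in the configuration $\{|x_i-x_j|\ge 3\delta,\ |1\pm x_j|\ge 3\delta\}$ comes from the uniform lower bound $\zeta_s'(x_j)\ge cN$ and the uniform bounds on $\zeta_s''$ from Remark \ref{rem:zetainj}, together with the uniform boundedness of $P^{(\infty)}(z)$ and $[P^{(\infty)}(z)]^{-1}$ on $\partial U_{x_j}$ away from the singularities (which in turn uses that $a(z),a(z)^{-1}$ and $\mathcal{D}_t(z)^{\pm\sigma_3}$ are bounded there, again via Lemma \ref{le:cauchybounds}). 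I would present the bulk of these verifications in Appendix \ref{app:locals}, since each individual check is a standard but lengthy computation in the style of \cite[Section 4.3]{krasovsky} and \cite[Section 4]{vanlessen}.
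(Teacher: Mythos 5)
Your overall strategy is the same as the paper's (which carries out these verifications in Appendix \ref{app:locals}): analyticity of $E$ from Remark \ref{rem:eholo}, jump bookkeeping via the branch structure of $W_j$ and the model jumps $K(\zeta)$, the small-$\zeta$ behaviour of $\Psi$ for item 3, and the large-$\zeta$ Bessel asymptotics plus Lemma \ref{le:cauchybounds} for the matching condition and its uniformity. Your treatment of item 4 in particular is sound and essentially identical in substance to the paper's, correctly identifying the uniform boundedness of $\mathcal{D}_t^{-1}W_j$ (via Lemma \ref{le:cauchybounds}) as the crux.

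There is, however, a genuine gap in your item 1. You assert that the $\zeta_s$-preimages of the eight rays $\Gamma_1,\dots,\Gamma_8$ are ``precisely the pieces of $\R$ and of $\Sigma_{j-1}^\pm,\Sigma_j^\pm$ inside $U_{x_j}'$,'' and conclude analyticity off $\Sigma$ from this. That premise is false: by Definition \ref{def:lenses2} only $\Gamma_1,\Gamma_2,\Gamma_4,\Gamma_5,\Gamma_6,\Gamma_8$ pull back into $\Sigma$; the rays $\Gamma_3$ and $\Gamma_7$ pull back to the curve $\lbrace z:\mathrm{Re}\,\zeta_s(z)=0\rbrace$, which crosses $U_{x_j}'$ transversally to the real axis and is \emph{not} part of $\Sigma$. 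Both $\Psi(\zeta_s(\cdot))$ (jump $e^{\pi i\frac{\beta_j}{2}\sigma_3}$ from \eqref{eq:psijump}) and $W_j^{-\sigma_3}$ (jump from the switch between $(z-x_j)^{\beta_j/2}$ and $(x_j-z)^{\beta_j/2}$ in \eqref{eq:wdef}) have nontrivial jumps across this curve, and the whole point of the extra branch cuts built into $W_j$ is that these two jumps cancel, e.g.\ $W_{j,+}W_{j,-}^{-1}=e^{i\pi\beta_j/2}$ on $\zeta_s^{-1}(\Gamma_3)$ exactly undoes $\Psi$'s jump there. You clearly have the right tools (you invoke the ``extra cut along the imaginary axis'' when discussing the lens jumps), but the cancellation on $\zeta_s^{-1}(\Gamma_3\cup\Gamma_7)$ is never performed, and as written the analyticity claim rests on a false statement. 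Secondarily, in item 3 the small-$\zeta$ asymptotics you quote for $\Psi$ (first column $\mathcal{O}(|\zeta|^{-\beta_j/2})$, second column $\mathcal{O}(|\zeta|^{\beta_j/2})$ in all octants) do not match the actual behaviour \eqref{eq:psiasy}, which is octant-dependent; with your version the first column would be $\mathcal{O}(|z-x_j|^{-\beta_j})$ outside the lenses as well, contradicting the claimed boundedness there. The correct octant-by-octant asymptotics are what make both halves of item 3 come out, so this step should be stated precisely rather than deferred to ``the exponent pieces rearrange.''
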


For our second differential identity, we will actually need more precise information about $P^{(x_j)}$ on $\partial U_{x_j}$. While we will only use it in the $\Tree=0$ case, it is not more difficult to formulate the result in the general case.

\begin{lemma}\label{le:2ndorderlocals}
For $z\in \partial U_{x_j}$

\begin{equation}\label{eq:2ndorderlocals}
P^{(x_j)}(z)\left[P^{(\infty)}(z)\right]^{-1}=I+\frac{\beta_j}{4\zeta_s(z)}E(z)\begin{pmatrix}
0 & 1+\frac{\beta_j}{2}\\
1-\frac{\beta_j}{2}& 0
\end{pmatrix}E(z)^{-1}+\mathcal{O}\left(N^{-2}\right),
\end{equation}

\noindent where the $\mathcal{O}(N^{-2})$-term is a $2\times 2$ matrix whose entries are $\mathcal{O}(N^{-2})$ uniformly in $z,s,$ and $\lbrace |x_i-x_j|\geq 3\delta \ \mathrm{for \ } i\neq j\rbrace$ and $\lbrace |1\pm x_j|\geq 3\delta \ \mathrm{for \ all \ } j\in\lbrace 1,...,k\rbrace\rbrace$.
\end{lemma}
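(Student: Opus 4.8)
The plan is to revisit the matching analysis that produced Lemma \ref{le:localrhp} and extract one more order in the asymptotic expansion. Recall that $P^{(x_j)}(z)[P^{(\infty)}(z)]^{-1}$ on $\partial U_{x_j}$ is computed by substituting the explicit form of $\Psi$ in the octant $\mathrm{I}$ (where $\Psi$ is given by Hankel functions, see \eqref{eq:psiI}) into the definition \eqref{eq:localdef}, and then using the definition \eqref{eq:edef1}--\eqref{eq:edef2} of $E$. Since $|\zeta_s(z)| \asymp N$ on $\partial U_{x_j}$ (by the estimate $\zeta_s'(x_j) > cN$ from Remark \ref{rem:zetainj}), the relevant input is the large-argument asymptotic expansion of the Hankel functions $H_\nu^{(1,2)}(\zeta)$, which has the form $\sqrt{2/(\pi\zeta)}\, e^{\pm i(\zeta - \nu\pi/2 - \pi/4)}(1 + c_1(\nu)/\zeta + \mathcal{O}(\zeta^{-2}))$ with $c_1(\nu) = -(4\nu^2-1)/8$ times $i$ (up to sign conventions). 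First I would write out $\Psi(\zeta) = (I + \Psi_1/\zeta + \mathcal{O}(\zeta^{-2})) \times (\text{the pure oscillatory/diagonal factor})$, compute the matrix $\Psi_1$ explicitly from the two orders $\nu = (\beta_j\pm 1)/2$, and observe — as is standard in this Fisher--Hartwig parametrix construction — that the leading oscillatory factor is exactly cancelled by the $W_j^{-\sigma_3}e^{-N\phi_s\sigma_3}$ factors and by $E$'s normalization, leaving $P^{(x_j)}(z)[P^{(\infty)}(z)]^{-1} = E(z)(I + \Psi_1/\zeta_s(z) + \mathcal{O}(\zeta_s(z)^{-2}))E(z)^{-1}$. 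A direct computation of $\Psi_1$ from the Hankel asymptotics should give $\Psi_1 = \frac{\beta_j}{4}\begin{pmatrix} 0 & 1 + \beta_j/2 \\ 1 - \beta_j/2 & 0\end{pmatrix}$ (the anti-diagonal structure and the exact entries being forced by the two indices $(\beta_j\pm1)/2$); substituting this and $\zeta = \zeta_s(z)$ yields \eqref{eq:2ndorderlocals}.

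The remaining work is to control the error terms uniformly. Here I would argue as follows. The $\mathcal{O}(\zeta^{-2})$ remainder in the Hankel asymptotics, transported through the conjugation by $E$ and the bounded analytic factors, contributes $\mathcal{O}(|\zeta_s(z)|^{-2})$; since $|\zeta_s(z)| \geq cN\delta$ on $\partial U_{x_j}$ this is $\mathcal{O}(N^{-2})$, and the implied constant is uniform in $s$ because all the relevant bounds ($\zeta_s'(x_j) > cN$, $|\zeta_s''| \leq CN$, boundedness of $d_s$ and of the Cauchy transform in Lemma \ref{le:cauchybounds}) were already established uniformly in $s$. Uniformity in the separation parameters $\{|x_i - x_j| \geq 3\delta\}$ and $\{|1 \pm x_j| \geq 3\delta\}$ follows because $P^{(\infty)}$, $W_j$, and $\mathcal{D}_t$ depend on the other singularities and endpoints only through factors $(z - x_l)^{\beta_l/2}$ with $|z - x_l| \geq 2\delta$ for $z \in \partial U_{x_j}$, hence these are bounded above and below uniformly under the separation hypothesis — exactly the same bookkeeping that gave the uniform $\mathcal{O}(N^{-1})$ in Lemma \ref{le:localrhp}, now carried to the next order. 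Since only the $\mathcal{T} = 0$ case is needed, I would not track the dependence on $\mathcal{T}$ here, though the argument goes through verbatim with the polynomial-coefficient uniformity of Lemma \ref{le:cauchybounds}.

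The one genuine subtlety — and the step I expect to be the main obstacle — is pinning down the exact entries of $\Psi_1$, i.e. verifying that the two off-diagonal entries are $1 + \beta_j/2$ and $1 - \beta_j/2$ rather than, say, being mixed up by a sign convention or acquiring a spurious diagonal piece. This requires being careful with: (i) the branch of $\sqrt{\pi\zeta}$ in \eqref{eq:psiI} and how it interacts with the phase in the Hankel asymptotics; (ii) the constant matrix $e^{-(\beta_j/2 + 1/4)\pi i \sigma_3}$ on the right of \eqref{eq:psiI}; and (iii) the fact that the first-order term of $\Psi$ written in the ``de-oscillated'' gauge is anti-diagonal, which is a reflection of the recurrence $H_{\nu-1}^{(k)} + H_{\nu+1}^{(k)} = (2\nu/\zeta)H_\nu^{(k)}$ relating the two rows of $\Psi$. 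I would verify the final form by an independent consistency check: plug \eqref{eq:2ndorderlocals} back into the jump relations of Lemma \ref{le:localrhp} and confirm that the correction term $\frac{\beta_j}{4\zeta_s}E(\cdots)E^{-1}$ is compatible to order $N^{-2}$ with $P^{(\infty)}$ having no jump across $\partial U_{x_j}$ and with the known jumps on $\Sigma$ — the anti-diagonal structure and the specific values $1 \pm \beta_j/2$ are the unique choice making this work, since these are precisely the subleading coefficients appearing in the classical confluent-hypergeometric/Bessel parametrix for a single Fisher--Hartwig singularity (cf. the analysis in \cite{vanlessen,krasovsky}). Once $\Psi_1$ is confirmed, assembling the uniform estimate is routine.
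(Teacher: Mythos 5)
Your proposal is correct and follows essentially the same route as the paper: the paper's proof quotes the uniform first-subleading-order expansion of $\Psi$ (the analogue of your $\Psi_1/\zeta$ term) from the discussion around equation (5.9) of \cite{vanlessen}, conjugates it through the same bounded matrix $A(z)=E(z)\bigl[\tfrac{1}{\sqrt{2}}\bigl(\begin{smallmatrix}1&i\\i&1\end{smallmatrix}\bigr)\bigr]^{-1}$ used in the proof of Lemma \ref{le:localrhp}, and verifies by the short matrix identity you anticipate that the conjugated coefficient is exactly the anti-diagonal matrix with entries $1\pm\beta_j/2$. The only difference is that you propose to re-derive the coefficient from the Hankel-function asymptotics rather than cite it, which is a matter of bookkeeping, not of method.
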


The $t=0$, $s=0$ case of these results has been proven in \cite[Section 4.3]{krasovsky}, though without focus on the uniformity relevant to us. Due to this, we will again sketch a proof in Appendix \ref{app:locals}.

\subsection{Local parametrices at the edge of the spectrum} \label{sec:locale}
The reasoning here is similar to the previous section -- we wish to find a function approximating $S$ near the points $\pm 1$. We will do this by approximating the Riemann-Hilbert problem and imposing a matching condition. Our argument will follow \cite[Section 4.4]{krasovsky}, which in turn relies on \cite{dkmlvz}. We will focus on the approximation at $1$, as the one at $-1$ is analogous. Again we will provide a sketch of the relevant proofs in Appendix \ref{app:locale}. We will begin by introducing the relevant coordinate change in this case (analogous to $\zeta_s$ in the previous section).

\begin{definition}\label{def:xi}
Let $\delta>0$ satisfy the conditions of Definition \ref{def:zeta}. Denote by $U_{1}$ a $\delta$-disk around $1$ and $U_{1}'$ denote a $3\delta/2$-disk around $1$. We assume that $\delta$ is small enough that $d$ has an analytic extension to $U_{1}'$. Moreover, we assume $\delta$ is small enough -- though independent of $s$ -- so that with a suitable choice of the branch, the function

\begin{equation}\label{eq:xidef}
\xi_s(z)=\left[-\frac{3}{2}N\phi_s(z)\right]^{2/3}
\end{equation}

\noindent is analytic and injective in $U_1'$, for all $s\in[0,1]$.
\end{definition}

We will justify that this is indeed possible in Appendix \ref{app:locale}. This conformal coordinate change allows us to define what $\Sigma_{k+1}^\pm$ looks like near $1$. Let $\delta>0$ be small enough to satisfy the conditions of Definition \ref{def:xi} and so that $\mathcal{T}_t$ is analytic in $U_1'$ for all $t\in[0,1]$. We will define the local parametrix in $U_1'$ and impose the matching condition on $\partial U_1$. Let us thus define $\Sigma_{k+1}^\pm$ in $U_1'$.

\begin{definition}\label{def:sigmae}
Inside $U_1'$, let $\Sigma_{k+1}^\pm$ be such that

\begin{equation}\label{eq:sigmae}
\xi_s(\Sigma_{k+1}^\pm\cap U_1')=\left[e^{\pm 2\pi i/3}\times [0,\infty)\right]\cap \xi_s(U_1').
\end{equation}
\end{definition}

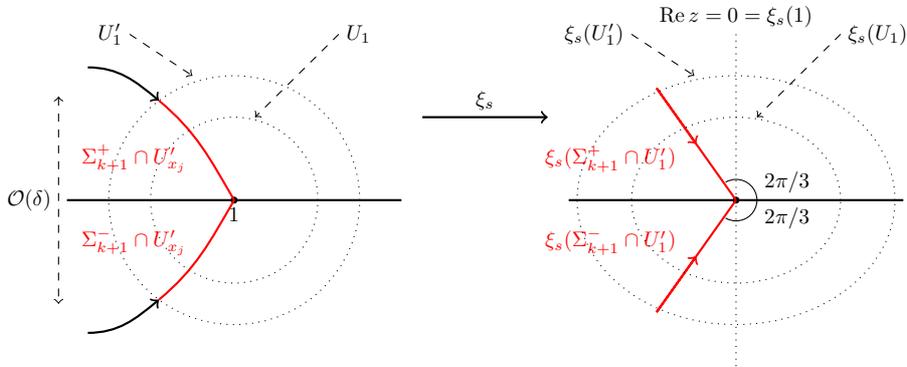
\begin{figure}[ht]
\begin{center}
\begin{tikzpicture}[scale=0.55, every node/.style={scale=0.8}]

\draw[thick] (-4, 0) -- (4, 0);
\draw[fill = black] (0,0) node[below]{$1$} circle[radius = 0.07];
\draw[dotted] (0,0) circle [radius = 2] circle[radius = 3];
\draw[thick,red] (-1.8, 2.4) to [out = -40, in = 120] (0,0); \node at (-2.4, 1) {\color{red}$\Sigma_{k+1}^+\cap U'_{x_j}$};
\draw[thick, ->] (-3.5, 3.2) to [out = 0, in = 140] (-1.8, 2.4);
\draw[thick,red] (-1.8, -2.4) to [out = 40, in = -120] (0,0); \node at (-2.4, -1) {\color{red}$\Sigma_{k+1}^-\cap U'_{x_j}$};
\draw[thick, ->] (-3.5, -3.2) to [out = 0, in = -140] (-1.8, -2.4);

\draw[dashed, <->] (-4.2, 2.5) -- (-4.2, -2.5); \node at (-4.9, 0) {$\mathcal{O}(\delta)$};
\draw[dashed, ->] (-2.5, 4) node[left] {$U'_{1}$} -- (-1, 3);
\draw[dashed, ->] (2.5, 4) node[right] {$U_{1}$} -- (0.5, 2);

\draw[thick, ->] (4.5, 2) -- (6, 2) node[above]{$\xi_s$} -- (7.5, 2);

\draw[thick] (8,0) -- (16,0);
\draw[fill = black] (12,0) circle [radius = 0.07];
\draw[thick, ->, red] (10.1, 2.7) -- (12,0) (10.1, 2.7) -- (11.05, 1.35);  \node at (9, 1) {\color{red}$\xi_s(\Sigma_{k+1}^+ \cap U'_{1})$};
\draw[thick, ->, red] (10.1, -2.7) -- (12,0) (10.1, -2.7) -- (11.05, -1.35); \node at (9, -1) {\color{red}$\xi_s(\Sigma_{k+1}^- \cap U'_{1})$};

\draw[dotted] (12, -4) -- (12, 4) node[above] {$\Re z = 0 = \xi_s(1)$};
\draw[dotted] (12,0) ellipse (3.8 and 3) (12,0) ellipse (2.5 and 2) ;
\draw (12.5,0) node[above right] {$2\pi / 3$} arc (0:120:0.5);
\draw (12.5,0) node[below right] {$2\pi / 3$} arc (0:-120:0.5);

\draw[dashed, ->] (9.5, 4) node[left] {$\xi_s(U'_{1})$} -- (11, 3);
\draw[dashed, ->] (14.5, 4) node[right] {$\xi_s(U_{1})$} -- (12.5, 2);

\end{tikzpicture}

\caption{Choice of the jump contours near the edge of the spectrum.}
\label{fig:cont}
\end{center}
\end{figure}

\begin{remark}
The angle $2\pi/3$ is slightly arbitrary here. In \cite{dkmlvz} the model Riemann-Hilbert problem relevant to us is constructed for a family of angle parameters $\sigma\in(\pi/3,\pi)$, and any angle here would work just as well for us, but we choose this for concreteness.

Also we point out that the above definition is fine as we know that $\xi_s$ is injective and we can apply the Koebe quarter theorem to ensure that the preimage of the rays is non-empty.
\end{remark}

We are now also in a position to define our local parametrix. As in the previous section, we need for this a solution to a certain model RHP considered in \cite{dkmlvz} as well as a function which is analytic in $U_{x_j}'$ which is required for the matching condition to hold.

\begin{definition}\label{def:locale}

Let us write $\mathrm{I}=\lbrace re^{i\theta}: r>0,\theta\in(0,2\pi/3)\rbrace$, $\mathrm{II}=\lbrace re^{i\theta}: r>0,\theta\in(2\pi /3,\pi)\rbrace$, $\mathrm{III}=\lbrace re^{i\theta}: r>0,\theta\in(-\pi,-2\pi/3)\rbrace$, and $\mathrm{IV}=\lbrace re^{i\theta}: r>0,\theta\in(-2\pi/3,0)\rbrace$. Then define

\begin{equation}\label{eq:qdef}
Q(\xi)=\begin{cases}
\begin{pmatrix}
\mathrm{Ai}(\xi) & \mathrm{Ai}(e^{4\pi i /3}\xi)\\
\mathrm{Ai}'(\xi) & e^{4\pi i /3} \mathrm{Ai}'(e^{4\pi i /3}\xi)
\end{pmatrix}e^{-\pi i \sigma_3/6}, & \xi \in \mathrm{I}\\
\begin{pmatrix}
\mathrm{Ai}(\xi) & \mathrm{Ai}(e^{4\pi i /3}\xi)\\
\mathrm{Ai}'(\xi) & e^{4\pi i /3} \mathrm{Ai}'(e^{4\pi i /3}\xi)
\end{pmatrix}e^{-\pi i \sigma_3/6}\begin{pmatrix}
1 & 0\\
-1 & 1
\end{pmatrix}, & \xi \in \mathrm{II}\\
\begin{pmatrix}
\mathrm{Ai}(\xi) & -e^{4\pi i /3}\mathrm{Ai}(e^{4\pi i /3}\xi)\\
\mathrm{Ai}'(\xi) &  -\mathrm{Ai}'(e^{4\pi i /3}\xi)
\end{pmatrix}e^{-\pi i \sigma_3/6}\begin{pmatrix}
1 & 0\\
1 & 1
\end{pmatrix}, & \xi \in \mathrm{III}\\
\begin{pmatrix}
\mathrm{Ai}(\xi) & -e^{4\pi i /3}\mathrm{Ai}(e^{4\pi i /3}\xi)\\
\mathrm{Ai}'(\xi) &  -\mathrm{Ai}'(e^{4\pi i /3}\xi)
\end{pmatrix}e^{-\pi i \sigma_3/6}, & \xi \in \mathrm{IV}
\end{cases},
\end{equation}

\noindent where $\mathrm{Ai}$ is the Airy function.

Morover, define another ``compatibility matrix"

\begin{equation}\label{eq:Fdef}
F(z)=F(z,t,s)=P^{(\infty)}(z,t)f_t(z)^{\sigma_3/2}e^{i\pi \sigma_3/4}\sqrt{\pi}\begin{pmatrix}
1 & -1\\
1 & 1
\end{pmatrix}\xi_s(z)^{\sigma_3/4} e^{-\pi i /12},
\end{equation}

\noindent where the roots are principal branch roots, and

\begin{equation}\label{eq:localedef}
P^{(1)}(z)=P^{(1)}(z,t,s)=F(z)Q(\xi_s(z))e^{-N\phi_s(z)\sigma_3}f_t(z)^{-\sigma_3/2}.
\end{equation}
\end{definition}

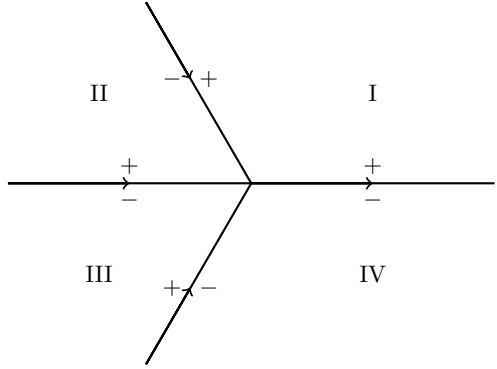
\begin{figure}[ht]
\begin{center}
\begin{tikzpicture}[scale=0.8, every node/.style={scale=0.9}]

\draw[thick, ->] (-4, 0) -- (0,0) (-4,0) -- (-2,0) node[above]{$+$} node[below]{$-$};
\draw[thick, ->] (0, 0) -- (4, 0) (0,0) -- (2,0) node[above]{$+$} node[below]{$-$};
\draw[thick, ->] (-1.73, 3) -- (0,0) (-1.73, 3) -- (-1, 1.73) node[right]{$+$} node[left]{$-$};
\draw[thick, ->] (-1.73, -3) -- (0,0) (-1.73, -3) -- (-1, -1.73) node[left]{$+$} node[right]{$-$};
\node at (2,1.5) {I};
\node at (-2.5,1.5) {II};
\node at (-2.5,-1.5) {III};
\node at (2,-1.5) {IV};
\end{tikzpicture}
\end{center}
\caption{Jump contour of $Q(\xi)$}
\end{figure}

\begin{remark}
Note that we can write

\begin{equation}\label{eq:localalt2}
P^{(1)}(z,t,s)=P^{(\infty)}(z,t)e^{\mathcal{T}_t(z)\sigma_3/2}\left[P^{(\infty)}(z,0)\right]^{-1}P^{(1)}(z,0,s) e^{-\mathcal{T}_t(z)\sigma_3/2}.
\end{equation}
\end{remark}

Again the relevant fact about this function is that it satisfies a suitable Riemann-Hilbert problem. Part of this is the fact that $F$ in \eqref{eq:Fdef} is an analytic function in $U_1'$. As before, we sketch the proof in Appendix \ref{app:locale}.

\begin{lemma}\label{le:localerhp}
The function $F$ from \eqref{eq:Fdef} is analytic in $U_1'$ and the function $P^{(1)}(z)$ satisfies the following Riemann-Hilbert problem.

\begin{itemize}[leftmargin=0.5cm]
\item[1.] $P^{(1)}(z)$ is analytic in $U_1'\setminus (\Sigma_{k+1}^+\cup \Sigma_{k+1}^-\cup \R)$.
\item[2.] For $\lambda\in (-1,1)\cap U_1'$, $P^{(1)}$  satisfies

\begin{equation}\label{eq:localej1}
P^{(1)}_+(\lambda)=P^{(1)}_-(\lambda)\begin{pmatrix}
0 & f_t(\lambda)\\
-f_t(\lambda)^{-1} & 0
\end{pmatrix}.
\end{equation}

\noindent For $\lambda\in(1,\infty)\cap U_1'$, $P^{(1)}$ satisfies

\begin{equation}\label{eq:localej2}
P^{(1)}_+(\lambda)=P^{(1)}_-(\lambda)\begin{pmatrix}
1 & f_t(\lambda) e^{N(g_{+,s}(\lambda)+g_{s,-}(\lambda)-V_s(\lambda)-\ell_s)}\\
0 & 1
\end{pmatrix}.
\end{equation}

\noindent For $\lambda\in \Sigma_{k+1}^{\pm}$, $P^{(1)}$ satisfies

\begin{equation}\label{eq:localej3}
P^{(1)}_+(\lambda)=P^{(1)}_-(\lambda)\begin{pmatrix}
1 & 0\\
f_t(\lambda)^{-1} e^{\mp Nh_s(\lambda)} & 1
\end{pmatrix}.
\end{equation}

\item[3.] For $z\in \partial U_1$, $P^{(1)}$ satisfies the following matching condition,

\begin{equation}
P^{(1)}(z)\left[P^{(\infty)}(z)\right]^{-1}=I+\mathcal{O}(N^{-1}),
\end{equation}

\noindent where the entries of the $\mathcal{O}(N^{-1})$ matrix are $\mathcal{O}(N^{-1})$ uniformly in $z\in \partial U_1$, uniformly in $\lbrace x_i\rbrace$ for $|x_i-x_j|\geq 3\delta$ for $i\neq j$ and $|x_i\pm 1|\geq 3\delta$ for $j\in\lbrace 1,...,k\rbrace$, uniformly in $t\in[0,1]$, and uniformly in $s\in[0,1]$. If in a neighborhood of $[-1,1]$, $\Tree$ is a real polynomial with fixed degree, the error is also uniform in the coefficients once they are restricted to some bounded set.
\end{itemize}
\end{lemma}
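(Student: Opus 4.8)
The plan is to verify the two assertions of Lemma~\ref{le:localerhp} in sequence: first that $F$ from \eqref{eq:Fdef} is analytic on $U_1'$, then that $P^{(1)}$ solves the stated RHP with the claimed matching condition, following the template of \cite[Section 4.4]{krasovsky}. For the analyticity of $F$, the point is that each factor on the right-hand side of \eqref{eq:Fdef} has branch cuts, but these must cancel. I would begin by recording the jump of $P^{(\infty)}$ across $(-1,1)\cap U_1'$ from \eqref{eq:globaljump}, the jump of $f_t(z)^{\sigma_3/2}$ across $(-1,1)$ (which involves a sign flip of the power of $(z-1)$), and the jump of $\xi_s(z)^{\sigma_3/4}$; one checks that the constant matrix $e^{i\pi\sigma_3/4}\sqrt\pi\begin{pmatrix}1 & -1\\ 1 & 1\end{pmatrix}$ is exactly what conjugates one set of jumps into the inverse of the other, so $F$ extends analytically across $(-1,1)\cap U_1'$. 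One then checks there is no isolated singularity at $z=1$: near $1$, $P^{(\infty)}\sim \mathcal{D}_t(\infty)^{\sigma_3}(\cdots)\mathcal{D}_t(z)^{-\sigma_3}$ with $a(z)\sim(z-1)^{\pm1/4}$, $f_t(z)^{1/2}\sim(z-1)^{\beta_j\cdot 0/2}$ (no singularity at the edge since the $x_j$ are interior), and $\xi_s(z)^{1/4}\sim(z-1)^{1/4}$ because $\phi_s$ vanishes to order $3/2$ at $1$; these fractional powers cancel to leave a bounded, hence analytic, function. This is essentially the computation in \cite[Section 4.4]{krasovsky} and \cite{dkmlvz}, and I would refer there for the details while spelling out the cancellation of branch cuts.

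For the RHP satisfied by $P^{(1)}$: analyticity off $\Sigma_{k+1}^\pm\cup\R$ in $U_1'$ is immediate from \eqref{eq:localedef} once $F$ is analytic, $Q(\xi_s(\cdot))$ is analytic off the preimages of the four rays, $\xi_s$ is injective and conformal on $U_1'$ (Definition~\ref{def:xi}), and $e^{-N\phi_s(z)\sigma_3}f_t(z)^{-\sigma_3/2}$ is analytic off $\R$. For the jump conditions, I would use $\xi_s(\Sigma_{k+1}^\pm\cap U_1')=e^{\pm2\pi i/3}[0,\infty)\cap\xi_s(U_1')$ from Definition~\ref{def:sigmae} so that the jump contours of $Q$ pull back exactly to $\Sigma_{k+1}^\pm$ and to $\R\cap U_1'$; then one computes $P^{(1)}_-{}^{-1}P^{(1)}_+ = f_t^{\sigma_3/2}e^{N\phi_s\sigma_3}\,Q_-^{-1}Q_+\,e^{-N\phi_s\sigma_3}f_t^{-\sigma_3/2}$ (the analytic $F$ drops out), and inserts the known jumps of $Q$ — which are the standard Airy jumps $\begin{pmatrix}1&0\\1&1\end{pmatrix}$ on the rays, $\begin{pmatrix}0&1\\-1&0\end{pmatrix}$ on $\R_-$, and $\begin{pmatrix}1&1\\0&1\end{pmatrix}$ on $\R_+$ — together with the identities $h_s = 2\phi_s$ (with the correct sign in each half-plane, from \eqref{eq:phidef}) on $(-1,1)$, and $g_{s,+}+g_{s,-}-V_s-\ell_s = -2\phi_s$ type relations on $(1,\infty)$ coming from \eqref{eq:gbv1}--\eqref{eq:gbv2} and \eqref{eq:hdef}, to match \eqref{eq:localej1}, \eqref{eq:localej2}, \eqref{eq:localej3} respectively. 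This is a bookkeeping exercise in conjugating constant jump matrices, exactly parallel to Lemma~\ref{le:localrhp} but with Airy rather than Bessel/confluent hypergeometric special functions.

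The main obstacle, and where I would concentrate the actual work, is the matching condition on $\partial U_1$ together with its uniformity. One uses the classical asymptotics of the Airy function and its derivative as $\xi\to\infty$ in appropriate sectors to get $Q(\xi)e^{-\frac{2}{3}\xi^{3/2}\sigma_3}\xi^{\sigma_3/4}\cdot(\text{const}) = I + \mathcal{O}(\xi^{-3/2})$; substituting $\xi=\xi_s(z)$ and using $\tfrac{2}{3}\xi_s(z)^{3/2} = -N\phi_s(z)$ (by \eqref{eq:xidef}) one obtains $P^{(1)}(z)[P^{(\infty)}(z)]^{-1} = F(z)\,(I+\mathcal{O}(\xi_s(z)^{-3/2}))\,F(z)^{-1}$ on $\partial U_1$. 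Since $|\xi_s(z)|\gtrsim N^{2/3}$ uniformly on $\partial U_1$ (as $\phi_s$ has a simple-type behaviour at $1$ with $s$-independent constants, cf.\ the argument deferred to Appendix~\ref{app:locale}), the error is $\mathcal{O}(N^{-1})$; one must then control $F(z)$ and $F(z)^{-1}$ uniformly on $\partial U_1$, which reduces to uniform bounds on $P^{(\infty)}$ near the edge (available from Definition~\ref{def:global}, using that $\mathcal{D}_t$ is uniformly bounded and bounded away from zero on $\partial U_1$ by Lemma~\ref{le:cauchybounds}) and on $\xi_s(z)^{\sigma_3/4}$ and $f_t(z)^{\sigma_3/2}$. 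The uniformity in $s,t$ comes from the $s$-independence of the relevant constants in the conformal maps and from Lemma~\ref{le:cauchybounds} for $t$ and for the polynomial-coefficient case; this tracking of constants is precisely the part not done in \cite{krasovsky}, so it is the step I would write out carefully, while deferring the remaining computations to Appendix~\ref{app:locale}.
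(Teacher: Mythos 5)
Your overall route is the same as the paper's: cancellation of branch cuts for the analyticity of $F$ plus a removable-singularity argument at $z=1$, pullback of the Airy model jumps through $\xi_s$ for the jump conditions, and the large-$\xi$ Airy asymptotics combined with $\tfrac23\xi_s^{3/2}=-N\phi_s$ and $|\xi_s|\asymp N^{2/3}$ on $\partial U_1$ for the matching. Two small slips first: $f_t^{\sigma_3/2}$ has \emph{no} jump across $(-1,1)\cap U_1'$ (all singularities $x_j$ are at distance $\ge 3\delta$ from $1$, so $f_t$ is analytic and non-vanishing throughout $U_1'$; you in fact say this yourself two sentences later), and the cancellation of the $P^{(\infty)}$ jump is against the factor of $i$ in $[\xi_s]_+^{1/4}=i[\xi_s]_-^{1/4}$ alone.

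The genuine gap is in your treatment of the matching condition. You write the matching as $F(z)\bigl(I+\mathcal{O}(\xi_s(z)^{-3/2})\bigr)F(z)^{-1}$ and propose to conclude by controlling $F$ and $F^{-1}$ ``uniformly'' on $\partial U_1$. But $F$ contains the factor $\xi_s(z)^{\sigma_3/4}$, whose entries are of size $N^{\pm 1/6}$ on $\partial U_1$, so neither $F$ nor $F^{-1}$ is bounded uniformly in $N$; conjugating a matrix whose entries are merely $\mathcal{O}(\xi_s^{-3/2})=\mathcal{O}(N^{-1})$ by $F$ then only yields $\mathcal{O}(N^{-2/3})$, which is weaker than the claimed rate (and the rate matters downstream, e.g.\ in Lemma \ref{le:rjumpmat} and in integrating the second differential identity). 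The correct bookkeeping is that the error in the Airy expansion sits \emph{inside} the $\xi_s^{-\sigma_3/4}$ factor, i.e.
\begin{equation*}
Q(\xi_s(z))e^{-N\phi_s(z)\sigma_3}=\frac{e^{\pi i/12}}{2\sqrt{\pi}}\,\xi_s(z)^{-\sigma_3/4}\left[\begin{pmatrix}1&1\\-1&1\end{pmatrix}e^{-i\pi\sigma_3/4}+\mathcal{O}(N^{-1})\right],
\end{equation*}
so that when one multiplies by $F$ the factors $\xi_s^{\sigma_3/4}$ and $\xi_s^{-\sigma_3/4}$ cancel \emph{before} the error term is conjugated, leaving
\begin{equation*}
P^{(1)}(z)\left[P^{(\infty)}(z)\right]^{-1}=I+P^{(\infty)}(z)f_t(z)^{\sigma_3/2}\,\mathcal{O}(N^{-1})\,f_t(z)^{-\sigma_3/2}\left[P^{(\infty)}(z)\right]^{-1},
\end{equation*}
where the conjugating matrices $P^{(\infty)}f_t^{\sigma_3/2}$ and their inverses \emph{are} uniformly bounded on $\partial U_1$ (by Lemma \ref{le:cauchybounds}), giving the stated $\mathcal{O}(N^{-1})$. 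Equivalently, if you insist on the form $FMF^{-1}$, you must record that $M$ has off-diagonal entries of the specific orders $\xi_s^{-2}$ and $\xi_s^{-1}$ (cf.\ Remark \ref{rem:localematch}), not generic $\mathcal{O}(\xi_s^{-3/2})$ entries.
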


Again we will need finer asymptotics for our second differential identity and we will formulate them in the $\Tree=0$ case.

\begin{lemma}\label{le:locale2ndorderasy}
For $z\in \partial U_{1}$

\begin{align*}
&P^{(1)}(z)\left[P^{(\infty)}(z)\right]^{-1}\\
&=I+P^{(\infty)}(z)f(z)^{\sigma_3/2}e^{i\pi\sigma_3/4}\frac{1}{8}\begin{pmatrix}
\frac{1}{6} & 1\\
-1 & -\frac{1}{6}
\end{pmatrix}e^{-i\pi\sigma_3/4}f(z)^{-\sigma_3/2}\left[P^{(\infty)}(z)\right]^{-1}\xi_s(z)^{-3/2}+\mathcal{O}(N^{-2})
\end{align*}

\noindent where the $\mathcal{O}(N^{-2})$-term is a $2\times 2$ matrix whose entries are $\mathcal{O}(N^{-2})$ uniformly in $z,s,$ and $\lbrace |x_i-x_j|\geq 3\delta \ \mathrm{for \ } i\neq j\rbrace$ and $\lbrace |1\pm x_j|\geq 3\delta \ \mathrm{for \ all \ } j\in\lbrace 1,...,k\rbrace\rbrace$. \end{lemma}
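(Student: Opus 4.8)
The plan is to carry the analysis behind the matching condition of Lemma \ref{le:localerhp} one order further, retaining the sub-leading term in the large-argument expansion of the Airy parametrix $Q$ of \eqref{eq:qdef} rather than only its leading term. Throughout we are in the $\Tree=0$ case, so $f_t=f$ and $P^{(\infty)}(\cdot,t)=P^{(\infty)}(\cdot,0)=:P^{(\infty)}$, and by \eqref{eq:xidef} we have $\xi_s(z)^{3/2}=-\frac{3}{2}N\phi_s(z)$, hence $e^{-N\phi_s(z)\sigma_3}=e^{\frac{2}{3}\xi_s(z)^{3/2}\sigma_3}$.

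First I would record the full asymptotics of $Q$. Inserting the classical asymptotic expansions of $\mathrm{Ai}$ and $\mathrm{Ai}'$ (with the standard next-order coefficients $5/72$ and $-7/72$), together with the corresponding expansions of $\mathrm{Ai}(e^{4\pi i/3}\xi)$ and $\mathrm{Ai}'(e^{4\pi i/3}\xi)$, into \eqref{eq:qdef}, one finds, uniformly as $\xi\to\infty$,
\[
Q(\xi)\,e^{\frac{2}{3}\xi^{3/2}\sigma_3}=\frac{e^{\pi i/12}}{2\sqrt{\pi}}\,\xi^{-\sigma_3/4}\begin{pmatrix}1&1\\-1&1\end{pmatrix}e^{-i\pi\sigma_3/4}\left(I+\frac{1}{\xi^{3/2}}\cdot\frac{1}{8}\begin{pmatrix}1/6&i\\i&-1/6\end{pmatrix}+\mathcal{O}(\xi^{-3})\right).
\]
Although these Airy asymptotics are stated sector by sector, in each of the sectors $\mathrm{II},\mathrm{III},\mathrm{IV}$ of \eqref{eq:qdef} $Q$ differs from its value in $\mathrm{I}$ by one of the two constant lower-triangular matrices there, so the displayed matrix expansion holds throughout $\C$ away from the rays; since $\partial U_1$ is a fixed contour this is all we need.

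Next I would substitute this into \eqref{eq:localedef}, i.e. $P^{(1)}(z)=F(z)Q(\xi_s(z))e^{-N\phi_s(z)\sigma_3}f(z)^{-\sigma_3/2}$ with $F$ as in \eqref{eq:Fdef}, and multiply on the right by $[P^{(\infty)}(z)]^{-1}$. Using $e^{-N\phi_s\sigma_3}=e^{\frac{2}{3}\xi_s^{3/2}\sigma_3}$, so that $Q(\xi_s)e^{-N\phi_s\sigma_3}$ is exactly the left side of the display above, the constant matrices combine through $\begin{pmatrix}1&-1\\1&1\end{pmatrix}\begin{pmatrix}1&1\\-1&1\end{pmatrix}=2I$, the scalars $\sqrt{\pi}$, $\frac{1}{2\sqrt{\pi}}$, $e^{\mp i\pi/12}$ cancel, the factors $\xi_s^{\pm\sigma_3/4}$ and $e^{\pm i\pi\sigma_3/4}$ cancel, and $f^{\sigma_3/2}f^{-\sigma_3/2}=I$; the leading term thus reproduces the matching condition $P^{(1)}[P^{(\infty)}]^{-1}=I+\mathcal{O}(N^{-1})$. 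Running the same telescoping cancellations on the $\xi_s^{-3/2}$-term leaves $P^{(\infty)}f^{\sigma_3/2}\,\frac{1}{8}\begin{pmatrix}1/6&i\\i&-1/6\end{pmatrix}f^{-\sigma_3/2}[P^{(\infty)}]^{-1}\xi_s^{-3/2}$, and since $e^{i\pi\sigma_3/4}\frac{1}{8}\begin{pmatrix}1/6&1\\-1&-1/6\end{pmatrix}e^{-i\pi\sigma_3/4}=\frac{1}{8}\begin{pmatrix}1/6&i\\i&-1/6\end{pmatrix}$, this is precisely the correction term asserted in the lemma.

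For the remainder: on the fixed circle $\partial U_1$ of radius $\delta$, $|\phi_s(z)|$ is bounded above and below by positive constants uniformly in $s$ (by Lemma \ref{le:hsign} and the construction of $\xi_s$ in Definition \ref{def:xi}), so $|\xi_s(z)|$ is of exact order $N^{2/3}$ and the $\mathcal{O}(\xi_s^{-3})$ term is $\mathcal{O}(N^{-2})$ uniformly in $z\in\partial U_1$ and $s\in[0,1]$; the remaining parameters enter only through $P^{(\infty)}$ and $f$, which are analytic and uniformly bounded with uniformly bounded inverses on $\partial U_1$ as long as the $x_j$ stay at distance at least $3\delta$ from one another and from $\pm1$ (by Lemma \ref{le:cauchybounds} and analyticity of $d$), giving the stated uniformity. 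The step I expect to be the main obstacle is the very first one — extracting the matrix $\frac{1}{8}\begin{pmatrix}1/6&i\\i&-1/6\end{pmatrix}$ from the Airy expansions with every sign and branch (of $(e^{4\pi i/3}\xi)^{3/2}$, of $\xi_s^{\sigma_3/4}$ and $f^{\sigma_3/2}$) correct, and checking that the sector-dependent triangular factors in \eqref{eq:qdef} do not disturb the matrix form of the expansion. Everything downstream of that is structurally identical to the proof of Lemma \ref{le:localerhp}, and the careful version appears in Appendix \ref{app:locale}.
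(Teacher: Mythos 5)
Your proposal is correct and takes essentially the same route as the paper: the paper's proof likewise inserts the next-order term of the large-argument expansion of $Q(\xi)e^{\frac{2}{3}\xi^{3/2}\sigma_3}$ (quoted from the literature in the form $\frac{e^{\pi i/12}}{2\sqrt{\pi}}\xi^{-\sigma_3/4}\left[M_0+M_1\xi^{-3/2}+\mathcal{O}(\xi^{-3})\right]e^{-i\pi\sigma_3/4}$ with $M_0=\begin{pmatrix}1&1\\-1&1\end{pmatrix}$, $M_1=\begin{pmatrix}-5/48&5/48\\-7/48&-7/48\end{pmatrix}$) and then telescopes through $F$ exactly as you do. Your factored form of the correction is algebraically identical, since $e^{i\pi\sigma_3/4}M_0^{-1}M_1e^{-i\pi\sigma_3/4}=\frac{1}{8}\begin{pmatrix}1/6&i\\i&-1/6\end{pmatrix}$, so the two presentations agree term by term.
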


\begin{remark}\label{rem:localematch}
Using the definition of $F$, one can check that this can be written also as

\begin{align*}
P^{(1)}(z)\left[P^{(\infty)}(z)\right]^{-1}=I+F(z)\begin{pmatrix}
0 & \frac{5}{48}\xi_s(z)^{-2}\\
-\frac{7}{48}\xi_s(z)^{-1} & 0
\end{pmatrix}F(z)^{-1} +\mathcal{O}(N^{-2}).
\end{align*}

From the previous representation of the matching condition matrix, one can easily see that the subleading term is indeed of order $N$. The benefit of this representation is that as $F$ and $F^{-1}$ are analytic in $U_1$, the subleading term is analytic in $U_1\setminus \lbrace 1\rbrace$ and has $($at most$)$ a second order pole at $z=1$.

\end{remark}

\subsection{The final transformation and asymptotic analysis of the problem}\label{sec:rhpasy}

We now perform the final transformation of the problem, and solve it asymptotically. The proofs of these statements are essentially standard in the RHP literature, but we don't know of a reference where the exact calculations we need exist and also issues such as uniformity in our relevant parameters are essential for us, but not usually stressed in the literature. Thus we provide proofs in Appendix \ref{app:rrhp}.

\begin{definition}\label{def:r}
Let us fix some small $\delta>0$ $($``small" being independent of $t$ and $s$ and detailed in Section \ref{sec:locals} and Section \ref{sec:locale}$)$, and write $U_{\pm 1}$ for a $\delta$-disk around $\pm 1$ and $U_{x_j}$ for a $\delta$-disk around $x_j$. We also assume that for $i\neq j$, $|x_i-x_j|\geq 3\delta$ and for all $i\neq 0,k+1$, $|x_i\pm 1|\geq 3\delta$. We then define

\begin{equation}\label{eq:r}
R(z)=\begin{cases}
S(z)\left[P^{(-1)}(z)\right]^{-1}, & z\in U_{-1}\setminus \Sigma\\
S(z)\left[P^{(x_j)}(z)\right]^{-1}, & z\in U_{x_j}\setminus \Sigma \ \mathrm{for \ some \ } j\\
S(z)\left[P^{(1)}(z)\right]^{-1}, & z\in U_{1}\setminus \Sigma\\
S(z) \left[P^{(\infty)}(z)\right]^{-1}, & z\in \C\setminus \overline{U_{-1}\bigcup\cup_{j=1}^k U_{x_j} \bigcup U_1\bigcup \Sigma}
\end{cases}.
\end{equation}

\end{definition}

We now state what is the Riemann--Hilbert solved by $R$ -- for details, see Appendix \ref{app:rrhp}.

\begin{lemma}\label{le:rrhp}
For the $\delta$ in Definition \ref{def:r}, define

\begin{align}\label{eq:gamma}
\Gamma_\delta&=\left(\R\setminus[-1-\delta,1+\delta]\right)\bigcup\left(\cup_{j=1}^{k+1}(\Sigma_j^+\cup \Sigma_j^-)\setminus \overline{U_{-1}\cup\cup_{j=1}^k U_{x_j}\cup U_1}\right)\\
\notag & \qquad \bigcup\left(\partial U_{-1}\cup \cup_{j=1}^k \partial U_{x_j} \cup \partial U_1\right),
\end{align}

\noindent where $\R$ and the lenses are oriented as before. $\partial U_{x_j}$ and $\partial U_{\pm 1}$ are oriented in a clockwise manner -- see Figure \ref{fig:R_RHP}. Then $R$ is the unique solution to the following Riemann-Hilbert problem:

\begin{itemize}[leftmargin=0.5cm]
\item[1.] $R:\C\setminus \Gamma_\delta\to \C^{2\times 2}$ is analytic.
\item[2.] $R$ satisfies the jump conditions $R_+(\lambda) = R_-(\lambda) J_R(\lambda)$ $($with lenses and $\R$ oriented as before, and the circles are oriented clockwise$)$, where the jump matrix $J_R$ take the following form:

\begin{itemize}
\item[$($i$)$] For $\lambda\in \R\setminus [-1-\delta,1+\delta]$,

\begin{align}\label{eq:rjump1}
J_R(\lambda)&= P^{(\infty)}(\lambda)\begin{pmatrix}
1 & f_t(\lambda) e^{N(g_{s,+}(\lambda)+g_{s,-}(\lambda)-V_s(\lambda)-\ell_s)}\\
0 & 1
\end{pmatrix}\left[P^{(\infty)}(\lambda)\right]^{-1}.
\end{align}

\item[$($ii$)$] For $\lambda\in \cup_{j=1}^{k+1}\Sigma_j^\pm\setminus\overline{U_{-1}\cup\cup_{j=1}^k U_{x_j}\cup U_1}$,

\begin{equation}\label{eq:rjump2}
J_R(\lambda)=P^{(\infty)}(\lambda)\begin{pmatrix}
1 & 0\\
f_t(\lambda)^{-1}e^{\mp Nh_s(\lambda)} & 1
\end{pmatrix}\left[P^{(\infty)}(\lambda)\right]^{-1}.
\end{equation}

\item[$($iii$)$]

For $\lambda\in \partial U_{x_j}\setminus \cup_{j=1}^{k+1}(\Sigma_j^+\cup \Sigma_j^-)$,

\begin{equation} \label{eq:R_jump3}
J_R(\lambda)=P^{(x_j)}(\lambda)\left[P^{(\infty)}(\lambda)\right]^{-1}.
\end{equation}

\item[$($iv$)$] For $\lambda\in \partial U_{\pm 1}\setminus (\R\cup \cup_{j=1}^{k+1}(\Sigma_j^+\cup \Sigma_j^-)$,

\begin{equation} \label{eq:R_jump4}
J_R(\lambda)=P^{(\pm 1)}(\lambda)\left[P^{(\infty)}(\lambda)\right]^{-1}.
\end{equation}
\end{itemize}

\item[3.] As $z\to \infty$,

\begin{equation}
R(z)=I+\mathcal{O}(|z|^{-1}).
\end{equation}
\end{itemize}
\end{lemma}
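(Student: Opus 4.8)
The plan is to verify items 1--3, together with uniqueness, by reading everything off the Riemann--Hilbert problems already established for $S$ (Lemma \ref{le:srhp}), for the global parametrix $P^{(\infty)}$ (Lemma \ref{le:global}), and for the local parametrices $P^{(x_j)}$ (Lemma \ref{le:localrhp}) and $P^{(\pm 1)}$ (Lemma \ref{le:localerhp}). The guiding point is that the local parametrices were built to have, inside the disks $U_{x_j}$ and $U_{\pm 1}$, exactly the same jumps and the same local singular behaviour as $S$, so that in the ratio \eqref{eq:r} these cancel and the only jumps that survive are on $\Gamma_\delta$, where moreover the global parametrix either has no jump (on the lenses and on $\R\setminus[-1-\delta,1+\delta]$) or is absent (on the circles).

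First I would check that $R$ extends analytically across $\Sigma$ inside the disks. On, say, $(\Sigma_{j-1}^\pm\cup\Sigma_j^\pm)\cap U_{x_j}$ and on $\R\cap U_{x_j}\setminus\{x_j\}$ one has $R = S(P^{(x_j)})^{-1}$, and by \eqref{eq:S_jump_lenses}, \eqref{eq:S_jump_main} and \eqref{eq:localjumplenses}, \eqref{eq:localjumpr} the jump matrices of $S$ and of $P^{(x_j)}$ there agree; writing $S_+ = S_-J$ and $P^{(x_j)}_+ = P^{(x_j)}_-J$ with this common $J$ gives $R_+ = S_-JJ^{-1}(P^{(x_j)}_-)^{-1} = R_-$, so $R$ has no jump and continues analytically. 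The same computation with \eqref{eq:localej1}, \eqref{eq:localej3} handles the arcs inside $U_{\pm 1}$. The one genuinely delicate point is the removability of the isolated singularities of $R$ at $x_1,\dots,x_k,\pm 1$: near $x_j$ one unpacks Definitions \ref{def:s} and \ref{def:local} to see that $S$ and $P^{(x_j)}$ carry the \emph{same} explicit singular prefactor producing the $|z-x_j|^{-\beta_j}$ growth of \eqref{eq:srhpsing} and \eqref{eq:localrhpsing}, so that $R = S(P^{(x_j)})^{-1}$ is in fact bounded in a punctured neighbourhood of $x_j$, hence the singularity is removable; the analogous cancellation at $\pm 1$ uses the analyticity of the matrix $F$ in $U_1'$ from Lemma \ref{le:localerhp} together with the boundedness of $S$ at $\pm 1$ stated in Lemma \ref{le:srhp}.

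Next I would compute the jump matrices on $\Gamma_\delta$. On $\R\setminus[-1-\delta,1+\delta]$ and on the parts of the lenses lying outside all disks, $R = S(P^{(\infty)})^{-1}$ on both sides and $P^{(\infty)}$ is analytic there, so $J_R = R_-^{-1}R_+ = P^{(\infty)}(S_-^{-1}S_+)(P^{(\infty)})^{-1}$; inserting the jump matrices of $S$ from Lemma \ref{le:srhp} yields \eqref{eq:rjump1} and \eqref{eq:rjump2}. On $\partial U_{x_j}$, oriented clockwise so that (in the paper's convention) the exterior of the disk is the $+$ side and the interior the $-$ side, $S$ has no jump, $R_+ = S(P^{(\infty)})^{-1}$, $R_- = S(P^{(x_j)})^{-1}$, hence $J_R = R_-^{-1}R_+ = P^{(x_j)}(P^{(\infty)})^{-1}$, which is \eqref{eq:R_jump3}; identically one gets \eqref{eq:R_jump4} on $\partial U_{\pm 1}$. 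At the finitely many points where a lens ray meets a circle one checks that the three incident jump matrices satisfy the usual cyclic compatibility relation; this is routine. For the behaviour at infinity, in the unbounded component $R = S(P^{(\infty)})^{-1}$, and $S(z) = I + \mathcal{O}(|z|^{-1})$ by Lemma \ref{le:srhp} while $P^{(\infty)}(z) = I + \mathcal{O}(|z|^{-1})$ by Lemma \ref{le:global}, so $R(z) = I + \mathcal{O}(|z|^{-1})$.

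Finally, uniqueness is the standard Liouville argument, understood with the usual tacit requirement that solutions be bounded at the exceptional points. One has $\det R\equiv 1$, since $\det S\equiv 1$ (inherited from $\det Y\equiv 1$ through the unit-determinant conjugations defining $T$ and $S$), $\det P^{(\infty)}\equiv 1$, and a direct computation gives $\det P^{(x_j)}\equiv\det P^{(\pm 1)}\equiv 1$; thus $R$ is everywhere invertible, and if $\widetilde R$ is any other solution then $\widetilde R R^{-1}$ has no jump across $\Gamma_\delta$, is analytic off its finitely many self-intersection points, is bounded near those points and near $x_1,\dots,x_k,\pm 1$, hence extends to an entire matrix function tending to $I$ at infinity, so $\widetilde R\equiv R$. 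I expect the removability of the Fisher--Hartwig singularities at the $x_j$ and of the soft-edge singularities at $\pm 1$ — that is, checking that the singular prefactors of $S$ and of the local parametrices coincide \emph{exactly}, not merely to leading order — to be the main technical obstacle; the rest is bookkeeping with jump matrices and standard estimates, which is precisely what is carried out in detail in Appendix \ref{app:rrhp}.
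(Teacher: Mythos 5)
Your overall architecture coincides with the paper's: read the jumps of $R$ off those of $S$ and the parametrices, note that inside the disks the jumps cancel by construction, compute $J_R=R_-^{-1}R_+$ on $\Gamma_\delta$ (your orientation bookkeeping on the circles is correct), get the normalization at infinity from $S(z),P^{(\infty)}(z)=I+\mathcal{O}(|z|^{-1})$, and conclude uniqueness by Liouville. The one place where your route diverges — and where it has a real gap — is the removability of the isolated singularities at $x_1,\dots,x_k$. You propose to show that $S$ and $P^{(x_j)}$ carry \emph{exactly} the same singular prefactor, so that $R=S[P^{(x_j)}]^{-1}$ is bounded near $x_j$ and Riemann's theorem applies. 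But the information available from Lemmas \ref{le:srhp} and \ref{le:localrhp} is only the \emph{order} of growth: from inside the lens the first column of $S$ and the second row of $[P^{(x_j)}]^{-1}$ are both $\mathcal{O}(|z-x_j|^{-\beta_j})$, so the product is a priori only $\mathcal{O}(|z-x_j|^{-\beta_j})$, not bounded. Upgrading this to boundedness would require matching the precise singular \emph{coefficients} of the Cauchy transforms in the second column of $Y$ against those of the Bessel-function parametrix $\Psi$ — a nontrivial local analysis that neither you nor the paper carries out, and which is in fact unnecessary.

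The paper closes this step differently and more cheaply: $R$ is bounded as $z\to x_j$ from \emph{outside} the lenses (where both $S$ and $[P^{(x_j)}]^{-1}$ are bounded), so the singularity cannot be a pole; and from inside the lenses $(z-x_j)^{m}R(z)\to 0$ for a large enough integer $m$, so it cannot be essential; hence it is removable. You should replace your "exact cancellation" claim by this two-pronged exclusion argument (for which the key input is $\det P^{(x_j)}\equiv 1$, giving the stated growth of $[P^{(x_j)}]^{-1}$). At $\pm 1$ your argument is fine and matches the paper's: $S$ is bounded there and the entries of the inverse local parametrix blow up at worst like a fractional power strictly less than $1$, so the singularity is again too weak to be a pole. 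With that one repair the proof is complete and is essentially the paper's.
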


The first ingredient to solving this Riemann--Hilbert problem is to show that the jump matrix of $R(z)$ is close to the identity matrix in a suitable sense.

\begin{lemma}\label{le:rjumpmat}
For $z \in \Gamma_\delta$, write $J_R(z)=I+\Delta_R(z)=I+\Delta$ for the jump matrix of $R$ as described in Lemma \ref{le:rrhp}. 
Then for any $p\geq 1$, and large enough $N$ $($``large enough" depending only on $V)$

\begin{equation*}
||\Delta||_{L^p(\Gamma_\delta)}=\mathcal{O}(N^{-1})
\end{equation*}

\noindent where the norm is any matrix norm, the $L^p$-spaces are with respect to the Lebesgue measure on the jump contour, and the $\mathcal{O}(N^{-1})$ term is uniform in everything relevant $($i.e., $(x_i)$ for $|x_i-x_j|\geq 3\delta$, for $i\neq 0,k+1$: $|x_i\pm 1|\geq 3\delta$, in $s,t\in[0,1]$, and if $\Tree$ is a real polynomial in a neighborhood of $[-1,1]$, then in its coefficients when restricted to a bounded set; but may depend on $\delta)$.
\end{lemma}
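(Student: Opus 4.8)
The plan is to estimate $\Delta_R = J_R - I$ piece by piece on $\Gamma_\delta$, using the explicit description of $J_R$ from Lemma \ref{le:rrhp}, and then convert pointwise bounds into $L^p$ bounds using that $\Gamma_\delta$ has finite total length. First I would record two uniform facts about the global parametrix: that $P^{(\infty)}$ and $[P^{(\infty)}]^{-1}$ are bounded uniformly on $\Gamma_\delta$ (away from $[-1,1]$, and on the fixed circles $\partial U_{x_j}$, $\partial U_{\pm 1}$), uniformly in $s,t$ and in the positions $x_j$ subject to the separation constraints (and in the coefficients of $\Tree$ in the polynomial case). This follows from Definition \ref{def:global}, Remark \ref{rem:globalholo}, and crucially Lemma \ref{le:cauchybounds}, which supplies exactly the uniform control of the Cauchy transform in $\mathcal{D}_t$ that is needed; the factors $a(z)^{\pm1}$ and $(z-x_j)^{\pm\beta_j/2}$ are bounded on the relevant contours because we stay a distance $\gtrsim\delta$ away from $[-1,1]$ and from the $x_j$.

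Then I would treat the four types of contour separately. On $\R\setminus[-1-\delta,1+\delta]$ (case (i)), the middle factor is $I$ plus an off-diagonal entry $f_t(\lambda)e^{N(g_{s,+}+g_{s,-}-V_s-\ell_s)}$; by Lemma \ref{le:gbv}, equation \eqref{eq:gbv2}, this exponent is $\le -C(|\lambda|-1)^{3/2}$ near the edges and $\le -\log|\lambda|$ far away, uniformly in $s$, so the entry is exponentially small in $N$ on compact pieces and integrable (indeed $O(N^{-1})$ after integrating, since $\int_M^\infty |\lambda|^{-N}d\lambda$ and $\int_0^{M} e^{-CN u^{3/2}}du$ are both $O(N^{-1})$), while $f_t$ is polynomially bounded. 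Conjugating by the uniformly bounded $P^{(\infty)}$ preserves these bounds. On the lens boundaries $\Sigma_j^\pm$ outside the disks (case (ii)), the middle factor is $I$ plus a lower-left entry $f_t(\lambda)^{-1}e^{\mp Nh_s(\lambda)}$; by Lemma \ref{le:hsign}, $\mathrm{Re}(\mp h_s)\le -\epsilon<0$ uniformly on the part of the lenses bounded away from the real axis (which is exactly where we are, since inside $U_{x_j}'$ the lenses have been removed and outside the distance to $\R$ is $\gtrsim\delta$), so this entry is $O(e^{-\epsilon N})$ uniformly, and $f_t^{-1}$ is bounded since $\Tree$ is bounded and we are away from the singularities. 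On the circles $\partial U_{x_j}$ (case (iii)) and $\partial U_{\pm1}$ (case (iv)), $\Delta_R = P^{(x_j)}[P^{(\infty)}]^{-1}-I$ and $P^{(\pm1)}[P^{(\infty)}]^{-1}-I$ are exactly the matching-condition errors, which are $O(N^{-1})$ uniformly in all relevant parameters by Lemma \ref{le:localrhp}(4) and Lemma \ref{le:localerhp}(3).

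Finally I would combine: $\|\Delta\|_{L^\infty(\Gamma_\delta)}=O(N^{-1})$ from the circle contributions (the others being exponentially small), and since $\Gamma_\delta$ has finite length (bounded uniformly, as the circles have fixed radius $\delta$ and the lens/real pieces are fixed curves), $\|\Delta\|_{L^p(\Gamma_\delta)}\le (\mathrm{length})^{1/p}\|\Delta\|_{L^\infty}=O(N^{-1})$ for every $p\ge1$; the unbounded tail of $\R$ contributes only an exponentially small amount and is handled by the integrability noted above rather than by the $L^\infty$ bound. I expect the only mildly delicate point — the ``main obstacle'' — to be bookkeeping the \emph{uniformity}: one must check that every constant ($C$, $\epsilon$, the bounds on $P^{(\infty)}$, the $O(N^{-1})$ in the matching conditions) can be chosen independently of $s,t$, of the configuration $(x_j)$ in the allowed region, and of the $\Tree$-coefficients, which is precisely why the earlier lemmas were stated with those uniformities built in; assembling them is routine but must be done carefully.
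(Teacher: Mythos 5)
Your proposal is correct and follows essentially the same route as the paper: exponential smallness on $\R\setminus[-1-\delta,1+\delta]$ via \eqref{eq:gbv2} and on the lens boundaries via Lemma \ref{le:hsign}, the $\mathcal{O}(N^{-1})$ matching conditions of Lemmas \ref{le:localrhp} and \ref{le:localerhp} on the circles, and the finite length of the contour to pass from pointwise to $L^p$ bounds. The only (harmless) slip is the parenthetical claim that $\int_0^{M}e^{-CNu^{3/2}}\,du=\mathcal{O}(N^{-1})$ --- it is in fact of order $N^{-2/3}$ --- but this is irrelevant because the contour stays at distance at least $\delta$ from $\pm1$, so that contribution is uniformly exponentially small, exactly as in the paper's argument.
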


\begin{figure}
\begin{center}
\begin{tikzpicture}[scale=0.6, every node/.style={scale=0.8}]
\draw[fill = black] (0,0) circle [radius = 0.05] node[below]{$-1$};
\draw[fill = black] (6,0) circle [radius = 0.05] node[below] {$x_1$};
\draw[fill = black] (12,0) circle [radius = 0.05] node[below]{$1$};
\draw[thick, ->, blue] (0.75,1.3) to [out = 60, in = 180] (3,2.5) node[above]{$\Sigma_1^+ \backslash (\cup_{j} U_{x_j})$};
\draw[thick, ->, blue] (0.75,-1.3) to [out = -60, in = -180] (3,-2.5) node[below]{$\Sigma_1^-\backslash (\cup_{j} U_{x_j})$};
\draw[thick, ->, blue] (6.75, 1.3) to [out = 60, in = 180] (9,2.5) node[above]{$\Sigma_2^+ \backslash (\cup_{j} U_{x_j})$};
\draw[thick, ->, blue] (6.75, -1.3) to [out = -60, in = -180] (9,-2.5) node[below]{$\Sigma_2^- \backslash (\cup_{j} U_{x_j})$};
\draw[thick, blue](3, 2.5) to [out = 0, in = 120] (5.25,1.3) 	(3, -2.5) to [out = 0, in = -120] (5.25, -1.3)
		(9, 2.5) to [out = 0, in = 120] (11.25,1.3) (9, -2.5) to [out = 0, in = -120] (11.25,-1.3);
\draw[thick, brown] (0,0) circle [radius=1.5]  (6,0) circle [radius=1.5]  (12,0) circle [radius=1.5];
\node at (0, 2) {\color{brown}$\partial U_{-1}$}; \node at (6, 2) {\color{brown}$\partial U_{x_1}$}; \node at (12, 2) {\color{brown}$ \partial U_1$};
\draw[thick, ->, brown] (-1.5, 0) arc [radius = 1.5, start angle = 180, end angle = 90];
\draw[thick, ->, brown] (4.5, 0) arc [radius = 1.5, start angle = 180, end angle = 90];
\draw[thick, ->, brown] (10.5, 0) arc [radius = 1.5, start angle = 180, end angle = 90];

\draw[thick, ->, red] (13.5, 0) -- (16.5, 0) (13.5, 0) -- (15,0);
\draw[thick, ->, red] (-4.5, 0) -- (-1.5, 0) (-4.5, 0) -- (-3,0);
\end{tikzpicture}
\end{center}
\caption{\label{fig:R_RHP} The jump contour of the Riemann--Hilbert problem for $R$, in the case $k = 1$.}
\end{figure}
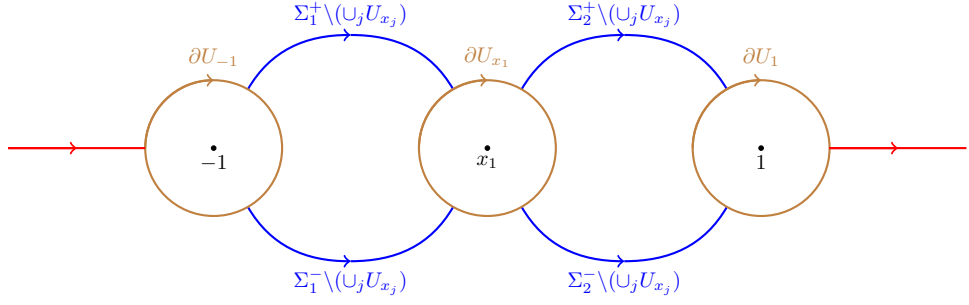

See Appendix \ref{app:rrhp} for a proof. 
We will want to show that $R$ is close to the identity, and the tool which allows us to do this is the following
representation of $R$ as a solution to a suitable integral equation involving its jump matrix.

\begin{proposition}\label{prop:neumann}
Let $\delta>0$ be small enough $($``small enough" being independent of $s$ and $t)$. For $N$ sufficiently large $($again independent of $s$ and $t)$, the unique solution of the Riemann--Hilbert problem for $R$ $($see Lemma \ref{le:rrhp}$)$ is given by

\begin{equation}\label{eq:R_Neuman}
R = I + C[\Delta + (I- C_\Delta)^{-1}(C_\Delta(I)) \Delta]
\end{equation}

\noindent where
\begin{align*}
C(f) : = \frac{1}{2\pi i} \int_{\Gamma_\delta} f(s) \frac{ds}{s-z}
\end{align*}

\noindent is the Cauchy operator on $\Gamma_\delta$, and $C_\Delta(f) = C_-(f \Delta)$ where $C_-(f) = \lim_{z \to s} C(f)$ as $z$ approaches a point $s \in \Gamma_\delta \backslash \{ \mathrm{intersection\ points}\}$ from the $-$side of $\Gamma_\delta$ $($for the orientation, see Lemma \ref{le:rrhp}$)$.
\end{proposition}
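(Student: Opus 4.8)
The plan is to follow the standard reduction of a small-norm Riemann--Hilbert problem to a singular integral equation, as in \cite[Chapter 7]{deift}. I would write the jump matrix of $R$ in the trivial factorized form $J_R = b_-^{-1}b_+$ with $b_- = I$, $b_+ = J_R = I+\Delta$, so that in the usual notation $w_+ = b_+ - I = \Delta$, $w_- = -(b_- - I) = 0$, $w = w_+ + w_- = \Delta$, and the associated singular integral operator is $C_w f = C_+(fw_-) + C_-(fw_+) = C_-(f\Delta) = C_\Delta f$, exactly the operator appearing in the statement. The general theory then says: provided $I - C_\Delta$ is invertible on $L^2(\Gamma_\delta)$ and $C_\Delta(I)\in L^2(\Gamma_\delta)$, the function $\mu := I + (I-C_\Delta)^{-1}(C_\Delta(I))$ satisfies the fixed-point identity $\mu = I + C_\Delta\mu$, and $\widetilde R(z) := I + C(\mu\Delta)(z)$ is a solution of the Riemann--Hilbert problem of Lemma \ref{le:rrhp}. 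Since $\Gamma_\delta$ has finite total length and $\Delta\in L^\infty(\Gamma_\delta)$ by Lemma \ref{le:rjumpmat}, we have $\Delta\in L^2(\Gamma_\delta)$, hence $C_\Delta(I) = C_-(\Delta)\in L^2(\Gamma_\delta)$; so the only point requiring genuine work is the invertibility of $I - C_\Delta$.

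For this I would use that the Cauchy operators $C_\pm$ are bounded on $L^2(\Gamma_\delta)$ with an operator norm \emph{uniform} over the admissible configurations, i.e. as $s,t$ range over $[0,1]$, the singularities $x_j$ over the relevant compact subset of $(-1,1)^k$, and $N$ over the large integers. This is classical: $\Gamma_\delta$ is a finite union of smooth arcs meeting transversally, all contained in a fixed bounded region, with uniformly bounded curvature and uniformly transversal self-intersections, so the Coifman--McIntosh--Meyer estimate applies with a constant that does not degenerate along the family (alternatively, one may first deform all the relevant contours onto a single fixed contour, which affects nothing in the argument). Granting this, $\|C_\Delta f\|_{L^2(\Gamma_\delta)} = \|C_-(f\Delta)\|_{L^2(\Gamma_\delta)} \le \|C_-\|_{L^2\to L^2}\,\|\Delta\|_{L^\infty(\Gamma_\delta)}\,\|f\|_{L^2(\Gamma_\delta)}$, and Lemma \ref{le:rjumpmat} gives $\|\Delta\|_{L^\infty(\Gamma_\delta)} = \mathcal{O}(N^{-1})$; hence $\|C_\Delta\|_{L^2\to L^2} = \mathcal{O}(N^{-1}) < 1$ for all large $N$, uniformly in $s,t$ and the $x_j$. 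Therefore $I - C_\Delta$ is invertible with $(I - C_\Delta)^{-1} = \sum_{n\ge 0} C_\Delta^n$ bounded uniformly in the same parameters, $\mu$ is well defined, and $\mu - I \in L^2(\Gamma_\delta)$.

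It then remains to verify that $\widetilde R$ genuinely solves the problem of Lemma \ref{le:rrhp} and to identify it with $R$. Analyticity of $\widetilde R$ on $\C\setminus\Gamma_\delta$ is immediate; since $\mu\Delta = \Delta + (\mu-I)\Delta\in L^1(\Gamma_\delta)$ (finite length, together with $\Delta\in L^2$, $\mu - I\in L^2$, $\Delta\in L^\infty$), the Cauchy integral $C(\mu\Delta)(z)$ tends to $0$ as $z\to\infty$, giving $\widetilde R(z) = I + \mathcal{O}(|z|^{-1})$. For the jump, the Sokhotski--Plemelj relation $C_+ - C_- = \mathrm{Id}$ yields $\widetilde R_+ - \widetilde R_- = \mu\Delta$, while $\widetilde R_- = I + C_-(\mu\Delta) = I + C_\Delta\mu = \mu$ by the fixed-point identity; combining, $\widetilde R_+ = \widetilde R_- + \mu\Delta = \widetilde R_-(I+\Delta) = \widetilde R_- J_R$, which is the required jump condition. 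Hence $\widetilde R$ solves the Riemann--Hilbert problem of Lemma \ref{le:rrhp}, and by the uniqueness asserted there, $R = \widetilde R$. Finally $\mu\Delta = \Delta + (\mu-I)\Delta = \Delta + \big((I-C_\Delta)^{-1}(C_\Delta(I))\big)\Delta$, so $R = I + C[\Delta + (I-C_\Delta)^{-1}(C_\Delta(I))\Delta]$, which is \eqref{eq:R_Neuman}. The one genuinely technical issue is the uniform $L^2$-boundedness of the Cauchy operator over the moving family of contours $\Gamma_\delta$ — in particular near their self-intersection points, where transversality must be controlled uniformly; everything else is routine bookkeeping with the Plemelj formula and the Neumann series.
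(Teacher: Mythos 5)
Your proposal is correct, but it runs the argument in the opposite direction from the paper. You \emph{construct} a candidate solution: you set up the factorization $b_-=I$, $b_+=J_R$, define $\mu=I+(I-C_\Delta)^{-1}(C_\Delta(I))$ so that $\mu=I+C_\Delta\mu$, verify via Sokhotski--Plemelj that $\widetilde R=I+C(\mu\Delta)$ satisfies the jump and normalization conditions of Lemma \ref{le:rrhp}, and then invoke the uniqueness asserted there to conclude $R=\widetilde R$. The paper instead starts from the already-constructed $R=S[P^{(\cdot)}]^{-1}$, applies Liouville's theorem to $R-\widehat R$ with $\widehat R=I+C(R_+-R_-)$ to obtain $R=I+C(R_-\Delta)$, takes $-$-boundary values to get the singular integral equation $(I-C_\Delta)(R_--I)=C_\Delta(I)$, and then inverts. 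The trade-off: the paper's route leans on the fact that $R$ exists and is bounded at the intersection points (so the Liouville step goes through), while yours leans on the uniqueness statement for the $R$-RHP, which strictly speaking requires controlling the behaviour of the candidate $\widetilde R$ at the self-intersection points of $\Gamma_\delta$ — a point you, like the paper, leave implicit. Both routes hinge on the same two facts (uniform $L^2$-boundedness of $C_-$ on $\Gamma_\delta$ and $\|\Delta\|_{L^p}=\mathcal{O}(N^{-1})$), and your discussion of the uniformity of the Cauchy-operator norm over the moving family of contours is if anything more careful than the paper's citation. One small slip: $\Gamma_\delta$ does \emph{not} have finite total length (it contains $\R\setminus[-1-\delta,1+\delta]$), so you cannot deduce $\Delta\in L^2$ from $\Delta\in L^\infty$ plus finite length; but this is harmless, since Lemma \ref{le:rjumpmat} gives $\|\Delta\|_{L^p(\Gamma_\delta)}=\mathcal{O}(N^{-1})$ for every $p\ge 1$ directly (the exponential decay of the jump on the unbounded part of the contour is what saves the integrability, and it is also what justifies the $\mathcal{O}(|z|^{-1})$ decay of $C(\mu\Delta)$ at infinity).
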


Finally, what we want to show is that $R(z)=I+\mathcal{O}(N^{-1})$ uniformly in everything relevant and use this as well as the explicit form of our parametrices to analyze our differential identities. The precise statement we need is the following one.

\begin{theorem}\label{th:rasy}
For small enough $\delta>0$ $($again small enough being independent of relevant quantities$)$ and large enough $N$ $($large enough being independent of everything relevant$)$ with respect to any matrix norm $|\cdot |$, there exists a $c>0$ such that

\begin{align*}
|R(z) - I| \le \frac{c}{N} \qquad \mathrm{and} \qquad |R'(z)|\le \frac{c}{N}
\end{align*}

\noindent uniformly in $(x_i)$ for $|x_i-x_j|\geq 3\delta$, $|x_i\pm 1|\geq 3\delta$ for $i\neq 0,k+1$, $t,s \in [0,1]$, $z \in \mathbb{C} \backslash \Gamma_\delta$, and if $\Tree$ is a real polynomial in a neighborhood of $[-1,1]$, then the error is uniform in its coefficients when these are restricted to a bounded set.

Moreover, for $\Tree=0$, we have

\begin{align*}
R(z) = I+R_1(z) + o(1/N), \qquad R'(z) = R_1'(z) + o(1/N)
\end{align*}

\noindent uniformly in $(x_i)$ for $|x_i-x_j|\geq 3\delta$, $|x_i\pm 1|\geq 3\delta$ for $i\neq 0,k+1$, $s \in [0,1]$, and $z \in \mathbb{C} \backslash (\Gamma_\delta\cup \cup_{j=0}^{k+1} U_{x_j})$. Here $R_1(z) = \sum_{j=0}^{k+1} R_1^{(x_j)}(z)$ with 

\begin{equation*}
R_1^{(x_j)}(z) =\begin{cases}
\frac{1}{x_j-z}\frac{\beta_j}{4\pi N d_s(x_j) \sqrt{1-x_j^2}} E^{(x_j)}(x_j) \begin{pmatrix} 0 & 1 + \frac{\beta_j}{2} \\ 1 - \frac{\beta_j}{2} & 0 \end{pmatrix} \left[E^{(x_j)}(x_j)\right]^{-1}, & j\in\lbrace 1,...,k\rbrace\\
-\underset{w = -1}{\mathrm{Res}} \frac{1}{w-z} F^{(-1)}(w) \begin{pmatrix} 0 & -\frac{5}{48 \xi_s^{(-1)}(w)^{2}}  \\ - \frac{7}{48 \xi_s^{(-1)}(w)}  & 0 \end{pmatrix} \left[F^{(-1)}(w)\right]^{-1}, & j = 0\\
-\underset{w = 1}{\mathrm{Res}} \frac{1}{w-z} F^{(1)}(w) \begin{pmatrix} 0 & \frac{5}{48\xi_s^{(1)}(w)^{2}}  \\ - \frac{7}{48 \xi_s^{(1)}(w)}  & 0 \end{pmatrix} \left[F^{(1)}(w)\right]^{-1}, & j = k+1
\end{cases}.
\end{equation*}
where $E$ and $F$ are the ``compatibility matrices" from Definitions \ref{def:local} and \ref{def:locale}.
\noindent In particular, we have
\begin{align*}
&\Jcal^{(x_j)}(z) : = \left([P^{(\infty)}(z)]^{-1} \left[R_1^{(x_j)}\right]'(z) P^{(\infty)}(z) \right)_{22}\\
& =\begin{cases}
\frac{1}{4} \frac{1}{(z - x_j )^2}\frac{i\beta_j}{4 \pi Nd_s(x_j)\sqrt{1-x_j^2}}
\bigg[\frac{a(z)^2}{a_+(x_j)^2} (c_{x_j, s}^2 + c_{x_j, s}^{-2} - \beta_j) \\
\qquad \qquad - \frac{a_+(x_j)^2}{a(z)^2}(c_{x_j, s}^2 + c_{x_j, s}^{-2} + \beta_j)\bigg] , & j \in \{1, \dots, k\}\\
-  \frac{1}{(z+1)^2} \frac{\sqrt{2}i}{8N}\left\{a(z)^{-2} \left[ \frac{5+ 96 \mathcal{A}^2}{48G_s^{(-1)}(-1)}- \frac{5\left[ G_s^{(-1)}\right]'(-1)}{12G_s^{(-1)}(1)^2} \right] -a(z)^2 \frac{7}{24G_s^{(-1)}(-1)} \right\}\\ \qquad \qquad + \frac{1}{(z+1)^3}\frac{5\sqrt{2}i}{48NG_s^{(-1)}(1)}  a(z)^{-2}, & j = 0\\
-\frac{1}{(z-1)^2} \frac{\sqrt{2}}{8N} \left\{a(z)^2 \left[\frac{5 + 96 \mathcal{A}^2}{48 G_s^{(1)}(1)} - \frac{5\left[G_s^{(1)}\right]'(1)}{12G_s^{(1)}(1)^2} \right] - a(z)^{-2} \frac{7}{24G_s^{(1)}(1)}\right\}\\ \qquad \qquad - \frac{1}{(z-1)^3} \frac{5\sqrt{2}}{48NG_s^{(1)}(1)} a(z)^2, & j=k+1
\end{cases}
\end{align*}

\noindent where
\begin{align*}
c_{x_j, s} &= \left(x_j + i \sqrt{1-x_j^2}\right)^{\mathcal{A}} \exp \left(-i \sum_{k > j} \beta_k \pi / 2 + N \phi_{s, +}(x_j) - (1+\beta_j) \pi i / 4 \right),\\
G_s^{(-1)}(-1) & = -i\pi \sqrt{2}d_s(-1), \quad \left[G_s^{(-1)}\right]'(-1) = -\frac{3\pi i}{10\sqrt{2}}[4d_s'(-1) - d_s(-1)],\\
G_s^{(1)}(1) & = \pi \sqrt{2}d_s(1), \quad \left[G_s^{(1)}\right]'(1) = \frac{3\pi}{10\sqrt{2}}[4d_s'(1) + d_s(1)].
\end{align*}

\end{theorem}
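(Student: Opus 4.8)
\emph{Strategy.} Throughout I work from the Neumann‑series representation of Proposition \ref{prop:neumann}, $R=I+C[\rho]$ with $\rho:=\Delta+(I-C_\Delta)^{-1}(C_\Delta(I))\Delta$, combined with the $L^p$‑smallness of $\Delta=J_R-I$ from Lemma \ref{le:rjumpmat}. Since $\Gamma_\delta$ is a finite union of smooth arcs (plus the unbounded piece of $\R$, where $C_\pm$ is still $L^2$‑bounded), one has $\|C_\Delta\|_{L^2\to L^2}\le\|C_-\|\,\|\Delta\|_{L^\infty(\Gamma_\delta)}=O(N^{-1})$ uniformly in $(x_i),s,t$ (and in the coefficients of a polynomial $\Tree$); hence for $N$ large, uniformly, $I-C_\Delta$ is invertible with inverse of norm $\le 2$, so $\|\rho\|_{L^2(\Gamma_\delta)}=O(N^{-1})$. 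The bounded part of $\Gamma_\delta$ has finite length and, by \eqref{eq:gbv2}, $\Delta$ (hence $\rho$) is $O(e^{-cN})$ on $\R\setminus[-1-\delta,1+\delta]$, so also $\|\rho\|_{L^1(\Gamma_\delta)}=O(N^{-1})$. From $R-I=C[\rho]$ and Hölder, $|R(z)-I|\le\|\rho\|_{L^1}/(2\pi\,\mathrm{dist}(z,\Gamma_\delta))$ and, differentiating under the integral sign, $|R'(z)|\le\|\rho\|_{L^1}/(2\pi\,\mathrm{dist}(z,\Gamma_\delta)^2)$. To remove the dependence on the distance and obtain the bound uniformly for all $z\in\C\setminus\Gamma_\delta$, one uses that $J_R$ extends analytically to a fixed neighbourhood of $\Gamma_\delta$ away from its finitely many self‑intersections (it is built from $P^{(\infty)}$ and the analytic continuations of $f_t,h_s,\dots$ from Sections \ref{sec:trans1}--\ref{sec:locale}), so $R$ continues analytically across $\Gamma_\delta$ and, given $z$, one deforms $\Gamma_\delta$ locally so the new contour stays at distance $\gtrsim\delta$ from $z$ without changing the $L^1$‑norm estimate; on the remaining exponentially‑small portion the bound is direct. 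This yields $|R(z)-I|\le c/N$ and $|R'(z)|\le c/N$ with $c$ independent of everything relevant, proving the first assertion.

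\emph{The refined expansion when $\Tree=0$.} Now $\Delta$ is $O(e^{-cN})$ on $\Gamma_\delta$ off the circles $\partial U_{x_0},\dots,\partial U_{x_{k+1}}$, while on $\partial U_{x_j}$ Lemma \ref{le:2ndorderlocals} (for $1\le j\le k$) and Lemma \ref{le:locale2ndorderasy} together with Remark \ref{rem:localematch} (for $j=0,k+1$) give $\Delta=\widetilde\Delta_j+O(N^{-2})$, where $\widetilde\Delta_j$ is the displayed $O(1/N)$ term and is the boundary value of a function meromorphic in $U_{x_j}$ with a single pole: of order $1$ at $x_j$ when $1\le j\le k$ (from the simple zero of $\zeta_s$), and of order $2$ at $\pm1$ (from $\xi_s^{-1},\xi_s^{-2}$, with $F$ and $F^{-1}$ analytic there). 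Feeding this into $\rho$, the quadratic term $(I-C_\Delta)^{-1}(C_\Delta(I))\Delta$ is $O(N^{-2})$ in $L^2$ hence (on the bounded‑length circles) in $L^1$, and $\Delta-\sum_j\widetilde\Delta_j\mathbf 1_{\partial U_{x_j}}$ is $O(N^{-2})$ on the circles and $O(e^{-cN})$ elsewhere, so $\rho-\sum_j\widetilde\Delta_j\mathbf 1_{\partial U_{x_j}}$ has $L^1(\Gamma_\delta)$‑norm $o(N^{-1})$. Putting $R_1(z):=\sum_{j=0}^{k+1}C[\widetilde\Delta_j\mathbf 1_{\partial U_{x_j}}](z)$, the estimate of the previous paragraph (with $\rho$ replaced by this difference, and using that $1/(s-z)$, $1/(s-z)^2$ are bounded on the circles for $z\notin\bigcup_jU_{x_j}$) gives $R(z)-I-R_1(z)=o(N^{-1})$ and $R'(z)-R_1'(z)=o(N^{-1})$ uniformly for $z\in\C\setminus(\Gamma_\delta\cup\bigcup_{j=0}^{k+1}U_{x_j})$. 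Finally, since each $\partial U_{x_j}$ is oriented clockwise and $\widetilde\Delta_j(w)/(w-z)$ is meromorphic in $U_{x_j}$ with its only pole at the centre, the residue theorem identifies $C[\widetilde\Delta_j\mathbf 1_{\partial U_{x_j}}](z)$ with $\mp$ the residue of $\widetilde\Delta_j(w)/(w-z)$ at $x_j$; carrying out this residue — using $\zeta_s(x_j)=0$, $\zeta_s'(x_j)=\pi N d_s(x_j)\sqrt{1-x_j^2}$ and analyticity of $E$ at $x_j$ for $1\le j\le k$, and a Taylor expansion of $F$ and $\xi_s$ about $\pm1$ for $j=0,k+1$ — gives exactly the stated $R_1^{(x_j)}(z)$, and $R_1'$ is its derivative.

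\emph{The formula for $\Jcal^{(x_j)}$.} This is a direct (if lengthy) computation. One substitutes the explicit $R_1^{(x_j)}$ just obtained into $\Jcal^{(x_j)}(z)=\big([P^{(\infty)}(z)]^{-1}[R_1^{(x_j)}]'(z)P^{(\infty)}(z)\big)_{22}$, and uses the factorised form of $P^{(\infty)}$ in Definition \ref{def:global}, so that conjugating a constant off‑diagonal $2\times2$ matrix by $P^{(\infty)}(z)$ produces the ratios $a(z)^2/a_+(x_j)^2$ and $a_+(x_j)^2/a(z)^2$ (respectively $a(z)^{\pm2}$ at the edges), together with the definitions of the compatibility matrices $E$ and $F$ in Definitions \ref{def:local} and \ref{def:locale}. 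Evaluating $a$, $\mathcal D_t|_{\Tree=0}$, $\phi_s$ and $\xi_s$ at the singular points produces the constants $c_{x_j,s}$, $G_s^{(\pm1)}(\pm1)$ and $[G_s^{(\pm1)}]'(\pm1)$; differentiating the simple pole $1/(z-x_j)$ gives the $1/(z-x_j)^2$ terms for $1\le j\le k$, and differentiating the second‑order pole at $\pm1$ gives both the $1/(z\mp1)^2$ and the $1/(z\mp1)^3$ terms. Extracting the $(2,2)$ entry and simplifying yields the asserted expression.

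\emph{Main obstacle.} The heart of the matter is the first paragraph together with all the uniformities: one must have Lemma \ref{le:rjumpmat} uniformly in $(x_i),s,t$ (and the coefficients of a polynomial $\Tree$), and — the subtle point — upgrade the distance‑dependent pointwise estimates for $R$ and $R'$ to estimates valid arbitrarily close to $\Gamma_\delta$, which is exactly where the analyticity of the jump matrix and the enlarged parametrix neighbourhoods $U_{x_j}'$ are used to deform the contour. In the refined step the delicate point is to verify that the subleading contributions — the $O(N^{-2})$ matching errors of Lemmas \ref{le:2ndorderlocals} and \ref{le:locale2ndorderasy}, the quadratic Neumann term, and the exponentially small jumps on the lenses and on $\R\setminus[-1-\delta,1+\delta]$ — combine to a genuine $o(1/N)$, uniformly in the parameters. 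The $\Jcal^{(x_j)}$ formula is long but conceptually routine once the explicit parametrices are in hand.
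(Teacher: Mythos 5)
Your proposal is correct and follows essentially the same route as the paper: the small-norm/Neumann-series bound $\|R_--I\|_{L^2(\Gamma_\delta)}=\mathcal{O}(N^{-1})$ combined with Cauchy-transform estimates and local contour deformation of the analytically continued jump matrix for $z$ near $\Gamma_\delta$, then for $\Tree=0$ the isolation of the $\mathcal{O}(N^{-1})$ part of $\Delta$ on the circles $\partial U_{x_j}$ via Lemmas \ref{le:2ndorderlocals} and \ref{le:locale2ndorderasy} and evaluation of $C(\Delta)$ by residues. The only differences are cosmetic (you bound $C[\rho]$ via the $L^1$ norm of $\rho=R_-\Delta$ where the paper uses Cauchy--Schwarz with $L^2$ norms), so there is nothing substantive to add.
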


\begin{remark}
As discussed in \cite{krasovsky}, using the asymptotic expansions of the Airy function and Bessel functions, the matching conditions of the local parametrices can be extended into asymptotic expansions in inverse powers of $N$. These then can be used to prove a full asymptotic expansion for $R$ and $R'$. We don't have use for this, so we won't discuss it further.
\end{remark}

\section{Integrating the differential identities}\label{sec:di}

In this section we will use our asymptotic solution and precise form of the parametrices to analyze the asymptotics of the differential identities \eqref{eq:di1} and \eqref{eq:di2}, and finally integrate them. We will start with \eqref{eq:di1}.

\subsection{\texorpdfstring{The differential identity \eqref{eq:di1}}{The first differential identity}}

Here we will give a (slightly simplified) variant of the argument in \cite[Section 5.3]{dik2} to integrate the differential identity \eqref{eq:di1}. As there are minor modifications due to the differences in the models and the argument being relevant for \eqref{eq:di2}, we present a full proof here. The main goal we wish to prove is the following.

\begin{proposition}\label{prop:di1int}
Let $V$ be one-cut regular, $\Tree$ as in Proposition \ref{prop:fh}, and $\delta>0$ small enough, but independent of $N$. Then as $N\to\infty$,

\begin{align}\label{eq:di1int}
\log \frac{D_{N-1}(f_1;V)}{D_{N-1}(f_0;V)}&=N\int_{-1}^1 \Tree(x)d(x)\sqrt{1-x^2}dx+\frac{\mathcal{A}}{\pi}\int_{-1}^1 \frac{\Tree(x)}{\sqrt{1-x^2}}dx-\sum_{j=1}^k \frac{\beta_j}{2}\Tree(x_j)\\
\notag &\quad +\frac{1}{4\pi^2}\int_{-1}^1 dy\frac{\Tree(y)}{\sqrt{1-y^2}}P.V.\int_{-1}^1 \frac{\Tree'(x)\sqrt{1-x^2}}{y-x}dx+o(1)
\end{align}

\noindent where $o(1)$ is uniform in $\lbrace (x_j)_{j=1}^k: |x_i-x_j|\geq 3\delta, i\neq j \ and \ |x_i\pm 1|\geq 3\delta \ \forall i\rbrace$, and if in a neighborhood of $[-1,1]$, $\Tree$ is a real polynomial of fixed degree, then the error is also uniform in the coefficients of $\Tree$ when these are restricted to a bounded set.
\end{proposition}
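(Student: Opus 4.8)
The plan is to integrate the differential identity \eqref{eq:di1} of Lemma~\ref{le:di} along the family $f_t$, $t\in[0,1]$, using the asymptotic solution of the Riemann--Hilbert problem developed in Section~\ref{sec:rh}. First I would unwind the integrand of \eqref{eq:di1}: since $f_t = e^{\mathcal{T}_t(\lambda)}\prod_j|\lambda-x_j|^{\beta_j}$ and only the factor $e^{\mathcal{T}_t}$ depends on $t$, one has $\partial_t f_t(\lambda) e^{-NV(\lambda)} = (\partial_t \mathcal{T}_t(\lambda)) f_t(\lambda) e^{-NV(\lambda)}$, and the quantity $Y_{11}\partial_x Y_{21}-Y_{21}\partial_x Y_{11}$ times $f_t e^{-NV}$ can be rewritten in terms of $T$, then $S$, and ultimately in terms of the parametrices composed with $R$. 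The standard manipulation (as in \cite[Section 5.3]{dik2}) expresses the $t$-derivative of $\log D_{N-1}(f_t)$ as a contour integral over $(-1,1)$ (plus negligible contributions from outside) of $\partial_t\mathcal{T}_t$ against an explicit density built from $P^{(\infty)}$, $P^{(x_j)}$, $P^{(\pm1)}$ and $R$. The leading contribution comes from the global parametrix $P^{(\infty)}$ (more precisely from $\mathcal{D}_t$, which contains the Cauchy transform of $\mathcal{T}_t$), which produces the $N\int \mathcal{T}_t\, d\,\sqrt{1-x^2}$ term at order $N$ and the $\mathcal{T}_t$-bilinear double-integral term at order $1$; the local parametrices near $x_j$ contribute the $-\tfrac{\beta_j}{2}\mathcal{T}_t(x_j)$ terms and the $\tfrac{\mathcal{A}}{\pi}\int \tfrac{\mathcal{T}_t}{\sqrt{1-x^2}}$ type terms via the $a(z)^{\pm2}$ and $(z-x_j)$ singular structure; and the edge parametrices at $\pm1$, together with the $R_1$ correction, contribute only to the $o(1)$ error once integrated. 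I would organize the computation so that each term in the target formula \eqref{eq:di1int} is identified with the $\int_0^1 dt$ of a corresponding term in $\partial_t(\cdot)$, using that $\int_0^1 \partial_t \mathcal{T}_t(\lambda)\,dt = \mathcal{T}_1(\lambda)-\mathcal{T}_0(\lambda) = \mathcal{T}(\lambda)$ (recall $\mathcal{T}_0=0$, $\mathcal{T}_1=\mathcal{T}$), which collapses all the $t$-dependence cleanly.

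Concretely, the key steps in order are: (1) substitute $Y = e^{N\ell_s\sigma_3/2}T\, e^{N(g_s-\ell_s/2)\sigma_3}$ and then $T = S\cdot(\text{lens factor})$ into the bracket in \eqref{eq:di1}, noting that the bilinear expression $Y_{11}\partial_x Y_{21}-Y_{21}\partial_x Y_{11}$ is essentially the $(21)$-entry of $Y^{-1}\partial_x Y$ up to conjugation-invariant factors, so it transforms controllably; (2) on $(-1,1)$, replace $S$ by $P^{(x_j)}$ inside each $U_{x_j}$ and by $P^{(\infty)}$ outside, picking up $R=I+\mathcal{O}(1/N)$ and $R'=\mathcal{O}(1/N)$ from Theorem~\ref{th:rasy}; (3) compute the $P^{(\infty)}$-contribution explicitly using Definition~\ref{def:global} — here $\partial_t\log\mathcal{D}_t$ involves $\partial_t\mathcal{T}_t$ and its finite Hilbert transform, giving both the order-$N$ term (from $\mathcal{D}_t(\infty)$ and the density $d\sqrt{1-x^2}$ via the Euler--Lagrange / equilibrium structure \eqref{eq:el1}) and the order-$1$ bilinear term $\tfrac{1}{4\pi^2}\int dy\,\tfrac{\mathcal{T}(y)}{\sqrt{1-y^2}}\,\mathrm{P.V.}\!\int\tfrac{\mathcal{T}'(x)\sqrt{1-x^2}}{y-x}dx$; (4) compute the $P^{(x_j)}$-contributions near each singularity, where the $a(z)^{\pm 2}$ behavior and the $1/(z-x_j)$ factor in $R_1^{(x_j)}$ — combined with a local expansion of $\partial_t\mathcal{T}_t$ around $x_j$ — produce exactly $-\tfrac{\beta_j}{2}\mathcal{T}_t(x_j)$ plus a piece that, after accounting for all $j$, assembles into $\tfrac{\mathcal{A}}{\pi}\int_{-1}^1\tfrac{\mathcal{T}_t(x)}{\sqrt{1-x^2}}dx$; (5) bound the edge ($\pm1$) and $R$-error contributions by $o(1)$, uniformly in the $x_j$ in the prescribed compact set and (in the polynomial case) in the coefficients, using the uniformity statements in Lemma~\ref{le:cauchybounds}, Lemma~\ref{le:rjumpmat}, and Theorem~\ref{th:rasy}; (6) integrate over $t\in[0,1]$ and use $\mathcal{T}_0=0$, $\mathcal{T}_1=\mathcal{T}$ to reach \eqref{eq:di1int}.

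I expect the main obstacle to be step (4): carefully extracting the finite-$N$-correct local contribution near each $x_j$ without double-counting the part that is already captured by $P^{(\infty)}$. The subtlety is that the Cauchy transform in $\mathcal{D}_t$ has an integrable singularity at $x_j$ and one must match the local parametrix expansion (through the compatibility matrix $E$ and the model solution $\Psi$ built from Hankel/Bessel functions) against the global one on $\partial U_{x_j}$ so that the $\mathcal{T}_t(x_j)$ coefficient comes out to be exactly $-\beta_j/2$ and the $\int \tfrac{\mathcal{T}_t}{\sqrt{1-x^2}}$ coefficient exactly $\mathcal{A}/\pi = \tfrac{1}{\pi}\sum_j \beta_j/2$. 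A secondary difficulty is keeping every error term uniform: one must check that the $o(1)$ survives uniformly as the $x_j$ range over the compact set $\{|x_i-x_j|\ge3\delta,\ |x_i\pm1|\ge3\delta\}$ and, in the polynomial case, over bounded coefficient sets — this relies on the uniform bounds already recorded in Section~\ref{sec:rh}, so it should be bookkeeping rather than a genuinely new estimate, but it requires care. A good sanity check along the way is to specialize to $\mathcal{T}=0$ (trivial) and to $k=0$ (no singularities), where the formula must reduce to the known linear-statistics CLT normalization for one-cut regular ensembles; matching that fixes the constants in steps (3)–(4).
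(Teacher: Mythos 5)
Your high-level skeleton (integrate \eqref{eq:di1} in $t$, use $\int_0^1\partial_t\mathcal{T}_t\,dt=\mathcal{T}$, feed in the RH asymptotics) matches the paper, but you are missing the one device that makes the computation tractable, and as a result your step (4) points down a substantially harder road that you do not actually carry out. The paper's proof (Lemma \ref{le:di1global}) first rewrites the integrand via the jump relation: $f_te^{-NV}(Y_{11}Y_{21}'-Y_{21}Y_{11}')$ equals the difference of the $\pm$ boundary values of $Y_{22}Y_{11}'-Y_{12}Y_{21}'=[Y^{-1}Y']_{11}$, which is analytic off $\R$. This converts the integral over $(-1,1)$ into integrals over contours $\tau_\pm$ that stay \emph{outside} the lenses and all the disks $U_{x_j}$, $U_{\pm1}$. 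On $\tau_\pm$ one only ever sees $P^{(\infty)}$ and $R=I+\mathcal{O}(N^{-1})$, so the whole answer reduces to $Ng_1'-\mathcal{D}_t'/\mathcal{D}_t+o(1)$; the terms $-\frac{\beta_j}{2}\Tree(x_j)$ and $\frac{\mathcal{A}}{\pi}\int\frac{\Tree}{\sqrt{1-x^2}}$ then fall out as the residue of $q_{FH}'$ at $x_j$ and the jump of $-\mathcal{A}/r(z)$ across $(-1,1)$ --- i.e.\ they come from the Fisher--Hartwig part of the \emph{global} Szeg\H{o} function, not from the local parametrices. The local parametrices and the edge parametrices enter this proposition only through the matching conditions that guarantee $R=I+\mathcal{O}(N^{-1})$.

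Your plan instead keeps the integral on the real axis and replaces $S$ by $P^{(x_j)}$ inside each $U_{x_j}$, hoping to extract $-\frac{\beta_j}{2}\Tree(x_j)$ and the $\mathcal{A}/\pi$ term from "a local expansion of $\partial_t\mathcal{T}_t$ around $x_j$" against the $\Psi$-model solution. That would require evaluating, uniformly in $N$, integrals over $U_{x_j}\cap\R$ of products of Hankel/Bessel functions with argument $\zeta_s(z)\asymp N(z-x_j)$ against a smooth test function --- an oscillatory-integral computation that is nowhere in the paper and that you neither set up nor estimate; it is not clear the $\mathcal{O}(1)$ pieces would assemble into the stated coefficients without exact Bessel identities. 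A second omission: even granting the reduction to $\mathcal{D}_t'/\mathcal{D}_t$, the bilinear term is not immediate from "$\partial_t\log\mathcal{D}_t$"; the paper needs Lemma \ref{le:szint}, whose proof integrates by parts in $t$, uses several finite-Hilbert-transform identities and the anti-self-adjointness of $\mathcal{H}$ on dual $L^p$ spaces to show the cross term equals minus the original, yielding the factor $\frac{1}{4\pi^2}$. Your proposal treats this as an automatic consequence of the form of $\mathcal{D}_t$, which it is not. To repair the argument, insert the boundary-value identity and the deformation to $\tau_\pm$ as the first step, and replace your step (4) by the residue/jump computation for $q_{FH}$ together with the Lemma \ref{le:szint}-type calculation for $q_{Sz}$.
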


The way we will do this is we'll express the integrand in \eqref{eq:di1} in a slightly different way which will allow deforming our integration contour in such a way that we can express $Y$ in terms of $R$ and the global parametrix $P^{(\infty)}$. The expression will be such that to leading order, we can treat $R$ as the identity, and using the global parametrix, we can perform the relevant integrals explicitly. 

Let us begin with expressing our integral in terms of the global parametrix. We first remind the reader that we denoted by $U_{[-1,1]}$ a fixed (independent of $N$ and $t$) complex neighborhood of $[-1,1]$ into which $\Tree_t$ had an analytic continuation for all $t\in[0,1]$. We also assumed that the lenses and neighborhoods $(U_{x_j})_{j=0}^{k+1}$ were inside $U_{[-1,1]}$. 

\begin{lemma}\label{le:di1global}
Let $\tau_+:[0,1]\to \lbrace z\in \C: \mathrm{Im}(z)\geq 0\rbrace\cap U_{[-1,1]}$ be a smooth simple curve independent of $N$. We also assume that $\tau_+(0)<-1$, $\tau_+(1)>1$, and that $\tau(s)$ is outside of the lenses and neighborhoods $(U_{x_j})_{j=0}^{k+1}$ for all $s$. We also define $\tau_-$ in a similar way but in the lower half plane and with the assumption that $\tau_-(0)=\tau_+(0)$ as well as $\tau_-(1)=\tau_+(1)$. See Figure \ref{fig:int_contour} for an illustration. 

Then for $t\in[0,1]$

\begin{align*}
\frac{1}{2\pi i}&\int_\R \left[Y_{11}(x,t)\partial_x Y_{21}(x,t)-Y_{21}(x,t)\partial_x Y_{11}(x,t)\right]\partial_t f_t(x) e^{-NV(x)}dx\\
&=N\int_{-1}^1 d(x)\sqrt{1-x^2}\frac{\partial_t f_t(x)}{f_t(x)}dx+\frac{1}{2\pi i}\left[\int_{\tau_+}-\int_{\tau_-}\right]\frac{\mathcal{D}_t'(z)}{\mathcal{D}_t(z)}\frac{\partial_t f_t(z)}{f_t(z)}dz+o(1),
\end{align*}

\noindent where $o(1)$ is uniform in $t\in[0,1]$, $\lbrace (x_j)_{j=1}^k: |x_i-x_j|\geq 3\delta, i\neq j \ and \ |x_i\pm 1|\geq 3\delta \ \forall i\rbrace$, and if in a neighborhood of $[-1,1]$, $\Tree$ is a real polynomial of fixed degree, then the error is also uniform in the coefficients of $\Tree$ when these are restricted to a bounded set.
\end{lemma}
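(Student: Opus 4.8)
The plan is to start from the differential identity \eqref{eq:di1}, substitute $Y = e^{N\ell_s\sigma_3/2} T e^{N(g_s-\ell_s/2)\sigma_3}$ and then peel off layers to reach $R$. The key observation is that the bilinear expression $Y_{11}\partial_x Y_{21} - Y_{21}\partial_x Y_{11}$ is, up to a scalar, the $(2,1)$-entry of $Y^{-1}\partial_x Y$ — or more precisely, since $\det Y = 1$, one has $Y_{11}\partial_x Y_{21}-Y_{21}\partial_x Y_{11} = (Y^{-1}Y')_{21}$ only after the relevant algebra; in any case it transforms in a controlled way under left/right multiplication by diagonal matrices. First I would rewrite the integrand purely in terms of $x \in (-1,1)$, where $\partial_t f_t$ is supported (since $\mathcal{T}$, hence $\partial_t f_t/f_t$, is supported near $[-1,1]$, and on $\R\setminus[-1,1]$ the factor $\partial_t f_t = \partial_t \mathcal{T}_t \cdot f_t$ vanishes outside a neighborhood of $[-1,1]$; actually $\mathcal{T}$ has compact support so this needs the contour to be pushed appropriately). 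On $(-1,1)$ the boundary values $Y_\pm$ are continuous, and one can express the bilinear form through $T_\pm$ and then through $S_\pm$ and the parametrices.

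The main step is a contour deformation. On $(-1,1)$, away from the $U_{x_j}$ and $U_{\pm 1}$, we have $S = R\, P^{(\infty)}$, and inside the disks $S = R\, P^{(x_j)}$ or $S = R\, P^{(\pm1)}$. Using Theorem \ref{th:rasy} ($R = I + O(1/N)$, $R' = O(1/N)$), the contributions involving $R$ are $o(1)$ after integration — this requires checking that the singular factors $|x-x_j|^{-\beta_j}$ coming from $\partial_t f_t / f_t \sim \partial_t\mathcal T_t$ (which is actually bounded! since the singular prefactor $\prod|x-x_j|^{\beta_j}$ cancels in $\partial_t f_t/f_t$) do not spoil integrability; in fact $\partial_t f_t/f_t = (e^{\mathcal{T}(x)}-1)/(1-t+te^{\mathcal{T}(x)})$ is smooth and compactly supported, which is the crucial simplification. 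Then the leading term is governed entirely by $P^{(\infty)}$. The plan is to use the explicit form \eqref{eq:global} of $P^{(\infty)}$ in terms of $a(z)$ and $\mathcal{D}_t(z)$: the bilinear combination built from $P^{(\infty)}_\pm$ on $(-1,1)$, when integrated against $\partial_t f_t/f_t$, splits into (a) a term proportional to $N$ times $\int_{-1}^1 d(x)\sqrt{1-x^2}\,\partial_t f_t(x)/f_t(x)\,dx$, coming from the $g_s$ / $h_s$ factors (the $e^{\pm N h_s}$ pieces and the jump $g_{s,+}-g_{s,-}$), and (b) a term that can be recognized as a boundary-value jump of an analytic function, namely $\mathcal{D}_t'/\mathcal{D}_t$ times $\partial_t f_t/f_t$, across $(-1,1)$.

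For part (b), the idea is that $\mathcal{D}_t(z)^{-\sigma_3}$ is the only $z$-dependent, $t$-dependent non-scalar factor in $P^{(\infty)}$ that survives, and the combination $(P^{(\infty)})^{-1}\partial_z P^{(\infty)}$ in the $(2,1)$ slot produces $2\,\mathcal{D}_t'/\mathcal{D}_t$ up to the oscillatory $a(z)$ factors; the $a$-factors contribute to (a) or cancel by symmetry. Since $\mathcal{D}_t'/\mathcal{D}_t \cdot \partial_t f_t/f_t$ is analytic in $U_{[-1,1]}\setminus[-1,1]$ (both factors extend; $\mathcal{D}_t$ is analytic and nonzero there, and $\partial_t f_t/f_t = \partial_t \mathcal{T}_t$ extends analytically), its integral over $(-1,1)$ against the jump equals $\tfrac{1}{2\pi i}$ times the difference of contour integrals over $\tau_+$ and $\tau_-$ by Cauchy's theorem — deforming the upper boundary of $(-1,1)$ up to $\tau_+$ and the lower down to $\tau_-$, picking up no residues since the only non-analyticity of $\partial_t f_t/f_t$ lies outside $U_{[-1,1]}$ (where $\mathcal T$ stops being analytic) but there the integrand vanishes because $\partial_t f_t = 0$ there, or is absorbed into the $o(1)$. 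This is exactly the claimed formula.

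\textbf{Main obstacle.} The hard part will be the bookkeeping of the $a(z)^{\pm 2}$ oscillatory factors and the $e^{\pm N h_s}$ terms on $(-1,1)$: one must show that after forming the bilinear combination $P^{(\infty)}_{11}\partial_x P^{(\infty)}_{21} - P^{(\infty)}_{21}\partial_x P^{(\infty)}_{11}$ and multiplying by $\partial_t f_t/f_t$, the rapidly oscillating pieces either integrate to $o(1)$ (by a Riemann–Lebesgue / integration-by-parts argument, using smoothness of $\partial_t f_t/f_t$) or collapse into the clean $N\int d\sqrt{1-x^2}\,\partial_t f_t/f_t$ term, leaving precisely the single non-oscillatory $\mathcal{D}_t'/\mathcal{D}_t$ contribution. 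One must also carefully justify replacing $S$ by $P^{(\infty)}$ near the endpoints and singularities — there the local parametrices differ from $P^{(\infty)}$, but since $\partial_t f_t/f_t$ is bounded and the disks have fixed radius $\delta$, and $P^{(x_j)}$ matches $P^{(\infty)}$ on $\partial U_{x_j}$ up to $O(1/N)$, one deforms the contour slightly inside or uses that $R = S (P^{(\ast)})^{-1}$ is still $I + O(1/N)$ across these regions, so the endpoint/singularity contributions are $o(1)$ too. Uniformity in $(x_j)$ and in polynomial coefficients of $\mathcal T$ follows from the corresponding uniformity in Theorem \ref{th:rasy} and Lemma \ref{le:cauchybounds}.
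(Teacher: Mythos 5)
Your proposal reaches the right formula but by a route that differs from the paper's, and as described it has a genuine gap at the singularities and edges. You propose to evaluate the integral directly on $\R$, substituting boundary values $Y_\pm\to T_\pm\to S_\pm\to$ parametrices, handling the oscillatory $e^{\pm Nh_s}$ factors on $(-1,1)$ by Riemann--Lebesgue, and dismissing the contributions from $\R\cap U_{x_j}$ and $\R\cap U_{\pm1}$ as $o(1)$. That last claim fails: the quantity $(Y^{-1}Y')_{21}f_te^{-NV}=2\pi i\sum_{l<N}p_l^2f_te^{-NV}$ is (up to the constant) the one-point function, of order $N$ uniformly on those segments, and its order-one correction there is governed by the Bessel/Airy parametrices, which differ from $P^{(\infty)}$ by $O(1)$ in the \emph{interior} of the disks (they match only on $\partial U_{x_j}$, and only to $O(1/N)$). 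Those order-one pieces are not negligible --- they are precisely what must reproduce the residue contributions $-\tfrac{\beta_j}{2}\Tree(x_j)$ that appear when the $\tau_\pm$ integrals of $\mathcal{D}_t'/\mathcal{D}_t\cdot\partial_tf_t/f_t$ are evaluated in Proposition \ref{prop:di1int}. Nor can you ``deform the contour slightly inside'' while in the $(2,1)$-entry form, because the weight $f_te^{-NV}$ (with its $|x-x_j|^{\beta_j}$ factors) does not extend analytically off $\R$.

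The paper's proof avoids both the oscillatory analysis on $(-1,1)$ and the local parametrices altogether by one algebraic step your proposal is missing: the jump relation \eqref{eq:Yjump} gives
\begin{equation*}
f_te^{-NV}\bigl(Y_{11}Y_{21}'-Y_{21}Y_{11}'\bigr)=\bigl(Y_{22,-}Y_{11}'-Y_{12,-}Y_{21}'\bigr)-\bigl(Y_{22,+}Y_{11}'-Y_{12,+}Y_{21}'\bigr),
\end{equation*}
i.e.\ the integrand is the jump across $\R$ of $[Y^{-1}Y']_{11}$, which is analytic off $\R$ with continuous boundary values. Multiplying by the analytic function $\partial_tf_t/f_t$ and applying Cauchy's theorem moves the integral onto $\tau_\pm$, plus a remainder on $\R\setminus[\tau_+(0),\tau_+(1)]$ that is exponentially small by \eqref{eq:gbv2}. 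On $\tau_\pm$ one is outside every lens and disk, so $S=RP^{(\infty)}$ and
$[Y^{-1}Y']_{11}=Ng_1'+[(P^{(\infty)})^{-1}(P^{(\infty)})']_{11}+o(1)=Ng_1'-\mathcal{D}_t'/\mathcal{D}_t+o(1)$,
with no $e^{\pm Nh_s}$ factor at all, since conjugation by $e^{N(g_1-\ell_1/2)\sigma_3}$ leaves diagonal entries unchanged; the $G_1'$ term is then deformed back onto $(-1,1)$ via \eqref{eq:gbv3} to give the leading term. If you insist on your on-the-real-line strategy you would have to carry out the full order-one local analysis inside every disk; the identity above is the shortcut that makes the lemma a short computation.
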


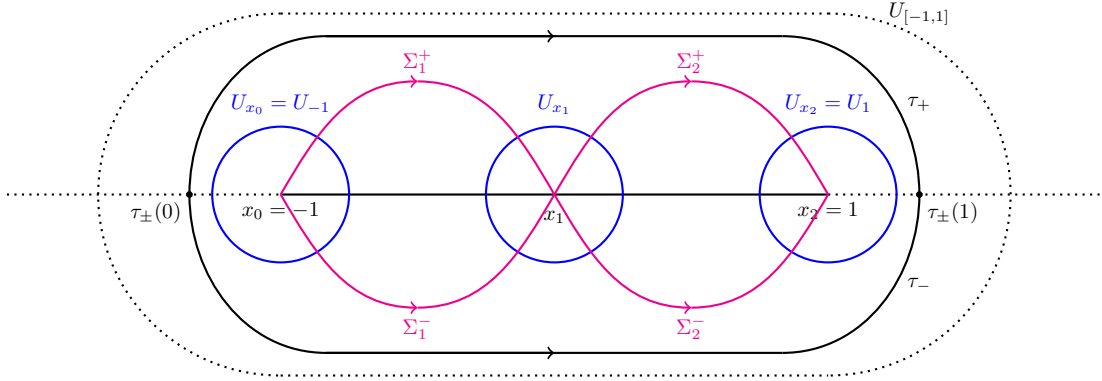
\begin{figure}[h!]
\begin{center}
\begin{tikzpicture}[scale=0.6, every node/.style={scale=0.8}]
\draw[thick, dotted] (-6, 0) -- (0,0) node[below]{$x_0 = -1$} (12, 0) node[below]{$x_2 = 1$} -- (18,0);
\draw[thick, dotted] (0,4) arc [radius=4, start angle=90, end angle= 270]
			(12,4) arc [radius=4, start angle=90, end angle= -90];
\draw[thick, dotted] (0, 4) -- (12, 4) (0, -4) -- (12, -4);
\node at (14, 4) {$U_{[-1,1]}$};

\draw[thick, fill] (-2,0) circle [radius = 0.05] node[below left]{$\tau_\pm(0)$};
\draw[thick, fill] (14,0) circle [radius = 0.05] node[below right]{$\tau_\pm(1)$};
\draw[thick] (-2, 0) to [out = 90, in = 180] (1, 3.5);
\draw[thick, ->] (1, 3.5) -- (11, 3.5) (1, 3.5) -- (6, 3.5);
\draw[thick] (11, 3.5) to [out = 0, in = 90] (14, 0);
\node at (14, 2) {$\tau_+$};

\draw[thick] (-2, 0) to [out = -90, in = -180] (1, -3.5);
\draw[thick, ->] (1, -3.5) -- (11, -3.5) (1, -3.5) -- (6, -3.5);
\draw[thick] (11, -3.5) to [out = 0, in = -90] (14, 0);
\node at (14, -2) {$\tau_-$};

\draw[thick] (0,0) -- (12,0); \node at(6,-0.5){$x_1$};
\draw[blue, thick] (0,0) circle [radius=1.5]  (6,0) circle [radius=1.5]  (12,0) circle [radius=1.5];
\draw[magenta, thick, ->] (0,0) to [out = 60, in = 180] (3,2.5) node[above]{$\Sigma_1^+$};
\draw[magenta, thick, ->] (0,0) to [out = -60, in = -180] (3,-2.5) node[below]{$\Sigma_1^-$};
\draw[magenta, thick, ->] (6,0) to [out = 60, in = 180] (9,2.5) node[above]{$\Sigma_2^+$};
\draw[magenta, thick, ->] (6,0) to [out = -60, in = -180] (9,-2.5) node[below]{$\Sigma_2^-$};
\draw[magenta, thick](3, 2.5) to [out = 0, in = 120] (6,0) 	(3, -2.5) to [out = 0, in = -120] (6,0)
		(9, 2.5) to [out = 0, in = 120] (12,0) (9, -2.5) to [out = 0, in = -120] (12,0);
\node at (0, 2) {\color{blue}$U_{x_0} = U_{-1}$}; \node at (6, 2) {\color{blue}$U_{x_1}$}; \node at (12, 2) {\color{blue}$U_{x_2} = U_1$};
\end{tikzpicture}
\caption{\label{fig:int_contour} Deforming the integration contour, $k = 1$. }
\end{center}
\end{figure}

\begin{proof}
Let us write $Y'=\partial_x Y$. We first note that an elementary calculation using \eqref{eq:Yjump} and the fact that the first column of $Y$ consists of polynomials which have no jump across $\R$,  show that for $\lambda\in \R$, 

\begin{equation}\label{eq:direform}
f_te^{-NV}(Y_{11}Y_{21}'-Y_{21}Y_{11}')=\left(Y_{22,-}Y_{11}'-Y_{12,-}Y_{21}'\right)-\left(Y_{22,+}Y_{11}'-Y_{12,+}Y_{21}'\right).
\end{equation}

Now recall that $Y_{12,\pm}$ and $Y_{22,\pm}$ have continuous boundary values on $\R$ so we see that the terms $Y_{22}Y_{11}'-Y_{12}Y_{21}'$ are analytic in $\C\setminus \R$ and are continuous up to the boundary. Moreover, by our construction, $f_t(z)^{-1}\partial_t f_t(z)$ is analytic in $U_{[-1,1]}$. We can thus argue by Cauchy's integral theorem to deform the integration contour. In particular, plugging \eqref{eq:direform} into \eqref{eq:di1}, we find 

\begin{align*}
\frac{1}{2\pi i}&\int_\R \left[Y_{11}(x,t)\partial_x Y_{21}(x,t)-Y_{21}(x,t)\partial_x Y_{11}(x,t)\right]\partial_t f_t(x) e^{-NV(x)}dx\\
&=\frac{1}{2\pi i}\int_{(-\infty,\tau_+(0)]\cup [\tau_+(1),\infty)} \left[Y_{11}(x,t)Y_{21}'(x,t)-Y_{21}(x,t)Y_{11}'(x,t)\right]\partial_t f_t(x) e^{-NV(x)}dx\\
&\quad -\frac{1}{2\pi i}\left[\int_{\tau_+}-\int_{\tau_-}\right]\left(Y_{22}(z,t)Y_{11}'(z,t)-Y_{12}(z,t)Y_{21}'(z,t)\right)\frac{\partial_t f_t(z)}{f_t(z)}dz.
\end{align*}

Notice that
\begin{align*}
Y_{11}Y_{21}' -Y_{21}Y_{11}' = [Y^{-1} Y']_{21},
\qquad Y_{22}Y_{11}' - Y_{12}Y_{21}' = [Y^{-1}Y']_{11}.
\end{align*}

\noindent Unravelling our transformations, we note as we are not inside the lenses or the neighborhoods, we have on $\R\setminus [\tau_+(0),\tau_+(1)]$ and on $\tau_\pm$

\begin{align}
\notag Y^{-1}Y'
& = \left[e^{N\ell_1\sigma_3 / 2} S e^{N(g_1-\ell_1/2)\sigma_3}\right]^{-1} \left[e^{N\ell_1\sigma_3 / 2} S e^{N(g_1-\ell_1/2)\sigma_3}\right]'\\
\notag & = Ng_1' {\sigma_3} + e^{-N(g_1-\ell_1/2)\sigma_3}S^{-1}S' e^{N(g_1-\ell_1/2)\sigma_3}\\
\label{eq:YY} & = Ng_1' {\sigma_3} + e^{-N(g_1-\ell_1/2)\sigma_3}\left[\left(P^{(\infty)}\right)^{-1} R^{-1} \left(RP^{(\infty)}\right)'\right] e^{N(g_1-\ell_1/2)\sigma_3}
\end{align}

\noindent where we have used the global parametrix in the last equality. Since the $P^{(\infty)}$-RHP implies that $P^{(\infty)}(z)$ is complex analytic when $z \not \in [-1,1]$, $I+\mathcal{O}(|z|^{-1})$ as $z \to \infty$, and $\det P^{(\infty)} \equiv 1$, we see that both $\left(P^{(\infty)}\right)^{-1}$ and $\left(P^{(\infty)}\right)'$ are bounded when we are away from a (complex) neighbourhood of $[-1, 1]$. One can easily check that they are in fact uniformly bounded in all our relevant parameters. Combined with the estimates 
\begin{align*}
R(z, t) = I + \mathcal{O}(N^{-1}), \qquad R'(z, t) = \mathcal{O}(N^{-1})
\end{align*}

\noindent in Theorem \ref{th:rasy}, we have $S^{-1}S' = \left(P^{(\infty)}\right)^{-1}\left(P^{(\infty)}\right)' + \mathcal{O}(N^{-1})$.

Consider first the integral along $\R\setminus [\tau_+(0),\tau_+(1)]$. Using the specific form \eqref{eq:global} of $P^{(\infty)}$, \eqref{eq:YY}, and the fact that terms containing $R$ give something $o(1)$, a direct calculation shows that 

\begin{align*}
& [Y(z, t)^{-1}Y'(z, t)]_{21}
= e^{N(2g_1(z)-\ell_1)}\left[P^{(\infty)}_{11}(z, t) \partial_z P_{21}^{(\infty)}(z, t) - P_{21}^{(\infty)}(z, t)\partial_z P_{11}^{(\infty)}(z, t) +o(1)\right]\\
& \quad = \frac{ie^{N(2g_1(z)-\ell_1)}}{4 \mathcal{D}_t^2(z)} \left[
((a(z)^2 + a(z)^{-2})(a(z)^2 - a(z)^{-2})' - (a(z)^2 - a(z)^{-2})(a(z)^2 + a(z)^{-2})' + o(1)
\right]\\
& \quad = \frac{ie^{N(2g_1(z)-\ell_1)}}{\mathcal{D}_t(z)^2} \left[ \frac{1}{z^2-1} + o(1) \right].
\end{align*}

\noindent Thus
\begin{align*}
& \left[Y_{11}(x,t)\partial_x Y_{21}(x,t)-Y_{21}(x,t)\partial_x Y_{11}(x,t)\right]\partial_t f_t(x) e^{-NV(x)}\\
& \qquad \qquad =  \left[\frac{e^{\mathcal{T}(x)} - 1}{\mathcal{D}_t(x)^2 (x^2 - 1)} \prod_{j=1}^k |x - x_j|^{\beta_j} + o(1)\right] e^{N(g_{1,+}(x)+g_{1,-}(x)-\ell_1-V(x))}
\end{align*}

\noindent and one finds from \eqref{eq:gbv2} that as $N\to\infty$, the integral along $\R\setminus [\tau_+(0),\tau_+(1)]$ is $o(1)$ uniformly in everything relevant.

Consider then the integrals along $\tau_\pm$. A similar direct calculation shows that

\begin{align*}
[Y(z, t)^{-1}Y'(z, t)]_{11}
& = Ng_1'(z) + P^{(\infty)}_{22}(z, t) \partial_z P_{11}^{(\infty)}(z, t) - P_{12}^{(\infty)}(z, t)\partial_z P_{21}^{(\infty)}(z, t) +o(1)\\
& = Ng_1'(z) +
\frac{1}{4}\left[\frac{\partial_z \mathcal{D}_t(z)^{-1}}{\mathcal{D}_t(z)^{-1}} \left((a(z)^2 + a(z)^{-2})^2 - (a(z)^2 - a(z)^{-2})^2\right)\right] +o(1)\\
& = Ng_1'(z) - \frac{\mathcal{D}_t'(z)}{\mathcal{D}_t(z)} +o(1)
\end{align*}

\noindent and hence
\begin{align*}
\left(Y_{22}(z,t)Y_{11}'(z,t)-Y_{12}(z,t)Y_{21}'(z,t)\right)\frac{\partial_t f_t(z)}{f_t(z)}
& =Ng_1'(z)\frac{\partial_t f_t(z)}{f_t(z)}-\frac{\mathcal{D}_t'(z)}{\mathcal{D}_t(z)}\frac{\partial_t f_t(z)}{f_t(z)}+o(1),
\end{align*}

\noindent where again $o(1)$ is uniform in everything relevant. This yields the claim once we notice that by contour deformation and \eqref{eq:gbv3}

\begin{align*}
-\frac{1}{2\pi i}\left[\int_{\tau_+}-\int_{\tau_-}\right]g_1'(z)\frac{\partial_t f(z)}{f_t(z)}dz=\int_{-1}^1 d(x)\sqrt{1-x^2}\frac{\partial_t f_t(x)}{f_t(x)}dx. 
\end{align*}

\end{proof}

Our next task is to calculate the $\tau_\pm$ integrals. To do this, we introduce some notation.

\begin{definition}\label{def:qfhnsz}
For $z\in \C\setminus(-\infty,1]$, let

\begin{equation}\label{eq:qfh}
q_{FH}(z)=\log\left[(z+r(z))^{-\mathcal{A}}\prod_{j=1}^k (z-x_j)^{\beta_j/2}\right],
\end{equation}

\noindent where the logarithm is with the principal branch, $\mathcal{A}=\sum_{j=1}^k \beta_j/2$, and $FH$ refers to Fisher-Hartwig.  We also define for $z\in\C\setminus [-1,1]$

\begin{equation}\label{eq:sz}
q_{Sz}(z)=q_{Sz}(z,t)=\frac{r(z)}{2\pi}\int_{-1}^1 \frac{\mathcal{T}_t(\lambda)}{\sqrt{1-\lambda^2}}\frac{1}{z-\lambda}d\lambda,
\end{equation}

\noindent where $r(z)$ is as in \eqref{eq:rdef} and $Sz$ refers to Szeg\H{o}.
\end{definition}

Note that we have $\mathcal{D}_t'/\mathcal{D}_t=q_{FH}'+q_{Sz}'$. We will need the following fact before proving Proposition \ref{prop:di1int}. The following is an analogue of a result in \cite{deift2} in the case of the circle.

\begin{lemma}\label{le:szint}

Write $\tau_\pm$ be as in Lemma \ref{le:di1global}. We have

\begin{equation}\label{eq:szint}
\int_0^1 \frac{1}{2\pi i}\left[\int_{\tau_+}-\int_{\tau_-}\right]q_{Sz}'(z,t)\frac{\partial_t f_t(z)}{f_t(z)}dzdt=-\frac{1}{4\pi^2}\int_{-1}^1 dy\frac{\Tree(y)}{\sqrt{1-y^2}}P.V.\int_{-1}^1 \frac{\Tree'(x)\sqrt{1-x^2}}{x-y}dx.
\end{equation}
\end{lemma}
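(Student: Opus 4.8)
The plan is to collapse the contours $\tau_\pm$ onto the cut $[-1,1]$, reduce the double integral to an integral of boundary values over $(-1,1)$, and then expand everything in Chebyshev polynomials so that the classical finite Hilbert transform formulas finish the computation explicitly.

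First I would integrate by parts along each of $\tau_\pm$, using that $\partial_t f_t/f_t=\partial_t\mathcal{T}_t$ (the singular factors of $f_t$ are $t$-independent, cf.\ Definition~\ref{def:ftcont}). Since $q_{Sz}(\cdot,t)$ and $\partial_t\mathcal{T}_t$ are single-valued and analytic near the common endpoints $\tau_\pm(0)<-1$ and $\tau_\pm(1)>1$, the boundary terms cancel in the difference $\int_{\tau_+}-\int_{\tau_-}$. Now $q_{Sz}(\cdot,t)$ is analytic in a neighbourhood of $[-1,1]$ with $[-1,1]$ removed, and by Lemma~\ref{le:cauchybounds} it is bounded up to $[-1,1]$ from each side, while $\partial_t\mathcal{T}_t$ (hence $\partial_z\partial_t\mathcal{T}_t$) is analytic on all of $U_{[-1,1]}$. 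Cauchy's theorem therefore lets me contract $\tau_+\cup(-\tau_-)$ onto a loop around $[-1,1]$ — the portions of $\tau_\pm$ over $(\tau_+(0),-1)\cup(1,\tau_+(1))$ cancel because the integrand is analytic there — and keeping the orientation straight this yields
\[
\frac{1}{2\pi i}\left[\int_{\tau_+}-\int_{\tau_-}\right]q_{Sz}'(z,t)\,\partial_t\mathcal{T}_t(z)\,dz=-\frac{1}{2\pi i}\int_{-1}^{1}\bigl(q_{Sz,+}(x,t)-q_{Sz,-}(x,t)\bigr)\,\partial_x\partial_t\mathcal{T}_t(x)\,dx .
\]

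Next I would compute the jump. Writing $q_{Sz}(z,t)=\frac{r(z)}{2\pi}\Phi_t(z)$ with $\Phi_t(z)=\int_{-1}^1\frac{\mathcal{T}_t(\lambda)}{\sqrt{1-\lambda^2}}\frac{d\lambda}{z-\lambda}$, the Sokhotski--Plemelj formula gives $\Phi_{t,\pm}(x)=\text{P.V.}\int_{-1}^1\frac{\mathcal{T}_t(\lambda)}{\sqrt{1-\lambda^2}(x-\lambda)}d\lambda\mp i\pi\frac{\mathcal{T}_t(x)}{\sqrt{1-x^2}}$, and from the principal branch one has $r_\pm(x)=\pm i\sqrt{1-x^2}$ on $(-1,1)$; the $\mathcal{T}_t(x)/\sqrt{1-x^2}$ contributions cancel, leaving $q_{Sz,+}(x,t)-q_{Sz,-}(x,t)=\frac{i\sqrt{1-x^2}}{\pi}\,\text{P.V.}\int_{-1}^1\frac{\mathcal{T}_t(\lambda)}{\sqrt{1-\lambda^2}(x-\lambda)}d\lambda$. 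Hence the left side of \eqref{eq:szint} equals $-\frac{1}{2\pi^2}\int_0^1 dt\int_{-1}^1 dx\,\sqrt{1-x^2}\,\partial_x\partial_t\mathcal{T}_t(x)\,\text{P.V.}\int_{-1}^1\frac{\mathcal{T}_t(\lambda)}{\sqrt{1-\lambda^2}(x-\lambda)}d\lambda$.

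Finally I would expand $\mathcal{T}_t=\sum_{n\ge0}c_n(t)T_n$ on $[-1,1]$; the coefficients decay exponentially uniformly in $t$ since $\mathcal{T}_t$ is real analytic near $[-1,1]$ uniformly in $t$, which legitimises interchanging the $t$-integral, the $\lambda$-integral and the sum. Using $T_n'=nU_{n-1}$, the finite Hilbert transform identities $\text{P.V.}\int_{-1}^1\frac{T_m(\lambda)}{\sqrt{1-\lambda^2}(x-\lambda)}d\lambda=-\pi U_{m-1}(x)$ for $m\ge1$ (and $0$ for $m=0$) and $\text{P.V.}\int_{-1}^1\frac{\sqrt{1-x^2}U_{m-1}(x)}{x-y}dx=-\pi T_m(y)$, together with the orthogonality relations $\int_{-1}^1\sqrt{1-x^2}U_{n-1}U_{m-1}\,dx=\frac{\pi}{2}\delta_{nm}$ and $\int_{-1}^1\frac{T_nT_m}{\sqrt{1-x^2}}dx=\frac{\pi}{2}\delta_{nm}$ for $n,m\ge1$, the left side of \eqref{eq:szint} collapses to $\frac14\int_0^1\sum_{n\ge1}n\,c_n(t)c_n'(t)\,dt=\frac18\sum_{n\ge1}n\bigl(c_n(1)^2-c_n(0)^2\bigr)$; since $\mathcal{T}_0\equiv0$ and $\mathcal{T}_1=\mathcal{T}$ this is $\frac18\sum_{n\ge1}n\alpha_n^2$, where $\mathcal{T}=\sum_n\alpha_nT_n$. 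Applying the same expansion to the right side of \eqref{eq:szint} also gives $\frac18\sum_{n\ge1}n\alpha_n^2$, which proves the identity. The step that needs the most care is the contour collapse together with the integration by parts — in particular keeping the orientations and the endpoint cancellations straight, and using that $\partial_t\mathcal{T}_t$ is analytic across $[-1,1]$ while $q_{Sz}$ is not; the structural reason the formula comes out cleanly is that after the $t$-integration the integrand is a total $t$-derivative (equivalently, the underlying bilinear form in $\mathcal{T}_t$ and $\partial_t\mathcal{T}_t$ is symmetric), so only the endpoint data $\mathcal{T}_0=0$ and $\mathcal{T}_1=\mathcal{T}$ survive.
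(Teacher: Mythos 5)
Your proposal is correct, and the first two steps --- integrating by parts along $\tau_\pm$, collapsing the contours onto the cut, and computing the jump $q_{Sz,+}-q_{Sz,-}=\tfrac{i\sqrt{1-x^2}}{\pi}\,P.V.\!\int_{-1}^1\frac{\mathcal{T}_t(\lambda)}{\sqrt{1-\lambda^2}(x-\lambda)}d\lambda$ via Sokhotski--Plemelj --- coincide with the paper's argument (equation \eqref{eq:Iint} and the display following it). Where you genuinely diverge is in the final step. The paper stays in ``operator'' language: it integrates by parts in $t$, and then uses the algebraic identities relating $s\,\mathcal{H}(f/s)$ to $\mathcal{H}(fs)/s$, the commutation of $\mathcal{H}$ with differentiation, and the anti-self-adjointness of $\mathcal{H}$ on dual $L^p$ spaces to show that the bilinear form $\langle g',\mathbf{1}_{(-1,1)}s\,\mathcal{H}(\mathbf{1}_{(-1,1)}h/s)\rangle$ is symmetric in $(g,h)$; this forces the $t$-integral to telescope to its endpoint values. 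You instead diagonalize in the Chebyshev basis, where both the finite Hilbert transform ($T_n\mapsto -\pi U_{n-1}$ and $sU_{n-1}\mapsto -\pi T_n$) and the derivative ($T_n'=nU_{n-1}$) act explicitly, so the symmetry is manifest and both sides of \eqref{eq:szint} evaluate to $\tfrac18\sum_{n\ge1}n\alpha_n^2$. Your route is more computational but arguably more transparent; its only additional burden is justifying the interchange of the sum with the $t$- and $\lambda$-integrals, which you correctly dispose of via the uniform (in $t$) exponential decay of the Chebyshev coefficients of $\mathcal{T}_t$ and $\dot{\mathcal{T}}_t$ --- available here because $\mathcal{T}_t$ is analytic in a $t$-independent neighbourhood of $[-1,1]$, as established in Section \ref{sec:trans2}. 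The paper's operator-theoretic argument has the advantage of not requiring analyticity of $\mathcal{T}$ on $[-1,1]$ itself (only enough regularity for the Hilbert-transform identities), which matters for the generalizations alluded to in Remark \ref{rem:ff}; for the class of symbols actually treated in Proposition \ref{prop:fh}, both proofs are complete.
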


\begin{proof}
Let us recall that we saw in the proof of Lemma \ref{le:cauchybounds} that off of $[-1,1]$ we can write

\begin{equation*}
q_{Sz}(z,t)=\frac{r(z)}{2\pi}\int_{-1}^1 \frac{\mathcal{T}_t(\lambda)-\mathcal{T}_t(z)}{z-\lambda}\frac{d\lambda}{\sqrt{1-\lambda^2}}+\frac{\mathcal{T}_t(z)}{2}
\end{equation*}

\noindent which implies that $q_{Sz}$ is bounded in a neighborhood of $[-1,1]$ and $q_{Sz}(\pm 1,t)=\frac{1}{2}\mathcal{T}_t(\pm 1)$. Moreover, we see from this that

\begin{align*}
q_{Sz}'(z,t)&=\frac{r'(z)}{2\pi }\int_{-1}^1 \frac{\mathcal{T}_t(\lambda)-\mathcal{T}_t(z)}{z-\lambda}\frac{d\lambda}{\sqrt{1-\lambda^2}}+\frac{r(z)}{2\pi}\int_{-1}^1\frac{\mathcal{T}_t(z)-\mathcal{T}_t(\lambda)-\mathcal{T}_t'(z)(z-\lambda)}{(z-\lambda)^2}\frac{d\lambda}{\sqrt{1-\lambda^2}}+\frac{\mathcal{T}_t'(z)}{2}.
\end{align*}

This in turn implies that $q_{Sz}'$ is bounded except at $z=\pm 1$ where it has singularities of order $|z\mp 1|^{-1/2}$; in particular these are integrable ones. Due to the singularities being integrable, we can perform contour deformation and integrate by parts in the $z$-integral in the left hand side of \eqref{eq:szint}. Noting that $f_t^{-1}\partial_t f_t=\partial_t\Tree_t=:\dot{\Tree}_t$ (we will use a dot here and below to indicate time derivatives below when there is no risk of confusion), we see that

\begin{align}\label{eq:Iint}
I:=\int_0^1 dt\left[\int_{\tau_+}-\int_{\tau_-}\right]\frac{dz}{2\pi i}\dot{\Tree}_t(z)q_{Sz}'(z,t)=-\int_0^1 {dt}\int_{-1}^1 \frac{dx}{2\pi i}\dot{\Tree}_t'(x)\left[q_{Sz,+}(x,t)-q_{Sz,-}(x,t)\right].
\end{align}

Let us write for $x\in(-1,1)$, $s(x)=\sqrt{1-x^2}$. As for $x\in(-1,1)$, $r_\pm (x)=\pm is(x)$, we see by Sokhotski--Plemelj that

\begin{equation*}
q_{Sz,+}(x,t)-q_{Sz,-}(x,t)=is(x) \frac{1}{\pi} P.V. \int_{-1}^1 \frac{\Tree_t(y)}{x-y}\frac{dy}{s(y)}=:is(x)[\mathcal{H}(\mathbf{1}_{(-1,1)}\Tree_t/s)](x),
\end{equation*}

\noindent where $\mathbf{1}_{(-1,1)}$ is the indicator function of the interval $(-1,1)$, and $\mathcal{H}$ denotes the Hilbert transform (note that the Hilbert transform is well defined as $\mathbf{1}_{(-1,1)}\Tree_t/s\in L^p(\R)$ for $p\in[1,2)$).

To simplify notation slightly, let us write $\langle f,g\rangle:=\int_\R f(x)g(x)dx$. Integrating by parts in the $t$ integral in \eqref{eq:Iint} we see that

\begin{align}\label{eq:Iparts}
I&=-\int_0^1\frac{1}{2\pi}\left\langle \dot{\Tree}_t',\mathbf{1}_{(-1,1)}s \mathcal{H}\left(\mathbf{1}_{(-1,1)}\Tree_t/s\right)\right\rangle dt\\
\notag &=-\frac{1}{2\pi}\left\langle \Tree',\mathbf{1}_{(-1,1)}s \mathcal{H}\left(\mathbf{1}_{(-1,1)}\Tree/s\right)\right\rangle+\int_0^1\frac{1}{2\pi}\left\langle \Tree_t',\mathbf{1}_{(-1,1)}s \mathcal{H}\left(\mathbf{1}_{(-1,1)}\dot{\Tree}_t/s\right)\right\rangle dt.
\end{align}

Our aim is now to show that actually $\frac{1}{2\pi}\int_0^1\langle \Tree_t',\mathbf{1}_{(-1,1)}s \mathcal{H}(\mathbf{1}_{(-1,1)}\dot{\Tree}_t/s)\rangle dt=-I$ so we would have $I=-\langle \Tree',\mathbf{1}_{(-1,1)}s \mathcal{H}(\mathbf{1}_{(-1,1)}\Tree/s)\rangle/4\pi$, which we will see to be equivalent to our claim. To see that indeed $\frac{1}{2\pi}\int_0^1\langle \Tree_t',\mathbf{1}_{(-1,1)}s \mathcal{H}(\mathbf{1}_{(-1,1)}\dot{\Tree}_t/s)\rangle dt=-I$, we note first that

\begin{equation*}
\frac{s(x)}{s(y)}\frac{1}{x-y}=\frac{s(y)}{s(x)}\frac{1}{x-y}-\frac{x+y}{s(x)s(y)}
\end{equation*}

\noindent implying that for say a continuous $f:[-1,1]\to \R$ and $x\in(-1,1)$

\begin{equation}\label{eq:Hing1}
s(x) \left[\mathcal{H}\left(\mathbf{1}_{(-1,1)}f/s\right)\right](x)=\frac{1}{s(x)}\left[\mathcal{H}\left(\mathbf{1}_{(-1,1)}fs\right)\right](x)-\frac{1}{\pi}\int_{-1}^1 \frac{x+y}{s(x)s(y)}f(y)dy.
\end{equation}

Using the definition of the Cauchy principal value integral, one can also check easily that for a smooth $f:[-1,1]\to \R$ and $x\in(-1,1)$

\begin{equation}\label{eq:Hing2}
\left[\mathcal{H}(\mathbf{1}_{(-1,1)}fs)\right]'(x)=\left[\mathcal{H}(\mathbf{1}_{(-1,1)}(fs)')\right](x).
\end{equation}

Thus integrating by parts in the $x$ integral, using the fact that $q_+(\pm 1,t)=q_-(\pm 1,t)$,  and \eqref{eq:Hing2}, we see that

\begin{align}\label{eq:tpint}
\left\langle \Tree_t',\mathbf{1}_{(-1,1)}s \mathcal{H}\left(\mathbf{1}_{(-1,1)}\dot{\Tree}_t/s\right)\right\rangle&=\int_{-1}^1 dx \Tree_t(x)\frac{s'(x)}{s(x)^2}\left(\left[\mathcal{H}\left(\mathbf{1}_{(-1,1)}\dot{\Tree}_ts\right)\right](x)-\int_{-1}^1 \frac{x+y}{\pi s(y)}\dot{\Tree}_t(y)dy\right)\\
\notag & \quad -\int_{-1}^1 dx \Tree_t(x)\frac{1}{s(x)}\left(\left[\mathcal{H}\left(\mathbf{1}_{(-1,1)}(\dot{\Tree}_ts)'\right)\right](x)-\int_{-1}^1 \frac{\dot{\Tree}_t(y)}{\pi s(y)}dy\right).
\end{align}

We then note that

\begin{align*}
[\mathcal{H}(\mathbf{1}_{(-1,1)}\dot{\Tree}_ts')](x)-\frac{1}{\pi}\int_{-1}^1 \frac{\dot{\Tree}_t(y)}{s(y)}dy&=\frac{1}{\pi}P.V.\int_{-1}^1 \frac{\dot{\Tree}_t(y)}{s(y)}\left(\frac{-y}{x-y}-1\right)dy\\
&={-}x[\mathcal{H}(\mathbf{1}_{(-1,1)}\dot{\Tree}_t/s)](x)
\end{align*}

\noindent and

\begin{align*}
\left[\mathcal{H}\left(\mathbf{1}_{(-1,1)}\dot{\Tree}_ts\right)\right](x)-\frac{1}{\pi}\int_{-1}^1 \frac{x+y}{ s(y)}\dot{\Tree}_t(y)dy&=\frac{1}{\pi}P.V.\int_{-1}^1 \frac{\dot{\Tree}_t(y)}{s(y)}\frac{\left[s(y)^2-(x^2-y^2)\right]}{x-y}dy\\
&=s(x)^2[\mathcal{H}(\mathbf{1}_{(-1,1)}\dot{\Tree}_t/s)](x).
\end{align*}

Plugging these into \eqref{eq:tpint}, using the fact that $s'(x)=-x/s(x)$ along with the anti-self adjointness of $\mathcal{H}$ we see that

\begin{align}\label{eq:asa}
\frac{1}{2\pi}\int_0^1\left\langle \Tree_t',\mathbf{1}_{(-1,1)}s \mathcal{H}\left(\mathbf{1}_{(-1,1)}\dot{\Tree}_t/s\right)\right\rangle dt&=-\frac{1}{2\pi}\int_0^1\left\langle \Tree_t,\mathbf{1}_{(-1,1)}s^{-1} \mathcal{H}\left(\mathbf{1}_{(-1,1)}\dot{\Tree}_t's\right)\right\rangle dt\\
\notag &=\frac{1}{2\pi}\int_0^1\left\langle \dot{\Tree}_t',\mathbf{1}_{(-1,1)} s\mathcal{H}(\mathbf{1}_{(-1,1)} \Tree_t/s)\right\rangle dt\\
\notag &=-I.
\end{align}

\noindent Note that $1/s\notin L^2(-1,1)$ so we can't use the anti-self adjointness of the Hilbert transform on the space $L^2$, but we use the fact that if $f\in L^p(\R)$ and $g\in L^{p'}(\R)$, where $p'$ is the H\"older conjugate of $p$, then $\int g\mathcal{H}f=-\int f\mathcal{H}g$ -- see e.g. \cite[Theorem 102]{titchmarsh}.

Plugging \eqref{eq:asa} into \eqref{eq:Iparts}, we find our previous claim that

\begin{equation*}
I=-\frac{1}{4\pi}\langle \Tree',\mathbf{1}_{(-1,1)}s \mathcal{H}(\mathbf{1}_{(-1,1)}\Tree/s)\rangle
\end{equation*}

\noindent Making use of the anti-self adjointness of $\mathcal{H}$ again, this translates into

\begin{equation*}
I=\frac{1}{4\pi^2}\int_{-1}^1 dy\frac{\Tree(y)}{\sqrt{1-y^2}}P.V.\int_{-1}^1 \frac{\Tree'(x)\sqrt{1-x^2}}{y-x}dx
\end{equation*}

\noindent which is our claim.

\end{proof}

We are now in a position to finish the proof.

\begin{proof}[Proof of Proposition \ref{prop:di1int}]
We start with the result of Lemma \ref{le:di1global}. Consider first the integral along $[-1,1]$. Here we note that by the definition of $f_t$, $\int_0^1 f_t(x)^{-1}\partial_t f_t(x)dt=\log f_1(x)-\log f_0(x)=\Tree (x)$. This yields the $\mathcal{O}(N)$-term in \eqref{eq:di1int}.

Let us now consider the $\mathcal{D}_t'/\mathcal{D}_t$-terms. The contribution from $q_{Sz}$ is calculated in Lemma \ref{le:szint}, so we need to understand the contribution of $q_{FH}$. As $q_{FH}$ is independent of $t$, we find that 

\begin{align}\label{eq:qfhint}
\int_0^1 dt\left[\int_{\tau_+}-\int_{\tau_-}\right]\frac{dz}{2\pi i}q_{FH}'(z)\frac{\dot{f}_t(z)}{f_t(z)}&=\left[\int_{\tau_+}-\int_{\tau_-}\right]\frac{dz}{2\pi i}\Tree(z)q_{FH}'(z).
\end{align}

Now as

\begin{equation*}
q_{FH}'(z)=-\frac{\mathcal{A}}{r(z)}+\sum_{j=1}^k \frac{\beta_j}{2}\frac{1}{z-x_j}
\end{equation*}

\noindent we see by Cauchy's integral theorem, the fact that $r_\pm(x)={\pm}i\sqrt{1-x^2}$ for $x\in(-1,1)$, and Sokhotski-Plemelj that

\begin{equation}\label{eq:qfhint2}
\int_0^1 dt\left[\int_{\tau_+}-\int_{\tau_-}\right]\frac{dz}{2\pi i}q_{FH}'(z)\frac{\dot{f}_t(z)}{f_t(z)}=\frac{\mathcal{A}}{\pi}\int_{-1}^1 \frac{\Tree(x)}{\sqrt{1-x^2}}dx-\sum_{j=1}^k \frac{\beta_j}{2}\Tree(x_j).
\end{equation}

Thus combining \eqref{eq:qfhint2}, \eqref{eq:szint}, our reasoning about the $\mathcal{O}(N)$ term, and Lemma \ref{le:di1global}, yields

\begin{align*}
\log D_{N-1}(f_1)-\log D_{N-1}(f_0)&=N\int_{-1}^1 \Tree(x)d(x)\sqrt{1-x^2}dx+\frac{\mathcal{A}}{\pi}\int_{-1}^1 \frac{\Tree(x)}{\sqrt{1-x^2}}dx-\sum_{j=1}^k \frac{\beta_j}{2}\Tree(x_j)\\
& \quad +\frac{1}{4\pi^2}\int_{-1}^1 dy\frac{\Tree(y)}{\sqrt{1-y^2}}P.V.\int_{-1}^1 \frac{\Tree'(x)\sqrt{1-x^2}}{y-x}dx+o(1),
\end{align*}

\noindent where $o(1)$ is uniform in everything relevant. This is precisely the claim.
\end{proof}

\subsection{\texorpdfstring{The differential identity \eqref{eq:di2}}{The second differential identity}}

The main goal of this section is to prove the following identity.

\begin{proposition}\label{prop:di2int}
Let $V$ be one-cut regular, $\Tree$ as in Proposition \ref{prop:fh}, $\delta>0$ small enough but independent of $N$. Then as $N\to\infty$,

\begin{align*}
\log D_{N-1}(f_0;V_1)-\log D_{N-1}(f_0;V_0)&=-\frac{N^2}{2}\int_{-1}^1\left(\frac{2}{\pi}+d(x)\right)(V(x)-2x^2)\sqrt{1-x^2}dx\\
&\quad -\mathcal{A}\frac{N}{\pi}\int_{-1}^1 \frac{V(x)-2x^2}{\sqrt{1-x^2}}dx+N\sum_{j=1}^k\frac{\beta_j}{2}(V(x_j)-2x_j^2)\\
&\quad +\sum_{j=1}^k \frac{\beta_j^2}{4}\log \left(\frac{\pi}{2}d(x_j)\right) - { \frac{1}{24}}\log\left(\frac{\pi^2}{4}d(1)d(-1)\right)+o(1),
\end{align*}

\noindent where $o(1)$ is uniform in $\lbrace (x_j)_{j=1}^k: |x_i-x_j|\geq 3\delta, i\neq j \ \mathrm{and} \ |x_i\pm 1|\geq 3\delta \ \forall i\rbrace$.
\end{proposition}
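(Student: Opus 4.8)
The plan is to follow the strategy used for \eqref{eq:di1} in Section \ref{sec:di}, now integrating the differential identity \eqref{eq:di2} over $s\in[0,1]$. Throughout $\mathcal{T}=0$, so $f=f_0=\prod_{j=1}^k|\lambda-x_j|^{\beta_j}$; write $\psi(z):=\partial_s V_s(z)=V(z)-2z^2$, which is real analytic in the fixed complex neighbourhood $U_{[-1,1]}$ of $[-1,1]$, and let $\tau_\pm$ be the contours of Lemma \ref{le:di1global}. First I would rewrite the integrand in \eqref{eq:di2} exactly as in \eqref{eq:direform} (which uses only \eqref{eq:Yjump} and that the first column of $Y$ consists of polynomials): for $x\in\R$,
\begin{equation*}
f(x)e^{-NV_s(x)}\big(Y_{11}Y_{21}'-Y_{21}Y_{11}'\big)=[Y^{-1}Y']_{11,-}(x)-[Y^{-1}Y']_{11,+}(x),
\end{equation*}
where $[Y^{-1}Y']_{11}=Y_{22}Y_{11}'-Y_{12}Y_{21}'$ is analytic off $\R$ with continuous boundary values. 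As in the proof of Lemma \ref{le:di1global} this lets me deform the $\R$-integral into a piece on $\R\setminus[\tau_+(0),\tau_+(1)]$ — which is exponentially small by \eqref{eq:gbv2}, even after multiplication by the polynomial $\psi$ and the prefactor $N$ — plus $-\tfrac{1}{2\pi i}\big[\int_{\tau_+}-\int_{\tau_-}\big][Y^{-1}Y']_{11}(z)\,\psi(z)\,dz$.

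Next I would unravel the transformations on $\tau_\pm$ (where one is outside the lenses and the discs around the $x_j$ and $\pm1$): $Y=e^{N\ell_s\sigma_3/2}Se^{N(g_s-\ell_s/2)\sigma_3}$ and $S=RP^{(\infty)}$, so, since the diagonal entries are unaffected by the conjugation, $[Y^{-1}Y']_{11}=Ng_s'+[(P^{(\infty)})^{-1}(P^{(\infty)})']_{11}+[(P^{(\infty)})^{-1}R^{-1}R'P^{(\infty)}]_{11}$. Using the refined expansion $R=I+R_1+o(1/N)$, $R'=R_1'+o(1/N)$ of Theorem \ref{th:rasy} (available precisely because $\mathcal{T}=0$), hence $R^{-1}R'=R_1'+o(1/N)$, the uniform boundedness of $P^{(\infty)},(P^{(\infty)})^{-1}$ on $\tau_\pm$, the identity $[(P^{(\infty)})^{-1}(P^{(\infty)})']_{11}=-\mathcal{D}_0'/\mathcal{D}_0=-q_{FH}'$ (as in the proof of Lemma \ref{le:di1global}, with $q_{Sz}(\cdot,0)=0$, in the notation of Definition \ref{def:qfhnsz}), and the tracelessness of $(P^{(\infty)})^{-1}R_1'P^{(\infty)}$ (from $\det R\equiv1$), I would obtain, on $\tau_\pm$,
\begin{equation*}
[Y^{-1}Y']_{11}(z)=Ng_s'(z)-q_{FH}'(z)-\mathcal{J}_s(z)+o(1/N),\qquad \mathcal{J}_s:=\sum_{j=0}^{k+1}\mathcal{J}^{(x_j)},
\end{equation*}
with the $\mathcal{J}^{(x_j)}$ exactly as in Theorem \ref{th:rasy}. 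Integrating \eqref{eq:di2} in $s$ then gives
\begin{equation*}
\log\frac{D_{N-1}(f_0;V_1)}{D_{N-1}(f_0;V_0)}=\frac{N}{2\pi i}\int_0^1\Big[\int_{\tau_+}-\int_{\tau_-}\Big]\big(Ng_s'(z)-q_{FH}'(z)-\mathcal{J}_s(z)\big)\psi(z)\,dz\,ds+o(1).
\end{equation*}

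The three contributions I would then evaluate as follows. For the $Ng_s'$-term, collapsing $\tau_\pm$ onto $[-1,1]$ and using \eqref{eq:gbv3} gives $\tfrac{1}{2\pi i}\big[\int_{\tau_+}-\int_{\tau_-}\big]g_s'\psi\,dz=-\int_{-1}^1 d_s(x)\sqrt{1-x^2}\,\psi(x)\,dx$; since $\int_0^1 d_s(x)\,ds=\tfrac12\big(\tfrac2\pi+d(x)\big)$, this produces the $N^2$-term of the Proposition. For the $q_{FH}'$-term, $q_{FH}$ is $s$-independent and $q_{FH}'=-\mathcal{A}/r(z)+\sum_{j}\tfrac{\beta_j}{2}(z-x_j)^{-1}$, so the Sokhotski--Plemelj computation of \eqref{eq:qfhint2} (with $\mathcal{T}$ replaced by $\psi$) gives $\tfrac{1}{2\pi i}\big[\int_{\tau_+}-\int_{\tau_-}\big]q_{FH}'\psi\,dz=\tfrac{\mathcal{A}}{\pi}\int_{-1}^1\tfrac{\psi(x)}{\sqrt{1-x^2}}\,dx-\sum_{j}\tfrac{\beta_j}{2}\psi(x_j)$, which produces the two $N$-terms. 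The remaining $O(1)$ contribution, $-\tfrac{N}{2\pi i}\int_0^1\big[\int_{\tau_+}-\int_{\tau_-}\big]\mathcal{J}_s(z)\psi(z)\,dz\,ds$, is the crux: I would insert the explicit formulas for the $\mathcal{J}^{(x_j)}$ from Theorem \ref{th:rasy}, deform $\tau_\pm$ onto $[-1,1]$, and evaluate the resulting contributions — the jump of $a(z)^{\pm2}$ across $(-1,1)$ against small circular arcs around the $x_j$ and $\pm1$, where the half-integer singularities of $a(z)^{\pm2}$ meet the rational poles of $R_1'$ and the individually divergent pieces must cancel (the standard but delicate Deift--Its--Krasovsky endpoint bookkeeping, cf. Remark \ref{rem:localematch}). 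The key simplification I expect is that every term carrying the factor $c_{x_j,s}^{\pm2}\propto e^{\pm2\pi iN\int_{x_j}^1 d_s(u)\sqrt{1-u^2}\,du}$ contributes only $o(1)$ after the $s$-integration, by the Riemann--Lebesgue lemma applied to the genuinely non-constant smooth phase $s\mapsto\int_{x_j}^1 d_s(u)\sqrt{1-u^2}\,du$. What then survives are the non-oscillatory pieces (the $\mp\beta_j$ parts of $\mathcal{J}^{(x_j)}$ for $j\in\{1,\dots,k\}$ and all of $\mathcal{J}^{(0)},\mathcal{J}^{(k+1)}$), whose residue/jump contributions are elementary; the $s$-integrals $\int_0^1\frac{ds}{d_s(x_j)}=\frac{\log(\tfrac{\pi}{2}d(x_j))}{d(x_j)-2/\pi}$ and their analogues at $\pm1$ produce the logarithms, and — after the cancellation of the $x_j$-position-dependent and $\mathcal{A}^2$-dependent contributions — exactly $\sum_{j=1}^k\tfrac{\beta_j^2}{4}\log\big(\tfrac{\pi}{2}d(x_j)\big)-\tfrac{1}{24}\log\big(\tfrac{\pi^2}{4}d(1)d(-1)\big)$. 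The main obstacle is precisely this last step: carrying the explicit parametrix algebra through the contour collapse, controlling the endpoint singularities at $\pm1$, and verifying the oscillatory and $\mathcal{A}^2$ cancellations; the rest is a direct transcription of the arguments already given for \eqref{eq:di1}, and the uniformity in $(x_j)$ follows from the uniformity already built into Theorem \ref{th:rasy} and Lemma \ref{le:gbv}.
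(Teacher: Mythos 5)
Your proposal is correct and follows essentially the same route as the paper: the contour deformation and parametrix expansion are carried out in Lemma \ref{le:di2global}, the $N^2$ and $N$ terms are evaluated exactly as you describe (Lemma \ref{le:oderN2} and \eqref{eq:di2orderN}), and the $\mathcal{O}(1)$ contributions from $\Jcal^{(x_j)}$ and $\Jcal^{(\pm1)}$ are computed in Lemmas \ref{le:r1xj} and \ref{le:partition} precisely via the mechanisms you anticipate — integration by parts in $s$ to kill the oscillatory $c_{x_j,s}^{\pm2}$ pieces, the identities of Lemma \ref{le:prinint}, and the vanishing of the $\mathcal{A}^2$-carrying edge integrals. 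The only stylistic difference is that the paper writes the singular kernels $a(z)^{\pm2}/(z-x_j)^2$ etc.\ as exact $z$-derivatives and integrates by parts before collapsing $\tau_\pm$, which sidesteps the divergent-piece bookkeeping you flag as the main obstacle.
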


The arguments are largely similar to those related to the differential identity \eqref{eq:di1} so we will be less detailed here. The arguments in the proof of Lemma \ref{le:di1global} can be repeated in this case with the only difference being that we replace $\partial_t f_t$ by $- Nf\partial_s V_s$ and $d$ with $d_s$ etc, apart from approximating $R$ by the identity -- we'll need the $\mathcal{O}(N^{-1})$ contribution from $R$ here as well. We will also need to assume that our lenses and neighborhoods of the singularities are chosen so that $V$ is analytic in some neighborhood of them, but as we assumed $V$ to be real analytic, we can of course do this. We will also assume that $\tau_\pm$ are inside this domain where $V$ can be analytically continued to. Repeating the arguments from the previous section in such a setting leads to the following lemma.

\begin{lemma}\label{le:di2global}
Let $\tau_\pm$ be as in Lemma \ref{le:di1global} with the difference that we assume that the contours are within the domain where $V$ is analytic in.  

Then for $s\in[0,1]$

\begin{align*}
-\frac{N}{2\pi i}&\int_\R \left[Y_{11}(x;V_s)\partial_x Y_{21}(x;V_s)-Y_{21}(x;V_s)\partial_x Y_{11}(x;V_s)\right]f(x)e^{-NV_s(x)}\partial_s V_s(x) dx\\
&=-N^2\int_{-1}^1 d_s(x)\sqrt{1-x^2}\partial_s V_s(x)dx-\frac{N}{2\pi i}\left[\int_{\tau_+}-\int_{\tau_-}\right]\mathcal{J}_s(z)\partial_s V_s(z)dz+o(1),
\end{align*}

\noindent where $o(1)$ is uniform in $s\in[0,1]$, $\lbrace (x_j)_{j=1}^k: |x_i-x_j|\geq 3\delta, i\neq j \ and \ |x_i\pm 1|\geq 3\delta \ \forall i\rbrace$ and 

\begin{equation*}
\mathcal{J}_s(z)=-Y_{22}(z;V_s)Y_{11}'(z;V_s)+Y_{12}(z;V_s)Y_{21}'(z;V_s).
\end{equation*}
\end{lemma}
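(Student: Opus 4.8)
\textbf{Proof proposal for Lemma \ref{le:di2global}.}

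The plan is to repeat, essentially verbatim, the contour deformation and residue computation carried out in the proof of Lemma \ref{le:di1global}, tracking the modifications caused by replacing $\partial_t f_t$ with $-Nf\,\partial_s V_s$ and $d$ with $d_s$, and---crucially---by retaining the $\mathcal{O}(N^{-1})$ contribution of $R$ rather than discarding it. First I would record the algebraic identity analogous to \eqref{eq:direform}: using the jump relation \eqref{eq:Yjump} for $Y=Y(\cdot;V_s)$ (whose first column consists of polynomials analytic across $\R$) one gets, for $\lambda\in\R$,
\begin{equation*}
fe^{-NV_s}\big(Y_{11}Y_{21}'-Y_{21}Y_{11}'\big)=\big(Y_{22,-}Y_{11}'-Y_{12,-}Y_{21}'\big)-\big(Y_{22,+}Y_{11}'-Y_{12,+}Y_{21}'\big),
\end{equation*}
where $'=\partial_x$. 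Since $Y_{12,\pm},Y_{22,\pm}$ have continuous boundary values, the combination $Y_{22}Y_{11}'-Y_{12}Y_{21}'$ is analytic in $\C\setminus\R$ and continuous up to $\R$, and $\partial_sV_s(z)=V(z)-2z^2$ is entire (in particular analytic in the region bounded by $\tau_\pm$ and $\R$). Hence Cauchy's theorem lets me push the $\R$-integral onto the contour $\tau_+\cup\tau_-$ together with the leftover pieces of $\R$ outside $[\tau_+(0),\tau_+(1)]$, exactly as in Lemma \ref{le:di1global}, producing
\begin{equation*}
-\frac{N}{2\pi i}\Big[\int_{\tau_+}-\int_{\tau_-}\Big]\big(Y_{22}Y_{11}'-Y_{12}Y_{21}'\big)(z;V_s)\,\partial_sV_s(z)\,dz
\end{equation*}
plus a $\R\setminus[\tau_+(0),\tau_+(1)]$ integral of $[Y^{-1}Y']_{21}f e^{-NV_s}\partial_sV_s$.

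Next I would unravel the transformations $Y=e^{N\ell_s\sigma_3/2}S e^{N(g_s-\ell_s/2)\sigma_3}$ and $S=RP^{(\infty)}$ on $\tau_\pm$ and on the outer part of $\R$ (both regions lying outside the lenses and the disks $U_{x_j},U_{\pm1}$), to get
\begin{equation*}
Y^{-1}Y'=Ng_s'\sigma_3+e^{-N(g_s-\ell_s/2)\sigma_3}\big[(P^{(\infty)})^{-1}R^{-1}(RP^{(\infty)})'\big]e^{N(g_s-\ell_s/2)\sigma_3}.
\end{equation*}
On the outer part of $\R$ the factor $e^{N(2g_{s,+}+2g_{s,-}-\ell_s-V_s)}$ appearing in $[Y^{-1}Y']_{21}fe^{-NV_s}$ is exponentially small by \eqref{eq:gbv2}, uniformly in everything relevant, so that contribution is $o(1)$ (here using also the uniform boundedness of $P^{(\infty)},(P^{(\infty)})^{-1}$ and their derivatives off a neighbourhood of $[-1,1]$, and $R=I+\mathcal{O}(N^{-1})$, $R'=\mathcal{O}(N^{-1})$ from Theorem \ref{th:rasy}). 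For the $\tau_\pm$ integral I extract the $[Y^{-1}Y']_{11}$ entry: the leading $Ng_s'\sigma_3$ gives $Ng_s'(z)\partial_sV_s(z)$, and by \eqref{eq:gbv3} and contour deformation,
\begin{equation*}
-\frac{N}{2\pi i}\Big[\int_{\tau_+}-\int_{\tau_-}\Big]g_s'(z)\partial_sV_s(z)\,dz=-N^2\int_{-1}^1 d_s(x)\sqrt{1-x^2}\,\partial_sV_s(x)\,dx,
\end{equation*}
which is the stated $\mathcal{O}(N^2)$ term. The remaining piece is $-\frac{N}{2\pi i}[\int_{\tau_+}-\int_{\tau_-}]\big([(P^{(\infty)})^{-1}R^{-1}(RP^{(\infty)})']_{11}\big)\partial_sV_s\,dz$; writing $R^{-1}(RP^{(\infty)})'=(P^{(\infty)})'+R^{-1}R'P^{(\infty)}$ and conjugating by $e^{\pm N(g_s-\ell_s/2)\sigma_3}$, the off-diagonal blocks of $(P^{(\infty)})^{-1}(P^{(\infty)})'$ pick up exponential factors and are negligible on $\tau_\pm$, so the $11$-entry of the diagonal part yields precisely $-Y_{22}Y_{11}'+Y_{12}Y_{21}'=\mathcal{J}_s(z)$ up to the already-accounted $Ng_s'$; that is, after subtracting the $g_s'$ piece, what is left is exactly $-\frac{N}{2\pi i}[\int_{\tau_+}-\int_{\tau_-}]\mathcal{J}_s(z)\partial_sV_s(z)\,dz$ with $\mathcal{J}_s$ as defined. (Unlike in Lemma \ref{le:di1global}, I deliberately do not replace $R$ by $I$ here, since the $\mathcal{O}(N^{-1})$ contribution, multiplied by $N$, survives in the limit and feeds into Proposition \ref{prop:di2int}; the explicit form of $R_1$ from Theorem \ref{th:rasy} will be used in the subsequent analysis.)

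The main obstacle is bookkeeping the uniformity: I must check that every error estimate ($o(1)$ from the exponentially small jumps, the $\mathcal{O}(N^{-1})$ from $R-I$ and $R'$, the boundedness of $P^{(\infty)}$ and its derivative away from $[-1,1]$) holds uniformly in $s\in[0,1]$ and in the configuration $\{|x_i-x_j|\ge3\delta,\ |x_i\pm1|\ge3\delta\}$, which requires that the curves $\tau_\pm$ can be fixed independently of $N$ and $s$ while staying outside all lenses and disks and inside the region where $V$ is analytic---feasible since $V$ is real analytic and the lenses/disks are $N$- and $s$-independent. No genuinely new difficulty arises beyond what was handled for \eqref{eq:di1}; the only subtlety is remembering to keep the $R_1$ term, and this is why the statement is phrased in terms of $\mathcal{J}_s$ rather than a fully explicit expression.
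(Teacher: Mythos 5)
Your proof is correct and is exactly the adaptation the paper intends: the paper omits the proof of Lemma \ref{le:di2global}, stating only that it is essentially identical to that of Lemma \ref{le:di1global}, and your argument reproduces that adaptation faithfully, including the two key points that the $Ng_s'$ term yields the $-N^2\int_{-1}^1 d_s\sqrt{1-x^2}\,\partial_sV_s\,dx$ contribution via \eqref{eq:gbv3} and that the $R$-correction must now be retained rather than absorbed into $o(1)$. Two cosmetic slips do not affect the argument: $V$ is only assumed real analytic, so $\partial_sV_s$ extends analytically to a neighbourhood of $\R$ rather than being entire (which is all the contour deformation needs, and is what the lemma's hypothesis on $\tau_\pm$ guarantees), and the exponent controlling the outer $\R$-integral should read $N(g_{s,+}+g_{s,-}-\ell_s-V_s)$, without the factors of $2$, so that \eqref{eq:gbv2} applies directly.
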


The proof is essentially identical to that of Lemma \ref{le:di1global} and we omit it. We now consider the asymptotics of the integral of this from $s=0$ to $s=1$. Let us first consider the order $N^2$ term.

\begin{lemma}\label{le:oderN2}
We have

\begin{equation*}
\int_0^1 ds(-N^2)\int_{-1}^1 d_s(x)\partial_s V_s(x)\sqrt{1-x^2}dx=-\frac{N^2}{2}\int_{-1}^1\left(\frac{2}{\pi}+d(x)\right)(V(x)-2x^2)\sqrt{1-x^2}dx.
\end{equation*}
\end{lemma}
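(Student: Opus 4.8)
The plan is to compute the left-hand side by recalling that $d_s(x) = (1-s)\frac{2}{\pi} + s\, d(x)$ and $V_s(x) = (1-s)2x^2 + sV(x)$, so that $\partial_s V_s(x) = V(x) - 2x^2$ is independent of $s$. First I would substitute these expressions directly into the integral:
\begin{equation*}
\int_0^1 ds\,(-N^2)\int_{-1}^1 d_s(x)\,\partial_s V_s(x)\,\sqrt{1-x^2}\,dx = -N^2\int_{-1}^1 (V(x)-2x^2)\sqrt{1-x^2}\left(\int_0^1 d_s(x)\,ds\right)dx,
\end{equation*}
where I have used Fubini to exchange the order of integration (justified since the integrand is continuous on the compact region $[0,1]\times[-1,1]$, using that $d$ is real analytic in a neighborhood of $[-1,1]$ by \eqref{eq:equilib}).

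Next I would evaluate the inner integral: since $d_s(x)$ is affine in $s$,
\begin{equation*}
\int_0^1 d_s(x)\,ds = \int_0^1\left[(1-s)\frac{2}{\pi} + s\,d(x)\right]ds = \frac{1}{2}\cdot\frac{2}{\pi} + \frac{1}{2}d(x) = \frac{1}{2}\left(\frac{2}{\pi} + d(x)\right).
\end{equation*}
Plugging this back in immediately yields
\begin{equation*}
-N^2\int_{-1}^1 (V(x)-2x^2)\sqrt{1-x^2}\cdot\frac{1}{2}\left(\frac{2}{\pi}+d(x)\right)dx = -\frac{N^2}{2}\int_{-1}^1\left(\frac{2}{\pi}+d(x)\right)(V(x)-2x^2)\sqrt{1-x^2}\,dx,
\end{equation*}
which is exactly the claimed identity.

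This statement is essentially a bookkeeping computation rather than a substantive lemma: there is no real obstacle, since everything reduces to the elementary fact that the average over $s\in[0,1]$ of the affine interpolation $d_s$ is the midpoint $\frac{1}{2}(d_0 + d_1)$. The only point requiring a word of care is the interchange of the $s$- and $x$-integrals, which is immediate from continuity and compactness. I would keep the proof to two or three lines in the final write-up.
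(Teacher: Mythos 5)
Your proof is correct and is essentially the same as the paper's, which simply notes that the identity follows immediately from the definitions $\partial_s V_s(x)=V(x)-2x^2$ and $d_s(x)=(1-s)\frac{2}{\pi}+sd(x)$. Your explicit evaluation of $\int_0^1 d_s(x)\,ds=\frac{1}{2}\bigl(\frac{2}{\pi}+d(x)\bigr)$ is just the spelled-out version of that one-line observation.
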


\begin{proof}
This follows immediately from the definitions: $\partial_s V_s(x)=V(x)-2x^2$ and $d_s(x)=(1-s)\frac{2}{\pi}+sd(x)$.

\end{proof}

For $\mathcal{J}$-terms, we note that we now need to take into account $\mathcal{O}(N^{-1})$ terms in the expansion of $R$ -- these will result in $\mathcal{O}(1)$ terms in the differential identity. We first focus on the $\mathcal{O}(N)$ terms which come from the $\mathcal{O}(1)$ terms in the expansion of $R$. For this, repeating our argument from the previous section results in the $\mathcal{O}(N)$ term being

\begin{equation*}
\frac{N}{2\pi i}\int_0^1ds\oint_{\gamma}\frac{\mathcal{D}'(x)}{\mathcal{D}(x)}\partial_s V_s(x)dx=\frac{N}{2\pi i}\oint_{\gamma}\frac{\mathcal{D}'(x)}{\mathcal{D}(x)}(V(x)-2x^2)dx,
\end{equation*}

\noindent where $\gamma$ is a nice curve enclosing $[-1,1]$ inside which everything relevant is analytic. We again have $\mathcal{D}'(z)/\mathcal{D}(z)=q_{Sz}'(z,0)+q_{FH}'(z,0)=q_{FH}'(z,0)$ (as $q_{Sz}(z,0)=0$). Recalling that

\begin{equation*}
q_{FH}'(z)=-\frac{\mathcal{A}}{r(z)}+\sum_{j=1}^k\frac{\beta_j}{2}\frac{1}{z-x_j},
\end{equation*}

\noindent an application of Sokhotski-Plemelj shows that the order $N$ terms combine into the following quantity

\begin{align}\label{eq:di2orderN}
\frac{N}{2\pi i}\oint_{\gamma}\frac{\mathcal{D}'(x)}{\mathcal{D}(x)}(V(x)-2x^2)dx&=-\frac{N}{2\pi i}\int_{-1}^1(q_{FH,+}'(x)-q_{FH,-}'(x))(V(x)-2x^2)dx\\
\notag &=-\mathcal{A}\frac{N}{\pi}\int_{-1}^1 \frac{V(x)-2x^2}{\sqrt{1-x^2}}dx+N\sum_{j=1}^k\frac{\beta_j}{2}(V(x_j)-2x_j^2).
\end{align}

Finally, let us consider the $\mathcal{O}(1)$ terms. We will make use of the following lemma (whose variants are surely well known in the literature, but as we don't know of a reference exactly in our setting we will sketch a proof of it).

\begin{lemma}\label{le:prinint}
For $x\in(-1,1)$ and one-cut regular potential $V$,

\begin{equation}\label{eq:1term}
P.V.\int_{-1}^1 V'(\lambda)\frac{\sqrt{1-\lambda^2}}{\lambda-x}d\lambda=-2\pi+2\pi^2d(x)(1-x^2)
\end{equation}

\noindent and

\begin{equation}\label{eq:2term}
\int_{x}^1 d(\lambda)\sqrt{1-\lambda^2}d\lambda=\frac{\sqrt{1-x^2}}{2\pi^2}P.V.\int_{-1}^1 \frac{V(\lambda)}{x-\lambda}\frac{d\lambda}{\sqrt{1-\lambda^2}}+\frac{1}{\pi}\arccos(x).
\end{equation}

\end{lemma}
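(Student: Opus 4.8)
\emph{Proposal.} The plan is to derive both identities from complex-analytic properties of the Stieltjes transform of the equilibrium measure,
\[
W(z)=\int_{-1}^1\frac{\mu_V(dy)}{z-y}=\int_{-1}^1\frac{d(y)\sqrt{1-y^2}}{z-y}\,dy,\qquad z\in\C\setminus[-1,1],
\]
which is analytic off $[-1,1]$ and satisfies $W(z)=z^{-1}+O(z^{-2})$ at infinity. Differentiating the Euler--Lagrange equality \eqref{eq:el1} gives $W_+(\lambda)+W_-(\lambda)=V'(\lambda)$ for $\lambda\in(-1,1)$, while the Sokhotski--Plemelj formula gives $W_+(\lambda)-W_-(\lambda)=-2\pi i\,d(\lambda)\sqrt{1-\lambda^2}$; since $V$ is real we also have $W_-(\lambda)=\overline{W_+(\lambda)}$, so in particular $W_+(\lambda)=\tfrac12 V'(\lambda)-\pi i\,d(\lambda)\sqrt{1-\lambda^2}$. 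With $r$ as in \eqref{eq:rdef}, the function $W(z)r(z)$ is analytic on $\C\setminus[-1,1]$, tends to $1$ at infinity, and (since $r_\pm(\lambda)=\pm i\sqrt{1-\lambda^2}$ on $(-1,1)$) has jump $r_+W_+-r_-W_-=i\sqrt{1-\lambda^2}\,(W_++W_-)=i\sqrt{1-\lambda^2}V'(\lambda)$ across $(-1,1)$. Hence, by the Plemelj representation together with Liouville's theorem,
\[
W(z)r(z)=1+\frac{1}{2\pi}\int_{-1}^1\frac{\sqrt{1-t^2}\,V'(t)}{t-z}\,dt,\qquad z\in\C\setminus[-1,1].
\]

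To obtain \eqref{eq:1term}, let $z\to x\in(-1,1)$ from the upper half-plane in this identity. The left side becomes $r_+(x)W_+(x)=i\sqrt{1-x^2}\,W_+(x)=\tfrac{i}{2}\sqrt{1-x^2}V'(x)+\pi d(x)(1-x^2)$, and on the right Sokhotski--Plemelj produces $1+\tfrac{1}{2\pi}\,P.V.\!\int_{-1}^1\frac{\sqrt{1-t^2}V'(t)}{t-x}dt+\tfrac{i}{2}\sqrt{1-x^2}V'(x)$. The terms proportional to $V'(x)$ cancel, and rearranging gives $P.V.\!\int_{-1}^1 V'(\lambda)\frac{\sqrt{1-\lambda^2}}{\lambda-x}d\lambda=2\pi^2 d(x)(1-x^2)-2\pi$, which is \eqref{eq:1term}.

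For \eqref{eq:2term}, write $\int_x^1 d(\lambda)\sqrt{1-\lambda^2}d\lambda=\mu_V([x,1])=\tfrac{1}{2\pi i}\bigl(g_+(x)-g_-(x)\bigr)$ for $g(z)=\int_{-1}^1\log(z-y)\mu_V(dy)$ (principal branch, $z\in\C\setminus(-\infty,1]$), the identity of the jump being elementary since $\log(x+i0-y)-\log(x-i0-y)=2\pi i\,\mathbf 1_{\{y>x\}}$. Likewise $\tfrac1\pi\arccos x=\tfrac{1}{2\pi i}\bigl(g_{\mathrm{ac},+}(x)-g_{\mathrm{ac},-}(x)\bigr)$, where $g_{\mathrm{ac}}(z)=\log\frac{z+r(z)}{2}$ is the logarithmic potential of the arcsine law: using $\int_{-1}^1\frac{d\lambda}{\sqrt{1-\lambda^2}(z-\lambda)}=\pi/r(z)$ (as in the proof of Lemma~\ref{le:cauchybounds}) one checks $g_{\mathrm{ac}}'=1/r$ and $g_{\mathrm{ac}}(z)=\int_{-1}^1\log(z-y)\frac{dy}{\pi\sqrt{1-y^2}}$. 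Thus $G:=g-g_{\mathrm{ac}}$ is analytic on $\C\setminus[-1,1]$ (it extends across $(-\infty,-1)$ because both potentials have jump $2\pi i$ there), vanishes at infinity, has $G'=W-1/r$, and $\mu_V([x,1])-\tfrac1\pi\arccos x=\tfrac{1}{2\pi i}(G_+-G_-)$. Integrating by parts in the key identity (the boundary terms vanish because $\sqrt{1-t^2}$ does), one finds $\int_{-1}^1\frac{\sqrt{1-t^2}V'(t)}{t-z}dt=-r(z)\frac{d}{dz}\bigl[r(z)\Psi(z)\bigr]$ with $\Psi(z)=\int_{-1}^1\frac{V(t)}{\sqrt{1-t^2}(t-z)}dt$ — an elementary computation using $\partial_t\frac{\sqrt{1-t^2}}{t-z}=\frac{tz-1}{\sqrt{1-t^2}(t-z)^2}$ and $tz-1=z(t-z)+(z^2-1)$, together with $r'=z/r$, $r^2=z^2-1$. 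Hence $G'(z)=-\tfrac{1}{2\pi}\frac{d}{dz}[r(z)\Psi(z)]$, so $G(z)=-\tfrac{1}{2\pi}r(z)\Psi(z)+C$ with $C=-\tfrac{1}{2\pi}\int_{-1}^1\frac{V(t)\,dt}{\sqrt{1-t^2}}$ determined by $G(\infty)=0$ and $r(z)\Psi(z)\to-\int_{-1}^1\frac{V(t)\,dt}{\sqrt{1-t^2}}$. The constant carries no jump, and $(r\Psi)_+-(r\Psi)_-=i\sqrt{1-x^2}(\Psi_++\Psi_-)=2i\sqrt{1-x^2}\,P.V.\!\int_{-1}^1\frac{V(t)\,dt}{\sqrt{1-t^2}(t-x)}$ (the $\delta$-contributions cancel in the sum), so $\tfrac{1}{2\pi i}(G_+-G_-)=\tfrac{\sqrt{1-x^2}}{2\pi^2}\,P.V.\!\int_{-1}^1\frac{V(\lambda)}{x-\lambda}\frac{d\lambda}{\sqrt{1-\lambda^2}}$, which is \eqref{eq:2term}.

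The one genuinely analytic step — everything else being algebra with Sokhotski--Plemelj and the closed-form Cauchy transform $\int_{-1}^1\frac{d\lambda}{\sqrt{1-\lambda^2}(z-\lambda)}=\pi/r(z)$ already recorded in the excerpt — is the Plemelj representation of $W(z)r(z)$: one must check that it has no extra entire part (this uses $W(z)r(z)=1+O(z^{-1})$ at infinity together with the fact that, by one-cut regularity, $\mu_V$ has density vanishing like a square root at $\pm1$, so $W(z)r(z)$ stays bounded there) and that the boundary values are attained in the sense needed for Sokhotski--Plemelj. I expect that, and the correct choice of branch of $r$ on $(-1,1)$ so that the spurious $V'(x)$- and $V(x)$-terms cancel, to be the only points requiring care; notably no Poincaré--Bertrand interchange of iterated principal-value integrals is needed in this route.
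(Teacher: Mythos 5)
Your proposal is correct. For \eqref{eq:1term} it is essentially the paper's own argument in different clothing: the paper forms the function $H(z)=2\pi r(z)\int_{-1}^1\frac{d(\lambda)\sqrt{1-\lambda^2}}{\lambda-z}d\lambda+\int_{-1}^1\frac{V'(\lambda)\sqrt{1-\lambda^2}}{\lambda-z}d\lambda$, shows via Sokhotski--Plemelj and \eqref{eq:el1} that it is entire and equals $-2\pi$ by Liouville, which is exactly your Plemelj representation $W(z)r(z)=1+\frac{1}{2\pi}\int_{-1}^1\frac{\sqrt{1-t^2}V'(t)}{t-z}dt$; the boundedness at $\pm1$ that you flag as the delicate point is precisely what the paper also invokes ("$H$ is bounded at $\pm1$"). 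For \eqref{eq:2term}, however, your route genuinely differs. The paper uses \eqref{eq:1term} to reduce the claim to the identity $p(x)=q(x)$ between an iterated principal-value integral and $\sqrt{1-x^2}\,P.V.\!\int_{-1}^1\frac{V(\lambda)}{x-\lambda}\frac{d\lambda}{\sqrt{1-\lambda^2}}$, and then verifies $p(1)=q(1)=0$ and $p'=q'$ by a somewhat fiddly direct computation with the Hilbert transform. You instead produce an explicit primitive: the $t$-integration by parts yielding $2\pi\bigl(W(z)r(z)-1\bigr)=-r(z)\frac{d}{dz}[r(z)\Psi(z)]$ gives $g-g_{\mathrm{ac}}=-\frac{1}{2\pi}r\Psi+C$ in closed form, and \eqref{eq:2term} drops out by reading off the jump across $(x,1)$. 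I checked the algebra ($tz-1=z(t-z)+(z^2-1)$, $rr'=z$, the vanishing boundary terms, the cancellation of the $\delta$-contributions in $\Psi_++\Psi_-$, and the normalization $C=-\frac{1}{2\pi}\int_{-1}^1\frac{V(t)}{\sqrt{1-t^2}}dt$ from $G(\infty)=0$) and it is sound. Your version buys a conceptually cleaner derivation that explains the $\frac{1}{\pi}\arccos x$ term as the jump of the arcsine log-potential and avoids differentiating under principal-value integrals; the paper's version avoids introducing the logarithmic potentials and keeps everything at the level of real-variable Hilbert-transform identities, which is the toolkit it reuses in Lemma \ref{le:szint}.
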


\begin{proof}
For \eqref{eq:1term}, define the function $H:(\C\setminus[-1,1])\to \C$

\begin{equation*}
H(z)=2\pi(z-1)^{1/2}(z+1)^{1/2}\int_{-1}^{1}\frac{d(\lambda)\sqrt{1-\lambda^2}}{\lambda-z}d\lambda+\int_{-1}^1 \frac{V'(\lambda)\sqrt{1-\lambda^2}}{\lambda-z}d\lambda.
\end{equation*}

Using Sokhotksi-Plemelj and \eqref{eq:el1}, one can check that this function is continuous across $(-1,1)$. One also sees easily that $H$ is bounded at $\pm 1$ so we conclude that it is entire. Finally as $H(\infty)=-2\pi$, Liouville implies that $H(z)=-2\pi$. An application of Sokhotski-Plemelj then implies \eqref{eq:1term}.

We note that as a consequence of \eqref{eq:1term}, one can check that what's required for \eqref{eq:2term} is to prove the identity

\begin{equation}\label{eq:goal}
p(x):=\int_{x}^1\frac{1}{\sqrt{1-y^2}}P.V.\int_{-1}^1 \frac{V'(\lambda)}{\lambda-y}\sqrt{1-\lambda^2}d\lambda dy=\sqrt{1-x^2}P.V. \int_{-1}^1 \frac{V(\lambda)}{x-\lambda}\frac{d\lambda}{\sqrt{1-\lambda^2}}=:q(x).
\end{equation}

One can easily check that these are both smooth functions of $x$ and satisfy $p(1)=q(1)=0$, so it's enough for us to check that $p'(x)=q'(x)$. For this, let us first write

\begin{equation*}
q(x)=\frac{1}{\sqrt{1-x^2}}P.V.\int_{-1}^1 \frac{V(\lambda)}{x-\lambda}\sqrt{1-\lambda^2}d\lambda-\frac{1}{\sqrt{1-x^2}}\int_{-1}^1 \frac{(x+\lambda)V(\lambda)}{\sqrt{1-\lambda^2}}d\lambda.
\end{equation*}

We again make use of the fact that differentiation commutes with the Hilbert transform so one can check that

\begin{align*}
q'(x)&=p'(x)-\frac{1}{\sqrt{1-x^2}}P.V.\int_{-1}^1 \frac{\lambda V(\lambda)}{x-\lambda}\frac{d\lambda}{\sqrt{1-\lambda^2}}+\frac{x}{(1-x^2)^{3/2}}P.V.\int_{-1}^1 \frac{V(\lambda)}{x-\lambda}\sqrt{1-\lambda^2}d\lambda\\
&\quad -\frac{x}{(1-x^2)^{3/2}}\int_{-1}^1 \frac{(x+\lambda)V(\lambda)}{\sqrt{1-\lambda^2}}d\lambda-\frac{1}{\sqrt{1-x^2}}\int_{-1}^1 \frac{V(\lambda)}{\sqrt{1-\lambda^2}}d\lambda\\
&=p'(x)+\frac{x}{\sqrt{1-x^2}}P.V.\int_{-1}^1 \frac{V(\lambda)}{x-\lambda}\left[-\frac{1}{\sqrt{1-\lambda^2}}+\frac{\sqrt{1-\lambda^2}}{1-x^2}-\frac{x^2-\lambda^2}{(1-x^2)\sqrt{1-\lambda^2}}\right]d\lambda\\
&=p'(x).
\end{align*}

We conclude that $p=q$ and \eqref{eq:2term} is true.

\end{proof}

Now to get a hold of the $\mathcal{O}(1)$-terms we are interested in, we need the $\mathcal{O}(N^{-1})$ term in the expansion of $\Jcal_s$ for the $\tau_\pm$-integrals. Again by Theorem \ref{th:rasy}, we know that
\begin{align*}
R(z) = I + \underbrace{R_1(z)}_{\mathcal{O}(N^{-1})} + o(N^{-1}),
\quad \Rightarrow \quad R(z)^{-1} = I - R_1(z) + o(N^{-1})
\end{align*}

\noindent where the claim about $R^{-1}$ follows by Neumann series expansion. Inspecting \eqref{eq:YY}, one realizes that the extra $\mathcal{O}(N^{-1})$ correction is indeed given by

\begin{equation*}
-\left(\left[P^{(\infty)}\right]^{-1}R_1'P^{(\infty)}\right)_{11}.
\end{equation*}

\noindent Let us consider first the contributions from the $R_1^{(x_j)}$ terms with $j\in\lbrace 1,...,k\rbrace$ (recall Theorem \ref{th:rasy} for the definition of this and $\Jcal^{(x_j)}$ below).

\begin{lemma}\label{le:r1xj}
Let $\tau_\pm$ be as in Lemma \ref{le:szint} and $j\in\lbrace 1,...,k\rbrace$. Then

\begin{equation}\label{eq:diorder1}
-\int_0^1 ds\frac{N}{2\pi i}\left[\int_{\tau_+}-\int_{\tau_-}\right]\Jcal^{(x_j)}(z)\partial_s V_s(z)dz=\frac{\beta_j^2}{4}\log\left[\frac{\pi}{2}d(x_j)\right]+\mathcal{O}(N^{-1})
\end{equation}

\noindent uniformly in $x_j\in(-1+\epsilon,1-\epsilon)$.

\end{lemma}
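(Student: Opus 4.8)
The plan is to substitute the explicit formula for $\Jcal^{(x_j)}$ from Theorem~\ref{th:rasy} and simplify it using the elementary relations $a(z)^2=r(z)/(z+1)$, $a(z)^{-2}=r(z)/(z-1)$, hence $a(z)^2-a(z)^{-2}=-2/r(z)$ and $a(z)^2+a(z)^{-2}=2z/r(z)$, together with the boundary values $a_+(x_j)^{2}=i\sqrt{(1-x_j)/(1+x_j)}$, $a_+(x_j)^{-2}=-i\sqrt{(1+x_j)/(1-x_j)}$. A short computation then gives
\[
\frac{a(z)^2}{a_+(x_j)^2}-\frac{a_+(x_j)^2}{a(z)^2}=\frac{-2i(z-x_j)}{r(z)\sqrt{1-x_j^2}},\qquad
\frac{a(z)^2}{a_+(x_j)^2}+\frac{a_+(x_j)^2}{a(z)^2}=\frac{2i(1-x_jz)}{r(z)\sqrt{1-x_j^2}},
\]
so that $\Jcal^{(x_j)}$ collapses to the compact form
\[
\Jcal^{(x_j)}(z)=\frac{\beta_j}{8\pi Nd_s(x_j)(1-x_j^2)\,r(z)}\left[\frac{c_{x_j,s}^2+c_{x_j,s}^{-2}}{z-x_j}+\beta_j\,\frac{1-x_jz}{(z-x_j)^2}\right].
\]
I then split the bracket into a non-oscillatory part (coefficient $\beta_j$) and an oscillatory part (coefficient $c_{x_j,s}^2+c_{x_j,s}^{-2}$) and handle the two $z$-integrals over $\tau_+\cup(-\tau_-)$ separately. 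Throughout I write $W(z):=\partial_sV_s(z)=V(z)-2z^2$, which is analytic in a neighbourhood of $[-1,1]$ and of $\tau_\pm$ and is independent of $s$, and I use that $\tau_+$, $\tau_-$ share their endpoints off $[-1,1]$.

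For the non-oscillatory part the crucial identity is $\tfrac{1-x_jz}{r(z)(z-x_j)^2}=\tfrac{d}{dz}\big[r(z)/(z-x_j)\big]$, which follows from $r'(z)=z/r(z)$ and $r(z)^2=z^2-1$. Integrating by parts along $\tau_\pm$ the boundary contributions cancel (common endpoints, $Wr/(z-x_j)$ single-valued there), leaving $-\big[\int_{\tau_+}-\int_{\tau_-}\big]\tfrac{W'(z)r(z)}{z-x_j}\,dz$; since $W'r$ vanishes at $\pm1$ there is no endpoint difficulty, and collapsing onto $(-1,1)$ with $r_\pm(x)=\pm i\sqrt{1-x^2}$ and Sokhotski--Plemelj produces $-2i\,P.V.\!\int_{-1}^1\tfrac{W'(x)\sqrt{1-x^2}}{x-x_j}\,dx$. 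By linearity and Lemma~\ref{le:prinint}~\eqref{eq:1term}, applied both to $V$ and to $2x^2$ (whose equilibrium density is $2/\pi$), this equals $-2i\cdot2\pi^2(d(x_j)-\tfrac2\pi)(1-x_j^2)$. Inserting this and integrating in $s$, using $\partial_sd_s(x_j)=d(x_j)-2/\pi$, $d_0\equiv2/\pi$, $d_1=d$, the non-oscillatory part contributes to the left-hand side of \eqref{eq:diorder1} exactly
\[
\frac{\beta_j^2}{4}\int_0^1\frac{d(x_j)-2/\pi}{d_s(x_j)}\,ds=\frac{\beta_j^2}{4}\big(\log d_1(x_j)-\log d_0(x_j)\big)=\frac{\beta_j^2}{4}\log\!\left[\frac{\pi}{2}d(x_j)\right].
\]

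For the oscillatory part, the same collapse applied to $\tfrac{W(z)}{r(z)(z-x_j)}$ gives $2i\,P.V.\!\int_{-1}^1\tfrac{W(x)}{\sqrt{1-x^2}(x-x_j)}\,dx$, which by Lemma~\ref{le:prinint}~\eqref{eq:2term} equals $-\tfrac{2\pi^2}{\sqrt{1-x_j^2}}\int_{x_j}^1(d(\lambda)-\tfrac2\pi)\sqrt{1-\lambda^2}\,d\lambda$. Thus, up to an explicit non-zero constant (a multiple of $\beta_j$ and a negative power of $1-x_j^2$), this part contributes $\big(\int_{x_j}^1(d-\tfrac2\pi)\sqrt{1-\lambda^2}d\lambda\big)\int_0^1\tfrac{c_{x_j,s}^2+c_{x_j,s}^{-2}}{d_s(x_j)}\,ds$. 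Now $|c_{x_j,s}|=1$ because $\phi_{s,+}(x_j)$ is purely imaginary on $(-1,1)$, and $c_{x_j,s}^2+c_{x_j,s}^{-2}=2\cos\!\big(\Theta(x_j)+2N\psi_s(x_j)\big)$ with $\psi_s(x_j)=\pi\int_{x_j}^1 d_s(\lambda)\sqrt{1-\lambda^2}d\lambda$ affine in $s$ of slope $\pi\int_{x_j}^1(d-\tfrac2\pi)\sqrt{1-\lambda^2}d\lambda$. An integration by parts in $s$ therefore bounds $\int_0^1\tfrac{c_{x_j,s}^2+c_{x_j,s}^{-2}}{d_s(x_j)}\,ds$ by $O\big(1/(N\,|\int_{x_j}^1(d-\tfrac2\pi)\sqrt{1-\lambda^2}d\lambda|)\big)$, and multiplying by the prefactor $\int_{x_j}^1(d-\tfrac2\pi)\sqrt{1-\lambda^2}d\lambda$ the oscillatory contribution is $O(N^{-1})$ \emph{uniformly} in $x_j$. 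Uniformity in $x_j\in(-1+\epsilon,1-\epsilon)$ then follows since there $d(x_j)$ and $d_s(x_j)$ are bounded away from $0$ (one-cut regularity) and $1-x_j^2$ is bounded away from $0$.

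I expect the oscillatory part to be the only genuinely delicate point: once the identity $\tfrac{d}{dz}[r(z)/(z-x_j)]=\tfrac{1-x_jz}{r(z)(z-x_j)^2}$ and Lemma~\ref{le:prinint} are in hand the non-oscillatory computation is routine and in fact exact, but for the oscillatory term one has to notice that the coefficient produced by the $z$-integration is proportional to the very same quantity $\int_{x_j}^1(d-2/\pi)\sqrt{1-\lambda^2}d\lambda$ that governs the stationarity of the phase in $s$. Without this matching, non-stationary phase in $s$ would only give $O(N^{-1})$ away from a (generically non-empty) exceptional set of $x_j$, and the required uniformity would break down.
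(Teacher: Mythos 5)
Your proposal is correct and follows essentially the same route as the paper's proof: the same algebraic reduction of $\Jcal^{(x_j)}$ via the identities for $a(z)^{\pm2}/a_+(x_j)^{\pm2}$ and $\frac{d}{dz}[r(z)/(z-x_j)]$, the same contour collapse and appeal to both parts of Lemma \ref{le:prinint}, and the same integration by parts in $s$ for the oscillatory term, including the key observation that the prefactor $\int_{x_j}^1(d-\tfrac{2}{\pi})\sqrt{1-\lambda^2}\,d\lambda$ produced by \eqref{eq:2term} exactly cancels the reciprocal of the phase slope, which is what makes the $\mathcal{O}(N^{-1})$ bound uniform in $x_j$. Apart from minor sign bookkeeping in the Sokhotski--Plemelj steps (which does not affect the final constants), nothing is missing.
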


\begin{proof}
Recall first of all from Theorem \ref{th:rasy} that for $j\in\lbrace 1,...k\rbrace$

\begin{align*}
N\Jcal^{(x_j)}(z)&=-\frac{1}{4}\frac{1}{(z - x_j )^2}\frac{i\beta_j^2}{4 \pi d_s(x_j)\sqrt{1-x_j^2}}\left[\frac{a(z)^2}{a_+(x_j)^2}+\frac{a_+(x_j)^2}{a(z)^2}\right]\\
&\quad + \frac{1}{4}\frac{1}{(z - x_j )^2}\frac{i\beta_j (c_{x_j,s}^2+c_{x_j,s}^{-2})}{4 \pi d_s(x_j)\sqrt{1-x_j^2}}\left[\frac{a(z)^2}{a_+(x_j)^2}-\frac{a_+(x_j)^2}{a(z)^2}\right]
\end{align*}

\noindent where

\begin{align*}
c_{x_j, s} &= \left(x_j + i \sqrt{1-x_j^2}\right)^{\mathcal{A}} \exp \left(-i \sum_{k > j} \beta_k \pi / 2 + N \phi_{s, +}(x_j) - (1+\beta_j) \pi i / 4 \right).
\end{align*}

Let us first focus on the $z$-integral in the statement of the lemma. Note first that

\begin{equation}\label{eq:a+}
\frac{a(z)^2}{a_+(x_j)^2}+\frac{a_+(x_j)^2}{a(z)^2}=\frac{2i(1-x_jz)}{(z-1)^{1/2}(z+1)^{1/2}\sqrt{1-x_j^2}}
\end{equation}

\noindent and

\begin{equation}\label{eq:a-}
\frac{a(z)^2}{a_+(x_j)^2}-\frac{a_+(x_j)^2}{a(z)^2}=\frac{2i(x_j-z)}{(z-1)^{1/2}(z+1)^{1/2}\sqrt{1-x_j^2}}.
\end{equation}

Using \eqref{eq:a+} and \eqref{eq:a-} one can check with direct calculations that

\begin{equation*}
\frac{1}{(x_j-z)^2}\left[\frac{a(z)^2}{a_+(x_j)^2}+\frac{a_+(x_j)^2}{a(z)^2}\right]=\frac{2i}{\sqrt{1-x_j^2}}\frac{d}{dz}\frac{(z-1)^{1/2}(z+1)^{1/2}}{z-x_j}
\end{equation*}

\noindent and

\begin{equation*}
\frac{1}{(x_j-z)^2}\left[\frac{a(z)^2}{a_+(x_j)^2}-\frac{a_+(x_j)^2}{a(z)^2}\right]=\frac{2i}{\sqrt{1-x_j^2}}\frac{1}{x_j-z}\frac{1}{(z-1)^{1/2}(z+1)^{1/2}}.
\end{equation*}

Recalling that $\partial_s V_s(z)=V(z)-2z^2$, we thus see by integration by parts, contour deformation, and Sokhotski-Plemelj that

\begin{align}\label{eq:firstterm}
\left[\int_{\tau_+}-\int_{\tau_-}\right]&\frac{1}{(x_j-z)^2}\left[\frac{a(z)^2}{a_+(x_j)^2}+\frac{a_+(x_j)^2}{a(z)^2}\right]\partial_s V_s(z)\frac{dz}{2\pi i}\\
\notag &=-\frac{1}{\pi}\frac{1}{\sqrt{1-x_j^2}}\left[\int_{\tau_+}-\int_{\tau_-}\right]\frac{(z-1)^{1/2}(z+1)^{1/2}}{z-x_j}(V'(z)-4z)dz\\
\notag &=-\frac{2i}{\pi\sqrt{1-x_j^2}}P.V.\int_{-1}^1 (V'(\lambda)-4\lambda)\frac{\sqrt{1-\lambda^2}}{\lambda-x_j}d\lambda
\end{align}

\noindent and simply by Sokhotski-Plemelj that

\begin{align}\label{eq:secondterm}
\left[\int_{\tau_+}-\int_{\tau_-}\right]&\frac{1}{(x_j-z)^2}\left[\frac{a(z)^2}{a_+(x_j)^2}-\frac{a_+(x_j)^2}{a(z)^2}\right]\partial_s V_s(z)\frac{dz}{2\pi i}\\
\notag &=\frac{2}{\pi i}\frac{1}{\sqrt{1-x_j^2}}P.V.\int_{-1}^1 \frac{V(\lambda)-2\lambda^2}{x_j-\lambda}\frac{d\lambda}{\sqrt{1-\lambda^2}}.
\end{align}

Let us first focus on the integral of the first term. We have from \eqref{eq:firstterm} and \eqref{eq:1term}

\begin{align}\label{eq:plusterm}
-\int_0^1 ds &\left[\int_{\tau_+}-\int_{\tau_-}\right]\left(-\frac{1}{4}\frac{1}{(z - x_j )^2}\frac{i\beta_j^2}{4 \pi d_s(x_j)\sqrt{1-x_j^2}}\left[\frac{a(z)^2}{a_+(x_j)^2}+\frac{a_+(x_j)^2}{a(z)^2}\right]\right)\partial_s V_s(z)\frac{dz}{2\pi i}\\
\notag &=\frac{\beta_j^2}{4}\left(d(x_j)-\frac{2}{\pi}\right)\int_0^1 \frac{ds}{d_s(x_j)}\\
\notag &=\frac{\beta_j^2}{4}\log \left[\frac{\pi}{2}d(x_j)\right].
\end{align}

Let us now turn to the second term. We have from \eqref{eq:secondterm} and \eqref{eq:2term} that

\begin{align*}
-\int_0^1 ds &\left[\int_{\tau_+}-\int_{\tau_-}\right]\left(\frac{1}{4}\frac{1}{(z - x_j )^2}\frac{i\beta_j(c_{x_j,s}^2+c_{x_j,s}^{-2})}{4 \pi d_s(x_j)\sqrt{1-x_j^2}}\left[\frac{a(z)^2}{a_+(x_j)^2}-\frac{a_+(x_j)^2}{a(z)^2}\right]\right)\partial_s V_s(z)\frac{dz}{2\pi i}\\
&=-(1-x_j)^{-3/2}\frac{\beta_j}{4}\int_{x_j}^1 \left(d(\lambda)-\frac{2}{\pi}\right)\sqrt{1-\lambda^2}d\lambda \int_0^1 ds \frac{c_{x_j,s}^2+c_{x_j,s}^{-2}}{d_s(x_j)}.
\end{align*}

Let us note that we can write $c_{x_j,s}^2=e^{i\theta_N(x_j)} e^{2\pi i N s\int_{x_j}^1\left(d(\lambda)-\frac{2}{\pi}\right)\sqrt{1-\lambda^2}}d\lambda$, where $e^{i\theta_N(x_j)}$ is a complex number of unit length and independent of $s$. Thus

\begin{align*}
\int_{x_j}^1&\left(d(\lambda)-\frac{2}{\pi}\right)\sqrt{1-\lambda^2}d\lambda \int_0^1 ds \frac{c_{x_j,s}^{\pm 2}}{d_s(x_j)}{=}\pm e^{\pm i\theta_N(x_j)}\frac{1}{2\pi i N}\int_0^1\frac{1}{d_s(x_j)}\frac{d}{ds}e^{\pm 2\pi i N s \int_{x}^1\left(d(\lambda)-\frac{2}{\pi}\right)\sqrt{1-\lambda^2}d\lambda}ds.
\end{align*}

Integrating this by parts, noting that $\frac{d}{ds}d_s(x)=d(x)-\frac{2}{\pi}$ is bounded and $1/d_s(x)^2$ is bounded in $x$ and $s$, we see that

\begin{equation}\label{eq:minusterm}
-\int_0^1 ds \left[\int_{\tau_+}-\int_{\tau_-}\right]\left(\frac{1}{(z - x_j )^2}\frac{i\beta_j(c_{x_j,s}^2+c_{x_j,s}^{-2})}{d_s(x_j)\sqrt{1-x_j^2}}\left[\frac{a(z)^2}{a_+(x_j)^2}-\frac{a_+(x_j)^2}{a(z)^2}\right]\right)\partial_s V_s(z)dz=\mathcal{O}(N^{-1})
\end{equation}

\noindent uniformly in $x_j\in(-1+\epsilon,1-\epsilon)$. Combining \eqref{eq:plusterm} and \eqref{eq:minusterm}, yields the claim \eqref{eq:diorder1}.
\end{proof}

Let us now treat the integrals associated to $\Jcal^{(\pm 1)}$.

\begin{lemma}\label{le:partition}
We have
\begin{align} \label{eq:partition}
-\int_0^1 ds \frac{N}{2 \pi i} \left[\int_{\tau_+}-\int_{\tau_-}\right] \Jcal^{(1)}(z) \partial_s V_s(z) dz
& = -{\frac{1}{24}}\log\left(\frac{\pi}{2}d(1)\right),\\
\notag -\int_0^1 ds \frac{N}{2 \pi i} \left[\int_{\tau_+}-\int_{\tau_-}\right] \Jcal^{(-1)}(z) \partial_s V_s(z) dz
& = -{\frac{1}{24}}\log\left(\frac{\pi}{2}d(-1)\right).
\end{align}
\end{lemma}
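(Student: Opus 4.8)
\textbf{Proof proposal for Lemma \ref{le:partition}.}

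The plan is to follow exactly the strategy of Lemma \ref{le:r1xj}, but now using the precise form of $\Jcal^{(1)}$ (and $\Jcal^{(-1)}$) recorded in Theorem \ref{th:rasy}, which involves the edge compatibility matrix $F$ and the conformal map $\xi_s$. The key structural observation, already noted in Remark \ref{rem:localematch}, is that the subleading correction coming from the edge parametrix has (at most) a pole of order three at $z=1$ (respectively $z=-1$), and no other singularities in $U_1$ apart from the branch cut along $[-1,1]$ which $P^{(\infty)}$ carries. Thus the integrand $\Jcal^{(1)}(z)\partial_s V_s(z)$ in the $\tau_\pm$-integral is analytic in $U_{[-1,1]}\setminus[-1,1]$, and, as in the previous lemmas, after integrating over $[0,1]$ in $s$ we may collapse the contour $\tau_+ - \tau_-$ onto $[-1,1]$ and apply Sokhotski--Plemelj, picking up the jump of $\Jcal^{(1)}$ across $(-1,1)$. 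The presence of the boundary terms $\frac{1}{(z-1)^2}$, $\frac{1}{(z-1)^3}$ multiplied by $a(z)^{\pm 2}$ means that, just as in \eqref{eq:firstterm}, we should integrate by parts to move these singularities onto $V'(z)-4z$ and then onto $V''$ if needed; one checks that $\frac{a(z)^{\pm2}}{(z-1)^2}$ and $\frac{a(z)^{\pm2}}{(z-1)^3}$ are exact derivatives of explicit algebraic functions (built from $r(z)=(z-1)^{1/2}(z+1)^{1/2}$ and rational functions with poles only at $\pm1$), so that after integration by parts the remaining principal value integrals are exactly of the type handled by Lemma \ref{le:prinint}.

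Concretely, I would first substitute the expression for $\Jcal^{(1)}(z)$ from Theorem \ref{th:rasy}, using $G_s^{(1)}(1)=\pi\sqrt2 d_s(1)$ and $[G_s^{(1)}]'(1)=\frac{3\pi}{10\sqrt2}[4d_s'(1)+d_s(1)]$. The $\mathcal{A}^2$-dependent pieces and the $[G_s^{(1)}]'(1)$-piece, once the $z$-integral is evaluated via the integration-by-parts/Sokhotski--Plemelj procedure and \eqref{eq:1term}--\eqref{eq:2term}, will produce integrals over $s\in[0,1]$ of the form $\int_0^1 \frac{(\text{stuff})}{d_s(1)}\,ds$ and $\int_0^1 \frac{d_s'(1)}{d_s(1)^2}\,ds$; the point is that these are designed to telescope. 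Indeed $\frac{d}{ds}\log d_s(1) = \frac{d(1)-2/\pi}{d_s(1)}$, so a factor $\int_0^1 \frac{d(1)-2/\pi}{d_s(1)}\,ds = \log\big(\tfrac{\pi}{2}d(1)\big)$ emerges just as in \eqref{eq:plusterm}; the remaining $s$-integrals (those not of this exact form) should either vanish identically by an algebraic identity or be $\mathcal{O}(N^{-1})$ — but here, unlike in Lemma \ref{le:r1xj}, there are no oscillatory $c_{x_j,s}^{\pm2}$ factors at the edge, so I expect the ``error'' terms to actually cancel exactly rather than merely be small, leaving precisely the clean coefficient $-\frac{1}{24}$. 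The case of $\Jcal^{(-1)}$ is identical after the reflection $z\mapsto -z$, $d(x)\mapsto d(-x)$, using the $j=0$ line of Theorem \ref{th:rasy} and the corresponding formulas for $G_s^{(-1)}$.

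The main obstacle will be the bookkeeping of the various algebraic contributions so that the numerical constant comes out to be exactly $-\frac{1}{24}$: one must carefully combine the $\frac{5+96\mathcal{A}^2}{48}$, the $-\frac{5}{12}$, the $-\frac{7}{24}$ and the order-three-pole $\frac{5}{48}$ terms, track the factors of $\sqrt2$, $i$, and $\pi$ coming through $F$, $a(z)_\pm$ at $z=\pm1$, and $d_s(\pm1)$, and verify that all $\mathcal{A}$-dependence and all $d'$-dependence cancel (as it must, since the final answer depends only on $d(\pm1)$). A useful internal consistency check, which I would use to catch sign errors, is that in the $V = 2x^2$ (GUE) case the left-hand side must vanish, since then $D_{N-1}(f_0;V_1) = D_{N-1}(f_0;V_0)$ — so the $s$-integrand is identically zero — while the right-hand side becomes $-\frac{1}{24}\log(\frac\pi2\cdot\frac2\pi) = 0$, consistent. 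I would also double-check against Krasovsky's GUE asymptotics \cite{krasovsky} that the constant $-\frac{1}{24}\log(\frac{\pi^2}{4}d(1)d(-1))$ appearing in Proposition \ref{prop:di2int} combines correctly with Krasovsky's explicit GUE constant to reproduce the $\frac{\beta_j^2}{4}$-power of $d(x_j)\frac{\pi}{2}\sqrt{1-x_j^2}$ and the edge factors in Proposition \ref{prop:fh}.
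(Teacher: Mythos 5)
Your proposal follows essentially the same route as the paper's proof: substitute the explicit formula for $\Jcal^{(\pm1)}$ from Theorem \ref{th:rasy}, recognize $a(z)^{\pm2}(z\mp1)^{-2}$ and $a(z)^{2}(z\mp1)^{-3}$ as exact derivatives so that integration by parts and contour collapse reduce everything to the principal-value integrals of Lemma \ref{le:prinint}, and then let the $s$-integral telescope via $\int_0^1 (d(1)-2/\pi)\,d_s(1)^{-1}ds=\log(\tfrac{\pi}{2}d(1))$. The cancellation of the $\mathcal{A}^2$- and $[G_s^{(1)}]'$-dependence that you anticipate indeed occurs, precisely because $\bigl[\int_{\tau_+}-\int_{\tau_-}\bigr]a(z)^{2}(z-1)^{-2}(V(z)-2z^2)\,dz=0$ identically, which is the single identity killing all those terms at once.
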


\begin{proof}
We only prove the first equality. From Theorem \ref{th:rasy} we have

\begin{align*}
\Jcal^{(1)}(z) &= {-}\frac{1}{(z-1)^2} \frac{2^{1/2}}{8N}\left\{a(z)^2 \left[ \frac{1}{48} \left(G_s^{(1)}(1)\right)^{-1}(5 + 96 \mathcal{A}^2) - \frac{5}{12} \left(G_s^{(1)}(1)\right)^{-2} \left(\left[G_s^{(1)}\right]'(1)\right)\right] \right.\\
& \qquad \qquad \left.- a(z)^{-2} \frac{7}{24}\left(G_s^{(1)}(1)\right)^{-1}\right\} - \frac{1}{(z-1)^3} \frac{5\sqrt{2}}{48NG_s^{(1)}(1)} a(z)^2
\end{align*}

\noindent where $G_s^{(1)}$ is defined in (\ref{eq:Gs}) and we have $G_s^{(1)}(1) = \pi \sqrt{2} d_s(1)$. Note that

\begin{equation*}
\frac{a(z)^2}{(z-1)^2}=-\frac{d}{dz}\frac{(z+1)^{1/2}}{(z-1)^{1/2}}\qquad \mathrm{and} \qquad \frac{a(z)^2}{(z-1)^3}=\frac{1}{3}\frac{d}{dz}\frac{(z-2)(z+1)^{1/2}(z-1)^{1/2}}{(z-1)^2}.
\end{equation*}

Thus integrating by parts, contour deformation, and a simple application of Lemma \ref{le:prinint} imply that

\begin{align*}
\left[\int_{\tau_+}-\int_{\tau_-}\right]\frac{a(z)^2}{(z-1)^2} V(z) dz &= -\left[\int_{\tau_+}-\int_{\tau_-}\right] V(z) \frac{d}{dz} \left(\frac{z+1}{z-1}\right)^{1/2}dz\\
& = \left[\int_{\tau_+}-\int_{\tau_-}\right]V'(z)  \left(\frac{z+1}{z-1}\right)^{1/2}dz\\
&= 2i  \int_{-1}^1 \frac{\sqrt{1-x^2}}{x-1}V'(x)dx={ -} 4\pi i
\end{align*}

\noindent and
\begin{align}\label{eq:aint1}
\left[\int_{\tau_+}-\int_{\tau_-}\right] \frac{a(z)^2}{(z-1)^2} \partial_s V_s(z) dz
= \left[\int_{\tau_+}-\int_{\tau_-}\right] \frac{a(z)^2}{(z-1)^2} (V(z) - 2z^2) dz =  0.
\end{align}

In a similar manner and with an application of Lemma \ref{le:prinint},

\begin{align*}
\left[\int_{\tau_+}-\int_{\tau_-}\right]\frac{a(z)^2}{(z-1)^3} V(z) dz&=-\left[\int_{\tau_+}-\int_{\tau_-}\right] V'(z) \frac{1}{3}\frac{(z-2)(z+1)^{1/2}(z-1)^{1/2}}{(z-1)^2}dz \\
&=-\frac{1}{3}\left[\int_{\tau_+}-\int_{\tau_-}\right] V'(z) \frac{(z+1)^{1/2}(z-1)^{1/2}}{z-1}dz\\
&\quad +\frac{1}{3} \left[\int_{\tau_+}-\int_{\tau_-}\right]V'(z) \frac{(z+1)^{1/2}(z-1)^{1/2}}{(z-1)^2}dz\\
&=-\frac{1}{3}\left[\int_{\tau_+}-\int_{\tau_-}\right]V'(z) \frac{(z+1)^{1/2}(z-1)^{1/2}}{z-1}dz\\
&\quad +\frac{1}{3}\left.\frac{d}{dx}\right|_{x=1}\left[\int_{\tau_+}-\int_{\tau_-}\right] V'(z) \frac{(z+1)^{1/2}(z-1)^{1/2}}{z-x}dz\\
&=\frac{2i}{3}\int_{-1}^1 V'(\lambda)\sqrt{\frac{1+\lambda}{1-\lambda}}d\lambda+\frac{2i}{3}\left.\frac{d}{dx}\right|_{x=1}P.V.\int_{-1}^1 V'(\lambda)\frac{\sqrt{1-\lambda^2}}{\lambda-x}d\lambda\\
&=\frac{4\pi i}{3}-\frac{8\pi^2 i}{3}d(1),
\end{align*}

\noindent which implies

\begin{equation}\label{eq:aint2}
\left[\int_{\tau_+}-\int_{\tau_-}\right]\frac{a(z)^2}{(z-1)^3} \partial_s V_s(z) dz=-\frac{8\pi^2 i}{3}\left(d(1)-\frac{2}{\pi}\right).
\end{equation}

Consider finally the $a(z)^{-2}$ term. One can easily check that

\begin{align*}
\frac{a(z)^{-2}}{(z-1)^2}
& = -\frac{2}{3} \frac{\partial}{\partial z} \left[ \frac{(z-1)^{1/2}(z+1)^{1/2}}{(z-1)^2} \right] + \frac{1}{3} \frac{a(z)^2}{(z-1)^2} .
\end{align*}

\noindent We can safely ignore the second term on the RHS, as we saw that it will integrate to zero. Moreover, we essentially calculated the integral related to the first term already:

\begin{align*}
-\frac{2}{3} \left[\int_{\tau_+}-\int_{\tau_-}\right]& V(z)  \frac{\partial}{\partial z} \left[ \frac{(z-1)^{1/2}(z+1)^{1/2}}{(z-1)^2} \right] dz
= -\frac{16}{3} \pi^2 i d(1)
\end{align*}

\noindent and we find

\begin{equation}\label{eq:aint3}
\left[\int_{\tau_+}-\int_{\tau_-}\right]\frac{a(z)^{-2}}{(z-1)^2} \partial_s V_s(z) dz=-\frac{16\pi^2 i}{3}\left(d(1)-\frac{2}{\pi}\right).
\end{equation}

Putting together \eqref{eq:aint1}, \eqref{eq:aint2}, and \eqref{eq:aint3} a direct calculation leads to

\begin{align*}
-\int_0^1 ds\frac{N}{2\pi i}\left[\int_{\tau_+}-\int_{\tau_-}\right]\Jcal^{(1)}(z)\partial_s V_s(z)dz=-{\frac{1}{24}}\log\left(\frac{\pi}{2}d(1)\right).
\end{align*}

\end{proof}

\begin{proof}[Proof of Proposition \ref{prop:di2int}]
This is simply a combination of Lemma \ref{le:di2global}, Lemma \ref{le:oderN2}, \eqref{eq:di2orderN}, Lemma \ref{le:r1xj}, and Lemma \ref{le:partition}.
\end{proof}

We are now in a position to apply these results.

\section{Proof of Theorem \ref{th:main}} \label{sec:mainproof}

As discussed earlier, we do this through Proposition \ref{prop:main}. Before proving this, we will need to recall Krasovsky's result for the GUE from \cite{krasovsky} and a result of Claeys and Fahs \cite{cf} which we need to control the situation when the singularities are close to each other. Let us begin with Krasovsky's result \cite[Theorem 1]{krasovsky}.

\begin{theorem}[Krasovsky]\label{th:krasovsky}
Let $(x_j)_{j=1}^k$ be distinct points in $(-1,1)$, let $\beta_j>-1$, and let $H_N$ be a GUE matrix $($i.e. $V(x)=2x^2)$. Then as $N\to\infty$

\begin{align*}
\E &\prod_{j=1}^k |\det(H_N-x_j)|^{\beta_j}\\
&=\prod_{j=1}^k C(\beta_j)(1-x_j^2)^{\frac{\beta_j^2}{8}}\left(\frac{N}{2}\right)^{\frac{\beta_j^2}{4}}e^{(2x_j^2-1-2\log 2)\frac{\beta_j}{2}N}\prod_{i<j}|2(x_i-x_j)|^{-\frac{\beta_i\beta_j}{2}}\left(1+\mathcal{O}(\log N/N)\right)
\end{align*}

\noindent uniformly in compact subsets of $\lbrace (x_1,...,x_k)\in(-1,1)^k: x_i\neq x_j \ \mathrm{for} \ i\neq j\rbrace$. Here $C(\beta)=2^{\frac{\beta^2}{2}}\frac{G(1+\beta/2)^2}{G(1+\beta)}$, and $G$ is the Barnes $G$ function.
\end{theorem}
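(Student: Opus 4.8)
The plan is to prove this as a specialisation of the Riemann--Hilbert analysis developed in Sections \ref{sec:hankel}--\ref{sec:di}, but run with the Fisher--Hartwig exponents $\beta_j$ as the deformation parameters rather than $\Tree$ or $V$. First, by Lemma \ref{le:andr} (Andreief's identity), with $f(\lambda)=\prod_{j=1}^k|\lambda-x_j|^{\beta_j}$ and $V(x)=2x^2$,
\[
\E\prod_{j=1}^k|\det(H_N-x_j)|^{\beta_j}=\frac{N!}{Z_N(2x^2)}\,D_{N-1}(f;2x^2),
\]
so the task splits into two pieces: the asymptotics of the Gaussian partition function $Z_N(2x^2)$, and the asymptotics of the Hankel determinant $D_{N-1}(f;2x^2)$ with pure Fisher--Hartwig symbol $f$. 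The first is classical: after the substitution $\lambda_j\mapsto\lambda_j/(2\sqrt N)$, the integral defining $Z_N(2x^2)$ becomes Mehta's integral, giving the closed form $Z_N(2x^2)=(4N)^{-N^2/2}(2\pi)^{N/2}\prod_{j=1}^N j!=(4N)^{-N^2/2}(2\pi)^{N/2}G(N+2)$, whose large-$N$ expansion follows from the asymptotics of the Barnes $G$-function (or Euler--Maclaurin); this is the ``reference scale'' against which the powers of $N$ in the statement are measured.

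For $D_{N-1}(f;2x^2)$ I would use the orthogonal polynomial / Riemann--Hilbert route of Section \ref{sec:hankel}. By Lemma \ref{le:poly}, $D_{N-1}(f)=\prod_{j=0}^{N-1}\chi_j(f)^{-2}$, and the leading coefficients $\chi_j(f)$ are encoded in the solution $Y=Y_j$ of the Fokas--Its--Kitaev problem (Proposition \ref{prop:yrhp}). Rather than analyse $Y_j$ at every degree $j\le N-1$, I would introduce the one-parameter family $f_\gamma(\lambda)=\prod_{j=1}^k|\lambda-x_j|^{\gamma\beta_j}$, $\gamma\in[0,1]$, so that $f_0\equiv 1$ (pure Gaussian weight, $D_{N-1}(f_0)=Z_N(2x^2)/N!$) and $f_1=f$, together with a differential identity for $\partial_\gamma\log D_{N-1}(f_\gamma)$ of exactly the type of Lemma \ref{le:di} --- namely $\tfrac1{2\pi i}\int_\R[Y_{11}\partial_xY_{21}-Y_{21}\partial_xY_{11}]\,\partial_\gamma f_\gamma\,e^{-NV}\,dx$ with $j=N$ --- and integrate from $\gamma=0$ to $\gamma=1$. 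The needed input is the steepest-descent analysis of Section \ref{sec:rh} specialised to $V=2x^2$, $\Tree=0$ (so $d\equiv 2/\pi$): the transformations $Y\to T\to S\to R$, the Szeg\H o-type global parametrix $P^{(\infty)}$ of Definition \ref{def:global}, the confluent-hypergeometric/Bessel-type local parametrices $P^{(x_j)}$ built from the model problem of Definition \ref{def:psi} (Section \ref{sec:locals}), the Airy parametrices at $\pm1$ (Section \ref{sec:locale}), and the uniform bound $R=I+\mathcal O(N^{-1})$ from Theorem \ref{th:rasy}. Feeding the parametrices into the differential identity and integrating in $\gamma$ should then reproduce the stated formula term by term.

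Tracing through the contributions: the leading $e^{(2x_j^2-1-2\log 2)\beta_jN/2}$ comes from $N\sum_j\beta_j\int\log|\lambda-x_j|\,d\mu_V(\lambda)=N\sum_j\tfrac{\beta_j}{2}(V(x_j)+\ell_V)$ via the Euler--Lagrange equation \eqref{eq:el1} with $\ell_V=-1-2\log 2$; the factors $(1-x_j^2)^{\beta_j^2/8}$ and the pairwise interaction $\prod_{i<j}|2(x_i-x_j)|^{-\beta_i\beta_j/2}$ come from the explicit Fisher--Hartwig part $(z+r(z))^{-\mathcal A}\prod_j(z-x_j)^{\beta_j/2}$ of $\mathcal D_t$ in $P^{(\infty)}$, evaluated by Sokhotski--Plemelj exactly as in the $q_{FH}$ computation of Section \ref{sec:di}; and the remaining power $\left(\tfrac N2\right)^{\beta_j^2/4}$ together with the Barnes-$G$ constant $C(\beta_j)=2^{\beta_j^2/2}G(1+\beta_j/2)^2/G(1+\beta_j)$ come from the $O(1)$ contribution of each local parametrix $P^{(x_j)}$ near $x_j$, which produces a digamma-type integrand in $\gamma$ whose $\gamma$-integral is precisely $\log C(\beta_j)$. (As a check on --- and an alternative route to --- this constant, one can treat the reference case $k=1$, $x_1=0$ exactly, since for the weight $|x|^\beta e^{-2Nx^2}$ the orthonormal polynomials are rescaled generalized Hermite polynomials with norming constants given by explicit products of Gamma functions, and the Barnes-$G$ asymptotics of $\prod_j j!\,\Gamma(\,\cdot\,)$ recover $C(\beta)N^{\beta^2/4}$; the general configuration then follows from the RHP deformation, which introduces no new transcendental constant.) All error terms stay uniform on compact subsets of $\{x_i\neq x_j\}$ by fixing disks $U_{x_j}$ of a common radius $\delta$ with $|x_i-x_j|\ge 3\delta$ and $|x_i\pm1|\ge 3\delta$, as in Definition \ref{def:r}, so the local parametrices decouple.

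The main obstacle is the precise local analysis at the Fisher--Hartwig points: constructing $P^{(x_j)}$ from $\Psi$ of Definition \ref{def:psi}, verifying the matching condition to the required order, and --- the genuinely delicate bookkeeping --- extracting its $O(1)$ contribution to the differential identity and carrying out the $\gamma$-integration so that $C(\beta_j)$ and the exponent $\beta_j^2/4$ of $N$ emerge with the correct numerical constants, uniformly as the $x_j$ range over a compact set of mutually distinct interior points. Combining this with the elementary $Z_N$ asymptotics then yields the stated expansion.
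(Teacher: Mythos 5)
The paper does not actually prove this statement: Theorem \ref{th:krasovsky} is imported from Krasovsky \cite[Theorem 1]{krasovsky}, with only a remark that the present normalization follows by trivial rescaling and that the uniformity on compact subsets of $\{x_i\neq x_j\}$ can be read off from uniform bounds on the jump matrices. So there is no in-paper proof to compare against; what you have written is in substance a reconstruction of Krasovsky's original Riemann--Hilbert argument (deformation in the Fisher--Hartwig exponents, Bessel-type local parametrices at the $x_j$, Airy parametrices at $\pm1$), and your decomposition into the Mehta integral for $Z_N(2x^2)$ plus the Hankel determinant $D_{N-1}(f;2x^2)$, together with your accounting of which parametrix produces which factor in the asymptotic formula, is correct in outline.

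Two caveats on the substance. First, for your deformation $f_\gamma=\prod_j|\lambda-x_j|^{\gamma\beta_j}$ one has $\partial_\gamma\log f_\gamma=\sum_j\beta_j\log|\lambda-x_j|$, which is \emph{not} analytic at the $x_j$; the contour-deformation mechanism of Lemma \ref{le:di1global}, which is what allows the paper to evaluate the $\Tree$-differential identity using the global parametrix alone, therefore does not transfer verbatim. The identity must instead be evaluated with the local parametrices $P^{(x_j)}$ on small circles around each singularity, and that computation --- which produces the exponent $\beta_j^2/4$ of $N$, the factor $(1-x_j^2)^{\beta_j^2/8}$, and, after the $\gamma$-integration, the Barnes constant $C(\beta_j)$ --- is the entire analytic content of the theorem; your sketch names it as ``delicate bookkeeping'' but does not carry it out (nor does it track the second-order terms needed for the stated $\mathcal O(\log N/N)$ error rather than a bare $o(1)$). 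Second, your proposed anchor via the exactly solvable case $k=1$, $x_1=0$ (generalized Hermite weights) does pin down $C(\beta)$, but to transport that constant to general configurations you need precisely the integrated differential identity with controlled constant term, so the two halves of your argument are not independent checks. As a map of how the result is actually proved the proposal is faithful; as a proof it stops where the real work begins, which is presumably why the paper cites Krasovsky rather than reproving him.
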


We mention that Krasovsky's result is actually valid for complex $\beta_j$ with real part greater than $-1$, and he used a slightly different normalization, but obtaining this formulation follows after trivial scaling. Also his formulation of the result does not stress the uniformity, but it can easily be checked through uniform bounds on the jump matrices which are similar to the ones we have considered.

Combining this with Proposition \ref{prop:di2int} yields the following result.

\begin{proposition}\label{prop:1cpurefh}
Let $H_N$ be drawn from a one-cut regular ensemble with potential $V$ and support of the equilibrium measure normalized to $[-1,1]$. If $(x_j)_{j=1}^k$ are distinct points in $(-1,1)$ and $\beta_j\geq 0$ for all $j$, then

\begin{align*}
\E \prod_{j=1}^k |\det(H_N-x_j)|^{\beta_j}&=\prod_{j=1}^k C(\beta_j)\left(d(x_j)\frac{\pi}{2}\sqrt{1-x_j^2}\right)^{\frac{\beta_j^2}{4}}\left(\frac{N}{2}\right)^{\frac{\beta_j^2}{4}}e^{(V(x_j)+\ell_V)\frac{\beta_j}{2}N}\\
&\quad \times \prod_{i<j}|2(x_i-x_j)|^{-\frac{\beta_i\beta_j}{2}}\left(1+o(1))\right)
\end{align*}

\noindent uniformly in compact subsets of $\lbrace (x_1,...,x_k)\in(-1,1)^k:x_i\neq x_j \ \mathrm{for} \ i\neq j\rbrace$.

\end{proposition}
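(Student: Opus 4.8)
\textbf{Proof strategy for Proposition \ref{prop:1cpurefh}.} The plan is to combine the GUE asymptotics of Krasovsky (Theorem \ref{th:krasovsky}) with the second differential identity, integrated in Proposition \ref{prop:di2int}. First observe that in the notation of Section \ref{sec:hankel}, the quantity $\E \prod_{j=1}^k |\det(H_N-x_j)|^{\beta_j}$ with respect to the ensemble with potential $V$ equals $\frac{N!}{Z_N(V)} D_{N-1}(f_0;V)$, where $f_0(\lambda)=\prod_{j=1}^k|\lambda-x_j|^{\beta_j}$ is exactly the symbol \eqref{eq:ffh} with $\Tree=0$, by Lemma \ref{le:andr} and Lemma \ref{le:poly}. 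Therefore
\begin{equation*}
\frac{\E_V \prod_j |\det(H_N-x_j)|^{\beta_j}}{\E_{\mathrm{GUE}} \prod_j |\det(H_N-x_j)|^{\beta_j}} = \frac{Z_N(2x^2)}{Z_N(V)}\cdot \frac{D_{N-1}(f_0;V)}{D_{N-1}(f_0;V_0)},
\end{equation*}
where $V_0(x)=2x^2$. The ratio of Hankel determinants on the right is precisely the object whose large-$N$ asymptotics are given by Proposition \ref{prop:di2int}. So the first step is to substitute Krasovsky's formula for the GUE expectation and Proposition \ref{prop:di2int}'s asymptotics for $\log D_{N-1}(f_0;V_1)-\log D_{N-1}(f_0;V_0)$ into this identity.

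The second step is to deal with the partition-function ratio $Z_N(2x^2)/Z_N(V)$, which is the $k=0$ (no singularities) special case. Here I would apply exactly the same differential identity machinery with no Fisher--Hartwig singularities present: setting all $\beta_j=0$ in Lemma \ref{le:di2} and integrating as in Proposition \ref{prop:di2int}, one gets
\begin{equation*}
\log\frac{Z_N(V_0)}{Z_N(V)} = \log D_{N-1}(1;V)-\log D_{N-1}(1;V_0) + (\text{trivial }N!\text{-independent terms}),
\end{equation*}
and the right-hand side is given by the $k=0$ instance of Proposition \ref{prop:di2int}: namely the $N^2$ term $-\frac{N^2}{2}\int_{-1}^1(\tfrac2\pi+d(x))(V(x)-2x^2)\sqrt{1-x^2}\,dx$, no order-$N$ term (since $\mathcal A=0$), and a constant $-\tfrac{1}{24}\log(\tfrac{\pi^2}{4}d(1)d(-1))$ coming from the edge parametrices. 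Actually the cleanest route is to note that $Z_N(V)=N!\,D_{N-1}(1;V)$ and then the $Z_N$-ratio is subsumed into applying Proposition \ref{prop:di2int} once with the full symbol $f_0$: the $\frac{N!}{Z_N(V)}$ factors cancel between numerator and denominator when one writes $\E_V[\cdots]/\E_{V_0}[\cdots] = D_{N-1}(f_0;V)/D_{N-1}(f_0;V_0)$ directly, so in fact only a single application of Proposition \ref{prop:di2int} is needed, together with Krasovsky's formula.

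The third step is purely algebraic bookkeeping: collect the exponents. From Krasovsky we have the factors $C(\beta_j)(1-x_j^2)^{\beta_j^2/8}(N/2)^{\beta_j^2/4} e^{(2x_j^2-1-2\log2)\frac{\beta_j}{2}N}\prod_{i<j}|2(x_i-x_j)|^{-\beta_i\beta_j/2}$. Adding the exponents from Proposition \ref{prop:di2int}: the order-$N$ terms $N\sum_j\frac{\beta_j}{2}(V(x_j)-2x_j^2)$ combine with $e^{(2x_j^2-1-2\log2)\frac{\beta_j}2 N}$ to give $e^{(V(x_j)-1-2\log2)\frac{\beta_j}2 N} = e^{(V(x_j)+\ell_{V_0})\frac{\beta_j}2 N}$; one must then check that the remaining order-$N$ piece $-\mathcal A\frac{N}\pi\int_{-1}^1\frac{V(x)-2x^2}{\sqrt{1-x^2}}dx$ together with the $N^2$ term is exactly what is needed to convert $\ell_{V_0}$ into $\ell_V$ — this follows from the Euler--Lagrange characterization \eqref{eq:el1} of $\ell_V$ and the identities in Lemma \ref{le:prinint} (in particular $\ell_V-\ell_{V_0}$ is expressible via $\int (V-2x^2)$ against the relevant densities). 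Similarly the constants: $(1-x_j^2)^{\beta_j^2/8}$ from Krasovsky times $(\tfrac\pi2 d(x_j))^{\beta_j^2/4}$ from the $\beta_j^2/4\log(\tfrac\pi2 d(x_j))$ term in Proposition \ref{prop:di2int} assembles into $(d(x_j)\tfrac\pi2\sqrt{1-x_j^2})^{\beta_j^2/4}$ as claimed (using $(1-x_j^2)^{\beta_j^2/8}=((1-x_j^2)^{1/2})^{\beta_j^2/4}$), while the edge constant $-\tfrac1{24}\log(\tfrac{\pi^2}4 d(1)d(-1))$ is $N$-independent and $x$-independent and must cancel against the analogous constant hidden in $Z_N(V_0)/Z_N(V)$ — equivalently, it is absorbed by noting it appears identically in the $k=0$ and $k\ge1$ cases so does not contribute to the ratio we actually want. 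Finally the error: Krasovsky gives $O(\log N/N)$ and Proposition \ref{prop:di2int} gives $o(1)$ in the exponent, so the product is $(1+o(1))$, uniform on compacts away from coincidences and the edge, as required.

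\textbf{Main obstacle.} The genuinely delicate part is the third step's verification that the order-$N$ and order-$N^2$ contributions of Proposition \ref{prop:di2int} reorganize exactly into $e^{(V(x_j)+\ell_V)\frac{\beta_j}{2}N}$ with the \emph{correct} $\ell_V$ and into the $N$-independent prefactor, i.e. making sure no spurious $x_j$-dependent or $N$-dependent factor survives. This requires carefully invoking the Euler--Lagrange equation \eqref{eq:el1} — which relates $\int \log|x-y|\mu_V(dy)$, $V$, and $\ell_V$ — together with Lemma \ref{le:prinint}, and tracking that the $\beta_j$-linear-in-$N$ terms of Proposition \ref{prop:di2int} are precisely $\frac{\beta_j}{2}N$ times $(V(x_j)-2x_j^2)-\frac1\pi\int_{-1}^1\frac{V(x)-2x^2}{\sqrt{1-x^2}}dx$ (the $\mathcal A$ in the integral term distributes as $\sum_j\beta_j/2$), and that this last bracket equals $\ell_V-\ell_{V_0}$ evaluated appropriately. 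Everything else is routine: the structure of the argument is simply ``Krasovsky for GUE $+$ integrate the interpolation identity Proposition \ref{prop:di2int} $+$ collect exponents.''
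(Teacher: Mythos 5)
Your strategy is essentially the paper's own proof: write the expectation as a ratio of Hankel determinants, apply Proposition \ref{prop:di2int} to interpolate from the GUE potential to $V$, feed in Theorem \ref{th:krasovsky}, and use the Euler--Lagrange relation together with the arcsine-law logarithmic potential (equivalently, $\frac{1}{\pi}\int_{-1}^1\frac{V(x)}{\sqrt{1-x^2}}dx+\ell_V=-2\log 2$) to reassemble the $\beta_j$-linear exponent into $(V(x_j)+\ell_V)\frac{\beta_j}{2}N$. One correction, though: your ``cleanest route'' shortcut is wrong. Since $\E_V\prod_j|\det(H_N-x_j)|^{\beta_j}=D_{N-1}(f_0;V)/D_{N-1}(1;V)$, the identity $\E_V[\cdots]/\E_{V_0}[\cdots]=D_{N-1}(f_0;V)/D_{N-1}(f_0;V_0)$ is false --- the partition functions $Z_N(V)$ and $Z_N(V_0)$ do not cancel, and you genuinely need \emph{two} applications of Proposition \ref{prop:di2int} (one with symbol $f_0$, one with symbol $1$), exactly as in the four-term combination $\log D_{N-1}(f_0;V)-\log D_{N-1}(f_0;V_0)-\log D_{N-1}(1;V)+\log D_{N-1}(1;V_0)$ that the paper writes down and that you yourself implicitly rely on when you say the $N^2$ term and the edge constant $-\frac{1}{24}\log(\frac{\pi^2}{4}d(1)d(-1))$ cancel between the $k=0$ and $k\ge 1$ cases. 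Relatedly, the $N^2$ term plays no role in converting $\ell_{V_0}$ into $\ell_V$; it simply cancels in that four-term difference, while the conversion of $\ell_{V_0}$ to $\ell_V$ comes entirely from the order-$N$ terms via $\ell_V-\ell_{V_0}=-\frac{1}{\pi}\int_{-1}^1\frac{V(x)-2x^2}{\sqrt{1-x^2}}dx$. With those two points repaired, your bookkeeping in step three matches the paper's.
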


\begin{proof}
Let us write $\E_V$ for the expectation with respect to an ensemble with potential $V$. Note that from \eqref{eq:andr} setting $f=1$, we have

\begin{equation*}
\frac{Z_N(V)}{N!}=D_{N-1}(1;V)
\end{equation*}

\noindent so we see from Proposition \ref{prop:di2int} that for $f(\lambda)=\prod_{j=1}^k |\lambda-x_j|^{\beta_j}$ and $V_0(x)=2x^2$

\begin{align}\label{eq:deltaexp}
\log \E_V &\prod_{j=1}^k |\det(H_N-x_j)|^{\beta_j}-\log \E_{V_0} \prod_{j=1}^k |\det(H_N-x_j)|^{\beta_j}\\
\notag &=\log D_{N-1}(f;V)-\log D_{N-1}(f;V_0)-\log D_{N-1}(1;V)+\log D_{N-1}(1;V_0)\\
\notag &=-N\sum_{j=1}^k\frac{\beta_j}{2}\left[\frac{1}{\pi}\int_{-1}^1 \frac{V(x)-2x^2}{\sqrt{1-x^2}}dx-(V(x_j)-2x_j^2)\right]+\sum_{j=1}^k\frac{\beta_j^2}{4}\log \left(\frac{\pi}{2}d(x_j)\right)+o(1),
\end{align}

\noindent where we have the desired uniformity.

Let us now recall the logarithmic potential of the arcsine law (see e.g. \cite[Section 1.3: Example 3.5]{st}): $\frac{1}{\pi}\int_{-1}^1\log|x-y|/\sqrt{1-x^2}dx=-\log 2$ for all $y\in(-1,1)$. This along with \eqref{eq:el1} imply that

\begin{equation*}
\frac{1}{\pi}\int_{-1}^1 \frac{V(x)}{\sqrt{1-x^2}}dx+\ell_V=-2\log 2.
\end{equation*}

This in turn implies that

\begin{equation*}
(2x_j^2-1-2\log 2)-\frac{1}{\pi}\int_{-1}^1 \frac{V(x)-2x^2}{\sqrt{1-x^2}}dx+(V(x_j)-2x_j^2)=V(x_j)+\ell_V.
\end{equation*}

Combining this with Theorem \ref{th:krasovsky} and \eqref{eq:deltaexp} yields the claim.
\end{proof}

We now recall the result of Claeys and Fahs that we will need, namely \cite[Theorem 1.1]{cf}.

\begin{theorem}[Claeys and Fahs]\label{th:cf}
Let $V$ be one-cut regular and let the support of the associated equilibrium measure be $[a,b]$ with $a<0<b$. Let $\beta>0$, $u>0$, and $f_u(x)=|x^2-u|^\beta$. Then

\begin{align*}
\log D_{N-1}(f_u;V)&=\log D_{N-1}(f_0;V)+\int_0^{s_{N,u}}\frac{\sigma_\beta(s)-\beta^2}{s}ds+\frac{\beta}{2}s_{N,u}\\
&\quad +N\frac{\beta}{2}(V(\sqrt{u})+V(-\sqrt{u})-2V(0))+\mathcal{O}(\sqrt{u})+\mathcal{O}(N^{-1})
\end{align*}

\noindent uniformly as $u\to 0$ and $N\to\infty$. Here

\begin{equation*}
s_{N,u}=-2\pi i N\int_{-\sqrt{u}}^{\sqrt{u}}d(s)\sqrt{(s-a)(b-s)}ds
\end{equation*}

\noindent and $\sigma_\beta(s)$ is analytic on $-i\R_+$, independent of $V,$ $N$, and $u$ and satisfies:

\begin{equation}\label{eq:sigmaasy}
\sigma_\beta(s)=\begin{cases}
\beta^2+o(1), & s\to -i0^+\\
\frac{\beta^2}{2}-\frac{\beta}{2}s+\mathcal{O}(|s|^{-1}), & s\to -i\infty
\end{cases}
\end{equation}

\noindent Moreover, the integral involving $\sigma_\beta$ is taken along $-i\R_+$.

\end{theorem}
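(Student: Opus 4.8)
The plan is to prove Theorem~\ref{th:cf} by a Riemann--Hilbert analysis of the orthogonal polynomials with weight $f_u(x)e^{-NV(x)}=|x-\sqrt u|^{\beta}\,|x+\sqrt u|^{\beta}\,e^{-NV(x)}$, combined with a differential identity in the merging parameter $s_{N,u}$. First I would set up the RHP $Y$ for the orthonormal polynomials exactly as in Proposition~\ref{prop:yrhp} (with $k=2$, $x_1=-\sqrt u$, $x_2=\sqrt u$, $\beta_1=\beta_2=\beta$, $\mathcal T=0$) and run the steepest-descent chain $Y\mapsto T\mapsto S\mapsto R$ of Section~\ref{sec:rh}: the $g$-function normalization for the potential $V$ on $[a,b]$, opening of lenses, the global parametrix $P^{(\infty)}$ (now with total exponent $\mathcal A=\beta$ and no $\mathcal T_t$), and Airy parametrices at the soft edges $a$ and $b$. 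Everything is standard and unchanged away from the origin; the only new ingredient is the local parametrix near $x=0$, where the two Fisher--Hartwig singularities collide as $u\to0$.

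The heart of the matter is this merging local parametrix $P^{(0)}$ on a disk $U_0$ of fixed radius around $0$, valid uniformly in the regime $N\sqrt u\in[0,\infty)$. When $N\sqrt u$ is bounded the singularities $\pm\sqrt u$ sit at distance $\mathcal O(1/N)$ and cannot be decoupled: $P^{(0)}$ is built from a model RHP $\Psi(\zeta;s)$ whose jump contour is a cross, with the Fisher--Hartwig jump matrices of Definition~\ref{def:psi} (exponent $\beta$) placed at $\zeta=+1$ and at $\zeta=-1$, normalized to $I$ at infinity. This is a confluent hypergeometric / Painlev\'e~V model problem, solvable in closed form via confluent hypergeometric functions; its parameter is the rescaled merging variable, which one arranges to equal $s_{N,u}$ via a conformal coordinate $\zeta$ near $0$ proportional to $N$ times the integral of the equilibrium density. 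The function $\sigma_\beta(s)$ is then read off from the subleading $1/\zeta$ coefficient in the large-$\zeta$ expansion of $\Psi(\zeta;s)$ (appropriately normalized), and one checks: $\sigma_\beta$ is analytic on $-i\mathbb{R}_+$; $\sigma_\beta(s)\to\beta^2$ as $s\to-i0^+$, the confluent limit in which the weight degenerates to a single Fisher--Hartwig singularity $|x|^{2\beta}$ of exponent $2\beta$; and $\sigma_\beta(s)=\tfrac{\beta^2}{2}-\tfrac{\beta}{2}s+\mathcal O(|s|^{-1})$ as $s\to-i\infty$, the separated regime in which $P^{(0)}$ factorizes into two copies of the Bessel-type parametrix of Definition~\ref{def:psi} and the asymptotics must be consistent with recovering the Fisher--Hartwig constants of Proposition~\ref{prop:fh} (the $C(\beta)^2$, the $(N/2)^{\beta^2/2}$ factors and the interaction factor $|2\cdot2\sqrt u|^{-\beta^2/2}$). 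Given this, the matching $P^{(0)}(z)\,[P^{(\infty)}(z)]^{-1}=I+\mathcal O(N^{-1})$ on $\partial U_0$ holds uniformly in $u$, and together with the Airy matchings one gets $\|J_R-I\|_{L^p(\Gamma_\delta)}=\mathcal O(N^{-1})$, hence $R=I+\mathcal O(N^{-1})$ uniformly, as in Theorem~\ref{th:rasy}, with a second-order term analogous to Lemma~\ref{le:2ndorderlocals}.

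Next I would integrate a differential identity in $u$ (it is cleanest to use $s$, since $s_{N,u}$ is a smooth increasing function of $u$ vanishing at $u=0$). Differentiating $\log D_{N-1}(f_u;V)$ and inserting the RHP solution, as in the integration of \eqref{eq:di1} in Section~\ref{sec:di}, yields an identity of the schematic form $\partial_s\log D_{N-1}(f_u;V)=\dfrac{\sigma_\beta(s)-\beta^2}{s}+\dfrac{\beta}{2}+N\dfrac{\beta}{2}\partial_s\bigl[V(\sqrt u)+V(-\sqrt u)\bigr]+\mathcal O(N^{-1})+\mathcal O(\sqrt u)$, where the $-\beta^2/s$ is precisely the renormalization making the right-hand side integrable at $s=0$ (this uses $\sigma_\beta(0^+)=\beta^2$), the constant $\beta/2$ integrates to $\tfrac{\beta}{2}s_{N,u}$, and the potential term integrates to $N\tfrac{\beta}{2}(V(\sqrt u)+V(-\sqrt u)-2V(0))$. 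Integrating from $s=0$ (i.e.\ $u=0$) to $s=s_{N,u}$ along $-i\mathbb{R}_+$ and adding the base value $\log D_{N-1}(f_0;V)$ produces the claimed formula. The errors are controlled by $R=I+\mathcal O(N^{-1})$ away from $0$, by the $\mathcal O(N^{-1})$ matching of $P^{(0)}$ on $\partial U_0$, and by Taylor-expanding $V$ and the density $d$ near the origin (which is where the $\mathcal O(\sqrt u)$ term originates), uniformly in the joint limit $N\to\infty$, $u\to0$.

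The main obstacle is the construction and uniform analysis of the merging parametrix $P^{(0)}$ and of the associated special function $\sigma_\beta$: one must show the confluent hypergeometric / Painlev\'e~V model problem is solvable and its solution controlled uniformly for $s$ over all of $-i[0,\infty)$, interpolating between the single-singularity behaviour at $s=0$ and the two well-separated Bessel parametrices as $|s|\to\infty$, and one must pin down the precise $s\to-i0^+$ and $s\to-i\infty$ asymptotics of $\sigma_\beta$ — the former to renormalize the differential identity and make the $s$-integral convergent, the latter to match Proposition~\ref{prop:fh}. A secondary difficulty is arranging the $u$-differential identity so that the only non-explicit contribution is the single integral of $\tfrac{\sigma_\beta(s)-\beta^2}{s}$, and tracking all error terms so that they collapse to $\mathcal O(\sqrt u)+\mathcal O(N^{-1})$ uniformly.
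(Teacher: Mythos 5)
This statement is not proved in the paper at all: it is \cite[Theorem 1.1]{cf}, imported verbatim (the paper only verifies, in Appendix \ref{app:cfcheck}, that the error terms remain uniform when $V$ is replaced by the shifted potential $V_{x,y}$). So the relevant comparison is with the actual proof of Claeys and Fahs, and your outline is a faithful reconstruction of it: a differential identity in the merging parameter (they differentiate in $u$ rather than $s$, which is cosmetic), the standard $Y\mapsto T\mapsto S\mapsto R$ chain with Airy parametrices at the soft edges, and a merging local parametrix at the origin built from the model RHP $\Psi(\zeta;s)$ of Claeys--Krasovsky \cite{ck}, whose logarithmic derivative produces $\sigma_\beta$; the renormalization by $\beta^2/s$ at $s\to -i0^+$ and the matching with the separated Fisher--Hartwig constants as $s\to -i\infty$ are exactly as you describe, and the latter is what the present paper exploits in the proof of Proposition \ref{prop:2mom} via \cite[equation (1.26)]{ck}.

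Two caveats. First, the merging model problem is \emph{not} ``solvable in closed form via confluent hypergeometric functions'': that is true only of the non-merging Fisher--Hartwig parametrix and of the two degenerate limits $s\to -i0^+$ and $s\to -i\infty$. For intermediate $s$ the problem is a genuine Painlev\'e V problem; existence of $\Psi(\cdot\,;s)$ for all $s\in -i(0,\infty)$ is established in \cite{ck} by a vanishing-lemma argument, and $\sigma_\beta$ satisfies the Jimbo--Miwa--Okamoto $\sigma$-form of Painlev\'e V rather than being an elementary function. Second, precisely the ingredients you flag as ``the main obstacle'' --- uniform control of $\Psi(\zeta;s)$ over all of $-i[0,\infty)$ and the two asymptotic regimes of $\sigma_\beta$ --- carry essentially all of the difficulty and occupy most of \cite{ck}; as written, your argument uses them as black boxes, so it is an accurate proof strategy but not a self-contained proof. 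Everything else (the form of the differential identity, the origin of the $\mathcal{O}(\sqrt u)$ error from Taylor expanding $V$ and $d$ at $0$, and the uniformity in the joint limit) is consistent with the published argument.
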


Much more is in fact known about $\sigma_\beta$. For example, it is known to satisfy a Painlev\'e V equation. A generalization of it was studied extensively in \cite{ck}. Theorem \ref{th:cf} and Proposition \ref{prop:1cpurefh} let us prove the convergence of $\E[\mu_N(f)^2]$ -- the argument is similar to analogous ones in \cite{cf,ck}.

\begin{proposition}\label{prop:2mom}
Let $\varphi:(-1,1)\to [0,\infty)$ be continuous and have compact support. Moreover, let $\beta\in(0,\sqrt{2})$. Then

\begin{equation*}
\lim_{N\to\infty}\E [\mu_{N,\beta}(\varphi)^2]=\int_{-1}^1\int_{-1}^1 \varphi(x)\varphi(y)(2|x-y|)^{-\frac{\beta^2}{2}}dxdy
\end{equation*}
\end{proposition}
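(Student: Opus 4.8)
The plan is to expand the square and write $\E[\mu_{N,\beta}(\varphi)^2]$ as a double integral
\[
\E[\mu_{N,\beta}(\varphi)^2]=\iint_{(-1,1)^2}\varphi(x)\varphi(y)\,\frac{\E|P_N(x)|^\beta|P_N(y)|^\beta}{\E|P_N(x)|^\beta\,\E|P_N(y)|^\beta}\,dx\,dy,
\]
and then to identify the pointwise limit of the integrand and produce a dominating function so that dominated convergence applies. The natural change of variables is to set $x=\frac{c+r}{?}$ — more precisely, for the coincidence region one writes $x=m+\tfrac{\rho}{2}$, $y=m-\tfrac{\rho}{2}$ with $m$ the midpoint and $\rho=x-y$, and treats the regime $|\rho|$ bounded below (away from the diagonal) separately from $|\rho|\to 0$. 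The key inputs are Proposition \ref{prop:1cpurefh} (for $x,y$ staying a fixed distance apart) and Theorem \ref{th:cf} of Claeys and Fahs (to handle $x,y$ merging), after an affine change of variables mapping the support to a symmetric interval so that the two singularities $x,y$ become $\pm\sqrt{u}$ with $u\to 0$.

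First I would split the domain: fix $\epsilon>0$ and a small $\eta>0$; write the double integral as the part with $|x-y|\geq\eta$ plus the part with $|x-y|<\eta$ (both restricted to $\mathrm{supp}\,\varphi\subset(-1+\epsilon,1-\epsilon)$). On $\{|x-y|\geq\eta\}$, Proposition \ref{prop:1cpurefh} with $k=2$, $\beta_1=\beta_2=\beta$ gives, uniformly,
\[
\frac{\E|P_N(x)|^\beta|P_N(y)|^\beta}{\E|P_N(x)|^\beta\,\E|P_N(y)|^\beta}=|2(x-y)|^{-\beta^2/2}(1+o(1)),
\]
because all the $d(x_j)$, $(1-x_j^2)$, $N$, and $e^{(V(x_j)+\ell_V)\frac\beta2 N}$ factors cancel between numerator and denominator, leaving only the interaction term $|2(x-y)|^{-\beta_1\beta_2/2}$. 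Since $\varphi$ is bounded with compact support and $|2(x-y)|^{-\beta^2/2}$ is bounded on $\{|x-y|\geq\eta\}$, this part converges to $\iint_{|x-y|\geq\eta}\varphi(x)\varphi(y)|2(x-y)|^{-\beta^2/2}dx\,dy$ by bounded convergence. On $\{|x-y|<\eta\}$ one uses Theorem \ref{th:cf}: after translating so the midpoint of $x,y$ sits at $0$ and rescaling, the ratio $D_{N-1}(f_u;V)/D_{N-1}(f_0;V)$ is controlled, and one extracts the leading behaviour $\bigl(\text{const}\cdot N|x-y|\bigr)^{-\beta^2/2}$ from $\int_0^{s_{N,u}}\frac{\sigma_\beta(s)-\beta^2}{s}ds+\frac\beta2 s_{N,u}$ using the asymptotics \eqref{eq:sigmaasy} of $\sigma_\beta$ (the $s\to-i\infty$ expansion produces exactly the logarithmic term $-\tfrac{\beta^2}{2}\log|s_{N,u}|$, hence the power $-\beta^2/2$), while the $N$-dependent prefactors again cancel against $\E|P_N(x)|^\beta\E|P_N(y)|^\beta$.

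The main obstacle is the uniform integrable domination near the diagonal: one needs a bound of the form
\[
\frac{\E|P_N(x)|^\beta|P_N(y)|^\beta}{\E|P_N(x)|^\beta\,\E|P_N(y)|^\beta}\le C\,|x-y|^{-\beta^2/2}
\]
valid for all large $N$ and all $x,y$ with $|x-y|$ small, since $\beta<\sqrt2$ guarantees $\beta^2/2<1$ and hence $|x-y|^{-\beta^2/2}$ is integrable over $(-1,1)^2$ — this is precisely where the restriction $\beta\in(0,\sqrt2)$ (the $L^2$ phase) is used. Such a bound follows from Theorem \ref{th:cf} together with monotonicity/boundedness properties of the Painlev\'e function $\sigma_\beta$ along $-i\R_+$ (in particular $\int_0^{s}\frac{\sigma_\beta-\beta^2}{s'}ds'$ grows like $-\tfrac{\beta^2}{2}\log|s|$ with a uniformly bounded remainder for $|s|$ large, and is bounded for $|s|$ small), controlling the ratio uniformly in $N$ and in the merging parameter $u$; one must be careful that the $\mathcal{O}(\sqrt u)$ and $\mathcal{O}(N^{-1})$ error terms in Theorem \ref{th:cf} are genuinely uniform in the relevant range, which is exactly the content of that theorem. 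Once domination is established, dominated convergence lets $\eta\to0$ and combines the two regions into
\[
\lim_{N\to\infty}\E[\mu_{N,\beta}(\varphi)^2]=\iint_{(-1,1)^2}\varphi(x)\varphi(y)\,(2|x-y|)^{-\beta^2/2}\,dx\,dy,
\]
which is the claim.
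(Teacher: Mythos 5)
Your proposal is correct in substance and rests on the same two pillars as the paper's proof (Proposition \ref{prop:1cpurefh} for separated singularities, Theorem \ref{th:cf} for merging ones), but it organizes the near-diagonal region differently. The paper splits into \emph{three} regions, $|x-y|\ge\epsilon$, $2N^{-\alpha}\le|x-y|<\epsilon$ and $|x-y|\le 2N^{-\alpha}$ with $\alpha\in(\beta^2/2,1)$: the innermost region is disposed of by a soft Cauchy--Schwarz bound, ratio $\le B(\beta)N^{\beta^2/2}$, multiplied by the vanishing area $N^{-\alpha}$, so Claeys--Fahs is only ever invoked where $s_{N,u}\to-i\infty$ and the clean $\sigma_\beta$ asymptotics apply; the intermediate region then contributes at most $\int_{|x-y|<\epsilon}\varphi(x)\varphi(y)(2|x-y|)^{-\beta^2/2}$ in the limsup, which is killed by $\epsilon\to0$, with no need for a single dominating function valid down to the diagonal. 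Your route instead demands the uniform two-sided bound $C|x-y|^{-\beta^2/2}$ for \emph{all} small $|x-y|$, which forces you to control $\int_0^{s_{N,u}}(\sigma_\beta(s)-\beta^2)s^{-1}ds+\tfrac{\beta}{2}s_{N,u}$ also in the regime where $s_{N,u}$ is bounded or small; this is true (by analyticity of $\sigma_\beta$ on $-i\R_+$ and its behaviour at both endpoints, cf. \cite[eq.\ (1.26)]{ck}) but is an extra verification the paper's Cauchy--Schwarz trick avoids. Two points you should make explicit to close the argument: (i) Theorem \ref{th:cf} is stated for a \emph{fixed} one-cut regular $V$, whereas the application requires it for the family $V_{x,y}(\lambda)=V(\lambda+(x+y)/2)$ uniformly in the midpoint; the paper devotes Appendix \ref{app:cfcheck} to checking this uniformity, and your appeal to "the content of that theorem" does not cover it as stated. (ii) The additive errors in Theorem \ref{th:cf} include the term $N\tfrac{\beta}{2}(V_{x,y}(\sqrt u)+V_{x,y}(-\sqrt u)-2V_{x,y}(0))=\mathcal{O}(Nu)$, which must be matched against the corresponding one-point factors $e^{(V(x_j)+\ell_V)\frac{\beta}{2}N}$ in the denominator before one can claim a bounded ratio; this cancellation is routine but is part of why the paper carries the comparison through $\E_V|\det(H_N-\frac{x+y}{2})|^{2\beta}$ rather than bounding numerator and denominator separately.
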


\begin{proof}
This is very similar to the proof of \cite[Corollary 1.11]{cf} where a more general statement was proven for the GUE. Let us fix some small $\epsilon>0$, $\alpha\in(\beta^2/2,1)$, and write the relevant moment in the following way:

\begin{align*}
\E [\mu_N(\varphi)^2]&=\left[\int_{|x-y|\geq \epsilon}+\int_{2N^{-\alpha}\leq |x-y|<\epsilon}+\int_{|x-y|\leq 2N^{-\alpha}}\right]\varphi(x)\varphi(y)\\
& \qquad \times \frac{\E\left[|\det (H_N-x)|^\beta|\det(H_N-y)|^\beta\right]}{\E|\det(H_N-x)|^\beta\E |\det(H_N-y)|^\beta }dxdy\\
&=:A_{N,1}(\epsilon)+A_{N,2}(\epsilon)+A_{N,3}.
\end{align*}

It follows immediately from Proposition \ref{prop:1cpurefh} that if there is some $\epsilon>0$ such that $|x-y|\geq \epsilon$ and $x,y\in(-1+\epsilon,1-\epsilon)$ then uniformly in such $x,y$

\begin{equation*}
\frac{\E\left[|\det (H_N-x)|^\beta|\det(H_N-y)|^\beta\right]}{\E|\det(H_N-x)|^\beta\E |\det(H_N-y)|^\beta }=\frac{1}{(2|x-y|)^{\frac{\beta^2}{2}}}(1+o(1)).
\end{equation*}

As $\varphi$ has compact support in $(-1,1)$, this is precisely the situation for the integral in $A_{N,1}(\epsilon)$. We conclude that

\begin{equation*}
\lim_{N\to\infty}A_{N,1}(\epsilon)=\int_{|x-y|\geq \epsilon}\varphi(x)\varphi(y)\frac{1}{(2|x-y|)^{\frac{\beta^2}{2}}}dxdy\stackrel{\epsilon\to 0^+}{\longrightarrow}\int_{-1}^1\int_{-1}^1\varphi(x)\varphi(y)\frac{1}{(2|x-y|)^{\frac{\beta^2}{2}}}dxdy.
\end{equation*}

Let us now consider $A_{N,3}$. Here we find by Cauchy--Schwarz and Proposition \ref{prop:1cpurefh} that there exists some finite $B(\beta)$ (uniform in the relevant $x,y$) such that

\begin{align*}
\frac{\E_V[|\det(H_N-x)|^\beta |\det(H_N-y)|^\beta]}{\E_V[|\det(H_N-x)|^\beta]\E_V[|\det(H_N-y)|^\beta]} &\leq \frac{\sqrt{\E_V[|\det(H_N-x)|^{2\beta}]\E_V[|\det(H_N-y)|^{2\beta}]}}{\E_V[|\det(H_N-x)|^\beta]\E_V[|\det(H_N-y)|^\beta]}\\
&\leq B(\beta)N^{\beta^2/2}
\end{align*}

\noindent so we see that as $N\to \infty$

\begin{equation*}
A_{N,3}=\int_{|x-y|\leq 2N^{-\alpha}}\varphi(x)\varphi(y)\frac{\E_V[|\det(H_N-x)|^\beta |\det(H_N-y)|^\beta]}{\E_V[|\det(H_N-x)|^\beta]\E_V[|\det(H_N-y)|^\beta]} dxdy\lesssim N^{-\alpha+\frac{\beta^2}{2}}\to 0
\end{equation*}

\noindent since we chose $\alpha>\beta^2/2$.

Thus to conclude the proof, it's enough to show that

\begin{equation*}
\lim_{\epsilon\to 0^+}\limsup_{N\to\infty} A_{N,2}(\epsilon)=0.
\end{equation*}

Let us begin doing this by noting that if we write $u=\frac{(x-y)^2}{4}$ and $V_{x,y}(\lambda)=V(\lambda+(x+y)/2)$, then in the notation of Theorem \ref{th:cf}

\begin{align*}
\E_V \left[|\det (H_N-x)|^\beta |\det(H_N-y)|^\beta\right]=\frac{D_{N-1}(f_u;V_{x,y})}{D_{N-1}(1;V)}.
\end{align*}

This follows from \eqref{eq:evlaw} through the change of variables $\lambda_i=\mu_i+\frac{x+y}{2}$.
 The goal is to make use of Theorem \ref{th:cf} to estimate $D_{N-1}(f_u;V_{x,y})$. There are several issues we need to check to justify this. First of all, we need $V_{x,y}$ to be one-cut regular and the interior of the support of its equilibrium measure to contain the point $0$. This is simple to justify as one can check from the Euler-Lagrange equations that the equilibrium measure associated to $V_{x,y}$ is simply $d(u+\frac{x+y}{2})\sqrt{1-(u+\frac{x+y}{2})^2}du$ and its support is $[-1-\frac{x+y}{2},1-\frac{x+y}{2}]$. The remaining conditions for one-cut regularity are easy to check with this representation.

It is less obvious that we can use Theorem \ref{th:cf} to study the asymptotics of $D_{N-1}(f_u;V_{x,y})$ as now $V_{x,y}$ depends on $x$ and $y$ and we would need the errors in the theorem to be uniform in $V$ as well. As mentioned in \cite{cf} for the GUE, for $x,y\in(-1+\epsilon,1-\epsilon)$, this can be checked by going through the relevant estimates in the proof. This is true also for general one-cut regular ensembles. As checking this may be non-trivial for a reader with little background in Riemann-Hilbert problems, we outline how to do this in Appendix \ref{app:cfcheck}.

We may therefore use Theorem \ref{th:cf}, and so we have

\begin{align*}
\log & \ \E_V[|\det(H_N-x)|^\beta |\det(H_N-y)|^\beta]\\
&=\log D_{N-1}(f_0;V_{x,y})-\log D_{N-1}(1;V)+\int_0^{s_{N,u}}\frac{\sigma_\beta(s)-\beta^2}{s}ds+\frac{\beta}{2}s_{N,u}\\
&\quad +N\frac{\beta}{2}(V_{x,y}(\sqrt{u})+V_{x,y}(-\sqrt{u})-2V_{x,y}(0))+\mathcal{O}(\sqrt{u})+\mathcal{O}(N^{-1}),
\end{align*}

\noindent where the error estimates are uniform in $|x-y|<\epsilon$ and $x,y\in(-1+\epsilon,1-\epsilon)$. Note that now

\begin{align*}
s_{N,u}&=-2\pi iN\int_{-\sqrt{u}}^{\sqrt{u}}d_{x,y}(s)\sqrt{1-\left(s+\frac{x+y}{2}\right)^2}ds\\
&=-4\pi i N\sqrt{u} d\left(\frac{x+y}{2}\right)\sqrt{1-\left(\frac{x+y}{2}\right)^2}+\mathcal{O}(N u)
\end{align*}

\noindent again uniformly in the relevant values of $x$ and $y$.

Recall that we're considering $u$ such that $\sqrt{u}<2\epsilon$ but $\sqrt{u}>N^{-\alpha}$ with $\frac{\beta^2}{2}<\alpha<1$. We then have $s_{N,u}\to -i\infty$ uniformly in the relevant $x,y$ and using \cite[equation (1.26)]{ck} one has

\begin{equation*}
\lim_{N\to\infty}\left[\int_0^{s_{N,u}}\frac{\sigma_{\beta}(s)-\beta^2}{s}ds+\frac{\beta}{2}s_{N,u}+\frac{\beta^{2}}{2}\log |s_{N,u}|\right]=\log \frac{G(1+\frac{\beta}{2})^{4}G(1+2\beta)}{G(1+\beta)^{4}}
\end{equation*}

\noindent uniformly for $x,y\in(-1+\epsilon,1-\epsilon)$ and $2N^{-\alpha}<|x-y|<\epsilon$.

On the other hand, reversing our mapping from $V$ to $V_{x,y}$, we see that

\begin{equation*}
\log D_{N-1}(f_0;V_{x,y})-\log D_{N-1}(1;V)=\log \E_V\left|\det\left(H_N-\frac{x+y}{2}\right)\right|^{2\beta}.
\end{equation*}

Thus we see that uniformly for $x,y\in(-1+\epsilon,1-\epsilon)$ and $2N^{-\alpha}<|x-y|<\epsilon$

\begin{align*}
\log & \ \E_V[|\det(H_N-x)|^\beta |\det(H_N-y)|^\beta]\\
&=\log \E_V\left|\det\left(H_N-\frac{x+y}{2}\right)\right|^{2\beta}+\log \frac{G(1+\frac{\beta}{2})^{4}G(1+2\beta)}{G(1+\beta)^{4}}\\
&\quad -\frac{\beta^2}{2}\log \left[4\pi N\sqrt{u}d\left(\frac{x+y}{2}\right)\sqrt{1-\left(\frac{x+y}{2}\right)^2}\right]+N\frac{\beta}{2} (V_{x,y}(\sqrt{u})+V_{x,y}(-\sqrt{u})-2V_{x,y}(0))\\
&\quad +\mathcal{O}(\sqrt{u})+o(1),
\end{align*}

\noindent where $o(1)$ means something that tends to zero as $N\to\infty$. Using these estimates, we can write for such $x,y$

\begin{align*}
&\frac{\E_V[|\det(H_N-x)|^\beta |\det(H_N-y)|^\beta]}{\E_V[|\det(H_N-x)|^\beta]\E_V[|\det(H_N-y)|^\beta]}\\
&\quad =\frac{G(1+\frac{\beta}{2})^{4}G(1+2\beta)}{G(1+\beta)^{4}}\frac{\E_V\left|\det\left(H_N-\frac{x+y}{2}\right)\right|^{2\beta}}{\E_V[|\det(H_N-x)|^\beta]\E_V[|\det(H_N-y)|^\beta]}\\
&\quad\quad \times N^{-\frac{\beta^2}{2}}(2|x-y|)^{-\frac{\beta^2}{2}}\left[\pi d\left(\frac{x+y}{2}\right)\sqrt{1-\left(\frac{x+y}{2}\right)^2}\right]^{-\frac{\beta^2}{2}}\\
&\quad \quad \times e^{\frac{N\beta}{2}\left(V_{x,y}(\sqrt{u})+V_{x,y}(-\sqrt{u})-2V_{x,y}(0)\right)} e^{\mathcal{O}(\sqrt{u})}(1+o(1))
\end{align*}

\noindent uniformly in $x,y\in(-1+\epsilon,1-\epsilon)$ and $2N^{-\alpha}<|x-y|<\epsilon$. Plugging in Proposition \ref{prop:1cpurefh}, we see that this becomes

\begin{align*}
&\frac{\E_V[|\det(H_N-x)|^\beta |\det(H_N-y)|^\beta]}{\E_V[|\det(H_N-x)|^\beta]\E_V[|\det(H_N-y)|^\beta]} \\
&\quad=\frac{\left(d\left(\frac{x+y}{2}\right)\sqrt{1-\left(\frac{x+y}{2}\right)^2}\right)^{\beta^2/2}}{\left(d(x)\sqrt{1-x^2}d(y)\sqrt{1-y^2}d(y)\right)^{\frac{\beta^2}{4}}}(2|x-y|)^{-\frac{\beta^2}{2}}(1+o(1))(1+\mathcal{O}(\sqrt{u}))\\
&\quad =(2|x-y|)^{-\frac{\beta^2}{2}}(1+o(1))(1+\mathcal{O}(\sqrt{u})).
\end{align*}

We conclude that

\begin{equation*}
\lim_{\epsilon\to 0^+}\limsup_{N\to\infty}\int_{2N^{-\alpha}<|x-y|<\epsilon}\varphi(x)\varphi(y)\frac{\E_V[|\det(H_N-x)|^\beta |\det(H_N-y)|^\beta]}{\E_V[|\det(H_N-x)|^\beta]\E_V[|\det(H_N-y)|^\beta]} dxdy=0,
\end{equation*}

\noindent which was the missing part of the proof.
\end{proof}

Next we need to study the cross term $\E \mu_{N,\beta}(\varphi)\widetilde{\mu}_{N,\beta}^{(M)}(\varphi)$ along with the fully truncated term $\E[\widetilde{\mu}_{N,\beta}^{(M)}(\varphi)^2]$. For this, we need Proposition \ref{prop:fh}, so let us finish the proof of it.

\begin{proof}[Proof of Proposition \ref{prop:fh}]
We have now

\begin{equation*}
\E e^{\sum_{j=1}^N\Tree (\lambda_j)}\prod_{j=1}^k |\det (H_N-x_j)|^{\beta_j}=\frac{D_{N-1}(f;V)}{D_{N-1}(1;V)},
\end{equation*}

\noindent where $f(\lambda)=f_1(\lambda)=e^{\Tree(\lambda)}\prod_{j=1}^k|\lambda-x_j|^{\beta_j}$. Since we know the asymptotics of this for $\Tree=0$, we can apply Proposition \ref{prop:di1int} to get the relevant asymptotics for $\Tree\neq 0$:

\begin{align*}
\frac{D_{N-1}(f_1;V)}{D_{N-1}(1;V)}&=\frac{D_{N-1}(f_0;V)}{D_{N-1}(1;V)}e^{N\int_{-1}^1 \Tree(x)d(x)\sqrt{1-x^2}dx+\sum_{j=1}^k \frac{\beta_j}{2}\left[\int_{-1}^1 \frac{\Tree(x)}{\pi \sqrt{1-x^2}}dx-\Tree(x_j)\right]}\\
&\quad \times e^{\frac{1}{4\pi^2}\int_{-1}^1 dy\frac{\Tree(y)}{\sqrt{1-y^2}}P.V.\int_{-1}^1 \frac{\Tree'(x)\sqrt{1-x^2}}{y-x}dx}(1+o(1))
\end{align*}

\noindent uniformly in everything relevant. Applying Proposition \ref{prop:1cpurefh} to this yields the claim.
\end{proof}

We now apply this to understanding the remaining terms.

\begin{proposition}\label{prop:mixedmom}
Let $\beta\in(0,\sqrt{2})$ and $\varphi:(-1,1)\to [0,\infty)$ be continuous with compact support. Then for fixed $M\in \Z_+$

\begin{equation*}
\lim_{N\to\infty} \E [\mu_{N,\beta}(\varphi)\widetilde{\mu}_{N,\beta}^{(M)}(\varphi)]=\lim_{N\to\infty}\E[\widetilde{\mu}_{N,\beta}^{(M)}(\varphi)^2]=\int_{-1}^1 \int_{-1}^1 \varphi(x)\varphi(y)e^{\beta^2\sum_{k=1}^M \frac{1}{k}T_k(x)T_k(y)}dxdy.
\end{equation*}
\end{proposition}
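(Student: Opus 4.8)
The plan is to reduce both expectations to instances of the Fisher--Hartwig asymptotics of Proposition \ref{prop:fh}. Recall that, as derived in the outline of the proof, each of the relevant expectations can be written as $\E\prod_{j=1}^N h(\lambda_j)$ for a suitable $h$; concretely,
\[
\E[\mu_{N,\beta}(\varphi)\widetilde{\mu}_{N,\beta}^{(M)}(\varphi)] = \iint \varphi(x)\varphi(y)\,\frac{\E\big(e^{\beta X_N(x)+\beta\widetilde X_{N,M}(y)}\big)}{\E\big(e^{\beta X_N(x)}\big)\,\E\big(e^{\beta\widetilde X_{N,M}(y)}\big)}\,dx\,dy,
\]
and similarly for $\E[\widetilde{\mu}_{N,\beta}^{(M)}(\varphi)^2]$ with $X_N(x)$ replaced by $\widetilde X_{N,M}(x)$. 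Here $e^{\beta X_N(x)+\beta\widetilde X_{N,M}(y)} = \prod_{j=1}^N |\lambda_j-x|^\beta e^{\mathcal{T}(\lambda_j)}$ with $\mathcal{T}(\lambda) = \mathcal{T}(\lambda;y) = -\beta\sum_{k=1}^M \frac{2}{k}\widetilde T_k(\lambda)T_k(y)$, which in a neighborhood of $[-1,1]$ equals $-\beta\sum_{k=1}^M \frac2k T_k(y) T_k(\lambda)$, a genuine polynomial in $\lambda$ whose coefficients vary over a bounded set as $y$ ranges over the compact support of $\varphi$. Thus Proposition \ref{prop:fh} applies with $k=1$ (one singularity at $x$), and also with $k=0$ (no singularity, just the $e^{\mathrm{Tr}\,\mathcal{T}}$ factor) for the denominator term $\E e^{\beta\widetilde X_{N,M}(y)}$, and with $k=1$, $\mathcal{T}=0$ for $\E e^{\beta X_N(x)}$. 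The uniformity statement in Proposition \ref{prop:fh} — uniform on compacts in the singularity location and uniform in the polynomial coefficients on bounded sets — is exactly what is needed to take the ratio and conclude a limit uniform for $x,y$ in the support of $\varphi$ with $|x-y|$ bounded below.

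The first step is to write out the five expectations appearing in the ratios using Proposition \ref{prop:fh}, being careful about which $\mathcal{T}$ and which $\beta_j$ appear in each. For the cross term: the numerator $\E(e^{\beta X_N(x)+\beta\widetilde X_{N,M}(y)})$ has one FH singularity of strength $\beta$ at $x$ and the function $\mathcal{T}(\cdot;y)$; $\E(e^{\beta X_N(x)})$ has one singularity of strength $\beta$ at $x$ and $\mathcal{T}=0$; $\E(e^{\beta\widetilde X_{N,M}(y)})$ has no singularity and the function $\mathcal{T}(\cdot;y)$. The plan is then to substitute the asymptotic formula \eqref{eq:fh} into the ratio and cancel. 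The leading factors $C(\beta)$, the $(d(x)\frac\pi2\sqrt{1-x^2})^{\beta^2/4}$, the $(N/2)^{\beta^2/4}$, and the exponential $e^{(V(x)+\ell_V)\frac\beta2 N}$ all cancel between numerator and the $\E(e^{\beta X_N(x)})$ denominator since they depend only on $x$ and $\beta$, not on $\mathcal{T}$. What survives is precisely the $\mathcal{T}$-dependent part of \eqref{eq:fh}: the terms $e^{N\int_{-1}^1 \mathcal{T}(x)d(x)\sqrt{1-x^2}dx}$, $e^{\sum_j\frac{\beta_j}{2}[\frac1\pi\int \frac{\mathcal{T}(x)}{\sqrt{1-x^2}}dx - \mathcal{T}(x_j)]}$, and the quadratic-in-$\mathcal{T}$ double-integral term. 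In the numerator these are evaluated with the one singularity at $x$; in the $\E(e^{\beta\widetilde X_{N,M}(y)})$ denominator with no singularity. Hence in the ratio the $N\int \mathcal{T}\,d\mu_V$ terms and the quadratic double-integral terms (which do not involve the singularities) cancel between numerator and this denominator, and one is left with exactly
\[
\exp\Big(\tfrac\beta2\big[\tfrac1\pi\textstyle\int_{-1}^1 \tfrac{\mathcal{T}(x;y)}{\sqrt{1-x^2}}dx - \mathcal{T}(x;y)\big]\Big)(1+o(1)).
\]
Now $\frac1\pi\int_{-1}^1 \frac{T_k(x)}{\sqrt{1-x^2}}dx = 0$ for $k\geq 1$, so the integral term vanishes, and $-\mathcal{T}(x;y) = \beta\sum_{k=1}^M\frac2k T_k(x)T_k(y)$; multiplying by $\frac\beta2$ gives the exponent $\beta^2\sum_{k=1}^M\frac1k T_k(x)T_k(y)$. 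This is exactly the claimed integrand.

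For the fully truncated term $\E[\widetilde{\mu}_{N,\beta}^{(M)}(\varphi)^2]$ the computation is structurally identical but now with \emph{no} singularities at all: the numerator uses the function $\mathcal{T}(\lambda;x)+\mathcal{T}(\lambda;y)$, and the two denominator factors use $\mathcal{T}(\cdot;x)$ and $\mathcal{T}(\cdot;y)$ respectively, all with $k=0$. The linear-in-$\mathcal{T}$ and the $N\int\mathcal{T}\,d\mu_V$ pieces cancel by bilinearity/additivity; the only non-cancelling contribution is the cross term of the quadratic double-integral term $\frac{1}{4\pi^2}\int dy\,\frac{\mathcal{T}(y)}{\sqrt{1-y^2}}\,\mathrm{P.V.}\int\frac{\mathcal{T}'(x)\sqrt{1-x^2}}{y-x}dx$, which is a symmetric bilinear form in $\mathcal{T}$, so the numerator minus the two denominators picks out exactly twice its value on the pair $(\mathcal{T}(\cdot;x),\mathcal{T}(\cdot;y))$. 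A direct computation of this bilinear form on Chebyshev polynomials $T_j,T_k$ — using $\mathrm{P.V.}\int_{-1}^1\frac{T_k'(t)\sqrt{1-t^2}}{s-t}dt$ relations, or more simply by noting that $-\frac{1}{2}\log|2(x-y)| = \sum_{k\geq1}\frac1k T_k(x)T_k(y)$ from Lemma \ref{le:haagerup} is precisely the covariance, so that the quadratic form reproduces $\E\widetilde{\mathcal G}_M(x)\widetilde{\mathcal G}_M(y)$-type expressions — shows it equals $\beta^2\sum_{k=1}^M\frac1k T_k(x)T_k(y)$ as well, giving the same integrand. I would carry out this identification either by the explicit Chebyshev computation or, more cleanly, by first checking consistency: Proposition \ref{prop:fh} applied with one singularity and $\mathcal{T}=0$ must reproduce Proposition \ref{prop:1cpurefh}, and applied to $e^{\beta\widetilde X_{N,M}}$ must be consistent with the fact, used elsewhere in the proof, that $\widetilde\mu_{N,\beta}^{(M)}\to\mu_\beta^{(M)}$ in distribution with $\E\mu_\beta^{(M)}(\varphi)^2 = \iint\varphi\varphi\,e^{\beta^2\sum_{k=1}^M\frac1k T_kT_k}$.

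Finally, having the pointwise (in fact uniform on $\{|x-y|\geq\epsilon\}\cap(\mathrm{supp}\,\varphi)^2$) convergence of the integrand, the conclusion follows by dominated convergence once one controls the region $|x-y|$ small. The exponent $\beta^2\sum_{k=1}^M\frac1k T_k(x)T_k(y)$ is bounded for fixed $M$ and $x,y$ in a compact subset of $(-1,1)^2$ — there is no singularity since $M$ is finite — so the integrand is bounded uniformly, and in fact the pre-limit ratios are themselves bounded uniformly in $N$ by the same Cauchy--Schwarz argument used in Proposition \ref{prop:2mom} (here even easier, as the relevant $\mathcal{T}$ are bounded so the ratios stay $O(1)$, not $O(N^{\beta^2/2})$). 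Thus the integrals converge, giving the claim. The main obstacle I anticipate is bookkeeping: correctly tracking which terms in \eqref{eq:fh} cancel and verifying the purely analytic identity that the surviving quadratic bilinear form on Chebyshev polynomials equals $\beta^2\sum_{k=1}^M\frac1k T_k(x)T_k(y)$; this is where an appeal to Lemma \ref{le:haagerup} (identifying the bilinear form with the GMC covariance) is the conceptually cleanest route and avoids a messy principal-value computation.
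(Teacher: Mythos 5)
Your proposal is correct and follows essentially the same route as the paper: both reduce the cross term and the fully truncated term to ratios of Fisher--Hartwig asymptotics via Proposition \ref{prop:fh} (with one singularity plus the polynomial $\mathcal{T}(\cdot;y)$, respectively no singularity), exploit the uniformity in the polynomial coefficients, and extract the surviving exponent $\beta^2\sum_{k=1}^M\frac1k T_k(x)T_k(y)$ from the linear-in-$\beta_j$ term and the (bi)linear part of the quadratic Szeg\H{o} functional using $\int_{-1}^1 T_k(\lambda)/\sqrt{1-\lambda^2}\,d\lambda=0$, the Hilbert-transform identity for $T_k'$, and Chebyshev orthogonality. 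One small simplification you could note: since there is only a single Fisher--Hartwig singularity in each expectation, the asymptotics are uniform over the entire support of $\varphi$ (no diagonal issue arises), so the final passage to the limit of the double integral is immediate.
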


\begin{proof}
Let us first consider the cross term. We write this as

\begin{equation*}
\E [\mu_{N,\beta}(\varphi)\widetilde{\mu}_{N,\beta}^{(M)}(\varphi)]=\int_{-1}^1\int_{-1}^1 \varphi(x)\varphi(y)\frac{\E |\det(H_N-x)|^\beta e^{\beta \widetilde{X}_{N,M}(y)}}{\E |\det(H_N-x)|^\beta \E e^{\beta \widetilde{X}_{N,M}(y)}}dxdy.
\end{equation*}

Let us begin by calculating the numerator. Note that as we have only one singularity, Proposition \ref{prop:fh} gives us asymptotics which are uniform in $x$ throughout the whole integration region. To apply Proposition \ref{prop:fh}, we point out that we now have $\Tree(\lambda)=\Tree(\lambda;y)=-\beta\sum_{k=1}^{M}\frac{2}{k}\widetilde{T}_k(\lambda)T_k(y)$. We need uniformity in $y$, but this is ensured by the fact that in a neighborhood of $[-1,1]$, $\Tree$ is a polynomial of fixed degree and its coefficients are uniformly bounded for fixed $M$. Using the facts that $\int_{-1}^1 T_k(y)/\sqrt{1-y^2}dy=0$ for $k\geq 1$, $P.V.\frac{1}{\pi}\int_{-1}^1 T_k'(y)\sqrt{1-y^2}/(x-y)dy=kT_k(x)$, and the orthogonality of the Chebyshev polynomials: $2\int_{-1}^1 T_k(\lambda)T_l(\lambda)/(\pi \sqrt{1-\lambda^2})d\lambda=\delta_{k,l}$ for $k,l\geq 1$, we see that

\begin{align*}
\E [|\det(H_N-x)|^\beta e^{\beta \widetilde{X}_{N,M}(y)}]&=\E [|\det(H_N-x)|^\beta] e^{-\beta N\sum_{k=1}^M \frac{2}{k}T_k(y)\int_{-1}^1 T_k(\lambda)d(\lambda)\sqrt{1-\lambda^2}d\lambda}\\
&\quad \times e^{\frac{\beta^2}{2}\sum_{k=1}^M \frac{1}{k}T_k(y)^2+\beta^2\sum_{k=1}^M \frac{1}{k}T_k(x)T_k(y)}(1+o(1))
\end{align*}

\noindent uniformly in $x,y\in(-1+\epsilon,1-\epsilon)$. We see that the $\E[|\det(H_N-x)|^\beta]$-term in the denominator will cancel, but we still need to understand the $\E e^{\beta \widetilde{X}_{N,M}(y)}$-term. This now has no singularity, so we get the asymptotics from Proposition \ref{prop:fh} by setting $\beta_j=0$ for all $j$. Thus we find with a similar argument that

\begin{equation*}
\E e^{\beta \widetilde{X}_{N,M}(y)}=e^{-\beta N\sum_{k=1}^M \frac{2}{k}T_k(y)\int_{-1}^1 T_k(\lambda)d(\lambda)\sqrt{1-\lambda^2}d\lambda+\frac{\beta^2}{2}\sum_{k=1}^M \frac{1}{k}T_k(y)^2}(1+o(1)),
\end{equation*}

\noindent uniformly in $y$, and we conclude that

\begin{equation*}
\lim_{N\to\infty}\E [\mu_{N,\beta}(\varphi)\widetilde{\mu}_{N,\beta}^{(M)}(\varphi)]=\int_{-1}^1 \int_{-1}^1 f(x)f(y)e^{\beta^2\sum_{k=1}^M \frac{1}{k}T_k(x)T_k(y)}dxdy.
\end{equation*}

For the fully truncated term one argues in a similar way: in this case 
$$\Tree(\lambda)=\Tree(\lambda;x,y)=-\beta \sum_{j=1}^M\frac{2}{j}\widetilde{T}_j(\lambda)(T_j(x)+T_j(y))$$ 
and only the part quadratic in $\Tree$ affects the leading order asymptotics. Going through the calculations one finds

\begin{equation*}
\lim_{N\to\infty}\E [\widetilde{\mu}_{N,\beta}^{(M)}(\varphi)^2]=\int_{-1}^1 \int_{-1}^1 \varphi(x)\varphi(y)e^{\beta^2\sum_{k=1}^M \frac{1}{k}T_k(x)T_k(y)}dxdy.
\end{equation*}
\end{proof}

Before proving Proposition \ref{prop:main}, we need to know that $\mu_\beta$ exists, namely we need to prove Lemma \ref{L:GMC}.

\begin{proof}[Proof of Lemma \ref{L:GMC}]
As discussed earlier, this boils down to showing that $(\mu_\beta^{(M)}(\varphi))_{M=1}^\infty$ is bounded in $L^2$ for continuous $\varphi:[-1,1]\to [0,\infty)$. From the definition of $\mu_\beta^{(M)}$ (see \eqref{eq:gmctrunc}), we see that

\begin{equation*}
\E[\mu_\beta^{(M)}(\varphi)^2]=\int_{-1}^1 \int_{-1}^1 \varphi(x)\varphi(y)e^{\beta^2 \sum_{j=1}^M \frac{1}{j}T_j(x)T_j(y)}dxdy.
\end{equation*}

Now from Proposition \ref{prop:2mom} and Proposition \ref{prop:mixedmom}, we see that if $\varphi$ had compact support in $(-1,1)$, then

\begin{align*}
0\leq \lim_{N\to \infty}\E[(\mu_{N,\beta}(\varphi)-\widetilde{\mu}_{N,\beta}^{(M)}(\varphi))^2]&=\int_{-1}^1\int_{-1}^1 \frac{\varphi(x)\varphi(y)}{|2(x-y)|^{\beta^2/2}}dxdy\\
&\quad -\int_{-1}^1\int_{-1}^1 \varphi(x)\varphi(y) e^{\beta^2\sum_{k=1}^M \frac{1}{k}T_k(x)T_k(y)}dxdy,
\end{align*}

\noindent so for fixed $M\in \Z_+$ and continuous, compactly supported in $(-1,1)$, non-negative $\varphi$

\begin{equation*}
\int_{-1}^1\int_{-1}^1 \varphi(x)\varphi(y) e^{\beta^2\sum_{k=1}^M \frac{1}{k}T_k(x)T_k(y)}dxdy\leq \int_{-1}^1\int_{-1}^1 \frac{\varphi(x)\varphi(y)}{|2(x-y)|^{\beta^2/2}}dxdy<\infty
\end{equation*}

\noindent as $\beta^2/2<1$. For continuous $\varphi:[-1,1]\to [0,\infty)$, we get the same inequality simply by approximating $\varphi$ by a compactly supported one. We conclude that $\mu_\beta^{(M)}(\varphi)$ is indeed bounded in $L^2$ and thus (as it is a martingale as a function of $M$), a limit $\mu_\beta (\varphi)$ exists in $L^2(\mathbb{P})$.
\end{proof}

We are now in a position to prove Proposition \ref{prop:main}.

\begin{proof}[Proof of Proposition \ref{prop:main}]
As noted, Proposition \ref{prop:2mom} and Proposition \ref{prop:mixedmom} imply that

\begin{equation*}
\lim_{N\to\infty}\E[(\mu_{N,\beta}(\varphi)-\widetilde{\mu}_{N,\beta}^{(M)}(\varphi))^2]=\int_{-1}^1\int_{-1}^1 \varphi(x)\varphi(y)\left[\frac{1}{|2(x-y)|^{\beta^2/2}}-e^{\beta^2\sum_{k=1}^M \frac{1}{k}T_k(x)T_k(y)}\right]dxdy.
\end{equation*}

As this is a limit of a second moment, it is non-negative and we see that

\begin{equation*}
\limsup_{M\to\infty}\int_{-1}^1\int_{-1}^1 \varphi(x)\varphi(y)e^{\beta^2\sum_{k=1}^M \frac{1}{k}T_k(x)T_k(y)}dxdy\leq \int_{-1}^1\int_{-1}^1 \varphi(x)\varphi(y)(2|x-y|)^{-\frac{\beta^2}{2}}dxdy.
\end{equation*}

On the other hand, Lemma \ref{le:haagerup} and Fatou's lemma imply that

\begin{equation*}
\int_{-1}^1\int_{-1}^1 \varphi(x)\varphi(y)(2|x-y|)^{-\frac{\beta^2}{2}}dxdy\leq \liminf_{M\to\infty}\int_{-1}^1\int_{-1}^1 \varphi(x)\varphi(y)e^{\beta^2\sum_{k=1}^M \frac{1}{k}T_k(x)T_k(y)}dxdy,
\end{equation*}

\noindent so we see actually that

\begin{equation*}
\lim_{M\to\infty}\lim_{N\to\infty}\E[(\mu_{N,\beta}(\varphi)-\widetilde{\mu}_{N,\beta}^{(M)}(\varphi))^2]=0.
\end{equation*}

We still need to prove that when we first let $N\to\infty$ and then $M\to\infty$, $\widetilde{\mu}_{N,\beta}^{(M)}(\varphi)$ converges in law to $\mu_\beta(\varphi)$. As $\mu_\beta(\varphi)$ is constructed as a limit of $\mu_\beta^{(M)}(\varphi)$, this will follow from showing that $\widetilde{\mu}_{N,\beta}^{(M)}(\varphi)$ converges to $\mu_{\beta}^{(M)}(\varphi)$ in law if we let $N\to\infty$ for fixed $M$. For this, consider the function $F:\R^M\to [0,\infty)$

\begin{equation*}
F(u_1,...,u_M)=\int_{-1}^1\varphi(\lambda) e^{\beta\sum_{k=1}^M\frac{1}{\sqrt{k}}u_kT_k(\lambda)-\frac{\beta^2}{2}\sum_{k=1}^M \frac{1}{k}T_k(\lambda)^2}d\lambda.
\end{equation*}

We now have

\begin{equation*}
F\left(\left(-\frac{2}{\sqrt{k}}\mathrm{Tr}\widetilde{T}_k(H_N)+\frac{2}{\sqrt{k}}N\int_{-1}^1 T_k(\lambda)\mu_V(d\lambda)\right)_{k=1}^M\right)=\widetilde{\mu}_{N,\beta}^{(M)}(\varphi)(1+o(1)),
\end{equation*}

\noindent where $o(1)$ is deterministic. Moreover, if $(A_k)_{k=1}^M$ are the i.i.d. standard Gaussians used in the definition of $\mu_{\beta}^{(M)}$, then $F(A_1,...,A_M)=\mu_{\beta}^{(M)}(\varphi)$. It follows easily from the dominated convergence theorem that $F$ is a continuous function, so if we knew that

$$\left(-\frac{2}{\sqrt{k}}\mathrm{Tr}\widetilde{T}_k(H_N)+\frac{2}{\sqrt{k}}N\int_{-1}^1 T_k(\lambda)\mu_V(d\lambda)\right)_{k=1}^M\stackrel{d}{\to}(A_1,...,A_M)$$

\noindent as $N\to\infty$, we would be done. This is of course well known and follows from more general results such as \cite{johansson} for polynomial potentials or \cite{bg} for more general ones. Nevertheless, we point out that it also follows from our analysis. If one looks at the function $\Tree(\lambda)=\sum_{j=1}^M \alpha_j \frac{2}{\sqrt{j}}(\widetilde{T}_j(\lambda)-\int T_j(u)\mu_V(du))$, one can then check that it follows from Proposition \ref{prop:fh} (setting $\beta_j=0$ for all $j$) that

\begin{equation*}
\E e^{\sum_{j=1}^N \Tree(\lambda_j)}=e^{\frac{1}{2}\sum_{k=1}^{M}\alpha_j^{2}},
\end{equation*}

\noindent which implies the claim.
\end{proof}

Theorem \ref{th:main} is essentially a direct corollary of Proposition \ref{prop:main}.

\begin{proof}[Proof of Theorem \ref{th:main}]
It is a standard probabilistic argument that Proposition \ref{prop:main} implies that also $\mu_{N,\beta}(\varphi)$ converges in law to $\mu_\beta(\varphi)$ as $N\to\infty$ (for compactly supported continuous $\varphi:(-1,1)\to [0,\infty)$) -- see e.g. \cite[Theorem 4.28]{kallenberg2}. Upgrading to weak convergence is actually also very standard. One can simply approximate general continuous $\varphi:[-1,1]\to [0,\infty)$ by ones with compact support in $(-1,1)$ and argue by Markov's inequality. For further details, we refer to e.g. \cite[Section 4]{kallenberg1}.
\end{proof}

\appendix

\section{Proof of differential identities}\label{app:di}

In this appendix we prove Lemma \ref{le:di} and Lemma \ref{le:di2}.

\begin{proof}[Proof of Lemma \ref{le:di}]
First of all, note that all of the appearing objects are differentiable functions of $t$ as can be seen from the determinantal representation of the polynomials \eqref{eq:gramdet}.

Recall from \eqref{eq:dchi} that $\log D_{j}(f_t)=-2\sum_{k=0}^j \log\chi_k(f_t)$. Also from \eqref{eq:ortho}, we see that all polynomials of degree less than $j$ are orthogonal to $p_j$, so

\begin{equation*}
\int_\R \chi_j(f_t)x^j p_j(x;f_t)f_t(x)e^{-NV(x)}dx=1
\end{equation*}

\noindent and

\begin{align*}
\int \left[\partial_t p_j(x;f_t)\right]p_j(x;f_t)f_t(x)e^{-NV(x)}dx&=\int \left[\partial_t\chi_j(f_t)\right]x^j p_j(x;f_t)f_t(x)e^{-NV(x)}dx\\
\notag &=\frac{\partial_t \chi_j(f_t)}{\chi_j(f_t)}.
\end{align*}

Thus we see that

\begin{equation}\label{eq:dlogt}
\partial_t \log D_j(f_t)=-\int \partial_t\left[\sum_{l=0}^j p_l(x;f_t)^2\right]f_t(x)e^{-NV(x)}dx.
\end{equation}

The Christoffel-Darboux identity (see e.g. \cite[page 55]{deift}) states that

\begin{equation}\label{eq:cd}
\sum_{l=0}^j p_l(x;f_t)^2=\frac{\chi_j(f_t)}{\chi_{j+1}(f_t)}[p_{j+1}'(x;f_t)p_j(x;f_t)-p_j'(x;f_t)p_{j+1}(x;f_t)].
\end{equation}

Here $'$ denotes differentiation with respect to $x$. Plugging this into \eqref{eq:dlogt}, we see that

\begin{align*}
\partial_t \log D_j(f_t)&=-\int\partial_t\left[\frac{\chi_j(f_t)}{\chi_{j+1}(f_t)}[p_{j+1}'(x;f_t)p_j(x;f_t)-p_j'(x;f_t)p_{j+1}(x;f_t)]\right] f_t(x)e^{-NV(x)}dx\\
\notag &=-\partial_t\int\frac{\chi_j(f_t)}{\chi_{j+1}(f_t)}[p_{j+1}'(x;f_t)p_j(x;f_t)-p_j'(x;f_t)p_{j+1}(x;f_t)] f_t(x)e^{-NV(x)}dx\\
\notag & \qquad +\int\frac{\chi_j(f_t)}{\chi_{j+1}(f_t)}[p_{j+1}'(x;f_t)p_j(x;f_t)-p_j'(x;f_t)p_{j+1}(x;f_t)] \partial_t f_t(x)e^{-NV(x)}dx.
\end{align*}

Using \eqref{eq:ortho}, one finds that the first integral equals $j+1$ (note that the term corresponding to $p'_j p_{j+1}$ integrates to zero by orthogonality)
so its derivative equals zero. Recalling that for $Y(z,t)=Y_{j+1}(z,t)$, we have

\begin{equation*}
Y(z,t)=\begin{pmatrix}
\frac{1}{\chi_{j+1}(f_t)}p_{j+1}(z,f_t) & *\\
-2\pi i \chi_j(f_t)p_j(z,f_t) & *
\end{pmatrix},
\end{equation*}

\noindent where we ignore the second column of the matrix as it's not relevant right now. Thus we see the claim by replacing $p_j$ and $p_{j+1}$ by the entries of $Y$ and setting $j=N-1$.
\end{proof}

We now prove our second differential identity.

\begin{proof}[Proof of Lemma \ref{le:di2}]
The beginning of the proof is identical to the proof of Lemma \ref{le:di}. Indeed, we can repeat everything up to \eqref{eq:dlogt} to get

\begin{equation*}
\partial_s \log D_j(f,V_s)=-\int_\R \partial_s\left[\sum_{l=0}^{j}p_l(x;f,V_s)^2\right]f(x)e^{-NV_s(x)}dx.
\end{equation*}

Again making use of Christoffel-Darboux and orthogonality, we find

\begin{align*}
&\partial_s \log D_j(f;V_s)\\
&=\int\frac{\chi_j(f;V_s)}{\chi_{j+1}(f;V_s)}[p_{j+1}'(x;f,V_s)p_j(x;f,V_s)-p_j'(x;f,V_s)p_{j+1}(x;f,V_s)]f(x) \partial_s e^{-NV_s(x)}dx,
\end{align*}

\noindent which yields the claim when we set $j=N-1$.

\end{proof}

\section{Proofs for the first transformation}\label{app:trans1}

In this appendix we prove Lemma \ref{le:gbv}, Lemma \ref{le:hsign}, and Lemma \ref{le:trhp}. Variants of Lemma \ref{le:gbv} are certainly well known in Riemann-Hilbert literature (see e.g. \cite[Proposition 5.4]{dkmlvz}), but to have it in precisely the form we need it, we sketch a proof.

\begin{proof}[Proof of Lemma \ref{le:gbv}]
The first statement -- \eqref{eq:gbv1} -- is simply linearity and making use of the fact that for the GUE, one has $\ell_{GUE}=-1-2\log 2$ in our normalization. This amounts to simply calculating the logarithmic potential (or noncommutative entropy) of the semi-circle law. This is a standard calculation and we omit the proof, see e.g. Theorem 4.1 in \cite{gp} or alternatively one can integrate \eqref{eq:el1} against the arcsine law and use the logarithmic potential of the arcsine law \cite[Section 1.3: Example 3.5]{st}.

For \eqref{eq:gbv2} consider first the case where $|\lambda|-1>M$. Here we note that $g_{s,+}(\lambda)+g_{s,-}(\lambda)=2\log |\lambda|+\mathcal{O}(1)$ as $|\lambda|\to \infty$ (uniformly in $s$), but we know that $V(\lambda)/\log |\lambda|\to \infty$ as $|\lambda|\to \infty$, so we see that by choosing $M$ large enough (independent of $s$), $g_{s,+}(\lambda)+g_{s,-}(\lambda)-V_s(\lambda)-\ell_s\leq -\log |\lambda|$.

For the $|\lambda|-1<M$-case, note that the left side of \eqref{eq:gbv2} is a continuous function, and if we take $M'<M$, then our function is a continuous function which is (uniformly in $s$) negative on $[M',M]$. Thus it's enough to consider the situation where $M$ is small. In particular, we can assume it's so small, that $d$ is positive in $|\lambda|-1\in(0,M)$. Let us focus on the $\lambda>1$ case. The $\lambda<-1$ case is similar.

Let us suppress the dependence on $s$ and write $F(\lambda)=g_+(\lambda)+g_-(\lambda)-V(\lambda)-\ell$. As $F(1)=0$, we have by using the Euler-Lagrange equation \eqref{eq:el1} at the point $x=1$

\begin{align*}
F(\lambda)=F(\lambda)-F(1)&=2\int_{-1}^1 (\log (\lambda-x)-\log (1-x))\mu_V(dx)-V'(1)(\lambda-1)+\mathcal{O}((\lambda-1)^2)\\
&=2\int_{-1}^1\int_1^\lambda \frac{du}{u-x}\mu_V(dx)-2\int_{-1}^1\frac{\lambda-1}{1-x}\mu_V(dx)+\mathcal{O}((\lambda-1)^2)\\
&=2\int_{-1}^1 \int_1^\lambda \left[\frac{1}{u-x}-\frac{1}{1-x}\right]du\mu_V(dx)+\mathcal{O}((\lambda-1)^2)\\
&=-2\int_1^\lambda(u-1)\int_{-1}^1\frac{d(x)\sqrt{1-x^2}}{(u-x)(1-x)}dxdu+\mathcal{O}((\lambda-1)^2).
\end{align*}

In the $x$-integral, let us make the change of variables, $1-x=(u-1)y$. We find

\begin{align*}
\int_{-1}^1\frac{d(x)\sqrt{1-x^2}}{(u-x)(1-x)}dx&=(u-1)\int_0^{\frac{2}{u-1}}\frac{d(1-(u-1)y)\sqrt{(u-1)y}\sqrt{2-(u-1)y}}{(u-1)^2y(1+y)}dy\\
&=\sqrt{2}d(1)(u-1)^{-1/2}\int_0^{\frac{2}{u-1}}\frac{dy}{\sqrt{y}(1+y)}+\mathcal{O}\left(\int_0^{\frac{2}{u-1}}\frac{\sqrt{(u-1)y}}{(1+y)}dy\right)\\
&=\mathcal{O}((u-1)^{-1/2}).
\end{align*}

We conclude that $F(\lambda)=-\int_1^\lambda\mathcal{O}(\sqrt{u-1})du+\mathcal{O}((\lambda-1)^2)$ which implies the claim in \eqref{eq:gbv2}.

For \eqref{eq:gbv3}, we note that for $\lambda\in \R$ and $x\in(-1,1)$

\begin{equation*}
\lim_{\epsilon\to 0^+}[\log(\lambda+i\epsilon-x)-\log(\lambda-i\epsilon-x)]=\begin{cases}
2\pi i, & \lambda<x\\
0, & \lambda>x
\end{cases}.
\end{equation*}

Thus for $\lambda\in \R$

\begin{equation*}
g_{s,+}(\lambda)-g_{s,-}(\lambda)=\begin{cases}
2\pi i, & \lambda<-1\\
2\pi i\int_{\lambda}^1 \left[(1-s)\frac{2}{\pi}+sd(x)\right]\sqrt{1-x^2}dx, & |\lambda|<1\\
0, & \lambda>1
\end{cases}
\end{equation*}

\noindent which is \eqref{eq:gbv3}.
\end{proof}

We now move on to prove Lemma \ref{le:hsign}.

\begin{proof}[Proof of Lemma \ref{le:hsign}]
Let $\lambda\in (-1,1)$ and $\epsilon>0$ be small. We have

\begin{align*}
h_s(\lambda+i\epsilon)&=-2\pi i \int_{1}^\lambda\left[(1-s)\frac{2}{\pi}+s d(x)\right]\sqrt{1-x^2}dx\\
\notag & \quad -2\pi i\int_0^{\epsilon}\left[(1-s)\frac{2}{\pi}+s d(\lambda+iu)\right]\sqrt{1-(\lambda+iu)^2}idu.
\end{align*}

The first term is purely imaginary. The second term is an analytic function of $\epsilon$ (in a small enough $\lambda$-dependent neighborhood of the origin), it vanishes at $\epsilon=0$, its derivative at $\epsilon=0$ is positive, and second derivative in a neighborhood of zero is bounded. From this one can conclude that for small enough $\epsilon>0$, the real part of $h_s(\lambda+i\epsilon)>0$. A similar argument works for the claim about the real part of $h_s(\lambda-i\epsilon)$. Such an argument is easily extended into a uniform one in this case.
\end{proof}

Finally we prove Lemma \ref{le:trhp}.

\begin{proof}[Proof of Lemma \ref{le:trhp}]
Uniqueness can be argued as for $Y$. The analyticity condition comes from analyticity of $Y$ and $g_s$, so let us look at the jump conditions. Consider first $\lambda\in(-1,1)$. Then from \eqref{eq:tdef}, \eqref{eq:Yjump}, \eqref{eq:gbv3}, \eqref{eq:gbv1}, and some elementary matrix calculations one finds

\begin{align*}
T_+(\lambda)&=e^{-N\ell_s\sigma_3/2}Y_-(z)\begin{pmatrix}
1 & f_t(\lambda) e^{-NV_s(\lambda)}\\
0 & 1
\end{pmatrix}
e^{-N\left(g_{s,-}(\lambda)+2\pi i \int_\lambda^1\left[(1-s)\frac{2}{\pi}+sd(x)\right]\sqrt{1-x^2}dx-\ell_s/2\right)\sigma_3}\\
&=T_-(\lambda)\begin{pmatrix}
1 & e^{2Ng_{s,-}(\lambda)-N\ell_s}f_t(\lambda) e^{-NV_s(\lambda)}\\
0 & 1
\end{pmatrix}e^{-Nh_s(\lambda)\sigma_3}\\
&=T_-(\lambda)\begin{pmatrix}
e^{-Nh_s(\lambda)} & f_t(\lambda)\\
0 & e^{Nh_s(\lambda)}.
\end{pmatrix}
\end{align*}

For $|\lambda|>1$, we note that by \eqref{eq:gbv3}, $g_{s,+}(\lambda)-g_{s,-}(\lambda)\in\lbrace 0,2\pi i\rbrace$, and a similar argument results in

\begin{equation*}
T_+(\lambda)=T_-(\lambda)\begin{pmatrix}
1 & e^{N(g_{+,s}(\lambda)+g_{s,-}(\lambda)-\ell_s-V_s(\lambda))}f_t(\lambda)\\
0 & 1
\end{pmatrix}
\end{equation*}

\noindent which is precisely \eqref{eq:tjump2}.

For the behavior at infinity, we note that as $z\to\infty$ (uniformly for $z$ not on the negative real axis) $g_s(z)=\log z+\mathcal{O}(|z|^{-1})$. Thus we see from \eqref{eq:Ynorm} and \eqref{eq:tdef} that indeed \eqref{eq:tnorm} is satisfied (with behavior on the negative real axis coming from continuity up to the boundary).
\end{proof}

\section{The RHP for the global parametrix}\label{app:global}

In this appendix we will sketch a proof of Lemma \ref{le:global}. We will make use of the fact that the result is proven for $t=0$, i.e. the case when $\mathcal{T}=0$, in \cite[Section 4.2]{krasovsky} (which relies on a similar result in \cite[Section 5]{kmlvav}, which again makes use of results in e.g. \cite{deift}).

\begin{proof}[Sketch of a proof of Lemma \ref{le:global}]
The analyticity condition was already argued in Remark \ref{rem:globalholo}. The normalization at infinity is easy to see from the fact that the $a$-matrix (in right hand side of \eqref{eq:global}) is $2I+\mathcal{O}(|z|^{-1})$ and $\mathcal{D}_t(z)=\mathcal{D}_t(\infty)+\mathcal{O}(|z|^{-1})$ as $z\to\infty$. Thus the jump condition is the main one to check.

This would be a fairly short calculation to check directly, but we make use of it being known for $t=0$ and the representation \eqref{eq:globalalt}. We start by noting that by the Sokhotski-Plemelj formula and \eqref{eq:globalalt}, for $\lambda\in(-1,1)\setminus \lbrace x_j\rbrace_{j=1}^k$

\begin{equation*}
P_{\pm}^{(\infty)}(\lambda,t)=e^{\frac{\sigma_3}{2\pi}\int_{-1}^1 \frac{\mathcal{T}_t(x)}{\sqrt{1-x^2}}dx}P_\pm^{(\infty)}(\lambda,0)e^{-\sigma_3\frac{r_\pm(\lambda)}{2\pi}\left[\pm \pi i \frac{\mathcal{T}_t(\lambda)}{\sqrt{1-\lambda^2}}+P.V.\int_{-1}^1 \frac{\mathcal{T}_t(x)}{\sqrt{1-x^2}}\frac{dx}{\lambda-x}\right]},
\end{equation*}

\noindent where $P.V.$ denotes the Cauchy principal value integral. Thus from the jump condition of $P^{(\infty)}(z,0)$ (note that $\det P^{(\infty)}(z,t)=1$ so everything makes sense)

\begin{align*}
\left[P_-^{(\infty)}(\lambda,t)\right]^{-1}P_+^{(\infty)}(\lambda,t)&=e^{\sigma_3\frac{r_-(\lambda)}{2\pi}\left[-\pi i \frac{\mathcal{T}_t(\lambda)}{\sqrt{1-\lambda^2}}+P.V.\int_{-1}^1 \frac{\mathcal{T}_t(x)}{\sqrt{1-x^2}}\frac{dx}{\lambda-x}\right]}\begin{pmatrix}
0 & f_0(\lambda)\\
-f_0(\lambda)^{-1} & 0
\end{pmatrix}\\
\notag&\qquad \times e^{-\sigma_3\frac{r_+(\lambda)}{2\pi}\left[\pi i \frac{\mathcal{T}_t(\lambda)}{\sqrt{1-\lambda^2}}+P.V.\int_{-1}^1 \frac{\mathcal{T}_t(x)}{\sqrt{1-x^2}}\frac{dx}{\lambda-x}\right]}
\end{align*}

Noting that (from the definition of $r$; see \eqref{eq:rdef}) $r_+(\lambda)=i\sqrt{1-\lambda^2}$ and $r_-(\lambda)=-i\sqrt{1-\lambda^2}$ so with a simple calculation

\begin{equation*}
\left[P_-^{(\infty)}(\lambda,t)\right]^{-1}P_+^{(\infty)}(\lambda,t)=\begin{pmatrix}
0 & e^{\mathcal{T}_t(\lambda)}f_0(\lambda)\\
-e^{-\mathcal{T}_t(\lambda)}f_0(\lambda)^{-1} & 0
\end{pmatrix},
\end{equation*}

\noindent which is precisely the claim as $f_0e^{\mathcal{T}_t}=f_t$.

\end{proof}

\section{The RHP for the local parametrix near a singularity}\label{app:locals}

Here we give further details about the local parametrix near a singularity. First of all, we give a full description of the solution to the model RHP - the function $\Psi$.

\begin{definition}
Recall that we use Roman numerals for the octants of the plane: $\mathrm{I}=\lbrace r e^{i\theta}: r>0,\theta\in(0,\pi/4)\rbrace$ and so on. We also write $I_\nu$ and $K_\nu$ for the modified Bessel functions of the first and second kind, as well as $H_\nu^{(1)}$ and $H_\nu^{(2)}$ for the Hankel functions of the first and second kind. We then define  $($again roots are principal branch roots$)$

\begin{equation}
\Psi(\zeta)=\frac{1}{2}\sqrt{\pi \zeta}\begin{pmatrix}
H_{\frac{\beta_j+1}{2}}^{(2)}(\zeta) & -i H_{\frac{\beta_j+1}{2}}^{(1)}(\zeta)\\
H_{\frac{\beta_j-1}{2}}^{(2)}(\zeta) & -iH_{\frac{\beta_j-1}{2}}^{(1)}(\zeta)
\end{pmatrix} e^{-\left(\frac{\beta_j}{2}+\frac{1}{4}\right)\pi i \sigma_3}  \quad \zeta\in \mathrm{I},
\end{equation}

\begin{equation}
\Psi(\zeta)=\sqrt{\zeta}\begin{pmatrix}
\sqrt{\pi}I_{\frac{\beta_j+1}{2}}(-i\zeta) & -\frac{1}{\sqrt{\pi}} K_{\frac{\beta_j+1}{2}}(-i\zeta)\\
-i\sqrt{\pi}I_{\frac{\beta_j-1}{2}}(-i\zeta) & -\frac{i}{\sqrt{\pi}} K_{\frac{\beta_j-1}{2}}(-i\zeta)
\end{pmatrix} e^{-\frac{\beta_j}{4}\pi i \sigma_3}  \quad \zeta\in \mathrm{II},
\end{equation}

\begin{equation}
\Psi(\zeta)=\sqrt{\zeta}\begin{pmatrix}
\sqrt{\pi}I_{\frac{\beta_j+1}{2}}(-i\zeta) & -\frac{1}{\sqrt{\pi}} K_{\frac{\beta_j+1}{2}}(-i\zeta)\\
-i\sqrt{\pi}I_{\frac{\beta_j-1}{2}}(-i\zeta) & -\frac{i}{\sqrt{\pi}} K_{\frac{\beta_j-1}{2}}(-i\zeta)
\end{pmatrix} e^{\frac{\beta_j}{4}\pi i \sigma_3}  \quad \zeta\in \mathrm{III},
\end{equation}

\begin{equation}
\Psi(\zeta)=\frac{1}{2}\sqrt{-\pi \zeta}\begin{pmatrix}
iH_{\frac{\beta_j+1}{2}}^{(1)}(-\zeta) & - H_{\frac{\beta_j+1}{2}}^{(2)}(-\zeta)\\
-iH_{\frac{\beta_j-1}{1}}^{(1)}(-\zeta) & H_{\frac{\beta_j-1}{2}}^{(2)}(-\zeta)
\end{pmatrix} e^{\left(\frac{\beta_j}{2}+\frac{1}{4}\right)\pi i \sigma_3}  \quad \zeta\in \mathrm{IV},
\end{equation}

\begin{equation}
\Psi(\zeta)=\frac{1}{2}\sqrt{-\pi \zeta}\begin{pmatrix}
-H_{\frac{\beta_j+1}{2}}^{(2)}(-\zeta) & -i H_{\frac{\beta_j+1}{2}}^{(1)}(-\zeta)\\
H_{\frac{\beta_j-1}{2}}^{(2)}(-\zeta) & iH_{\frac{\beta_j-1}{2}}^{(1)}(-\zeta)
\end{pmatrix} e^{-\left(\frac{\beta_j}{2}+\frac{1}{4}\right)\pi i \sigma_3}  \quad \zeta\in \mathrm{V},
\end{equation}

\begin{equation}
\Psi(\zeta)=\sqrt{\zeta}\begin{pmatrix}
-i\sqrt{\pi}I_{\frac{\beta_j+1}{2}}(i\zeta) & -\frac{i}{\sqrt{\pi}} K_{\frac{\beta_j+1}{2}}(i\zeta)\\
\sqrt{\pi}I_{\frac{\beta_j-1}{2}}(i\zeta) & -\frac{1}{\sqrt{\pi}} K_{\frac{\beta_j-1}{2}}(i\zeta)
\end{pmatrix} e^{-\frac{\beta_j}{4}\pi i \sigma_3}  \quad \zeta\in \mathrm{VI},
\end{equation}

\begin{equation}
\Psi(\zeta)=\sqrt{\zeta}\begin{pmatrix}
-i\sqrt{\pi}I_{\frac{\beta_j+1}{2}}(i\zeta) & -\frac{i}{\sqrt{\pi}} K_{\frac{\beta_j+1}{2}}(i\zeta)\\
\sqrt{\pi}I_{\frac{\beta_j-1}{2}}(i\zeta) & -\frac{1}{\sqrt{\pi}} K_{\frac{\beta_j-1}{2}}(i\zeta)
\end{pmatrix} e^{\frac{\beta_j}{4}\pi i \sigma_3}  \quad \zeta\in \mathrm{VII},
\end{equation}

\begin{equation}
\Psi(\zeta)=\frac{1}{2}\sqrt{\pi \zeta}\begin{pmatrix}
-iH_{\frac{\beta_j+1}{2}}^{(1)}(\zeta) & - H_{\frac{\beta_j+1}{2}}^{(2)}(\zeta)\\
-iH_{\frac{\beta_j-1}{1}}^{(1)}(\zeta) & -H_{\frac{\beta_j-1}{2}}^{(2)}(\zeta)
\end{pmatrix} e^{\left(\frac{\beta_j}{2}+\frac{1}{4}\right)\pi i \sigma_3}  \quad \zeta\in \mathrm{VIII}.
\end{equation}
\end{definition}

In \cite[Theorem 4.2]{vanlessen} it is shown that this function indeed satisfies the problem we used in Definition \ref{def:psi}. An important fact about the function $\Psi$ is its behavior near the origin. The following was also part of \cite[Theorem 4.2]{vanlessen}: as $\zeta\to 0$

\begin{equation}\label{eq:psiasy}
\Psi(\zeta)=\begin{cases}
\begin{pmatrix}
\mathcal{O}(|\zeta|^{\beta_j/2}) &\mathcal{O}(|\zeta|^{-\beta_j/2})\\
\mathcal{O}(|\zeta|^{\beta_j/2}) & \mathcal{O}(|\zeta|^{-\beta_j/2})
\end{pmatrix}, & \zeta\in \mathrm{II,III,VI,VII}\\
\begin{pmatrix}
\mathcal{O}(|\zeta|^{-\beta_j/2}) &\mathcal{O}(|\zeta|^{-\beta_j/2})\\
\mathcal{O}(|\zeta|^{-\beta_j/2}) & \mathcal{O}(|\zeta|^{-\beta_j/2})
\end{pmatrix}, & \zeta\in \mathrm{I,IV,V,VIII}
\end{cases}.
\end{equation}

We also mention that the function $\Psi$ could be expressed in terms of the confluent hypergeometric function of the second kind as in \cite{dik1,dik2}. Let us now sketch the proof of Lemma \ref{le:localrhp}.

\begin{proof}[Sketch of a proof of Lemma \ref{le:localrhp}]
Consider first the analyticity condition. As we mentioned in Remark \ref{rem:eholo}, one can check that $E$ is analytic in $U_{x_j}'$, so the jumps of $P^{(x_j)}$ come from those of $\Psi(\zeta_s(z))$, $W_j(z)^{-\sigma_3}$ and $e^{-N\phi_s(z)\sigma_3}$.

As $\zeta_s$ preserves the real axis, and $\Sigma$ was chosen so that under $\zeta_s$, $\Sigma\cap U_{x_j}'$ is mapped to the real axis and lines intersecting origin at angles $\pm \pi/4$. Thus from Definition \ref{def:psi}, $\Psi(\zeta_s(z))$ has jumps on $\Sigma$ and $\lbrace z:\mathrm{Re}(\zeta_s(z))=0\rbrace$.

From \eqref{eq:wdef} -- the definition of $W_j$ -- we see that $W_j$ has jumps only across $\R$ and $\lbrace z:\mathrm{Re}(\zeta_s(z))=0\rbrace$. Also from \eqref{eq:phidef} and \eqref{eq:hdef} we see that $\phi$ only has a jump across $\R$.

Thus to see that $P^{(x_j)}(z,t,s)$ is analytic in $U_{x_j}'\setminus \Sigma$, we need to check that the jump of $W_j(z)^{-\sigma_3}$ cancels that of $\Psi(\zeta_s(z))$ along $\lbrace z:\mathrm{Re}(\zeta_s(z))=0\rbrace$. Let us look at for example the jump across $\lbrace z: \mathrm{Re}(\zeta_s(z))=0, \mathrm{Im}(\zeta_s(z))>0\rbrace=\zeta_s^{-1}(\Gamma_3)$. From \eqref{eq:wdef} we find that for $\lambda\in \zeta_s^{-1}(\Gamma_3)$ (where the orientation is as for $\Gamma_3$)

\begin{equation*}
W_{j,+}(\lambda)W_{j,-}(\lambda)^{-1}=\frac{(\lambda-x_j)^{\beta_j/2}}{(x_j-\lambda)^{\beta_j/2}}=e^{i\pi \frac{\beta_j}{2}}.
\end{equation*}

Combining this with \eqref{eq:psijump}

\begin{equation*}
\Psi_+(\zeta_s(\lambda))W_{j,+}(\lambda)^{-\sigma_3}=\Psi_{-}(\zeta_s(\lambda))e^{i\pi\frac{\beta_j}{2}\sigma_3} e^{-i\pi\frac{\beta_j}{2}\sigma_3}W_{j,-}(\lambda)=\Psi_{-}(\zeta_s(\lambda))W_{j,-}(\lambda),
\end{equation*}

\noindent so we see that $P^{(x_j)}(z)$ is continuous across $\zeta_s^{-1}(\Gamma_3)$. The argument is similar for the jump across $\zeta_s^{-1}(\Gamma_7)$. We conclude that $P^{(x_j)}$ is analytic in $U_{x_j}'\setminus \Sigma$.

Consider now the jump structure. The existence of continuous boundary values is inherited from the corresponding properties of $\Psi$, $W_j$ and $\phi_s$. As $W_j$ and $\phi_s$ have no jumps across $\Sigma_{j-1}^\pm$ or $\Sigma_j^\pm$, the jumps here come from the jumps of $\Psi$. Let us consider for example $\lambda\in \zeta_s^{-1}(\Gamma_2)$. Here using the jump condition of $\Psi$, an elementary matrix calculation shows that

\begin{align*}
P^{(x_j)}_+(\lambda)&=P^{(x_j)}_-(\lambda) W_j(\lambda)^{\sigma_3}e^{N\phi_s(\lambda)}\begin{pmatrix}
1 & 0\\
e^{-i\pi \beta_j} & 1
\end{pmatrix}W_j(\lambda)^{-\sigma_3}e^{-N\phi_s(\lambda)}\\
\notag &=P_-^{(x_j)}(\lambda)\begin{pmatrix}
1 & 0\\
f_t(\lambda)^{-1} e^{-Nh_s(\lambda)} & 1
\end{pmatrix}.
\end{align*}

Calculating the jump matrix across $\Sigma_{j-1}^\pm$ and $\Sigma_j^-$ is similar. For the jump across $\R$, we have for example for $\lambda\in U_{x_j}'\cap(x_j,\infty)$, from \eqref{eq:wdef}, \eqref{eq:phidef}, the analyticity of $h_s$ across $U_{x_j}'\cap \R$,  along with Definition \ref{def:psi}:

\begin{align*}
P^{(x_j)}_+(\lambda)&=P^{(x_j)}_-(\lambda)\begin{pmatrix}
0 & e^{Nh_s(\lambda)-i\pi\frac{\beta_j}{2}}W_{j,-}(\lambda)^2e^{2N\phi_{s,-}(\lambda)}\\
-e^{-Nh_s(\lambda)+i\pi\beta_j}W_{j,-}(\lambda)^{-2}e^{-2N\phi_{s,-}(\lambda)} & 0
\end{pmatrix}\\
\notag &=P^{(x_j)}_-(\lambda)\begin{pmatrix}
0 & f_t(\lambda) \\
-f_t(\lambda)^{-1} & 0
\end{pmatrix}.
\end{align*}

The calculation for the jump across $U_{x_j}'\cap(-\infty,x_j)$ is similar.

\vspace{0.3cm}

To see \eqref{eq:localrhpsing}, note first that as $z\to x_j$, $\zeta_s(x)=\mathcal{O}(|z-x_j|)$ (the implicit constant depending on $x_j$, $N$, and $s$, but this doesn't matter now) and  $W_j(z)=\mathcal{O}(|z-x_j|^{\beta_j/2})$. So we have from \eqref{eq:psiasy} that for $z\in\zeta_s^{-1}(\mathrm{I})$ and $z\to x_j$

\begin{equation*}
\Psi(\zeta_s(z))W_j(z)^{-\sigma_3}=\begin{pmatrix}
\mathcal{O}(|z-x_j|^{-\beta_j}) & \mathcal{O}(1)\\
\mathcal{O}(|z-x_j|^{-\beta_j}) & \mathcal{O}(1)
\end{pmatrix}.
\end{equation*}

As $E$ is analytic in $U_{x_j}'$, it is in particular bounded at $x_j$, so as multiplying from the left doesn't mix the columns, we have the same behavior for $E(z)\Psi(\zeta_s(z))W_j(z)^{-\sigma_3}$. Now also $\phi_s$ is bounded at $x_j$ and again multiplying by a diagonal matrix doesn't mix the columns so we have the claimed asymptotics for $P^{(x_j)}(z)$ as $z\to x_j$ from $\zeta_s^{-1}(\mathrm{I})$. The other regions are similar.

\vspace{0.3cm}

Let us now focus on the matching condition \eqref{eq:localmatch}. We note that as $d$ is positive on $[-1,1]$, we see that for $z\in \partial U_{x_j}$ (and for $\delta$ small enough), $|\zeta_s(z)|\asymp N$ where the implied constants are uniform in $x_j\in(-1+3\delta,1-3\delta)$, $s\in[0,1]$, and $z\in \partial U_{x_j}$. Thus to study $\Psi(\zeta_s(z))$, we can make use of the large argument expansion of Bessel functions. We won't go into great detail here, but simply refer the reader to \cite[Section 4.3]{vanlessen} and references therein.

For simplicity, we focus on the domain $\lbrace z: \mathrm{arg}\ \zeta_s(z)\in(0,\pi/2)\rbrace$. In the other domains, one has different asymptotics for $\Psi$, but the argument is similar. The relevant asymptotics here are

\begin{equation}\label{eq:besselasy}
\Psi(\zeta)=\frac{1}{\sqrt{2}}\begin{pmatrix}
1 & -i\\
-i & 1
\end{pmatrix}\left[I+\mathcal{O}(|\zeta|^{-1})\right]e^{\frac{\pi i}{4}\sigma_3}e^{-i\zeta\sigma_3}e^{-\pi i\frac{\beta_j}{4}\sigma_3},
\end{equation}

\noindent where the implied constant in $\mathcal{O}(|\zeta|^{-1})$ is uniform in the first quadrant. Here and below, the $\mathcal{O}$-notation will refer to a $2\times 2$ matrix whose entries satisfy the relevant bound. Noting from \eqref{eq:zetadef}, \eqref{eq:hdef}, and \eqref{eq:phidef}, that for $z\in U_{x_j}'\cap\lbrace\mathrm{Im}(z)>0\rbrace$

\begin{equation*}
\zeta_s(z)=-Ni(\phi_{s,+}(x_j)-\phi_s(z)).
\end{equation*}

It then follows from this and \eqref{eq:besselasy} that for $z\in \zeta_s^{-1}(\mathrm{I}\cup\mathrm{II})\cap \partial U_{x_j}$

\begin{align*}
\Psi(\zeta_s(z))W_{j}(z)^{-\sigma_3}e^{-N\phi_s(z)\sigma_3}&=\frac{1}{\sqrt{2}}\begin{pmatrix}
1 & -i\\
-i & 1
\end{pmatrix}\left[I+\mathcal{O}(N^{-1})\right]e^{\frac{\pi i}{4}\sigma_3}e^{-N(\phi_{s,+}(x_j)-\phi_s(z))\sigma_3}e^{-\pi i\frac{\beta_j}{4}\sigma_3}\\
\notag & \qquad \times W_j(z)^{-\sigma_3}e^{-N\phi_s(z)\sigma_3}\\
\notag &=\frac{1}{\sqrt{2}}\begin{pmatrix}
1 & -i\\
-i & 1
\end{pmatrix}\left[I+\mathcal{O}(N^{-1})\right] e^{i\frac{\pi}{4}(1-\beta_j)\sigma_3}e^{-N\phi_{s,+}(x_j)\sigma_3}W_j(z)^{-\sigma_3},
\end{align*}

\noindent where the $\mathcal{O}(N^{-1})$ term is uniform in everything relevant. Using \eqref{eq:edef1} and \eqref{eq:localdef}, we see that for $z\in \zeta_s^{-1}(\mathrm{I}\cup\mathrm{II})\cap \partial U_{x_j}$

\begin{equation*}
P^{(x_j)}(z)\left[P^{(\infty)}(z)\right]^{-1}=A(z)(I+\mathcal{O}(N^{-1}))A(z)^{-1}=I+A(z)\mathcal{O}(N^{-1})A(z)^{-1},
\end{equation*}

\noindent where the $\mathcal{O}(N^{-1})$ term is uniform in everything relevant and

\begin{equation*}
A(z)=P^{(\infty)}(z)W_j(z)^{\sigma_3} e^{N\phi_{s,+}(x_j)\sigma_3}e^{-i\frac{\pi}{4}(1-\beta_j)\sigma_3}=E(z)\left[\frac{1}{\sqrt{2}}\begin{pmatrix}
1 & i\\
i & 1
\end{pmatrix}\right]^{-1}
\end{equation*}

\noindent

The claim (in this sector of the boundary) will then follow if we show that $A$ is uniformly bounded in everything relevant. As $\phi_{s,+}(x_j)$ is purely imaginary (see \eqref{eq:hdef}), we see that the relevant question is the boundedness of $P^{(\infty)}(z)W_j(z)^{\sigma_3}$ and its inverse. Looking at \eqref{eq:global}, we see that this is equivalent to $\mathcal{D}_t(z)^{-1}W_j(z)$ being uniformly bounded and uniformly bounded away from zero. Let us write this quantity out. From \eqref{eq:ddef} and \eqref{eq:wdef} we have

\begin{equation*}
\left|\mathcal{D}_t(z)^{-1}W_j(z)\right|=\left|(z+r(z))^{\mathcal{A}} e^{-\frac{r(z)}{2\pi}\int_{-1}^1 \frac{\mathcal{T}_t(\lambda)}{\sqrt{1-\lambda^2}}\frac{d\lambda}{z-\lambda}} e^{\frac{1}{2}\mathcal{T}_t(z)}\right|.
\end{equation*}

$z+r(z)$ is obviously bounded for $z$ in a compact set, the integral term is uniformly bounded in everything relevant by Lemma \ref{le:cauchybounds}, and the last term is bounded as $\mathcal{T}_t$ is uniformly bounded in everything relevant. Similarly we see uniform boundedness away from zero. This concludes the proof for $z\in \zeta_s^{-1}(\mathrm{I}\cup\mathrm{II})\cap \partial U_{x_j}$. The proof in the remaining parts of the boundary are similar.
\end{proof}

We now move on to considering the proof of Lemma \ref{le:2ndorderlocals}.

\begin{proof}[Proof of Lemma \ref{le:2ndorderlocals}]
Here we simply need to take into account the next term in the asymptotic expansion of $\Psi$. The argument is otherwise as in the proof of Lemma \ref{le:localrhp}. For simplicity, we will focus on the case where $\zeta$ is in the first quadrant. Other quadrants are handled in a similar manner. We refer to the discussion around \cite[equation (5.9)]{vanlessen} for the following asymptotics:

\begin{equation}\label{eq:psiasy2ndorder}
\Psi(\zeta)=\frac{1}{\sqrt{2}}\begin{pmatrix}
1 & -i\\
-i & 1
\end{pmatrix}\left[I-i\frac{\beta_j}{4\zeta}\begin{pmatrix}
\frac{\beta_j}{2} & i\\
i& -\frac{\beta_j}{2}
\end{pmatrix}+\mathcal{O}\left(|\zeta|^{-2}\right)\right]e^{i\left(\frac{\pi}{4}-\frac{\beta_j\pi}{4}-\zeta\right)\sigma_3},
\end{equation}

\noindent where the error $\mathcal{O}(|\zeta|^{-2})$ is uniform for $\zeta$ in the first quadrant. Then arguing as in the previous proof, we see that

\begin{align*}
P^{(x_j)}(z)\left[P^{(\infty)}(z)\right]^{-1}&=I-i\frac{\beta_j}{4\zeta_s(z)}A(z)\begin{pmatrix}
\frac{\beta_j}{2} & i\\
i& -\frac{\beta_j}{2}
\end{pmatrix}A(z)^{-1}+\mathcal{O}\left(|\zeta_s(z)|^{-2}\right),
\end{align*}

\noindent where we used the uniform boundedness of $A$ and $A^{-1}$. Noting that

\begin{equation*}
\frac{1}{\sqrt{2}}\begin{pmatrix}
1 & -i\\
-i & 1
\end{pmatrix}\begin{pmatrix}
\frac{\beta_j}{2} & i\\
i & -\frac{\beta_j}{2}
\end{pmatrix}\frac{1}{\sqrt{2}}\begin{pmatrix}
1 & i\\
i & 1
\end{pmatrix}=\begin{pmatrix}
0 & \left(1+\frac{\beta_j}{2}\right)i\\
\left(1-\frac{\beta_j}{2}\right)i & 0
\end{pmatrix},
\end{equation*}

\noindent making use of $\zeta_s(z)\asymp N$ uniformly in everything relevant for $z\in \partial U_{x_j}$ and the fact that the asymptotic expansion of $\Psi$ is uniform, we see the claim. Again, the argument in the other regions is similar.

\end{proof}

\section{The RHP for the local parametrix near the edge of the spectrum}\label{app:locale}

In this section we will give some further details about the parametrices near the edge of the spectrum. First we will justify the definition of the function $\xi_s$ from \eqref{eq:xidef}.

\begin{proof}[Justification of the definition of $\xi_s$]
The argument is essentially as in \cite[Section 7]{dkmlvz}. Let us first recall some properties of $\phi_s$. From \eqref{eq:phidef} and \eqref{eq:hdef}, we note that $\phi_s$ has a jump across $U_1'\cap(-1,1)$ but is continuous across $U_1'\cap(1,\infty)$, so it is analytic in $U_1'\setminus[-1,1]$.
Moreover, in $U_1'\setminus[-1,1]$ we can write

\begin{equation}\label{eq:gtil}
\frac{3\pi}{2} d_s(z) (z+1)^{1/2} (z-1)^{1/2} = \widetilde{G}^{(1)}_s(z) (z-1)^{1/2},
\end{equation}

\noindent where $\widetilde{G}^{(1)}_s$ is analytic in $U_1'$. Expanding $\widetilde{G}^{(1)}_s$ as a series, integrating, and taking into account the branch structure of $\phi_s$, we can write

\begin{equation}\label{eq:Gs}
-\frac{3}{2}\phi_s(z)=G^{(1)}_s(z)(z-1)^{3/2},
\end{equation}

\noindent where the power is according to the principal branch and $G^{(1)}_s$ is analytic in $U_{1}'$. If we expand $G^{(1)}_s(z)=\sum_{k=0}^\infty G^{(1)}_{s,k}(z-1)^k$ and $\widetilde{G}^{(1)}_s(w)=\sum_{k=0}^\infty \widetilde{G}^{(1)}_{s,k}(w-1)^k$, then

\begin{equation*}
G^{(1)}_{s,k}=\frac{2}{3+2k} \widetilde{G}^{(1)}_{s,k}.
\end{equation*}

Now as $\widetilde{G}^{(1)}_{s,0}=\frac{3\pi}{\sqrt{2}}d_s(1)$ is uniformly bounded away from zero, we see from the above display that the same holds for $G^{(1)}_{s,0}$. By Cauchy's integral formula (for derivatives),

\begin{equation*}
\left|\widetilde{G}^{(1)}_{s,k}\right|\leq (3\delta/2)^{-k}\sup_{|z-1|=\delta}\left|\frac{3}{2}\sqrt{z+1}\left[sd(z)+(1-s)\frac{2}{\pi}\right]\right|\leq C_\delta (3\delta/2)^{-k},
\end{equation*}

\noindent for some constant $C_\delta$ independent of $s$, so we again get a similar bound for $G^{(1)}_{s,k}$. From this type of estimate, one can easily argue that by possibly decreasing $\delta$ by some $s$-independent factor, $G^{(1)}_s$ is zero free in $U_1'$. Thus with a suitable convention for the branch of the power, the function

\begin{equation*}
\xi_s(z)=N^{2/3}(z-1) G^{(1)}_s(z)^{2/3}
\end{equation*}

\noindent is analytic in $U_1'$.

For injectivity, note that the derivative of the function $z\mapsto (z-1)G^{(1)}_s(z)^{2/3}$ at $z=1$ is uniformly (in $s$) bounded away from zero and its second derivative is uniformly bounded in $s$ and in a small enough ($s$ independent) neighborhood of $1$. Thus by decreasing $\delta$ if needed (in an $s$ independent manner), we have univalence of $\xi_s$.

\end{proof}

We now sketch the proof of Lemma \ref{le:localerhp}.

\begin{proof}[Sketch of a proof of Lemma \ref{le:localerhp}]
Let us first of all consider the analyticity of $F$. $P^{(\infty)}$ is analytic in $U_1'\setminus [-1,1]$, $f^{1/2}$ is analytic in $U_1'$, and as $\zeta_s(1)=0$, $\zeta_s^{1/4}$ has a branch cut in $U_1$. We note from \eqref{eq:Gs} that as one can check (from \eqref{eq:hdef}) that $-\phi_s(\lambda)>0$ for $\lambda>1$, $G_s(\lambda)>0$ for $\lambda>1$. Thus $G_s$ is real on $\R\cap U_1'$. As we argued above that it's zero free, it must be positive on $\R\cap U_1'$, so we see that $\xi_s(\lambda)<0$ for $\lambda<1$. As we are dealing with the principal branch, the cut of $\xi_s^{1/4}$ is along $U_1'\cap(-1,1)$. It's thus enough to check that $F$ is continuous across $(-1,1)\cap U_1'$ and does not have an isolated singularity at $z=1$.

For the continuity across $(-1,1)$, let $\lambda\in(-1,1)\cap U_1'$. We have from \eqref{eq:globaljump} and the jump for $\xi_s^{1/4}$: for $\lambda\in(-1,1)\cap U_1'$

\begin{equation*}
[\xi_s]_+^{1/4}(\lambda)=i[\xi_s]_-^{1/4}(\lambda),
\end{equation*}

\noindent so that

\begin{align*}
F_-(\lambda)^{-1}F_+(\lambda)&=\left([\xi_s]^{1/4}_-(\lambda)\right)^{-\sigma_3} \frac{1}{2}\begin{pmatrix}
1 & 1\\
-1 & 1
\end{pmatrix} e^{-i\frac{\pi}{4}\sigma_3}f_t(\lambda)^{-\sigma_3/2}\left[P^{(\infty)}_-(\lambda)\right]^{-1} P^{(\infty)}_+(\lambda)\\
\notag & \qquad \times f_t(\lambda)^{\sigma_3/2}e^{i\frac{\pi}{4}\sigma_3}\begin{pmatrix}
1 & -1\\
1 & 1
\end{pmatrix}\left([\xi_s]^{1/4}_+(\lambda)\right)^{\sigma_3} \\
\notag &=\left([\xi_s]^{1/4}_-(\lambda)\right)^{-\sigma_3} \frac{1}{2}\begin{pmatrix}
1 & 1\\
-1 & 1
\end{pmatrix} \begin{pmatrix}
0 & -i\\
-i & 0
\end{pmatrix}\begin{pmatrix}
1 & -1\\
1 & 1
\end{pmatrix}\left([\xi_s]^{1/4}_+(\lambda)\right)^{\sigma_3} \\
&=I.
\end{align*}

\noindent Thus $F$ is continuous across $(-1,1)\cap U_1'$.

For the absence of an isolated singularity, we note that the entries of $\xi_s(z)^{\sigma_3/4}$ behave at worst like $|z-1|^{-1/4}$ as $z\to 1$. From \eqref{eq:global} and Lemma \ref{le:cauchybounds}, we see that the entries of $P^{(\infty)}(z)$ behave at worst like $|z-1|^{-1/4}$ as well. As $f^{1/2}(z)$ is bounded at $z=1$, the entries of $F(z)$ behave at worst like $|z-1|^{-1/2}$. This is not strong enough to be a pole, so there can be no isolated singularity at $z=1$ and $F$ is analytic.

\vspace{0.3cm}

Towards checking the analyticity of $P^{(1)}$ on $U_1'\setminus \Sigma$, we refer to \cite[Section 7]{dkmlvz} on the following matter (in their notation $Q=\Psi^\sigma$): $Q(\xi_s(z))$ is analytic on $U_1'\setminus \Sigma$ and it satisfies the following jump conditions:

\begin{equation}\label{eq:qjump1}
Q_+(\xi_s(\lambda))=Q_-(\xi_s(\lambda))\begin{pmatrix}
1 & 0\\
1 & 1
\end{pmatrix}, \quad \lambda\in \Sigma_{k+1}^\pm\cap U_1',
\end{equation}

\begin{equation}\label{eq:qjump2}
Q_+(\xi_s(\lambda))=Q_-(\xi_s(\lambda))\begin{pmatrix}
0 & 1\\
-1 & 0
\end{pmatrix}, \quad \lambda\in (-1,1)^\pm\cap U_1',
\end{equation}

\noindent and

\begin{equation}\label{eq:qjump3}
Q_+(\xi_s(\lambda))=Q_-(\xi_s(\lambda))\begin{pmatrix}
1 & 1\\
0 & 1
\end{pmatrix}, \quad \lambda\in (1,\infty)^\pm\cap U_1'.
\end{equation}

As $f_t^{\pm 1/2}$ is analytic in $U_1'$ as if $F$, and $\phi_s$ has a jump along $(-1,1)\cap U_1'$, we see that $P^{(1)}$ indeed is analytic in $U_1'$.

The jump conditions come from those of $Q$. Let us check for example the one across $(-1,1)\cap U_1'$ -- \eqref{eq:localej1}. For $\lambda\in(-1,1)\cap U_1'$, we have

\begin{align*}
\left[P_-^{(1)}(\lambda)\right]^{-1}P^{(1)}_+(\lambda)&=f_t(\lambda)^{\sigma_3/2}e^{N\phi_{s,-}(\lambda)}Q_-(\xi_s(\lambda)) Q_+(\xi_s(\lambda)e^{-N\phi_{s,+}(\lambda)}f_t(\lambda)^{-\sigma_3/2}\\
\notag &=f_t(\lambda)^{\sigma_3/2} e^{-\frac{1}{2}Nh_s(\lambda)\sigma_3}\begin{pmatrix}
0 & 1\\
-1 & 0
\end{pmatrix}e^{-\frac{1}{2}N h_s(\lambda)\sigma_3}f_t(\lambda)^{-\sigma_3/2}\\
\notag &=\begin{pmatrix}
0 & f_t(\lambda)\\
-f_t(\lambda)^{-1} & 0
\end{pmatrix}.
\end{align*}

\noindent The other jump conditions are similar.

Let us then check the matching condition. Let $z\in \partial U_1$. For small enough $\delta$ (independent of $s$), it is clear from \eqref{eq:hdef} and \eqref{eq:phidef} that $|\phi_s(z)|$ is bounded away from zero uniformly in $s$ and uniformly in $z\in \partial U_1$. Thus $|\xi_s(z)|\asymp N^{2/3}$ where the implied constants are uniform in $z$ and $s$. We can thus make use of the large $|\xi|$ asymptotics of $\mathrm{Ai}(\xi)$ and $\mathrm{Ai}'(\xi)$ to obtain asymptotics for $Q(\xi_s(z))$. For this, we will again refer to \cite{dkmlvz} -- in particular \cite[(7.30)]{dkmlvz}: for $z\in \partial U_1$

\begin{equation*}
Q(\xi_s(z))e^{\frac{2}{3}\xi_s(z)^{3/2}\sigma_3}=\frac{e^{\pi i /12}}{2\sqrt{\pi}}\xi_s(z)^{-\sigma_3/4}\left[\begin{pmatrix}
1 & 1\\
-1 & 1
\end{pmatrix}e^{-i\frac{\pi}{4}\sigma_3}+\mathcal{O}(N^{-1})\right],
\end{equation*}

\noindent where the error is uniform in $z$ and $s$. Recalling that the construction of $\xi_s$ was precisely so that $\frac{2}{3}\xi_s(z)^{3/2}=-N\phi_s(z)$, we see that

\begin{equation*}
Q(\xi_s(z))e^{-N\phi_s(z)\sigma_3}f_t(z)^{-\sigma_3/2}=\frac{e^{\pi i /12}}{2\sqrt{\pi}}\xi_s(z)^{-\sigma_3/4}\left[\begin{pmatrix}
1 & 1\\
-1 & 1
\end{pmatrix}e^{-i\frac{\pi}{4}\sigma_3}+\mathcal{O}(N^{-1})\right] f_t(z)^{-\sigma_3/2},
\end{equation*}

\noindent with the $\mathcal{O}(N^{-1})$-term being uniform in everything relevant. Thus

\begin{align*}
P^{(1)}(z)\left[P^{(\infty)}(z)\right]^{-1}&=I+P^{(\infty)}(z) f_t(z)^{\sigma_3/2}\mathcal{O}(N^{-1})f_t(z)^{-\sigma_3/2}\left[P^{(\infty)}(z)\right]^{-1}.
\end{align*}

As $f_t(z)^{\pm 1}$ as well as the entries of $[P^{(\infty)}]^{\pm 1}$ are uniformly (in everything relevant) bounded on $\partial U_1$, the claim follows.
\end{proof}

We will also give a proof of Lemma \ref{le:locale2ndorderasy}.

\begin{proof}[Proof of Lemma \ref{le:locale2ndorderasy}]
This is again proven as the matching condition, but using finer asymptotics of the Airy function. In particular, one has (see \cite[(7.30)]{dkmlvz})

\begin{align*}
Q(\xi_s(z))&e^{-N\phi_s(z)\sigma_3}\\
&=\frac{e^{\pi i /12}}{2\sqrt{\pi}}\xi_s(z)^{-\sigma_3/4}\left[\begin{pmatrix}
1 & 1\\
-1 & 1
\end{pmatrix}+\begin{pmatrix}
-\frac{5}{48} & \frac{5}{48}\\
-\frac{7}{48} & -\frac{7}{48}
\end{pmatrix}\xi_s(z)^{-3/2}+\mathcal{O}(|\xi_s(z)|^{-3})\right]e^{-i\frac{\pi}{4}\sigma_3},
\end{align*}

\noindent where the constant implied by the $\mathcal{O}$ notation is uniform in everything relevant. Thus arguing as in the previous proof, we see that for $z\in \partial U_1$

\begin{align*}
&P^{(1)}(z)\left[P^{(\infty)}(z)\right]^{-1}\\
&=I+P^{(\infty)}(z)f(z)^{\sigma_3/2}e^{i\pi\sigma_3/4}\frac{1}{8}\begin{pmatrix}
\frac{1}{6} & 1\\
-1 & -\frac{1}{6}
\end{pmatrix}e^{-i\pi\sigma_3/4}f(z)^{-\sigma_3/2}\left[P^{(\infty)}(z)\right]^{-1}\xi_s(z)^{-3/2}+\mathcal{O}(N^{-2})
\end{align*}

\noindent uniformly in everything relevant.

\end{proof}

\section{\texorpdfstring{Proofs concerning the final transformation and solving the $R$-RHP}{Proofs concerning the final transformation and solving the small norm RHP}}\label{app:rrhp}

In this section we sketch proofs concerning the final transformation and the solution of the $R$-RHP. We start with checking that $R$ indeed solves the RHP of Lemma \ref{le:rrhp}.

\begin{proof}[Proof of Lemma \ref{le:rrhp}]
Uniqueness follows from $S$ being the unique solution to its problem. The last condition is immediate to check as for large $|z|$, $R(z)=S(z)[P^{(\infty)}(z)]^{-1}$ and both of these terms are asymptotically $I+\mathcal{O}(|z|^{-1})$. The jump conditions simply make use of the definition of $R$ and the jump conditions of $S$ -- these are direct to check and we skip this.

For the analyticity condition we begin with the domain $U_{\pm 1}$. Here the construction of $P^{(\pm 1)}$ was such that it would have the same jumps as $S$ so $R$ has no branch cuts inside of $U_{\pm 1}$. We are left with the possibility that $R$ would have an isolated singularity at $z=\pm 1$. Recall that $S(z)$ is bounded as $z\to \pm 1$, while Lemma \ref{le:cauchybounds} implies that the entries of $[P^{(\infty)}(z)]^{-1}$ can blow up at most like $|z\mp 1|^{-1/4}$ as $z\to \pm 1$. Thus the possible isolated singularity of $R$ is not strong enough to be a pole (or essential), so it is removable, and $R$ is analytic in $U_{\pm 1}$.

Consider now a neighborhood $U_{x_j}$. Again, by the construction of the parametrix, there are no jumps here, and the only possible singularity is an isolated singularity at $x_j$. Recall now that as $z\to x_j$ from outside of the lenses, $S(z)=\mathcal{O}(1)$, and as $z\to x_j$ from inside of the lenses,

\begin{equation*}
S(z)=\begin{pmatrix}
\mathcal{O}(|z-x_j|^{-\beta_j}) & \mathcal{O}(1)\\
\mathcal{O}(|z-x_j|^{-\beta_j}) & \mathcal{O}(1)
\end{pmatrix}.
\end{equation*}

$P^{(x_j)}(z)$ has similar behavior near $x_j$. To estimate it's inverse, we note  that $\det P^{(x_j)}(z)=1$ for all $z\in U_{x_j}$ - which follows directly from the definitions once one knows that $\det \Psi=1$ (which we argued following Definition \ref{def:psi}, or one could check directly using the explicit representation of $\Psi$ from Appendix \ref{app:locals}).

We thus see that as $z\to x_j$ from outside of the lenses, $[P^{(x_j)}(z)]^{-1}$ remains bounded, and as $z\to x_j$ from inside the lenses, we have

\begin{equation*}
\left[P^{(x_j)}(z)\right]^{-1}=\begin{pmatrix}
\mathcal{O}(1) & \mathcal{O}(1)\\
\mathcal{O}(|z-x_j|^{-\beta_j}) & \mathcal{O}(|z-x_j|^{-\beta_j})
\end{pmatrix}
\end{equation*}

\noindent so we conclude that from the inside of the lens, the entries of the matrix $S(z)[P^{(x_j)}(z)]^{-1}$ have singularities of order $\mathcal{O}(|z-x_j|^{-\beta_j})$ at worst. Now we see that as $S(z)[P^{(x_j)}(z)]^{-1}$ remains bounded as $z\to x_j$ from outside of the lenses, it can't have a pole at $x_j$. But as the degree of the singularity is bounded (we can find an integer $k$ such that $(z-x_j)^kS(z)[P^{(x_j)}(z)]^{-1}$ tends to zero as $z\to x_j$), the singularity can't be essential either. Thus the only possibility is that the singularity is removable, and $R(z)$ is analytic in $U_{x_j}$. Thus we see that $R$ indeed solves the Riemann-Hilbert problem.
\end{proof}

We next prove the relevant estimate for the jump matrix.

\begin{proof}[Proof of Lemma \ref{le:rjumpmat}]
Let us first consider the jump matrix on $\R\setminus [-1-\delta,1+\delta]$.  Here we have

\begin{equation*}
\Delta(\lambda)=P^{(\infty)}(\lambda)\begin{pmatrix}
0 & f_t(\lambda)e^{N(g_{s,+}(\lambda)+g_{s,-}(\lambda)-V_s(\lambda)-\ell_s)}\\
0 & 0
\end{pmatrix}\left[P^{(\infty)}(\lambda)\right]^{-1}.
\end{equation*}

First of all, we note that the entries of $P^{(\infty)}(\lambda)$ and $[P^{(\infty)}(\lambda)]^{-1}$ are bounded (uniformly in everything relevant) in this area, and $f_t(\lambda)$ grows like $|\lambda|^{\sum_{j=1}^k \beta_j}$ as $|\lambda|\to\infty$. From \eqref{eq:gbv2}, we see that there exist constants $C,M>0$ depending only on $V$ such that for $|\lambda|>1+M$, $e^{N(g_{s,+}(\lambda)+g_{s,-}(\lambda)-V_s(\lambda)-\ell_s)}\leq |\lambda|^{-N}$ and for $|\lambda|-1\in(0,M)$, $e^{N(g_{s,+}(\lambda)+g_{s,-}(\lambda)-V_s(\lambda)-\ell_s)}\leq e^{-NC(|\lambda|-1)^{3/2}}$. From these estimates, it's easy to see that any $L^p$ norm on $\R\setminus[-1-\delta,1+\delta]$ is exponentially small in $N$.

Consider next the part of the contour lying on the boundaries of the lenses. More precisely, we have for $\lambda\in \cup_{j=1}^{k+1}\Sigma_j^\pm\setminus\overline{U_{-1}\cup\cup_{j=1}^k U_{x_j}\cup U_1}$,

\begin{equation*}
\Delta(\lambda)=P^{(\infty)}(\lambda)\begin{pmatrix}
0 & 0\\
f_t(\lambda)^{-1}e^{\mp Nh_s(\lambda)} & 0
\end{pmatrix}\left[P^{(\infty)}(\lambda)\right]^{-1}.
\end{equation*}

We now refer to Lemma \ref{le:hsign}, which states that for example for $\lambda\in\Sigma_j^+\setminus\overline{U_{-1}\cup\cup_{l=1}^k U_{x_l}\cup U_1}$, there exists an $\epsilon>0$ independent of $s$ and $\lambda$ such that  $\mathrm{Re}(h_s(\lambda))>\epsilon$ (we assume that the distance between this part of the contour and the real axis is bounded away from zero uniformly in everything relevant). Moreover, $f_t(\lambda)^{-1}$ is uniformly bounded here so we again get exponential smallness for any $L^p$ norm uniformly in everything relevant for this part of the contour (as the contour has finite length). The $\Sigma_j^{-}$-case is identical.

For $\partial U_{x_j}$ and $\partial U_{\pm 1}$ the bounds come from the matching conditions in Lemma \ref{le:localrhp} and Lemma \ref{le:localerhp}. Combining the estimates from the different parts of the contour is elementary and we find the claim.

\end{proof}

The next proof we consider is the representation of $R$ in terms of a certain Neumann-series. The proof follows \cite[Theorem 7.8]{dkmlvz}, and while it is a standard fact, we record it here for completeness.

\begin{proof}[Proof of Proposition \ref{prop:neumann}]
By the Sokhotski-Plemelj theorem, we see that the function $\widehat{R}=I+ C(R_+ - R_-)$ satisfies $\widehat{R}_+-\widehat{R}_-=R_+-R_-$ across $\Gamma_\delta\setminus \lbrace \mathrm{intersection \ points}\rbrace$ (note that from our proof of Lemma \ref{le:rjumpmat}, we see that $R_+-R_-=R_-\Delta$ has nice enough decay at infinity for $\widehat{R}$ to be well defined). Thus the function $\widehat{R}-R$ has no jump across $\Gamma_\delta\setminus \lbrace\mathrm{intersection \ points}\rbrace$. By construction, both functions are bounded at the intersection points of the different parts of the contour, and behave like $I+\mathcal{O}(|z|^{-1})$ as $z\to\infty$, so by Liouville's theorem

\begin{equation*}
R=I+C(R_+-R_-)=I+C(R_-\Delta).
\end{equation*}

In particular, taking the limit from the $-$ side, we obtain
$$R_- - I = C_-(R_- \Delta) = C_\Delta(R_-) \quad \Leftrightarrow  \quad (I-C_\Delta)(R_- - I) = C_\Delta(I).$$

\noindent It is well known that $C_-$ is a bounded operator from $L^2(\Gamma_\delta)$ to $L^2(\Gamma_\delta)$ -- see e.g. the discussion and references in \cite[Appendix A]{dkmlvz}. Given the estimate in Lemma \ref{le:rjumpmat} the operator norm of $C_\Delta$ is of order $\mathcal{O}(1/N)$, $I - C_\Delta$ is invertible (and the inverse can be expanded as a Neumann series) for $N$ sufficiently large and the result follows.

\end{proof}

Finally we prove the main result concerning $R$. Our proof is a minor modification of that in \cite{krasovsky}.

\begin{proof}[Proof of Theorem \ref{th:rasy}]
Note that since $(I - C_\Delta)(R_- - I) = C_\Delta(I)$ and since the $L^2$-boundedness of $C_-$ implies that $||C_\Delta(I)||_{L^2(\Gamma_\delta)}=\mathcal{O}(N^{-1})$ (uniformly in everything relevant), we have

\begin{align*}
||R_- - I||_{L^2(\Gamma_\delta)} \le ||(I - C_\Delta)^{-1}||_{L^2(\Gamma_\delta)\to L^2(\Gamma_\delta)} ||C_{\Delta}(I)||_{L^2(\Gamma_\delta)} \le \frac{c_1}{N}
\end{align*}

\noindent for some $c_1 > 0$ (independent of the relevant quantities).

Now fix some small $\epsilon > 0$, and suppose $z$ is at least $\epsilon$ away from the jump contour $\Gamma_\delta$. Recall that in the proof of \eqref{eq:R_Neuman}, we saw that $(I-C_\Delta)^{-1}C_\Delta(I)=R_--I$, so we have (for $c_2,c_3,c_4$ depending on $\epsilon$ but not on $t,s,...$)

\begin{align*}
|R- I| &\le |C(\Delta)| + |C((R_- - I)\Delta)| \\
& \le \frac{c_2}{N} + c_3 ||R_- - I||_{L^2(\Gamma_\delta)} ||\Delta||_{L^2(\Gamma_\delta)} \le \frac{c_4}{N},
\end{align*}

\noindent where we used Cauchy-Schwarz in the second step and the facts that $R_-$ is bounded on $\Gamma_\delta$ and behaves like $I+\mathcal{O}(|\lambda|^{-1})$, as $\lambda\to\infty$.

\vspace{0.3cm}

For $z \in \mathbb{C} \backslash \Gamma_\delta$ that is within a distance of $\epsilon$ from $\Gamma_\delta$ but not close to any intersection points, we use the usual trick of contour deformation. First note that we can analytically continue the jump matrix $J_R$ to, without loss of generality, a $(2\epsilon)$-neighbourhood of $\Gamma_\delta$, with the estimates in Lemma \ref{le:rjumpmat} remaining true (up to a change of constants).

We may assume that $z$ lies on the $+$ side of $\Gamma_\delta$. Let $\widetilde{\Gamma}_{\delta}$ be the contour in Figure \ref{fig:R_deform}, obtained from $\Gamma_\delta$ with the dotted part replaced by a half circle of radius $\epsilon$,  and $\widetilde{R}$ be defined as shown, where $J$ is the analytic continuation of $J_R$. Then $\widetilde{R}(z)$ satisfies the same Riemann-Hilbert problem as $R(z)$ except on the new contour $\widetilde{\Gamma}_\delta$. Repeating our argument for the case where $z$ is at distance at least $\epsilon$ from the contour, we see that

\begin{equation*}
|R(z) - I| = |\widetilde{R}(z) - I| \le \frac{c_5}{N},
\end{equation*}

\noindent for a $c_5$ which is uniform in the relevant quantities. Now note that all estimates established so far are also uniform in $\delta \in K \subset (0, \delta_0]$ for some compact set $K$ and $\delta_0 > 0$, see \cite[Section 7.2]{dkmlvz}. If $z$ is close to any intersection points we may then deform our contour by varying $\delta$.

For the derivative, let us consider the case where the distance between $z$ and the jump contour is greater than $\epsilon$. Then by Cauchy's integral formula we have

\begin{equation*}
R'(z)=\frac{1}{2\pi i}\int_{|w-z|=\epsilon}\frac{R(w)}{(w-z)^2}dw=\frac{1}{2\pi i}\int_{|w-z|=\epsilon}\frac{R(w)-I}{(w-z)^2}dw = \mathcal{O}(N^{-1})
\end{equation*}

\noindent where the last equality follows from the uniform estimates for $R(w)-I$. For $z$ close to the contour we argue by contour deformation again.

\begin{figure}
\begin{center}
\begin{tikzpicture}

\draw[thick, ->] (0,0) -- (0.5, 0) node[above]{$+$} node[below] {$-$} -- (4,0) (0,0) -- (2,0);
\draw[thick, ->] (6,0) -- (10,0) node[right] {$\widetilde{\Gamma}_\delta$} (6,0) -- (8,0);
\draw[dotted, thick] (4,0) -- (6,0);
\draw[fill = black] (5, 0.1) circle [radius = 0.05] node[above]{$z$};
\draw[thick] (4,0) arc [radius = 1, start angle = 180, end angle = 360];
\draw[thick, ->] (4,0) arc [radius = 1, start angle = 180, end angle = 270];
\draw[thick, <->] (5,-0.2) -- (5, -0.5) node[left] {$\epsilon$} -- (5, -0.8);

\node at (2, 1) {$\widetilde{R} = R$};
\node at (2, -1) {$\widetilde{R} = R$};
\draw[thick, ->] (7, -1) node[below] {$\widetilde{R} = RJ$} -- (5.5, -0.3);

\end{tikzpicture}
\end{center}
\caption{\label{fig:R_deform} Deforming the R-RHP.}
\end{figure}
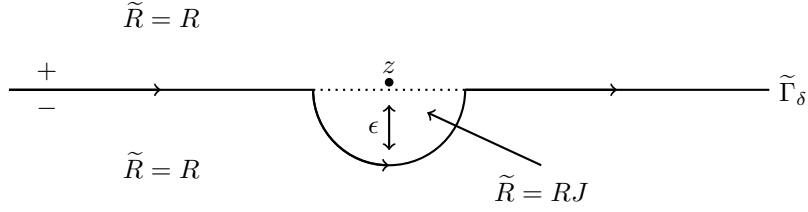

We now want to extract the second order asymptotics when $\Tree = 0$. Since

\begin{equation*}
R=I+C(\Delta)+C((R_--I)\Delta),
\end{equation*}

\noindent repeating our argument with minor modifications we see that

\begin{equation*}
R-I-C(\Delta)=\mathcal{O}(N^{-2})
\quad \text{ and } \quad R' - C(\Delta)' = \mathcal{O}(N^{-2})
\end{equation*}

\noindent uniformly off of $\Gamma_\delta$ and uniformly in everything relevant. Now by definition, we have

\begin{equation*}
[C(\Delta)](z)=\int_{\Gamma_\delta}\frac{\Delta(w)}{w-z}\frac{dw}{2\pi i}.
\end{equation*}

With similar arguments as in the proof of Lemma \ref{le:rjumpmat}, one can easily see (e.g. using Cauchy-Schwarz and a $L^2$-norm bound on the jump matrix on the unbounded part of the contour and a $L^\infty$-norm bound on the part of the contour on the boundary of the lenses) that the contribution from the part of the contour on $\R$ and on the boundary of the lenses has uniformly (in everything relevant) exponentially small contribution to $C(\Delta)$. Thus we have for $z$ not on the jump contour

\begin{align*}
[C(\Delta)](z)&=\sum_{j=0}^{k+1}\oint_{\partial U_{x_j}}\frac{\Delta(w)}{w-z}\frac{dw}{2\pi i}+\mathcal{O}(N^{-2})
=: \sum_{j=0}^{k+1}R_1^{(x_j)}(z) + \mathcal{O}(N^{-2}),
\end{align*}

\noindent where the orientation of the contours is in the clockwise direction and the $\mathcal{O}(N^{-2})$ is uniform in everything relevant. From Lemma \ref{le:2ndorderlocals}, Lemma \ref{le:locale2ndorderasy}, and Remark \ref{rem:localematch}, we can then write (again for $z$ off of the jump contour)

\begin{align*}
&R_1^{(x_j)}(z)
=\frac{1}{2\pi i}\oint_{\partial U_{x_j}}\frac{dw}{w-z}\frac{\beta_j}{4\zeta_s^{(x_j)}(w)}E^{(x_j)}(w)\begin{pmatrix}
0 & 1+\frac{\beta_j}{2}\\
1-\frac{\beta_j}{2} & 0\end{pmatrix}\left[E^{(x_j)}(w)\right]^{-1}, \quad 1\leq j\leq k\\
&R_1^{(\pm 1)}(z)= \frac{1}{2\pi i}\oint_{\partial U_{\pm 1}}\frac{dw}{w-z}F^{(\pm1)}(w)\begin{pmatrix}
0 & \pm\frac{5}{48}\left[\xi_s^{(\pm 1)}(w)\right]^{-2}\\
-\frac{7}{48}\left[\xi_s^{(\pm 1)}(w)\right]^{-1} & 0
\end{pmatrix}\left[F^{(\pm 1)}(w)\right]^{-1}\\
\end{align*}

\noindent where the superscripts have been added to underline that the functions depend on the singularity we are considering.

Consider now $z\notin U_{x_j}$ with $j\in\lbrace 1,...,k\rbrace$. Then as $E,E^{-1}$ are analytic in $U_{x_j}$ and $1/\zeta^{(x_j)}_s(w)$ has a simple pole at $w=x_j$ (and no other singularities in $U_{x_j}$), we see that

\begin{align*}
R_1^{(x_j)}(z) &=\frac{1}{z-x_j}\frac{\beta_j}{4\pi N\left(\frac{2}{\pi}(1-s)+sd(x_j)\right)\sqrt{1-x_j^2}}E^{(x_j)}(x_j)\begin{pmatrix}
0 & 1+\frac{\beta_j}{2}\\
1-\frac{\beta_j}{2} & 0\end{pmatrix}\left[E^{(x_j)}(x_j)\right]^{-1},
\end{align*}

\noindent where, by writing $b_{x_j} = a_+(x_j)^2 + a_+(x_j)^{-2}$ and $\bar{b}_{x_j} = a_+(x_j)^2 - a_+(x_j)^{-2}$, { one finds (after an elementary calculation)}
\begin{align*}
& E^{(x_j)}(x_j)\begin{pmatrix}
0 & 1+\frac{\beta_j}{2}\\
1-\frac{\beta_j}{2} & 0\end{pmatrix}\left[E^{(x_j)}(x_j)\right]^{-1}\\
& \qquad = \frac{1}{8}
\begin{pmatrix}
-i[2(c_{x_j}^2 + c_{x_j}^{-2}) b_{x_j} \bar{b}_{x_j} + \beta_j (b_{x_j}^2 + \bar{b}_{x_j}^2)] &
2 \mathcal{D}(\infty)^2 [(c_{x_j}^2 b_{x_j}^2 + c_{x_j}^{-2} \bar{b}_{x_j}^2 ) + \beta_j b_{x_j} \bar{b}_{x_j}{]}\\
2 \mathcal{D}(\infty)^{-2} [(c_{x_j}^{-2} b_{x_j}^2 + c_{x_j}^{2} \bar{b}_{x_j}^2 ) + \beta_j b_{x_j} \bar{b}_{x_j}{]}&
i[2(c_{x_j}^2 + c_{x_j}^{-2}) b_{x_j} \bar{b}_{x_j} + \beta_j (b_{x_j}^2 + \bar{b}_{x_j}^2)]
\end{pmatrix}.
\end{align*}

\noindent {Here we made use of the fact that $E^{(x_j)}$ is analytic at $x_j$ so we can evaluate $E^{(x_j)}(x_j)$ using the formula \eqref{eq:edef1}.}

For $R_1^{(\pm 1)}(z)$ with $z \notin U_{\pm 1}$ the residue calculations are more involved (but still straightforward) because of the presence of a second order pole. We just summarize here that

\begin{align*}
& R_{1}^{(-1)}(z)
= - \frac{2^{1/2}}{2N}\frac{1}{(-1-z)^2} \frac{5}{48G_s^{(-1)}(-1)}
\begin{pmatrix}
-i & \mathcal{D}(\infty)^2 \\ \mathcal{D}(\infty)^{-2} & i
\end{pmatrix}\\
&  +\frac{2^{1/2}}{8N}\frac{1}{z+1} \begin{pmatrix}
i\left[\frac{9 - 96 \mathcal{A}^2}{48G_s^{(-1)}(-1)}- \frac{5\left[ G_s^{(-1)}\right]'(-1)}{12G_s^{(-1)}(-1)^2}\right] &
\mathcal{D}(\infty)^2 \left[ \frac{19 +96\mathcal{A}(1 + \mathcal{A})}{48G_s^{(-1)}(-1)} + \frac{5\left[G_s^{(-1)}\right]'(-1)}{12G_s^{(-1)}(-1)^2}\right]\\
\frac{i}{\mathcal{D}(\infty)^{2}} \left[ \frac{19 -96\mathcal{A}(1-\mathcal{A})}{48G_s^{(-1)}(-1)} + \frac{5\left[ G_s^{(-1)}\right]'(-1)}{12G_s^{(-1)}(-1)^2}\right]&
-i\left[\frac{9 - 96 \mathcal{A}^2}{48G_s^{(1)}(1)}- \frac{5\left[G_s^{(-1)}\right]'(-1)}{12G_s^{(-1)}(-1)^2}\right]
\end{pmatrix},\\
& R_1^{(1)}(z)
= - \frac{2^{1/2}}{2N}\frac{1}{(1-z)^2} \frac{5}{48G_s^{(1)}(1)}
\begin{pmatrix}
1 & -i\mathcal{D}(\infty)^2 \\ -i \mathcal{D}(\infty)^{-2} & -1
\end{pmatrix}\\
&  -\frac{2^{1/2}}{8N}\frac{1}{1-z} \begin{pmatrix}
\frac{9 - 96 \mathcal{A}^2}{48G_s^{(1)}(1)}+ \frac{5\left[ G_s^{(1)}\right]'(1)}{12G_s^{(1)}(1)^2} &
i \mathcal{D}(\infty)^2 \left[ \frac{19 +96\mathcal{A} + 96 \mathcal{A}^2}{48G_s^{(1)}(1)} - \frac{5\left[ G_s^{(1)}\right]'(1)}{12G_s^{(1)}(1)^2}\right]\\
i \mathcal{D}(\infty)^{-2} \left[ \frac{19 -96\mathcal{A} + 96 \mathcal{A}^2}{48G_s^{(1)}(1)} - \frac{5\left[ G_s^{(1)}\right]'(1)}{12G_s^{(1)}(1)^2}\right]&
-\frac{9 - 96 \mathcal{A}^2}{48G_s^{(1)}(1)}- \frac{5\left[ G_s^{(1)}\right]'(1)}{12G_s^{(1)}(1)^2}
\end{pmatrix},
\end{align*}

\noindent where the functions $G_s^{(\pm 1)}(z)$ come from
\begin{align*}
\xi_s^{(-1)}(z) = e^{-i\pi} N^{2/3} G_s^{(-1)}(z)^{2/3} (z+1),
\qquad \xi_s^{(1)}(z) = N^{2/3} G_s^{(1)}(z)^{2/3} (z-1),
\end{align*}

\noindent (see Appendix \ref{app:locale}). $\Jcal^{(x_j)}(z)$ may now be obtained by direct calculation.
\end{proof}

\section{Uniformity of the asymptotics in Theorem \ref{th:cf}}\label{app:cfcheck}

In this appendix we will give a brief outline of how to check that the asymptotics in Theorem \ref{th:cf} are still uniform when we replace $V$ by $V_{x,y}$ when $x,y\in(-1+\epsilon,1-\epsilon)$ (in the notation of Section \ref{sec:mainproof}). We will not try to be self contained here and we will use notations both from \cite{cf} and ones we've adopted earlier in this article. We won't provide all of the relevant definitions from \cite{cf}. We will simply try to provide a map of how to go over the argument.

Let us write $u=(x-y)^2/4\geq 0$ (which in the notation of \cite{cf} is $t$) and $v=(x+y)/2\in(-1+\epsilon,1-\epsilon)$, where $\epsilon$ is determined by the support of our non-negative test function. We also write $V_v(z)=V(z+v)$. In the notation of Section \ref{sec:mainproof}, we are interested in the asymptotics of $D_{N-1}(f_u;V_v)$, which in the notation of \cite{cf} would be $\widehat{Z}_N(u,\beta,V_v)/N!$. Note that in the notation of \cite{cf}, $\beta$ is replaced by $\alpha$.

Let us write $Y$ for the solution of the RHP related to $D_{N-1}(f_u;V_v)$.  $Y$ depends on $u$ and $v$, but as usual, we suppress this dependence in our notation. Then as the "center of mass" and "relative motion" coordinates decouple, or $\partial_u V_v=0$ for all $u$ and $v$, the proof of \cite[Proposition 4.1]{cf} carries through word to word and one finds

\begin{equation}\label{eq:cfdi}
\partial_u \log D_{N-1}(f_u;V_v)=-\frac{\beta}{2\sqrt{u}}\left[(Y(\sqrt{u})^{-1}Y'(\sqrt{u}))_{22}-(Y(-\sqrt{u})^{-1}Y'(-\sqrt{u}))_{22}\right].
\end{equation}

\noindent The goal will be to integrate this from zero to some positive $u$. Even though $\pm \sqrt{u}$ lie on the jump contour of $Y$, this quantity in fact does not have a jump so the notation is justified. Moreover one can calculate the relevant quantities at a point $z$ and then let $z\to \pm \sqrt{u}$ -- in particular the point $z$ can be taken to be outside of the relevant lenses and for simplicity in the lower half plane (see \cite[Figure 8]{cf}).  In \cite[Section 6]{cf}, using results of \cite{ck}, it is argued that near the points $\pm \sqrt{u}$, but outside of the lenses, one can write

\begin{equation}\label{eq:cfyrep}
Y(z)=e^{-N\frac{\ell_{v}}{2}\sigma_3}\left(R_v(z)E_v(z)\Psi^{(2)}(\lambda_v(z);s_{N,u})W_v(z)\right)e^{Ng_v(z)\sigma_3}e^{\frac{N\ell_{v}}{2}\sigma_3},
\end{equation}

\noindent where $\ell_v$ and $g_v$ refer to the $\ell$- and $g$-quantities constructed from the potential $V_v$. If we restrict to points $z$ outside of the lenses and in the lower half plane, then one has

\begin{equation}\label{eq:wcf}
W_v(z)=\left[(z^2-u)^{-\beta/2}e^{\frac{-\pi i \beta}{2}}e^{N\phi_v(z)}\right]^{\sigma_3},
\end{equation}

\noindent where (see the discussion around \cite[equation (4.13)]{cf} for details about the branch and integration contour -- note that in the notation of \cite{cf}, $d$ is $h$ and the support of the equilibrium measure is $[a,b]$ instead of our $[-1-v,1-v]$)

\begin{equation}\label{eq:phicf}
\phi_v(z)=\pi\int_{1-v}^z d_v(\xi)((\xi+v)+1)((\xi+v)-1))^{1/2}d\xi.
\end{equation}

$\lambda_v$ is a coordinate change which for $z$ in the lower half plane is defined by (see \cite[equation (6.2)]{cf})

\begin{equation}\label{eq:lambdacf}
\lambda_v(z)=-iN\left(-\phi_v(z)-\frac{\phi_{v,+}(\sqrt{u})+\phi_{v,+}(-\sqrt{u})}{2}\right).
\end{equation}

The main reason the uniformity of the asymptotics holds is that varying $v\in(-1+\epsilon,1-\epsilon)$ does not change the qualitative behavior of the asymptotics of $\lambda_v(z)$. If one were to allow $v=\pm 1$, then the situation would be different.

For the definition of $\Psi^{(2)}(\lambda,s)$, we refer to \cite[Section 3]{cf}, but point out here that while it depends on $\beta$, it does not depend on $x,y,$ or $V$. The function $E_v$ is analytic in a neighborhood of zero (containing the points $\pm \sqrt{u}$) and for the values of $z$ we are interested in, it can be written as (see \cite[Section 6.4]{cf})

\begin{equation}\label{eq:ecf}
E_v(z)=\mathcal{N}_v(z)W_v(z)^{-1}e^{-i\lambda_v(z)\sigma_3}=\mathcal{N}_v(z)\left[(z^2-u)^{\beta/2}e^{\pi i \beta/2}\right]^{\sigma_3} e^{\frac{N}{2}\left(\phi_{v,+}(\sqrt{u})+\phi_{v,+}(-\sqrt{u})\right)\sigma_3},
\end{equation}

\noindent where $\mathcal{N}_v(z)$ is the global parametrix which is of similar form as the one we consider in Section \ref{sec:global} apart from the support of the equilibrium measure now being $[-1-v,1-v]$ which changes the formulas slightly. See also around \cite[equations (5.5) and (6.1)]{cf} for details. In particular, as $z\to \pm \sqrt{u}$ for a fixed $N$, $\mathcal{N}_v(z)\sim(z\mp \sqrt{u})^{-\frac{\beta}{2}\sigma_3}$ uniformly in $v$. This combined with the fact that $\phi_{v,+}(\pm \sqrt{u})$ is purely imaginary implies that in a neighborhood of the origin, $E_v$, $E_v^{-1}$, and $E_v'$ are bounded uniformly in $v\in(-1+\epsilon,1-\epsilon)$.

Finally $R_v$ is a solution to a small norm RHP. As pointed out in \cite{cf}, the analysis of $R_v$ and its RHP is essentially carried out in \cite{ck}. While verifying in full detail the asymptotic behavior of $R_v$ is not something we will do, we will briefly sketch part of the argument, namely uniform asymptotics for the jump matrix across part of the boundary of a neighborhood of the origin. Analyzing the jump matrix of $R$ in the remaining part of the contour is similar and with a standard argument one finds that $R$ is uniformly close to the identity and its derivative is uniformly small.

From the definition of $R_v$ in \cite[Section 6.5]{cf} we see for $z$ on the boundary of some neighborhood of the origin containing the points $\pm \sqrt{u}$

\begin{equation}\label{eq:cfrjump}
R_{v,+}(z)=R_{v,-}(z) E_v(z)\Psi^{(2)}(\lambda_v(z);s_{N,u})W_v(z)\mathcal{N}_v(z)^{-1}
\end{equation}

Following the notation in \cite[Section 3]{cf}, we note that we can write

\begin{equation}\label{eq:cfpsi2}
\Psi^{(2)}(\lambda,s)=\Psi_{CK}\left(-\frac{4\lambda}{|s|}i;s\right)\chi(\lambda),
\end{equation}

\noindent where $\Psi_{CK}$ is the solution to the RHP in \cite[Section 3]{ck} and $\chi(\lambda)$ is defined in \cite[(3.12)]{cf}. We note that as $u$ is always small for us, $|\lambda_v(z)/|s||\sim u^{-1/2}$ is large if $z$ is at a fixed distance from $\pm \sqrt{u}$. We thus want to know the $\lambda\to\infty$ asymptotics of $\Psi_{CK}(\lambda,s)$ for all values of $s$. This was studied in \cite{ck}. For the relevant asymptotics for $\Psi_{CK}(\zeta;s)$, we refer to the discussion relevant to \cite[equations (3.6), (5.25), and (6.32)]{ck}.  For $\Psi^{(2)}(\lambda;s)$ these asymptotics translate into the following: for large $|\lambda|$

\begin{equation*}
\Psi^{(2)}(\lambda;s)=\begin{cases}
(I+\mathcal{O}(|s||\lambda|^{-1})) e^{i\lambda\sigma_3}, & s\to -i0^+\\
(I+\mathcal{O}(|\lambda|^{-1}))e^{i\lambda \sigma_3}, & s=\mathcal{O}(1)\\
(I+\mathcal{O}(|s \lambda|^{-1}))e^{i\lambda\sigma_3}, & s\to -i\infty
\end{cases}.
\end{equation*}

Using \eqref{eq:ecf} and fact that $E_v$ and $E_v^{-1}$ are uniformly bounded, we thus see that for all $u$ and uniformly in $v$, the jump matrix along this part of the jump contour is

\begin{equation*}
I+E_v(z)\mathcal{O}(\min(|s|,|s|^{-1})|\lambda_v(z)|^{-1})E_v(z)^{-1}=I+\mathcal{O}(\min(|s|,|s|^{-1})|\lambda_v(z)|^{-1}).
\end{equation*}

Going over such an argument in full detail would then imply that $R_v$ can be solved through the general small-norm approach and one has uniform asymptotics for $R_v$, e.g. $R_v(z)=I+\mathcal{O}(N^{-1})$ and $R_v'(z)=\mathcal{O}(N^{-1})$ uniformly in $z$ and $v\in(-1+\epsilon,1-\epsilon)$.

Let us now return to the differential identity \eqref{eq:cfdi}. With a basic matrix algebra argument, one finds from \eqref{eq:cfyrep} as in \cite[Section 5]{cf}

\begin{align*}
\left(Y^{-1}(z)Y'(z)\right)_{22}&=(B(z))_{22}-\frac{N}{2}V_v'(z)+\frac{\beta}{2}\left[\frac{1}{z-\sqrt{u}}+\frac{1}{z+\sqrt{u}}\right]\\
&\quad+\left[\Psi^{(2)}(\lambda_v(z);s_{N,u})\frac{d}{dz}\Psi^{(2)}(\lambda_v(z);s_{N,u})\right]_{22},
\end{align*}

\noindent where

\begin{equation*}
B(z)=\Psi^{(2)}(\lambda_v(z);s_{N,u})^{-1}(R_v(z)E_v(z))^{-1}(R_v(z)E_v(z))' \Psi^{(2)}(\lambda_v(z);s_{N,u}).
\end{equation*}

For the asymptotics of the $\frac{d}{dz}\Psi^{(2)}$-term, one can argue exactly like in \cite[Section 6.4]{cf} (see also \cite[equations (5.27) and (5.28); Lemma 5.3]{cf}) to find that as $z\to \pm \sqrt{u}$,

\begin{align*}
\left(\Psi^{(2)}(\lambda_v(z);s)^{-1}\frac{d}{dz}\Psi^{(2)}(\lambda_v(z);s)\right)_{22}&=\pm 2i\frac{\lambda_v'(\pm \sqrt{u})}{s_{N,u}}\left(\frac{\sigma_\beta(s_{N,u})-\frac{\beta^2}{2}}{\beta}+\frac{s_{N,u}}{2}\right)-\frac{\beta}{2}\frac{1}{z\mp \sqrt{u}}\\
&\quad +\mathcal{O}(1),
\end{align*}

\noindent where $\mathcal{O}(1)$ is uniform in $v$.

Thus what remains is the $B$-term. For this, by what we've argued about $R$ and $E$, we see that $(RE)^{-1}(RE)'=\mathcal{O}(1)$ uniformly in $v$ in a neighborhood of zero. Thus it is enough to show that as $z\to\pm \sqrt{u}$, $((\Psi^{(2)})^{-1}\mathcal{O}(1)\Psi^{(2)})_{22}=\mathcal{O}(1)$ uniformly in $v$. Here again the asymptotics of $\Psi^{(2)}$ come from \cite{ck}, and in fact the uniformity in $v$ follows from the argument for a fixed $v$ as in \cite[Section 5.6 and Section 6.6]{cf} and the uniform behavior of $\lambda_v$.


\begin{thebibliography}{99}
\bibitem{agz} G. Anderson, A. Guionnet, and O. Zeitouni: {\it An introduction to random matrices.} Cambridge Studies in Advanced Mathematics, 118. Cambridge University Press, Cambridge, 2010. xiv+492 pp.
\bibitem{abb} L.-P. Arguin, D. Belius, and P. Bourgade: {\it Maximum of the characteristic polynomial of random unitary matrices}.  Comm. Math. Phys. 349 (2017), no. 2, 703–751.
\bibitem{abbrs} L.-P. Arguin, D. Belius, P. Bourgade, M. Radziwi\l{}\l{}, and K. Soundararajan: {\it Maximum of the Riemann zeta function on a short interval of the critical line}. Preprint arXiv:1612.08575.
\bibitem{ajks} K. Astala, P. Jones, A. Kupiainen, and E. Saksman: {\it Random conformal weldings.} Acta Math. 207 (2011), no. 2, 203–254.
\bibitem{bkm} E. Bacry,  A. Kozhemyak, J.-F. Muzy: {\it Log-normal continuous cascade model of asset returns: aggregation properties and estimation.} Quant. Finance 13 (2013), no. 5, 795–818.
\bibitem{berestycki} N. Berestycki: {\it An elementary approach to Gaussian multiplicative chaos.}  Electron. Commun. Probab. 22 (2017), Paper No. 27, 12 pp.


\bibitem{LQGnotes} N. Berestycki: {\it Itroduction to the {G}aussian free field and {L}iouville quantum gravity.} Available on the author's website.

\bibitem{bdz} M. Bramson, J. Ding and O. Zeitouni. {\it Convergence in law of the maximum of the two-dimensional discrete Gaussian free field.}  Comm. Pure Appl. Math. 69 (2016), no. 1, 62–123.

\bibitem{bf} A. Borodin and P. Ferrari: {\it Anisotropic growth of random surfaces in 2+1 dimensions.} Comm. Math. Phys. 325
(2014), no. 2, 603–684.
\bibitem{bg} G. Borot and A. Guionnet: {\it Asymptotic expansion of $\beta$ matrix models in the one-cut regime.} Comm. Math. Phys. 317 (2013), no. 2, 447–483.
\bibitem{charlier} C. Charlier: {\it Asymptotics of Hankel determinants with a one-cut regular potential and Fisher-Hartwig singularities.} Preprint arXiv:1706.03579.
\bibitem{cmn} R. Chhaibi, T. Madaule, and J. Najnudel: {\it On the maximum of the C$\beta$E field.} Preprint arXiv:1607.00243.
\bibitem{cnn} R. Chhaibi, J. Najnudel, and A. Nikeghbali: {\it The Circular Unitary Ensemble and the Riemann zeta function: the microscopic landscape and a new approach to ratios.}  Invent. Math. 207 (2017), no. 1, 23–113.
\bibitem{cf} T. Claeys and B. Fahs: {\it Random matrices with merging singularities and the Painlev\'e V equation}. SIGMA Symmetry Integrability Geom. Methods Appl. 12 (2016), Paper 031, 44 pp.
\bibitem{ck} T. Claeys and I. Krasovsky: {\it  Toeplitz determinants with merging singularities.} Duke Math. J. 164 (2015), no. 15, 2897–2987.
\bibitem{dkrv} F. David, A. Kupiainen, R. Rhodes, and V. Vargas: {\it Liouville quantum gravity on the Riemann sphere.} Comm. Math. Phys. 342 (2016), no. 3, 869–907.
\bibitem{deift2} P. Deift: {\it Integrable Operators.} Differential operators and spectral theory, 69–84, Amer. Math. Soc. Transl. Ser. 2, 189, Amer. Math. Soc., Providence, RI, 1999.
\bibitem{deift} P. Deift: {\it Orthogonal polynomials and random matrices: a Riemann-Hilbert approach.}  Courant Lecture Notes in Mathematics, 3. New York University, Courant Institute of Mathematical Sciences, New York; American Mathematical Society, Providence, RI, 1999. viii+273 pp.
\bibitem{dik1} P. Deift, A. Its, and I. Krasovsky: {\it Asymptotics of Toeplitz, Hankel, and Toeplitz+Hankel determinants with Fisher-Hartwig singularities.} Ann. of Math. (2) 174 (2011), no. 2, 1243–1299.
\bibitem{dik2} P. Deift, A. Its, and I. Krasovsky: {\it  On the asymptotics of a Toeplitz determinant with singularities.} Random matrix theory, interacting particle systems, and integrable systems, 93–146, Math. Sci. Res. Inst. Publ., 65, Cambridge Univ. Press, New York, 2014.
\bibitem{dkml} P. Deift, T. Kriecherbauer, and K. T-R McLaughlin: {\it New results on the equilibrium measure for logarithmic potentials in the presence of an external field} J. Approx. Theory {\bf 95} (1998), 388–475.
\bibitem{dkmlvz} P. Deift, T. Kriecherbauer, K. T-R McLaughlin, S. Venakides, X. Zhou: {\it Strong asymptotics of orthogonal polynomials with respect to exponential weights.} Comm. Pure Appl. Math. {\bf 52} (1999), no. 12, 1491–1552.
\bibitem{dz} P. Deift and X. Zhou: {\it A steepest descent method for oscillatory Riemann-Hilbert problems. Asymptotics for the MKdV equation} Ann. of Math. (2) {\bf 137} (1993), no. 2, 295–368.
\bibitem{ds} B. Duplantier and S. Sheffield: {\it  Liouville quantum gravity and KPZ.} Invent. Math. 185 (2011), no. 2, 333–393.
\bibitem{eml} N.M. Ercolani and K. D. T.-R. McLaughlin: {\it Asymptotics of the partition function for random matrices via Riemann-Hilbert techniques and applications to graphical enumeration.} Int. Math. Res. Not. 2003, no. 14, 755–820.
\bibitem{fik} A.S. Fokas, A.R. Its, and A.V. Kitaev: {\it The isomonodromy approach to matrix models in 2D quantum gravity.} Comm. Math. Phys. {\bf 147} 395-430 (1992).
\bibitem{for} P.J. Forrester:  {\it Log-gases and random matrices.} London Mathematical Society Monographs Series, 34. Princeton University Press, Princeton, NJ, 2010. xiv+791 pp.
\bibitem{ff} P. J. Forrester and N. E. Frankel: {\it Applications and generalizations of Fisher-Hartwig asymptotics.} J. Math. Phys. 45 (No.5), 2003-20028 (2004).
\bibitem{fhk} Y. V. Fyodorov, G. A. Hiary, and J. P. Keating, {\it Freezing Transition, Characteristic Polynomials of Random Matrices, and the Riemann Zeta Function}, Phys. Rev. Lett. 108 (2012), 170601, 5pp.

\bibitem{fk} Y.V. Fyodorov and J. Keating: {\it Freezing transitions and extreme values: random matrix theory, and disordered landscapes.} Philos. Trans. R. Soc. Lond. Ser. A Math. Phys. Eng. Sci. 372 (2014), no. 2007, 20120503, 32 pp.
\bibitem{fks} Y.V. Fyodorov, B. A. Khoruzhenko, and N. Simm: {\it Fractional Brownian motion with Hurst index $H=0$ and the Gaussian Unitary Ensemble.} Ann. Probab. 44 (2016), no. 4, 2980–3031.
\bibitem{fs} Y.V. Fyodorov and N. Simm: {\it On the distribution of maximum value of the characteristic polynomial of GUE random matrices.} Nonlinearity 29 (2016) 2837–2855.
\bibitem{gp} S. Garoufalidis and I. Popescu: {\it Analyticity of the planar limit of a matrix model.} Ann. Henri Poincar\'e {\bf 14} (2013), no. 3, 499–565.
\bibitem{hko} C.P. Hughes, J. Keating, and N. O'Connell: {\it  On the characteristic polynomial of a random unitary matrix.} Comm. Math. Phys. 220 (2001), no. 2, 429–451.
\bibitem{io} V. Ivanov and G. Olshanski: {\it Kerov’s central limit theorem
for the Plancherel measure on Young diagrams.} Symmetric
functions 2001: surveys of developments and perspectives,
93–151, Kluwer Acad. Publ., Dordrecht, 2002.
\bibitem{johansson} K. Johansson: {\it On fluctuations of eigenvalues of random Hermitian matrices.} Duke Math. J. 91 (1998), no. 1, 151–204.
\bibitem{kahane} J.-P. Kahane: {\it  Sur le chaos multiplicatif.} Ann. Sci. Math. Qu\'ebec 9 (1985), no. 2, 105–150.
\bibitem{kallenberg1} O. Kallenberg: {\it  Random measures.} Fourth edition. Akademie-Verlag, Berlin; Academic Press, Inc., London, 1986. 187 pp.
\bibitem{kallenberg2} O. Kallenberg: {\it Foundations of Modern Probability.} Second edition. Probability and its Applications (New York). Springer-Verlag, New York, 2002. xx+638 pp.
\bibitem{ks} J.P. Keating and N.C. Snaith: {\it Random matrix theory and $\zeta(\frac{1}{2}+it)$.} Comm. Math. Phys. {\bf 214} (1) 57-89 (2000).
\bibitem{kenyon} R. Kenyon: {\it Dominos and the Gaussian free field.} Ann. Probab. 29 (2001), no. 3, 1128–1137.
\bibitem{krasovsky} I. Krasovsky:  {\it Correlations of the characteristic polynomials in the Gaussian unitary ensemble or a singular Hankel determinant}. Duke Math. J. {\bf 139} (2007), no. 3, 581–619.
\bibitem{kml} A.B.J. Kuijlaars and K.T.-R. McLaughlin: {\it Generic behavior of the density of states in random matrix theory and equilibrium problems in the presence of real analytic external fields}, Comm. Pure Appl. Math. {\bf 53} (2000),
736–785.
\bibitem{kmlvav} A. B. J. Kuijlaars, K. T.-R. McLaughlin, W. Van Assche, and M. Vanlessen:
{\it The Riemann-Hilbert approach to strong asymptotics for orthogonal polynomials on} $[-1,1]$.
Adv. Math. {\bf 188} (2004), no. 2, 337–398.
\bibitem{lrv} H. Lacoin, R. Rhodes, and V. Vargas: {\it Complex Gaussian multiplicative chaos.}
Comm. Math. Phys. 337 (2015), no. 2, 569--632.

\bibitem{LamOstSimm} G. Lambert, D. Ostrovsky and N. Simm: {\it Subcritical multiplicative chaos for regularized counting statistics from random matrix theory}. Preprint arXiv:1612.02367. 


\bibitem{LamPaq} G. Lambert and E. Paquette: {\it The law of large numbers for the maximum of almost Gaussian log-correlated fields coming from random matrices}. Preprint arXiv:1611.08885.


\bibitem{legall} J.-F. Le Gall: {\it Uniqueness and universality of the Brownian map.} Ann. Probab. 41 (2013), no. 4, 2880–2960.
\bibitem{mrv} T. Madaule, R. Rhodes, and V. Vargas: {\it Glassy phase and freezing of log-correlated Gaussian potentials.} Ann. Appl. Probab. 26 (2016), no. 2, 643–690.
\bibitem{miermont} G. Miermont: {\it The Brownian map is the scaling limit of uniform random plane quadrangulations.} Acta Math. 210 (2013), no. 2, 319–401.
\bibitem{ms} J. Miller and S. Sheffield: {\it Quantum Loewner Evolution}.  Duke Math. J. 165 (2016), no. 17, 3241–3378.
\bibitem{ms2} J. Miller and S. Sheffield: {\it Liouville quantum gravity and the Brownian map I: The QLE(8/3,0) metric}. Preprint arXiv:1507.00719.
\bibitem{ms3} J. Miller and S. Sheffield: {\it Liouville quantum gravity and the Brownian map II: geodesics and continuity of the embedding.} Preprint arXiv:1605.03563.
\bibitem{ms4} J. Miller and S. Sheffield: {\it Liouville quantum gravity and the Brownian map III: the conformal structure is determined.} Preprint arXiv:1608.05391.

\bibitem{naj} J. Najnudel: {\it On the extreme values of the Riemann zeta function on random intervals of the critical line.} Preprint arXiv:1611.05562.

\bibitem{ps}  D. Porter and D.S.G. Stirling: {\it Integral equations. A practical treatment, from spectral theory to applications.} Cambridge Texts in Applied Mathematics. Cambridge University Press, Cambridge, 1990. xii+372 pp.

\bibitem{pz} E. Paquette and O. Zeitouni: {\it The maximum of the CUE field.} Int. Math. Res.
Not., 2017:1–92.
\bibitem{rv1} R. Rhodes and V. Vargas: {\it Gaussian multiplicative chaos and applications: a review.} Probab. Surv. 11 (2014), 315–392.
\bibitem{rv2} R. Rhodes and V. Vargas: {\it Lecture notes on Gaussian multiplicative chaos and Liouville Quantum Gravity.} Preprint arXiv:1602.07323.
\bibitem{rivi} B. Rider and B. Vir\'ag: {\it The noise in the circular law and the Gaussian free field.} Int. Math. Res. Not. IMRN 2007, no. 2, Art. ID rnm006, 33 pp.
\bibitem{st} E.B. Saff and V. Totik: {\it Logarithmic potentials with external fields}, Grundlehren der Mathematischen Wissenschaften, Vol. 316, Springer-Verlag, Berlin, 1997.
\bibitem{sw} E. Saksman and C. Webb: {\it The Riemann zeta function and Gaussian multiplicative chaos: statistics on the critical line}. Preprint arXiv:1609.00027.
\bibitem{sheffield} S. Sheffield: {\it Conformal weldings of random surfaces: SLE and the quantum gravity zipper.} Ann. Probab. 44 (2016), no. 5, 3474–3545.
\bibitem{titchmarsh} E.C. Titchmarsh: {\it Introduction to the theory of Fourier integrals.}
Third edition. Chelsea Publishing Co., New York, 1986. x+394 pp.
\bibitem{vanlessen}  M. Vanlessen: {\it Strong asymptotics of the recurrence coefficients of orthogonal polynomials associated to the generalized Jacobi weight.} J. Approx. Theory {\bf 125} (2003), no. 2, 198–237.
\bibitem{watson} G.N. Watson. {\it A treatise on the theory of Bessel functions.}  Cambridge University Press, Cambridge, England; The Macmillan Company, New York, 1944. vi+804 pp.
\bibitem{webb} C. Webb: {\it The characteristic polynomial of a random unitary matrix and Gaussian multiplicative chaos -- the $L^2$-phase.} Electron. J. Probab. 20 (2015), no. 104, 21 pp.
\end{thebibliography}
\end{document}